\documentclass[12pt]{article}
\usepackage{amsthm}
\usepackage{dsfont} 
\usepackage{amssymb,amsmath,graphicx,
	amsfonts,euscript}
\usepackage{color}
\usepackage{cite}
\allowdisplaybreaks
\usepackage{hyperref}
\numberwithin{equation}{section}

\DeclareMathOperator{\Op}{Op}

\newcommand{\be}{\begin{eqnarray}}
\newcommand{\ee}{\end{eqnarray}}
\newcommand{\bes}{\begin{eqnarray*}}
\newcommand{\ees}{\end{eqnarray*}}

\renewcommand{\theequation}{\arabic{section}.\arabic{equation}}

\newtheorem{corollary}{Corollary}[section]
\newtheorem{definition}[corollary]{Definition}

\newtheorem{lemma}[corollary]{Lemma}
\newtheorem{proposition}[corollary]{Proposition}
\newtheorem{remark}[corollary]{Remark}
\newtheorem{theorem}[corollary]{Theorem}
\newtheorem{Notations}[corollary]{Notations}
\newcommand{\beq}{\begin{equation}}
\newcommand{\eeq}{\end{equation}}
\newcommand{\ben}{\begin{eqnarray}}
\newcommand{\een}{\end{eqnarray}}
\newcommand{\beno}{\begin{eqnarray*}}
\newcommand{\eeno}{\end{eqnarray*}}
\usepackage{mathrsfs}
\usepackage{enumerate}
\usepackage{geometry}
\usepackage{lastpage}
\usepackage{fancyhdr}
\title{\large\bf{On the Wave of Klein-Gordon Type: Propagation Equation at a Boundary and Interior Observability}} 

\author{
Siwar Ben Said\\
Laboratoire Analyse, G{\'e}om{\'e}trie et Applications\\
  Université Sorbonne Paris Nord\\
   99 Avenue Jean Baptiste Cl{\'e}ment \\93430 Villetaneuse, France\\
  siwar.bensaid@math.univ-paris13.fr
  }

\begin{document}
\maketitle
\begin{abstract}
Understanding the transport of semiclassical measures along generalized bicharacteristics is a key ingredient in the proof of observability for the wave equation. Here, the coefficients are only assumed to be of class $C^1$, which is a limit case for the existence
of bicharacteristics. We consider boundary conditions of the form $\partial_\nu u + i \partial_t u = 0$, as a first step towards more general Lopatinskii-type conditions. We derive the transport equation satisfied by the measure arising from the concentration of
high-frequency waves that obstruct observability. Observability of solutions associated with positive time-frequencies is then obtained by contradiction under the geometrical control condition.
\end{abstract}
\noindent\textbf{Key words:}  Klein-Gordon equation, interior observability, exact controllability, semi-classical measure, boundary propagation equation.\\
\noindent\textbf{MSC-2020:} 35L05, 35L20, 35Q49, 35R05, 93B07, 34A99, 35S05.

\tableofcontents

\section{Introduction} \label{intro2.1tH}
The observability of the wave equation, which concerns  the ability to estimate the energy of a free solution using localized measurements in space and time, has been a major focus of research for many years, due to its strong link to the problem of exact controllability. This issue arises in both parabolic \cite{16A1,17A1,18A1} and hyperbolic \cite{5A1,7A1} systems. The observability problem has been studied on bounded domains in the case of smooth coefficients with methods that rely on microlocal analysis, a theory that originated in the 1950s with the introduction of pseudo-differential  operators by Kohn and Nirenberg \cite{14A1}, further generalized by Hörmander \cite{15A1}. 
Building on the work of Rauch and Taylor \cite{6A1}, Bardos, Lebeau and Rauch demonstrated boundary observability of waves in \cite{7A1}. Consequently, they established boundary stabilization under a condition known as the geometric control condition (GCC) for bounded, open, connected sets $\Omega$ of $\mathbb{R}^d$ with $C^\infty$ boundary. GCC concerns generalized geodesics. Away from boundaries they coincide with geodesics. At boundaries they obey the laws of geometrical optics. Existence and uniqueness of generalized geodesics is guaranteed by coefficients and boundary smoothness. GCC roughly states the following:
\begin{center}
\textit{(GCC): All generalized geodesics enter the observation region in some time $T > 0$.}
\end{center}
Then, in \cite{8A1}, Burq and Gérard established that the GCC is a necessary and sufficient condition for the exact controllability of the wave equation with Dirichlet boundary conditions. However, microlocal techniques demand high regularity of coefficients and boundaries.
For domains with rougher boundaries, older methodologies appear more applicable. Up until the late 1990s, most results were established under a global geometrical assumption known as the $\Gamma$-condition introduced by Lions \cite{5A1}. That condition allows one to use a multiplier method. Restriction  however applies for non constant coefficients. The $\Gamma$-condition however is not sharp: observability holds for geometries that do not satisfy the $\Gamma$-condition; see \cite{7A1}.\\
The works \cite{7A1,8A1} rely on microlocal tools, including the propagation of wavefront sets and microlocal defect measures introduced by Gérard \cite{9A1} and Tartar \cite{12A1}. 
Initially developed within the framework of the Melrose-Sjöstrand 
$C^\infty$ singularity propagation, which necessitated $C^\infty$ smoothness, the theory has progressed using the framework of semi-classical measures \cite{10A1} making it possible to consider $C^2$ coefficients in \cite{30A1}. Above this level of smoothness is just the natural $W^{2,\infty}$ regularity needed to define a geodesic flow. Below this smoothness threshold, uniqueness fails for geodesics. Below $C^1$ regularity, geodesics may further fail to exist. For $C^1$ coefficient regularity and $C^2$ boundary generalized geodesics do exist as proven in \cite{11A1}. Away from the boundary they are projections of bicharacteristics that are integral curve of the $C^0$ Hamiltonian vector field.
A natural question is then to understand the relationship between these nonunique integral curves and the observability property. This problem is addressed in the recent work of Burq, Dehman, and Le Rousseau \cite{1A1}. More precisely, for a $C^1$ metric and $C^2$ boundary, they prove that GCC as above remains sufficient for observability in the case of a Dirichlet boundary condition.\\

Note that in this case GCC is asked to hold for all generalized geodesics going through any point. If uniqueness holds, this condition is no different from that in the smooth case. The result of \cite{1A1} is obtained in the case of homogeneous Dirichlet boundary conditions.
The proof in \cite{1A1}, relies on a contradiction argument. A semiclassical measure emerges from potential concentration phenomena for sequences of waves that obstruct observability. This measure satisfies a transport equation along the Hamiltonian vector field associated with the wave operator. Here, we should mention that this transport equation originates from \cite{10A1}, where the measure was established in the context of the flat metric on a bounded open set in $\mathbb{R}^d$
with a $W^{2,\infty}$-boundary. However, no propagation result for the support of the measure was derived from this equation in \cite{10A1}.
In \cite{11A1}, where the setting is
different than in \cite{10A1}, a propagation result for the support of the measure along generalized bicharacteristics was obtained. Specifically, Burq, Dehman, and Le Rousseau showed that the support of the semi-classical measure is a union of maximal generalized bicharacteristics.
A natural question lies in the extension of this result for conditions different from Dirichlet conditions. Burq-Lebeau showed that a natural framework for the study of measure propagation is that of Lopatinskii conditions. Here, as a first attempt in this direction, we consider a Klein-Gordon wave equation with boundary condition given by 
\begin{align*}
   \partial_\nu u+i\partial_tu=0 
\end{align*}
which verifies the Lopatinskii condition for $\tau > 0$ but not for $\tau<0$, where $\tau$ denotes the dual variable associated with $t$.
With such condition, the wave energy is preseved similarly to Dirichlet and Neumann conditions. As for Neumann, constant functions are trivial solutions of the wave equation; hence, the introduction of the  Klein-Gordon model for which this does not occur (see below).
In this paper, following the strategy of \cite{1A1}, under the GCC, we establish interior observability of solutions associated with positive time-frequencies for the Klein-Gordon wave equation with a $C^1$ metric and a $C^2$ boundary. We first reduce the observation estimate to a high-frequency observation for semi-classically localized initial data, following  \cite{299A1}. We then argue by contradiction, constructing sequences of $H^1\times L^2$-normalized initial data that are spectrally localized in high frequencies and that vanish asymptotically in the observation region. These sequences are associated with a semi-classical measure $\mu$ that satisfies a propagation equation at the boundary, as stated in Theorem \ref{th1A1}. This theorem is central to the proof of our main result. Finally, we use this equation, along with the result from \cite{11A1}, which states that the support of $\mu$ is a union of maximal generalized bicharacteristics. By GCC, this leads to a contradiction, as $\mu$ vanish in the observation region, and yields the observability result.
\subsection{Outline}
The content is organized as follows. In Section \ref{settingA1} we set up the geometric framework used throughout the paper. In Section \ref{mainresultA1}, we state the main result (Theorem \ref{mainA1}).  In Section \ref{GeoA1}, we introduce the geometrical notions necessary to precisely formulate the geometric control condition (GCC) within our low-regularity framework, along with the main result. Section \ref{dyaA1} presents a semiclassical reduction of the observability estimates, with Section \ref{strategyA1}, outlining a contradiction argument that leads to a proof of a semiclassical observability inequality for positive time-frequencies. In Section \ref{semiopA1}, we review the theory of semiclassical pseudo-differential operators under minimal regularity assumptions on the symbols and recall the notions of semiclassical measures. Section \ref{uuuA1} is dedicated to proving the measure-propagation equation stated in Theorem \ref{th1A1}. Finally, in Section \ref{conclusionA1}, we use the result from \cite{11A1}, stated in Theorem \ref{generabichaA1}, and the propagation equation of Theorem \ref{th1A1} to conclude the proof of the main result.
\subsection{Setting and well-posedness} \label{settingA1}
Throughout this paper, we consider $\mathcal{M}$ a compact connected Riemanian
manifold of dimension $d\geq 2$ with a $C^2$ boundary, endowed with a $ C^1$ metric $g=(g_{ij})$. Note that $\mathcal{M}$ being compact it contains its boundary $\partial\mathcal{M}$.
\begin{Notations}
Denote by $\mu_g$ the canonical positive Riemannian density on $\mathcal{M}$.\\
The $L^2$-inner product and norm are considered with respect to this density $\mu_g$, that is,
\begin{align}
(u,v)_{L^2(\mathcal{M})}=\int_{\mathcal{M}}u\bar{v} d\mu_g, \quad\quad\quad  \|u\|^2_{L^2(\mathcal{M})}=\int_{\mathcal{M}}|u|^2 d\mu_g.
\end{align}
In local coordinates, the volume element $d\mu_g$ is given by $(det\;g)^{\frac{1}{2}}dx$. We denote the space of $L^2$-vector fields on $\mathcal{M}$ by $L^2V(\mathcal{M})$, equipped with the norm
\begin{align}
\|v\|^2_{L^2V(\mathcal{M})}=\int_{\mathcal{M}}g(v,\bar{v})d\mu_g, \quad\quad\quad v\in L^2V(\mathcal{M}).
\end{align}
For a function $f$ and a vector field $u$, the Riemannian gradient and the divergence are given respectively in local coordinates by
\begin{align*}
(\nabla_gf)^i=\sum_{1\leq j\leq d}g^{i,j}\partial_{x_j}f,\quad \quad \mathrm{div}(u)=(\mathrm{det}(g))^{-\frac{1}{2}}\sum_{1\leq i\leq d}\partial_{x_i}\big((\text{det}\; g)^{\frac{1}{2}}u^i),
\end{align*}
where $(g^{i,j})=(g_{i,j})^{-1}$.

The space $C_c(\mathbb{R}^{2d})$ denotes the set of compactly supported
continuous functions on $\mathbb{R}^{2d}$. 
$C^\infty_c(\mathbb{R}^{2d})$ refers to the space of smooth functions with compact support on $\mathbb{R}^{2d}$. The space $C_0(\mathbb{R}^{2d})$ consists of continuous functions on $\mathbb{R}^{2d}$ that converge to $0$ at infinity. Finally,
$\mathcal{D}'(\mathbb{R}\times \mathcal{M})$ denote the space of distributions on $\mathbb{R}\times \mathcal{M}$.
\end{Notations}
In this paper, we consider the following wave equation
of Klein-Gordon type
\begin{equation}
\label{eqimA1}
    \begin{cases}
       ( \partial_t^2-\Delta_g+m)u = 0 \quad  \quad\quad&\text{in} \;\mathbb{R}\times \mathcal{M}, \\
        \;\partial_nu+i\partial_tu\;\;\;\quad= 0 \;\;\;  \quad \quad&\text{in} \;\mathbb{R}\times \mathcal{\partial M},\\
        u_{\mid t=0}=\underline{u}^0,\;\; \partial_tu_{\mid t=0}=\underline{u}^1 \quad &\text{in} \;\mathcal{ M},
    \end{cases}
\end{equation}
where $m > 0$ is a constant, $n$ stands for the unitary normal inward pointing vector field to $\partial\mathcal{M}$ in the sense of the metric $g$ and
$\Delta_g$ corresponds to the unbounded Laplace-Beltrami opetator on $L^2(\mathcal{M})$, defined in local coordinates by
\begin{align}
\Delta_g f=(\text{det}\; g)^{-\frac{1}{2}}\sum_{1\leq i,j\leq d}\partial_{x_i}\big((\text{det}\; g)^{\frac{1}{2}}g^{i,j}(x)\partial_{x_j}f\big) .
\end{align}
For simplicity we consider the case $m = 1$ in \eqref{eqimA1}.

Let $\mathcal{H}$ be the space $H^1(\mathcal{M})\times L^2(\mathcal{M})$ and $A$ the unbounded operator on $\mathcal{H}$ defined by
\begin{align}
A:=i\begin{pmatrix}
0 & -1  \\
-\Delta_g +1 & 0
\end{pmatrix},\label{opA1}
\end{align}
with domain $$D(A)=\{(u^0,u^1)\in \mathcal{H};\;u^0\in H^2(\mathcal{M}),\; u^1 \in H^1(\mathcal{M}) \; \text{and} \; (\partial_nu^0+iu^1)_{|\partial\mathcal{M}}= 0 \}.$$
Setting $U=\,^t(u,\partial_tu)$, the equation \eqref{eqimA1} can be written
\begin{align}\label{eq2A1}
\partial_tU=i AU, \quad\quad U_{|t=0}=\underline{U}=\,^t(\underline{u}^0,\underline{u}^1).
\end{align}
A norm on $\mathcal{H}$ is defined by
\begin{align*}
\|(u^0,u^1)\|^2_\mathcal{H}&=\|u^0\|^2_{H^1(\mathcal{M})}+\|u^1\|^2_{L^2(\mathcal{M})}
\\&=\|u^0\|^2_{L^2(\mathcal{M})}+\|\nabla_gu^0\|^2_{L^2V(\mathcal{M})}+\|u^1\|^2_{L^2(\mathcal{M})}.
\end{align*}
This norm is associated with the inner product
\begin{align*}
(U,\tilde{U})_\mathcal{H}=(u^0,\tilde{u}^0)_{L^2(\mathcal{M})}+(\nabla_gu^0,\nabla_g\tilde{u}^0)_{L^2V(\mathcal{M})}+(u^1,\tilde{u}^1)_{L^2(\mathcal{M})},
\end{align*}
for $U=(u^0,u^1)$ and $\tilde{U}=(\tilde{u}^0,\tilde{u}^1)$.\\
A norm on $D(A)$ is given by
\begin{align*}
\|U\|_{D(A)}^2=\|AU\|_{\mathcal{H}}^2=\|u^1\|^2_{H^1(\mathcal{M})}+\|-\Delta_gu^0+u^0\|^2_{L^2(\mathcal{M})}, \quad U=(u^0,u^1).
\end{align*}
Here, the equation \eqref{eqimA1} is known as the Klein–Gordon equation, but for simplicity, we continue to call it wave equation.  Note that when $m=0$, which corresponds to the classical wave equation, constant functions are eigenfunctions of the operator $\Delta_g$ with $0$ as the eigenvalue. Consequently, constant functions are solutions to the wave equation and are considered "invisible solutions" in the context of the observability property we are concerned with (see Remark \ref{invisA1}). This issue does not arise when dealing with a manifold with a boundary and homogeneous Dirichlet conditions \cite{1A1}.  In our case, with the boundary condition as in \eqref{eqimA1}, to overcome this difficulty, one could either work in a quotient space or replace the wave operator with the Klein-Gordon operator. This explain our choice of equation \eqref{eqimA1} instead of the classical wave equation.

Before stating our main result, we establish in Proposition \ref{proµµµA1} the well-posedness of equation \eqref{eqimA1} in the energy space $\mathcal{H}$. This is based on the properties of the operator $(A, D(A))$ given in Lemmas \ref{selfA1} and  \ref{semigroupeA1}. The proofs of these lemmas are provided in Appendix \ref{appenAA1}.
\begin{lemma} \label{selfA1}
The operator $(A,D(A))$ is selfadjoint on $\mathcal{H}$ and has a compact resolvent.
\end{lemma}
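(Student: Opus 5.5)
The plan is to prove directly that $A$ is densely defined and symmetric, then that $A\pm i$ are onto $\mathcal{H}$. This gives self-adjointness. Compactness of the resolvent will then follow from elliptic regularity and Rellich's theorem.

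\emph{Symmetry.} For $U=(u^0,u^1)$ and $V=(v^0,v^1)$ in $D(A)$ we have $AU=(-iu^1,\,i(-\Delta_g+1)u^0)$. We use Green's formula with the inward normal,
$$\int_{\mathcal{M}}(-\Delta_g u)\bar v\,d\mu_g=\int_{\mathcal{M}}g(\nabla_g u,\nabla_g\bar v)\,d\mu_g+\int_{\partial\mathcal{M}}\partial_n u\,\bar v\,d\sigma ,$$
which is valid for $u\in H^2$, $v\in H^1$ with $C^1$ metric and $C^2$ boundary. Together with $\partial_n u^0=-iu^1$ on $\partial\mathcal M$, it gives
$$(AU,V)_{\mathcal H}=-i(u^1,v^0)_{H^1}+i(u^0,v^1)_{H^1}+\int_{\partial\mathcal M}u^1\bar v^1\,d\sigma .$$
The same computation using $\partial_n v^0=-iv^1$ shows that $(U,AV)_{\mathcal H}$ equals the same expression. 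The boundary terms coincide precisely because of the factor $i$ in the boundary condition. Hence $A$ is symmetric.

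\emph{Density of $D(A)$.} Take $(u^0,u^1)$ smooth up to the boundary; such pairs are dense in $\mathcal H$. Correct $u^0$ by a boundary-layer function $\varphi_\varepsilon\in H^2$ supported in an $\varepsilon$-collar of $\partial\mathcal M$ with $\partial_n\varphi_\varepsilon=-\partial_nu^0-iu^1$ on $\partial\mathcal M$. A profile of the form $\varphi_\varepsilon(x)=\varepsilon\,\chi(\mathrm{dist}(x,\partial\mathcal M)/\varepsilon)\,h(x')$ has $\|\varphi_\varepsilon\|_{H^1}=O(\varepsilon^{1/2})$. Then $(u^0+\varphi_\varepsilon,u^1)\in D(A)$, and this pair tends to $(u^0,u^1)$ in $\mathcal H$.

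\emph{Range condition.} Fix $\lambda=\pm i$ and $F=(f^0,f^1)\in\mathcal H$, and solve $(A-\lambda)U=F$. The first component gives $u^1=i(f^0+\lambda u^0)$. Substituting, $u^0$ must solve
$$(-\Delta_g+1-\lambda^2)u^0=(-\Delta_g+2)u^0=-if^1+\lambda f^0\ \text{ in }\mathcal M,\qquad \partial_n u^0-\lambda u^0=f^0\ \text{ on }\partial\mathcal M .$$
The weak formulation is: find $u^0\in H^1$ such that for all $v\in H^1$,
$$a(u^0,v):=\int g(\nabla_g u^0,\nabla_g\bar v)+2\int u^0\bar v-\lambda\int_{\partial\mathcal M}u^0\bar v=\int(-if^1+\lambda f^0)\bar v-\int_{\partial\mathcal M}f^0\bar v .$$
Since $\lambda$ is purely imaginary, $\mathrm{Re}\,a(v,v)\ge\|v\|^2_{H^1}$. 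The right-hand side is continuous on $H^1$ by the trace theorem, so Lax–Milgram gives a unique solution $u^0\in H^1$.

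The main obstacle is upgrading this to $u^0\in H^2$. The coefficients $(\det g)^{1/2}g^{ij}$ are $C^1$, hence Lipschitz, and the boundary is $C^2$. The Neumann-type datum $f^0+\lambda u^0|_{\partial\mathcal M}$ lies in $H^{1/2}(\partial\mathcal M)$. I would invoke the classical $H^2$ regularity for divergence-form elliptic operators with Lipschitz coefficients on $C^{1,1}$ domains with oblique/Neumann data in $H^{1/2}$, as in Grisvard. If needed, I would prove it by localizing, flattening the boundary with a $C^2$ chart (which keeps the coefficients $C^1$), and using tangential difference quotients, then recovering the normal second derivative from the equation. Once $u^0\in H^2$, we get $u^1=i(f^0+\lambda u^0)\in H^1$, and the boundary condition $\partial_nu^0+iu^1=0$ holds by construction. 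So $U\in D(A)$, and a symmetric operator with $\mathrm{Ran}(A\pm i)=\mathcal H$ is self-adjoint.

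\emph{Compact resolvent.} The regularity estimate gives $\|U\|_{H^2\times H^1}\lesssim\|F\|_{\mathcal H}$, so $(A-i)^{-1}$ maps $\mathcal H$ boundedly into $H^2\times H^1$. The embedding $H^2\times H^1\hookrightarrow H^1\times L^2$ is compact by Rellich on the compact manifold $\mathcal M$, hence $(A-i)^{-1}$ is compact on $\mathcal H$.
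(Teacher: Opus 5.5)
Your argument follows the paper's proof. Symmetry comes from Green's formula plus the boundary condition. Surjectivity of $A\pm i$ comes from Lax--Milgram for a form whose boundary term is purely imaginary, so coercivity comes from the interior terms alone. $H^2$ regularity for the Neumann problem with $H^{1/2}$ data then puts the solution in $D(A)$, and Rellich gives compactness of $(A-i)^{-1}$. Your $\lambda=\pm i$ bookkeeping handles the paper's two separate problems, for $\mathrm{Id}+iA$ and $A+i\,\mathrm{Id}$, at once. Your formulas for $u^1$, for the interior equation and for the boundary condition $\partial_nu^0-\lambda u^0=f^0$ are correct.

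There is one error, in the variational formulation. With the inward-normal Green formula you state and $\partial_nu^0=\lambda u^0+f^0$, one gets
\[
\int_{\mathcal M}(-\Delta_gu^0)\bar v\,d\mu_g=\int_{\mathcal M}g(\nabla_gu^0,\nabla_g\bar v)\,d\mu_g+\lambda\int_{\partial\mathcal M}u^0\bar v\,d\sigma+\int_{\partial\mathcal M}f^0\bar v\,d\sigma .
\]
So the boundary term in $a$ must be $+\lambda\int_{\partial\mathcal M}u^0\bar v\,d\sigma$; your right-hand side is fine. With $-\lambda$ as written, testing against arbitrary $v\in H^1(\mathcal M)$ shows that the Lax--Milgram solution satisfies $\partial_nu^0+\lambda u^0=f^0$. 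The claim that $\partial_nu^0+iu^1=0$ holds ``by construction'' therefore fails. The fix costs nothing: $\mathrm{Re}\,\lambda=0$, so coercivity is unchanged. The paper's forms carry the sign you wrote because its appendix integrates by parts with the outward orientation of the normal, as its symmetry computation shows. Under your inward convention the sign must flip.

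Finally, your density step can be dropped. If $W\perp D(A)$, write $W=(A-i)U$ with $U\in D(A)$. Symmetry gives $0=((A-i)U,V)_{\mathcal H}=(U,(A+i)V)_{\mathcal H}$ for all $V\in D(A)$, and surjectivity of $A+i$ forces $U=0$. This is just as well, since with a $C^1$ metric your construction of $\varphi_\varepsilon$ needs more care than you give it. The Riemannian distance to $\partial\mathcal M$ need not be $C^2$, and $h=-\partial_nu^0-iu^1$ is in general only $C^1$ along $\partial\mathcal M$. So $\varphi_\varepsilon\in H^2(\mathcal M)$ is not automatic.
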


\begin{lemma} \label{semigroupeA1}
The operator $(A,D(A))$ is the infinitesimal generator of a strongly continuous unitary group $S(t)$ on $\mathcal{H}$.
\end{lemma}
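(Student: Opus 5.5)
The plan is to derive the statement directly from Lemma \ref{selfA1} through Stone's theorem, and to add a direct energy computation as a consistency check. A self-adjoint operator is in particular densely defined and closed. Stone's theorem says that a densely defined operator $A$ is self-adjoint if and only if $iA$ generates a strongly continuous unitary group. Setting $S(t)=e^{itA}$, defined by the spectral calculus for the self-adjoint operator $(A,D(A))$, we obtain a strongly continuous unitary group on $\mathcal{H}$. We then show that its generator, in the convention of \eqref{eq2A1}, is $iA$, or $A$ in the physicists' convention $S(t)=e^{itA}$. This gives the claim. The compactness of the resolvent is not needed here, though later it yields a spectral decomposition of $S(t)$ on an orthonormal basis of eigenvectors.

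Next I will check by hand that the sign conventions in the boundary condition really make $iA$ skew-adjoint rather than merely dissipative. For $U=(u^0,u^1)\in D(A)$ we have $iAU=(u^1,\Delta_g u^0-u^0)$, so
\begin{align*}
(iAU,U)_{\mathcal H}=(u^1,u^0)_{L^2}+(\nabla_g u^1,\nabla_g u^0)_{L^2V}+(\Delta_g u^0,u^1)_{L^2}-(u^0,u^1)_{L^2}.
\end{align*}
We apply Green's formula with the inward normal $n$ (valid for $u^0\in H^2$, $u^1\in H^1$, $C^2$ boundary and $C^1$ metric):
\begin{align*}
(\Delta_g u^0,u^1)_{L^2}=-(\nabla_g u^0,\nabla_g u^1)_{L^2V}-\int_{\partial\mathcal M}\partial_n u^0\,\overline{u^1}\,d\sigma_g .
\end{align*}
The boundary condition $\partial_n u^0=-iu^1$ turns the boundary term into $i\int_{\partial\mathcal M}|u^1|^2d\sigma_g$. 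Every term of $(iAU,U)_{\mathcal H}$ is then of the form $z-\bar z$ or purely imaginary, so $\mathrm{Re}(iAU,U)_{\mathcal H}=0$. Hence $\|S(t)\underline U\|_{\mathcal H}$ is conserved for data in $D(A)$, which is consistent with unitarity.

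If one prefers to avoid Stone's theorem, the alternative route is Lumer--Phillips applied to both $iA$ and $-iA$. Both are dissipative by the computation above. The range condition $\mathrm{Ran}(\lambda\mp iA)=\mathcal H$ for some $\lambda>0$ is exactly the resolvent statement underlying Lemma \ref{selfA1}. Both operators then generate contraction semigroups, which glue into a group of isometries, and these are surjective, hence unitary.

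The only delicate point is the sign bookkeeping in the boundary term. It relies on the orientation of $n$ (inward) and on the factor $i$ in $\partial_n u+i\partial_t u=0$, and these must combine into a purely imaginary contribution. If they did not, we would only get a contraction semigroup in one time direction.
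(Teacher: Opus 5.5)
Your proposal is correct and essentially follows the paper. The paper derives the lemma from the self-adjointness in Lemma \ref{selfA1}: it notes that $\pm iA$ are maximal monotone on the dense domain $D(A)$ and applies the Lumer--Phillips theorem, which is exactly your alternative route, while your appeal to Stone's theorem is an equivalent shortcut from the same input (and your check $\mathrm{Re}(iAU,U)_{\mathcal H}=0$ already follows from the symmetry of $A$ proved there).
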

In view of Lemmas \ref{selfA1} and  \ref{semigroupeA1}, solutions of the system \eqref{eqimA1} are given by the following  proposition, with the proof same as in Theorem 6.1 of \cite{20A1}.
\begin{proposition}
\label{proµµµA1}
For $\underline{U}=(\underline{u}^0,\underline{u}^1)\in \mathcal{H}$, there exists a unique
\begin{align*}
u\in    C\big(\mathbb{R}; H^1(\mathcal{M}) \big)\cap C^1\big(\mathbb{R};L^2(\mathcal{M})\big),
\end{align*}
that is a weak solution of \eqref{eqimA1}. This solution is given by  the first component of $S(t)\underline{U}$, that is, $ u_{\mid t=0}=\underline{u}^0$ and $\partial_tu_{\mid t=0}=\underline{u}^1$ and it satisfies
\begin{align*}
( \partial_t^2-\Delta_g+1)u = 0 \quad \text{in} \quad \mathcal{D}'(\mathbb{R}\times \mathcal{M}).
\end{align*}
\end{proposition}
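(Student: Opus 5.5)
The plan is to use the unitary group $S(t)$ from Lemma \ref{semigroupeA1}, whose generator is $iA$ with $A$ selfadjoint by Lemma \ref{selfA1} (Stone's theorem). For $\underline{U}\in\mathcal{H}$ set $U(t)=S(t)\underline{U}=(u(t),v(t))$. Strong continuity of $S(t)$ on $\mathcal{H}=H^1\times L^2$ gives $u\in C(\mathbb{R};H^1(\mathcal{M}))$ and $v\in C(\mathbb{R};L^2(\mathcal{M}))$, together with $u(0)=\underline{u}^0$ and $v(0)=\underline{u}^1$.

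For the regularity $C^1(\mathbb{R};L^2)$, I would first take $\underline{U}\in D(A)$. Then $U\in C^1(\mathbb{R};\mathcal{H})\cap C(\mathbb{R};D(A))$ and $\partial_tU=iAU$. Reading this componentwise gives $\partial_t u=v$ and $\partial_t v=(\Delta_g-1)u$. In particular $\partial_t u=v$ holds in $C(\mathbb{R};L^2)$, and $(\partial_t^2-\Delta_g+1)u=0$ holds in $C(\mathbb{R};L^2)$; since $u(t)\in H^2$ with $(\partial_n u+iv)|_{\partial\mathcal{M}}=0$, the boundary condition holds as well. For general $\underline{U}\in\mathcal{H}$, I would approximate by $\underline{U}_k\in D(A)$. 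Because $S(t)$ is unitary, $U_k\to U$ in $C([-T,T];\mathcal{H})$. Then $u_k=\int_0^t v_k+\underline{u}^0_k$ passes to the limit in $L^2$, giving $\partial_tu=v$ continuously, so $u\in C^1(\mathbb{R};L^2)$. The equation passes to the limit in $\mathcal{D}'(\mathbb{R}\times\mathcal{M})$: test against $\varphi\in C_c^\infty((\mathbb{R}\times\mathcal{M})^\circ)$, where only interior test functions are needed, so the boundary terms vanish. The weak formulation including the boundary condition also passes to the limit. It is obtained by integrating by parts against $H^1$ test functions, which for the $D(A)$ solutions reads
\begin{align*}
\frac{d}{dt}(v,w)_{L^2}+(\nabla_g u,\nabla_g w)_{L^2V}+(u,w)_{L^2}-i\int_{\partial\mathcal{M}} v\,\bar w\,d\sigma=0 .
\end{align*}
The sign here comes from $n$ being inward. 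The difficulty is that the boundary term requires $v(t)\in H^{1/2}$, so the limit is only taken in integrated-in-time form, where $v=\partial_tu$ and the term becomes $-i\,\partial_t\int_{\partial\mathcal{M}}u\bar w$, which is well defined via the trace of $u\in H^1$.

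Uniqueness of weak solutions is the main obstacle. I would handle it by duality, as in the cited Theorem 6.1 of \cite{20A1}. If $u$ is a weak solution with zero data, then for any $\Phi\in D(A)$ I would compute $\frac{d}{dt}(U(t),S(t)\Phi)_{\mathcal{H}}$ using the weak formulation, and the selfadjointness of $A$ should show that this derivative is zero. Hence $(U(t),S(t)\Phi)=0$ for all $\Phi$, which forces $U\equiv0$. The delicate part is justifying this derivative in the weak setting. To make it rigorous I would regularize in time, convolving $u$ with a mollifier $\rho_\varepsilon(t)$, which preserves the weak formulation; then I would apply the previous computation and let $\varepsilon\to0$.
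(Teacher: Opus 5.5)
Your proposal is correct and takes the same route as the paper. The paper obtains the unitary group from Lemmas \ref{selfA1} and \ref{semigroupeA1} and defers everything else to the standard argument of Theorem 6.1 in \cite{20A1}: density of $D(A)$ and passage to the limit for existence, and duality against $S(t)\Phi$ (justified here by your time mollification) for uniqueness. Your explicit weak formulation, with the boundary term written in integrated form as $-i\,\partial_t\int_{\partial\mathcal{M}} u\,\bar w$, correctly supplies the detail the paper leaves implicit.
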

We define the energy of this solution at time $t$ by\begin{align}
\mathcal{E}(u)(t)=\frac{1}{2}\|(u(t),\partial_tu(t))\|^2_\mathcal{H}.
\end{align} This energy is constant with respect to time $t$, i.e., $\mathcal{E}(u)(t)  =\mathcal{E}(u)(0)$, and for simplicity, we will denote it as $\mathcal{E}(u)$.
\subsection{Statement of the main result} \label{mainresultA1}
 Let $\omega$ be a nonempty open subset of $\mathcal{M}$. Interior observability of the wave equation \eqref{eqimA1} from $\omega$ in time $T$ is defined as follows.
\begin{definition}[Interior observability]\label{defobsA1}
Let $\omega$ be a nonempty open subset of $\mathcal{M}$.The wave equation \eqref{eqimA1} is said to be observable from $\omega$ in time
$T > 0$ if there exists $C_{\mathrm{obs}}>0$ such that for any $\underline{U}=(\underline{u}^0,\underline{u}^1)\in \mathcal{H}$,
\begin{align} \label{obsA1}
\mathcal{E}(u)\leq C_{\mathrm{obs}}\int_0^T\|\mathds{1}_{\omega}\partial_tu(t)\|_{L^2(\mathcal{M})}^2dt,
\end{align}
for the weak solution $u$ to \eqref{eqimA1}.
\end{definition}
Note that the selfadjoint operator $A$, defined in \eqref{opA1}, with compact resolvent on $\mathcal{H}$, admits a sequence of eigenvalues $(\lambda_\nu)_{\nu \in \mathbb{Z}}$, compted with their finite multiplicities, such that
\begin{align*}
    \lambda_{-\nu}=-\lambda_\nu, \quad \lambda_\nu>0 \quad\text{if}\quad \nu>0 \quad \text{and}\quad \lambda_\nu \underset{\nu \rightarrow +\infty}{\longrightarrow}+\infty,
\end{align*}
 with associated sequence of eigenfunctions $(e_\nu)_\nu=(e_\nu^0,e_\nu^1)_\nu$, forming a Hilbert basis of $\mathcal{H}$. Any $U \in \mathcal{H}$ reads $U=\sum\limits_{\nu \in \mathbb{Z}} U_\nu e_\nu$ with $U_\nu=(U,e_\nu)_\mathcal{H}$ and $(U_\nu)_\nu \in \ell^2(\mathbb{C})$.
 
 Define
 \begin{align*}
     E^+=\{U \in \mathcal{H}; \quad U_\nu=0 \quad \text{if} \quad \nu<0\} \quad \text{and} \quad E^-=\{U \in \mathcal{H}; \quad U_\nu=0 \quad \text{if} \quad \nu>0\}.
 \end{align*}
 In this article, we are concerned with interior observability as follows.
 \begin{definition}[Interior observability for solutions associated with positive time-frequencies] \label{intersiwarnew}Let $\omega$ be a nonempty open subset of $\mathcal{M}$. The wave equation \eqref{eqimA1} is said to be observable for solutions associated with positive time-frequencies from $\omega$ in time $T>0$ if one has 
 \begin{align} \label{obsPartielA1}
\mathcal{E}(u)\leq C_{\mathrm{obs}}\int_0^T\|\mathds{1}_{\omega}\partial_tu(t)\|_{L^2(\mathcal{M})}^2dt,
\end{align}
for some $C_{\mathrm{obs}}>0$ for any solution to \eqref{eqimA1} with that $(\underline{u}^0,\underline{u}^1)\in E^+$.
\end{definition}
\begin{remark} \label{invisA1}
In the case of the classical wave equation (when $m = 0$), the energy function is given by
\begin{align*}
\mathcal{E}(u)(t)=\frac{1}{2}(\|\partial_tu(t)\|_{L^2(\mathcal{M})}^2+\|\nabla_gu(t)\|_{L^2V(\mathcal{M})}^2).
\end{align*}
Constant functions $u$ are said to be invisible for an observability inequality of the form \eqref{obsA1} or \eqref{obsPartielA1}, as they have zero energy and $\int_0^T\|\mathds{1}_{\omega}\partial_tu(t)\|_{L^2(\mathcal{M})}^2dt$ vanishes as well.
\end{remark}
The following proposition states that exact controllability is equivalent to an obserbability inequality. The duality between observability and controllability has been demonstrated by several authors (see, in particular, \cite{23A1} and \cite{5A1}).
\begin{proposition} []
Let $\omega$  be an open subset of $\mathcal{M}$ and $T > 0$. The wave equation is exactly controllable
from $\omega$ at time $T$ if and only if it is observable from $\omega$ at time $T$.
\end{proposition}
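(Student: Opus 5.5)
The plan is the Hilbert Uniqueness Method duality. First I fix the control problem being characterized. Exact controllability from $\omega$ at time $T$ means: for every target in the (dual) energy space there is a control $f\in L^2((0,T)\times\mathcal{M})$ such that the solution $y$ of
\begin{align*}
(\partial_t^2-\Delta_g+1)y=\mathds{1}_\omega f \ \text{in } (0,T)\times\mathcal{M},\qquad \partial_n y+i\partial_t y=0 \ \text{on } (0,T)\times\partial\mathcal{M},
\end{align*}
starting from zero data reaches that state at $t=T$. By time reversibility this is equivalent to driving any initial state to zero. Using the group $S(t)$ from Lemma \ref{semigroupeA1}, I write the control-to-state map as a bounded operator
\begin{align*}
L_T: L^2((0,T)\times\mathcal{M})\to\mathcal{H},\qquad L_Tf=\int_0^T S(T-s)\,{}^t\!\big(0,-i\mathds{1}_\omega f(s)\big)\,ds,
\end{align*}
with the sign and constant dictated by \eqref{eq2A1}. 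Exact controllability is then the statement that $L_T$ is surjective.

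The second step is to compute the adjoint $L_T^*$ with respect to the $\mathcal{H}$ inner product and the $L^2$ pairing. Unitarity of $S(t)$ gives $S(T-s)^*=S(s-T)$. For $\Phi\in\mathcal{H}$, the quantity $(L_T^*\Phi)(s)$ is obtained by pairing $S(s-T)\Phi$ against the vector ${}^t(0,\cdot)$ in the $\mathcal{H}$ inner product. Because the second component carries the plain $L^2$ product, $(L_T^*\Phi)(s)$ is, up to a unimodular constant, $\mathds{1}_\omega$ times the second component of $S(s-T)\Phi$. That second component is $\partial_t\varphi$, where $\varphi$ is the free solution of \eqref{eqimA1} with the same boundary condition, since $A$ is selfadjoint (Lemma \ref{selfA1}). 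Hence
\begin{align*}
\|L_T^*\Phi\|_{L^2}^2=\int_0^T\|\mathds{1}_\omega\partial_t\varphi(t)\|^2_{L^2(\mathcal{M})}dt .
\end{align*}
Time translation and conservation of $\mathcal{E}$ give $\mathcal{E}(\varphi)=\tfrac12\|\Phi\|_{\mathcal{H}}^2$, so there is no distinction between data given at time $0$ and at time $T$.

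Finally, I apply the closed range theorem for bounded operators between Hilbert spaces: $L_T$ is onto if and only if there is $c>0$ with $\|L_T^*\Phi\|\ge c\|\Phi\|_{\mathcal{H}}$ for all $\Phi$. By the identity above, this inequality is exactly \eqref{obsA1} with $C_{\mathrm{obs}}=1/(2c^2)$. For the constructive direction, I can also minimize the HUM functional $J(\Phi)=\tfrac12\|L_T^*\Phi\|^2-(\Phi,Y)_{\mathcal{H}}$, which is coercive under \eqref{obsA1}. Its minimizer $\hat\Phi$ yields the control $f=L_T^*\hat\Phi$.

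The main obstacle is the adjoint computation in the second step. I have to check carefully that the boundary term $i\partial_t u$ does not spoil the duality. One option is to justify the formula for data in $D(A)$ by integrating by parts in time and space, where the boundary integrals cancel because both the forward and adjoint problems satisfy $\partial_n+i\partial_t$; a density argument then extends it to $\mathcal{H}$. The cleaner route is to bypass integration by parts and rely only on the selfadjointness of $A$ and the unitarity of $S(t)$, which already encode this cancellation. This requires the control problem to be posed in the semigroup sense. I would also state explicitly that the control space is $L^2((0,T)\times\omega)$ and that the state space is $\mathcal{H}$, or its dual under the appropriate identification, for the equivalence to be precise.
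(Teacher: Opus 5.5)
Your argument is correct, and it is the Hilbert Uniqueness Method / closed-range duality that the paper does not prove but invokes by citing \cite{5A1} and \cite{23A1}: with the Riesz identification of $\mathcal{H}$, unitarity of $S(t)$ (from the selfadjointness of $A$) gives $L_T^*\Phi=\mathds{1}_\omega\partial_t\varphi$ up to a unimodular factor, so surjectivity of $L_T$ is equivalent to \eqref{obsA1}. One cosmetic slip: in the complex setting the HUM functional should read $J(\Phi)=\tfrac12\|L_T^*\Phi\|^2-\mathrm{Re}(\Phi,Y)_{\mathcal{H}}$, though you do not need it since the closed range theorem already gives both directions.
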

Before stating our main result, we first introduce the following notion.
\begin{definition}(Interior geometric control condition) \label{interiorA1}
Let $\omega$ be an open subset of $\mathcal{M}$.
One says that $\omega$ controls geometrically the manifold $\mathcal{M}$ if there exists $T > 0$ such
that any generalized bicharacteristic (see Definition \ref{generaA1} bellow) reaches a point above $]0, T [\times \omega$. One says
that $(\omega, T )$ fulfills GCC. In this case, one sets
\begin{align*}
T_{GCC}(\omega)=\mathrm{inf} \{ T>0; \;(\omega,T)\; \text{fulfills}\; GCC\}.
\end{align*}
\end{definition}
Our main result is the following.
\begin{theorem} \label{mainA1}
Let $\omega$ be an open subset of $\mathcal{M}$ that satisfies the
interior geometric control condition associated with the infimum time $T_{GCC} (\omega)$.
Let $T > T_{GCC} (\omega)$. Then, the wave equation 
is observable from $\omega$ at time $T>0$ in the sense of Definition \ref{intersiwarnew}.
\end{theorem}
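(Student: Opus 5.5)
The plan follows the strategy of \cite{1A1} adapted to the boundary condition $\partial_n u+i\partial_t u=0$, and runs in three stages: a semiclassical reduction, a contradiction argument producing a semiclassical measure, and a propagation argument that contradicts the geometric control condition.

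\textbf{Step 1: reduction to high frequencies.} We first reduce \eqref{obsPartielA1} to a semiclassical statement, following \cite{299A1}. Expand $\underline{U}\in E^+$ in the eigenbasis $(e_\nu)_{\nu>0}$ of $A$ and split it dyadically: for $h=2^{-k}$, let $\underline{U}_h$ be the part with $\lambda_\nu\in[\frac{1}{2h},\frac{2}{h}]$. It then suffices to prove, for all $h$ small enough, the uniform estimate
\begin{align*}
\mathcal{E}(u_h)\leq C\int_0^T\|\mathds{1}_{\omega}\partial_t u_h\|^2_{L^2(\mathcal{M})}\,dt .
\end{align*}
Two ingredients recombine the dyadic pieces. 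Almost orthogonality of the pieces, obtained by commuting a time cutoff and using $\partial_t=iA$ on each block, allows summation over $k$. The finitely many remaining low frequencies are handled by a compactness and unique continuation argument. For the unique continuation, an eigenfunction of $A$ vanishing on $\omega$ satisfies $(-\Delta_g+1-\lambda^2)e^0=0$ with $e^0=0$ on $\omega$. Uniqueness for the Cauchy problem with $C^1$ metric (Lipschitz-type regularity suffices for second-order elliptic operators) then gives $e^0\equiv 0$, since $\mathcal{M}$ is connected. The Klein--Gordon mass excludes the zero mode, and $E^+$ excludes the eigenvalue $0$.

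\textbf{Step 2: contradiction and the semiclassical measure.} Suppose the semiclassical estimate fails. We then obtain $h_k\to0$ and data $\underline{U}_k\in E^+$, spectrally localized at $\lambda\sim h_k^{-1}$, with $\mathcal{E}(u_k)=1$ and $\int_0^T\|\mathds{1}_\omega\partial_tu_k\|^2\to0$. Using the semiclassical calculus for $C^1$-symbols recalled in Section \ref{semiopA1}, we extract a semiclassical measure $\mu$ on the phase space over $]0,T[\times\mathcal{M}$, including the boundary, associated with $h_k\partial_t u_k$ and $h_k\nabla u_k$. This measure has the following properties:
\begin{enumerate}[(i)]
\item it is supported in the characteristic set $\{\tau^2=g^{-1}(x)(\xi,\xi)\}$;
\item it is supported in $\{\tau>0\}$, because the data lie in $E^+$ and the spectral localization is at positive $\lambda$;
\item it has total mass $\simeq T$, since energy is conserved and high-frequency localization excludes mass at infinity or at $\tau=0$;
\item it vanishes above $]0,T[\times\omega$.
\end{enumerate}

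\textbf{Step 3: propagation and conclusion.} The key ingredient is Theorem \ref{th1A1}, the boundary propagation equation $^t H_p\mu=\sum$ (boundary terms) in the sense of \cite{10A1,1A1}. To establish it we test the equation against $\Op_h(q)$ for tangential symbols $q$, integrate by parts, and evaluate the boundary contributions using $h\partial_n u=-ih\partial_t u$. On $\{\tau>0\}$ the Lopatinskii condition holds, so the traces are controlled and the boundary terms have the same structure as in the Dirichlet case: a nonnegative measure concentrated on the hyperbolic and glancing sets, entering with the correct sign. We then invoke the result of \cite{11A1}, recalled as Theorem \ref{generabichaA1}: the support of a measure satisfying such an equation is a union of maximal generalized bicharacteristics. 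Since $\mu\neq0$, its support contains some generalized bicharacteristic on $]0,T[$. Because $T>T_{GCC}(\omega)$, this bicharacteristic reaches a point above $]0,T[\times\omega$, where $\mu=0$. This is a contradiction, and the theorem follows.

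\textbf{Main obstacle.} I expect the main difficulty to be Step 3, namely deriving the boundary propagation equation with only $C^1$ coefficients and non-Dirichlet traces. The semiclassical normal derivative trace $h\partial_n u_k|_{\partial\mathcal{M}}$ must be shown bounded in $L^2$, and its square must be related to measures on $T^*\partial\mathcal{M}$. My first approach is a multiplier identity with the vector field $n$, extended into a neighbourhood, combined with the relation $h\partial_n u=-ih\partial_t u$. Positivity of $\tau$ on the support is what should make the boundary term have the sign needed by \cite{11A1}.
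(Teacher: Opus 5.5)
\textbf{The gap is in the trace estimates.} Your overall architecture is the paper's: dyadic reduction with unique continuation, a contradiction sequence in $E^+$, semiclassical measures, the boundary propagation equation, Theorem \ref{generabichaA1} from \cite{11A1}, and GCC. The gap sits exactly at the step you flag: the $L^2_{\mathrm{loc}}$ bounds on $u_k|_{\partial\mathcal{L}}$ and $h_k\partial_n u_k|_{\partial\mathcal{L}}$ (Proposition \ref{boundnessaubordA1}). Without them $\nu$ does not exist and the boundary terms in the commutator identity are uncontrolled. You rightly say the Lopatinskii condition for $\tau>0$ should give these bounds, but the mechanism you propose cannot.

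A multiplier identity with the normal field $n$, even microlocalized by a tangential cutoff, bounds the boundary form $\|h\partial_t u\|^2+\|h\partial_n u\|^2-\|h\nabla' u\|_g^2$ up to $O(1)$ errors. Inserting $|h\partial_n u|=|h\partial_t u|$ leaves the tangential symbol $2\tau^2-g(\xi',\xi')$. This is exactly what the paper uses near glancing points (the symbol $r$ in the proof of Theorem \ref{lopA1}). By G{\aa}rding it controls the trace wherever $2\tau^2\neq g(\xi',\xi')$: on ${}^\parallel\mathcal{H}_\partial$, on ${}^\parallel\mathcal{G}_\partial$, and on part of ${}^\parallel\mathcal{E}_\partial$.

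It degenerates, however, on the elliptic cone $\{g(\xi',\xi')=2\tau^2\}$. Since it involves $\tau$ only through $\tau^2$, positivity of $\tau$ cannot rescue it. That cone is precisely where the Lopatinskii condition fails for $\tau<0$. In ${}^\parallel\mathcal{E}_\partial$ the solution must decay into $\{z>0\}$, which forces $h\partial_z u|_{z=0}\approx-\sqrt{g(\xi',\xi')-\tau^2}\,u|_{z=0}$. Combined with $h\partial_z u=hD_t u$, this gives $(\tau+\sqrt{g(\xi',\xi')-\tau^2})\,u|_{z=0}\approx 0$. That factor is elliptic for $\tau>0$ but vanishes on the cone when $\tau<0$.

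So you need an elliptic boundary analysis that actually sees the sign of $\tau$. The paper does this in Theorem \ref{ellipA1}: it divides by $p$, computes $\Op^h(e/p)$ on the boundary layers $u|_{z=0}\,\delta$ and $u|_{z=0}\,\delta'$ by residues, and uses $\tau\varphi(\tau)\geq 0$. Lemmas \ref{lem1Th}--\ref{lem3Th} handle the $C^1$ coefficients. This is the one place where the restriction to $E^+$ is genuinely needed.

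\textbf{Positivity of $\tau$ is not what gives the boundary source its sign.} Once the traces are bounded, the boundary condition gives $\nu^{1,1}=\tau^2\nu^{0,0}$ and $\nu^{1,0}=\tau\nu^{0,0}$, so $\mathrm{Im}\,\nu^{1,0}=0$. The source in \eqref{tranA1} is then $(2\tau^2-g(\xi',\xi'))\,\nu^{0,0}$, which is nonnegative on ${}^\parallel\mathcal{H}_\partial\cup{}^\parallel\mathcal{G}_\partial$ for either sign of $\tau$.

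\textbf{A smaller slip in the normalization.} With $\mathcal{E}(u_k)=1$, both $h_k\partial_t u_k$ and $h_k\nabla u_k$ are $O(h_k)$ in $L^2$, so the measure you describe is zero. Normalize $h_k^2\mathcal{E}(u_k)=1$ and take the measure of $u_k$, as the paper does.
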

Here, we recall that Burq, Dehman and Le Rousseau \cite{1A1} established interior observability in the sense of Definition \ref{defobsA1} for the wave equation under homogeneous Dirichlet boundary conditions. The main point of Theorem \ref{mainA1} lies in the consideration of a boundary condition different from the classical Dirichlet or Neumann ones, thereby opening the way to the study of more general boundary conditions of Lopatinskii type. It is important to note that, under the boundary condition $\partial_n u + i \partial_t u = 0,$ the Lopatinskii condition holds for $\tau > 0$, but fails for $\tau < 0$, where $\tau$ denotes the dual variable associated with $t$. This restriction prevented us from obtaining observability in the classical sense through the method used in this work. This does not mean that observability is false, but only that it is not accessible through our approach. For waves associated with negative frequencies, the problem also remains open and constitutes a natural question for further investigation. A boundary condition that could satisfy the Lopatinskii condition for all $\tau\in\mathbb{R}^*$ would be $\partial_n u - |D_t|u = 0.$ In this case, however, the difficulty is that $|D_t|$ is nonlocal in time, so the associated wave equation cannot be written in semigroup form. Consequently, the approach developed in this paper does not adapt to this setting. At the same time, this also gives rise to a new open problem.
\section{Geometry}
\label{GeoA1}
In this section, we present the geometric notions required to clearly understand the GCC stated in Definition \ref{interiorA1} within our low-regularity framework. 
\subsection{Choice of local coordinates}
Throughout this article, near a boundary point, local coordinates are chosen such as in the following proposition. Such a choice ensures the simplification of some geometrical notions introduced in this work.
\begin{proposition}[Quasi-normal geodesic coordinates]
\label{pro9A1}
For any $m_0 \in \partial\mathcal{M}$, there exists a $C^2$-local chart $(O,\phi)$ such that $m_0\in O$ and $\phi(m)=(x',z)$, where $x'\in \mathbb{R}^{d-1}$ and $z\in \mathbb{R}$, with
\begin{enumerate}
    \item $\phi(O\cap \mathcal{M})=\{z\geq0\}\cap\phi(O),\quad\phi(O\cap \partial\mathcal{M})=\{z=0\}\cap\phi(O)$, and\\ $\phi(O\setminus\mathcal{M})=\{z<0\}\cap \phi(O)$.
    \item Near the boundary, the metric $g$ satisfies
    \begin{align}
    \label{6***A1}
    g^{d,d}(x',z)=1+zh^{d,d}(x',z), \quad\;\text{and}\quad\; g^{i,d}(x',z)=zh^{i,d}(x',z),
    \end{align}
  for some continuous functions $h^{i,d}$, $i=1,..,d-1$.
\end{enumerate}
\end{proposition}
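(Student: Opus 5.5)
The plan is to build the chart in two stages: first straighten the boundary, then correct the transversal coordinate so that $z$ becomes, to first order, the distance to $\partial\mathcal{M}$. One cannot use genuine geodesic normal coordinates, since $g\in C^1$ gives only a $C^0$ Christoffel field and geodesics need not be unique. Instead we use the flow of the inward normal vector field, which only requires differentiating the boundary once.

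Since $\partial\mathcal{M}$ is $C^2$, we start from a $C^2$ chart $\psi$ near $m_0$ that flattens the boundary: $\psi(m)=(y',y_d)$, with $\mathcal{M}$ given by $\{y_d\ge 0\}$ and $\partial\mathcal{M}$ by $\{y_d=0\}$. In these coordinates $g$ is $C^1$. On $\{y_d=0\}$ we define the inward unit normal
\begin{align*}
n(y') = \big(g^{dd}(y',0)\big)^{-1/2}\big(g^{1d}(y',0),\dots,g^{dd}(y',0)\big).
\end{align*}
This is $C^1$ in $y'$ and transversal to the boundary, because its $d$-th component is positive.

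Next we set $\Phi(x',z)=(x',0)+z\,n(x')$. This map is $C^1$, and its differential at $(x',0)$ is invertible since $n$ is transversal. By the inverse function theorem, $\Phi$ is a $C^1$ diffeomorphism near $(x'_0,0)$, and $\phi=\Phi^{-1}\circ\psi$ maps the boundary to $\{z=0\}$ and the interior to $\{z>0\}$ (shrinking $O$ if needed). In the new coordinates, at $z=0$ the vector $\partial_z$ equals $n$, which is $g$-unit and $g$-orthogonal to $T\partial\mathcal{M}=\mathrm{span}(\partial_{x_i})_{i<d}$. Hence $g_{dd}=1$ and $g_{id}=0$ at $z=0$, so the inverse metric satisfies $g^{dd}(x',0)=1$ and $g^{id}(x',0)=0$.

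\textbf{Main obstacle: regularity.} This construction only yields a $C^1$ chart, whereas a $C^2$ chart is claimed, and the coefficients $g^{ij}$ in new coordinates involve first derivatives of the chart. To get $C^2$, I would replace $n$ by a smoothed field $\tilde n(x',z)$ that equals $n$ at $z=0$. For example, convolve $n$ in $x'$ with a kernel $\rho_z$ at scale $z$, which gives a field that is $C^1$ for $z>0$ with controlled derivatives, and then define $\Phi(x',z)=(x',0)+z\,\tilde n(x',z)$. Alternatively, I would accept that $\psi$ is $C^2$ and that the correction $\Phi$ is only needed to be $C^2$ in the sense used later.

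Given such a chart, $g^{dd}-1$ and $g^{id}$ are continuous and vanish on $\{z=0\}$. Moreover, they are $C^1$ in $z$ whenever the chart is $C^2$ and $g\in C^1$. The functions
\begin{align*}
h^{dd}=\frac{g^{dd}-1}{z},\qquad h^{id}=\frac{g^{id}}{z}
\end{align*}
are then continuous, since $h(x',z)=\int_0^1\partial_z g(x',sz)\,ds$. This gives the expansions \eqref{6***A1}. If the chart is only $C^1$, then $g^{ij}$ is merely continuous and this division step fails. That is why the smoothing step above, which must be done carefully so that $\partial_z\tilde n$ stays bounded, is the delicate part.
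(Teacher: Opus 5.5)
The paper does not prove this proposition; it only refers to Appendix B of \cite{11A1}, so there is no internal argument to compare with. On its merits your route is the right one: a $C^2$ chart flattening $\partial\mathcal{M}$, the $C^1$ inward unit normal $n=(g^{dd})^{-1/2}(g^{1d},\dots,g^{dd})$, a push-off map $\Phi(x',z)=(x',0)+z\,\tilde n(x',z)$ with $\tilde n$ a regularization of $n$ at scale $z$, and then Hadamard's lemma in $z$. Your check at $z=0$ is also correct: $\partial_{x_a}\Phi=e_a$ and $\partial_z\Phi=n$ give $g_{dd}=1$ and $g_{id}=0$, hence $g^{dd}=1$ and $g^{id}=0$ there.

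However, the proposal stops at the one non-routine step: showing that $\Phi$ is $C^2$. The fallback you offer ("$C^2$ in the sense used later") is not available. The paper genuinely needs $g\in C^1$ in these coordinates: \eqref{hamiltA1} involves $\partial_{x_k}g^{i,j}$, $H_pz$ is used as a $C^1$ function, and $H_p^2z$ must be continuous to partition $\mathcal{G}_\partial$. Also, the condition you single out, boundedness of $\partial_z\tilde n$, only gives $\partial_z\Phi(x',0)=n(x')$. What is actually needed is that every second derivative of $F=z\tilde n$ be continuous up to and across $z=0$.

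That does hold. Take $\rho\in C^\infty_c(\mathbb{R}^{d-1})$ with $\int\rho=1$, and set $\tilde n(x',z)=\int\rho(w)\,n(x'-zw)\,dw$ for all real $z$. This also covers $z<0$, which item 1 requires. Then $\tilde n\in C^1$ and $\tilde n(\cdot,0)=n$. For continuous $f$, $\sigma\in C^\infty_c$ and $z\neq 0$, integration by parts in $w$ gives
\begin{align*}
z\,\partial_{x_b}\int\sigma(w)f(x'-zw)\,dw&=\int\partial_b\sigma(w)\,f(x'-zw)\,dw,\\
z\,\partial_{z}\int\sigma(w)f(x'-zw)\,dw&=-\int\mathrm{div}_w\big(w\,\sigma(w)\big)\,f(x'-zw)\,dw .
\end{align*}
The right-hand sides are continuous on $\mathbb{R}^{d-1}\times\mathbb{R}$ and vanish at $z=0$, because both kernels have zero integral. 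In other words, the prefactor $z$ absorbs the $z^{-1}$ created when the second derivative is moved onto the kernel. Apply these identities with $f=\partial_j n$ and $\sigma=\rho$ or $w_j\rho$. This shows that
\begin{itemize}
\item $\partial_{x_a}\partial_{x_b}F=\int\partial_b\rho(w)\,\partial_a n(x'-zw)\,dw$,
\item $\partial_z\partial_{x_a}F$,
\item $\partial_z^2F=2\partial_z\tilde n+z\,\partial_z^2\tilde n$
\end{itemize}
all extend continuously to $z=0$. Hence $\Phi\in C^2$, and its Jacobian at $z=0$ is unchanged. The $C^2$ inverse function theorem then applies, $\tilde n_d>0$ preserves the sign of $z$, and your Hadamard step goes through because $g$ is now $C^1$ in the new chart.

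Finally, your diagnosis of the unsmoothed chart is off. There $\partial_z\Phi=n(x')$ does not depend on $z$, so for instance
\[
g_{ij}(\Phi(x',z))n^in^j-1=z\int_0^1\partial_kg_{ij}(\Phi(x',sz))\,n^kn^in^j\,ds,
\]
and similarly for $g_{id}$. The division by $z$ therefore does not fail. The real defect of that chart is that the transformed metric is only $C^0$, and that is what the smoothing repairs.
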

Proposition \ref{pro9A1} is presented in \cite{11A1} with a generalization to other levels of regularity (see Appendix B in \cite{11A1}).

We set $\mathcal{L}=\mathbb{R}\times \mathcal{M}$. For $x=\phi(m)$ with $m\in O\cap \mathcal{M}$, we denote the associated coordinates in \(T_m\mathcal{M}\) by \(\upsilon = (\upsilon', \upsilon_d)\) and in \(T^*_m\mathcal{M}\) by \(\xi = (\xi', \xi_d)\), where \(\upsilon', \xi' \in \mathbb{R}^{d-1}\) and \(\upsilon_d, \xi_d \in \mathbb{R}\). We denote the cotangent variable \(\xi_d\) by the letter \(\zeta\), so that \(\xi = (\xi', \zeta)\).

Note that, with local charts at the boundary as expressed in Proposition \ref{pro9A1}, if \\$\upsilon \in T_x \partial \mathcal{M}$ and $x\in \partial \mathcal{M}$, then \(\upsilon = (\upsilon', 0)\). We use the bijective map \((\xi', 0) \longmapsto \xi'\) to parameterize \(T^*_x \partial \mathcal{M}\).

We set
\begin{align*}
T\mathcal{M}=\underset{x\in \mathcal{M}}{\bigcup} \{x\}\times T_x\mathcal{M} \quad\quad \text{and}\quad\quad T^*\mathcal{M}=\underset{x\in \mathcal{M}}{\bigcup} \{x\}\times T^*_x\mathcal{M}.
\end{align*}

Recalling that $\mathcal{M}$ includes its boundary $\partial\mathcal{M}$, the tangent space $T\mathcal{M}$ (respectively, $T^*\mathcal{M}$) contains \(\{x\} \times T_x\mathcal{M}\) (respectively, \(\{x\} \times T^*_x\mathcal{M}\)) for each \(x \in \partial \mathcal{M}\). We denote the boundary of $T^*\mathcal{M}$ by
$\partial(T^*\mathcal{M})$, which consists of the set of pairs $(x, \xi)$ with $x \in \partial\mathcal{M}$.

In the corresponding local chart on \(\mathcal{L}\), the representative of $(t, m) \in \mathcal{L}$ is expressed as $(t, x) = (t, x', z)$. Here we denote an element of $T^*\mathcal{L}$ by $\rho$, that is, $\varrho = (t, x, \tau, \xi)$ with $(t, x) \in \mathcal{L}$, $\tau \in \mathbb{R}$, and $\xi \in T^*_x\mathcal{M}$. Naturally, we mean by $T^*\mathcal{L} \setminus 0$ for the set of points $\varrho = (t, x, \tau, \xi)$ with $(\tau, \xi) \neq 0$. The boundary $\partial(T^*\mathcal{L})$ consists on the set of points $\varrho = (t, x, \tau, \xi)$ such that $x \in \partial\mathcal{M}$. In local coordinates, $\partial(T^*\mathcal{L})$ is given by $\{z = 0\}$, and $T^*\mathcal{L}$ corresponds to $\{z \geq 0\}$.
\subsection{Hamiltonian vector field and bicharacteristics }
In this section, we give the expressions for the wave operator and the Hamiltonian vector field, each in local coordinates. We also recall the definition of bicharacteristics. \\
 In local coordinates, the wave operator $P=\partial^2_t-\Delta_g+1$ is given by 
\begin{align*}
Pf=\partial^2_tf-(\text{det}\; g)^{-\frac{1}{2}}\sum_{1\leq i,j\leq d}\partial_{x_i}\Big((\text{det}\; g)^{\frac{1}{2}}g^{i,j}(x)\partial_{x_j}f\Big)+f .
\end{align*}
Its principal symbol is
\begin{align*}
p(\varrho)=-\tau^2 +\sum_{1\leq i,j\leq d} g^{i,j}(x)\xi_i\xi_j, \quad\quad \varrho=(t,x,\tau,\xi).
\end{align*}
We denote by $H_p$ the Hamiltonian vector field associated with $p$, that is, the unique vector
field such that $\{p, f \} = H_p f$ for any smooth function $f$, where $\{., .\}$ denotes the Poisson
bracket. In local chart, one has
\begin{align*}
\{p, f \}=\partial_\tau p\partial_tf-\partial_tp\partial_\tau f+\sum_{1\leq j\leq d}(\partial_{\xi_j}p\partial_{x_j}f-\partial_{x_j}p\partial_{\xi_j}f),
\end{align*}
yielding
\begin{align} \label{hamiltA1}
H_p(\varrho)&=\partial_\tau p(\varrho)\partial_t+\nabla_\xi p(\varrho).\nabla_x-\nabla_xp(\varrho).\nabla_\xi \nonumber
\\&=-2\tau\partial_t+2g^{i,j}(x)\xi_i\partial_{x_j}-\partial_{x_k}g^{i,j}(x)\xi_i\xi_j\partial_{\xi_k}.
\end{align}
Observe that, for a function $f$ of the variable $\varrho=(t, x, \tau, \xi)$, one has
\begin{align*}
^tH_pf(\varrho)=2\tau\partial_tf(\varrho)-2\partial_{x_j}\big(g^{i,j}(x)\xi_if(\varrho)\big)+\partial_{\xi_k}\big(\partial_{x_k}g^{i,j}(x)\xi_i\xi_jf(\varrho)\big),
\end{align*}
with which one deduces
\begin{align*}
^t H_p=-H_p.
\end{align*}
In the following definition we recall the notion of bicharacteristics. We denote by $\mathrm{Char}(p)$ the characteristic set of $p$, given by
\begin{align*}
\mathrm{Char}(p):=\{\varrho = (t, x, \tau, \xi) \in T^*\mathcal{L},\, p(\varrho)=0\}.
\end{align*}
\begin{definition} [Bicharacteristics]
Let $V$  be an open subset of $T^*\mathcal{L} \setminus \partial(T^*\mathcal{L})$, and let $J \subset \mathbb{R}$ be an
interval. A $C^1$ map $\gamma : J \rightarrow V \cap \mathrm{Char}(p)$ is called a bicharacteristic in $V$ if it satisfies
\begin{align*}
\frac{d}{ds}\gamma(s)=H_p(\gamma(s)), \quad \text{for\; all}\quad s\in J.
\end{align*}
It is said to be maximal in $V$ if it cannot be extended by another bicharacteristic also
valued in $V$.
\end{definition}
\subsection{Partition of the cotangent bundle at the boundary}
We denote by $^\parallel \partial(T^*\mathcal{L})\subset \partial(T^*\mathcal{L})$  the bundle consisting of points $$\varrho=(\varrho',0)=(t,x',z=0,\tau,\xi',0)\in T^*\mathcal{L},$$ where  $\varrho'=(t,x',z=0,\tau,\xi')\in T^*\partial\mathcal{L}$. By identifying $\varrho'$ and $(\varrho',0)$ as expressed above using the chosen local coordinates, one can write the identification  $^\parallel \partial(T^*\mathcal{L})\simeq T^*\partial\mathcal{L}$.
\\
Let $\pi_\parallel$ be the projection map from $\partial(T^*\mathcal{L})$ to $^\parallel\partial(T^*\mathcal{L})$ defined by
\begin{align*}
\pi_\parallel(t,x',z=0,\tau,\xi',\zeta)=(t,x',z=0,\tau,\xi',0).
\end{align*}
We express the vector bundle $^\parallel\partial(T^*\mathcal{L})$ as the union of the three bundles $^\parallel\mathcal{E}_\partial$,
 $^\parallel\mathcal{G}_\partial$,     
 $^\parallel\mathcal{H}_\partial$ defined as follows.
\begin{definition}[Elliptic, glancing, and hyperbolic regions]
\label{defn11A1}
The vector bundle $^\parallel\partial(T^*\mathcal{L})$ consists on three homogeneous regions.
\begin{enumerate}
    \item The elliptic region $^\parallel\mathcal{E}_\partial= \,^\parallel\partial(T^*\mathcal{L})\cap \{p>0\}$. A point $\varrho\in \;^\parallel\mathcal{E}_\partial$ is called an elliptic point.

    \item The glancing region $^\parallel\mathcal{G}_\partial=\,^\parallel\partial(T^*\mathcal{L})\cap \{p=0\}$. A point $\varrho\in \;^\parallel\mathcal{G}_\partial$ is called a glancing point.
    \item The hyperbolic region $^\parallel\mathcal{H}_\partial=\,^\parallel\partial(T^*\mathcal{L})\cap \{p<0\}$. A point $\varrho\in \;^\parallel\mathcal{H}_\partial$ is  called an hyperbolic point.
\end{enumerate}
With \eqref{6***A1}, one has $p(\varrho)=-\tau^2+\zeta^2+g(\xi',\xi') $ if $\varrho\in \partial(T^*\mathcal{L})$, and so we deduce the following properties:
\begin{enumerate}
    \item If $\varrho \in\,^\parallel\mathcal{E}_\partial$ then $\pi^{-1}_\parallel(\{\varrho\})\cap \mathrm{Char}(p)=\emptyset$.
    \item If $\varrho \in\,^\parallel\mathcal{G}_\partial$ then $\pi^{-1}_\parallel(\{\varrho\})\cap \mathrm{Char}(p)=\{\varrho\}$.
    \item If $\varrho \in\,^\parallel\mathcal{H}_\partial$ then $\pi^{-1}_\parallel(\{\varrho\})\cap \mathrm{Char}(p)=\{\varrho^-,\varrho^+\}$, where 
    \begin{align}
    \label{6£££A1}
    \varrho^\pm=(t,x',z=0,\tau,\xi',\zeta^\pm) \quad with \quad \zeta^\pm =\pm\sqrt{-p(\varrho)}.
    \end{align}
\end{enumerate}
\end{definition}
Note that based on the partition of $^\parallel\partial(T^*\mathcal{L})$ given in Definition \ref{defn11A1}, if \\$\varrho \in \mathrm{Char}(p)\cap \partial(T^*\mathcal{L})$, one has $\pi _\parallel(\varrho)\in \, ^\parallel\partial(T^*\mathcal{L})$ and $p(\pi _\parallel(\varrho)) \leq 0$. Furthermore,\\ $\varrho \in \mathrm{Char}(p)\cap \partial(T^*\mathcal{L})$ and $p(\pi _\parallel(\varrho))=0$ is equivalent to writing $\varrho\in \; ^\parallel\mathcal{G}_\partial$. This leads to the following associated partition of $\mathrm{Char}(p)$ on the boundary $ \partial(T^*\mathcal{L})$.
\begin{definition}[Partition of $\mathrm{Char}(p)$ at the boundary]
One partitions $\mathrm{Char}(p)\cap \partial(T^*\mathcal{L})$ into two  homogeneous regions $\mathcal{G}_\partial$ and $\mathcal{H}_\partial$ with
\begin{enumerate}
    \item  $\mathcal{G}_\partial=\,^\parallel\mathcal{G}_\partial$; $\varrho \in \mathcal{G}_\partial \Leftrightarrow \varrho \in \mathrm{Char}(p) $ and    $\pi_\parallel(\varrho)=\varrho$ .
    \item  $\varrho \in \mathcal{H}_\partial$ if $\varrho \in \mathrm{Char}(p) $ and $\pi_\parallel(\varrho)\in\; ^\parallel\mathcal{H}_\partial$. If $\varrho=(t,x',z=0,\tau,\xi',\zeta)$ one writes that
 \begin{enumerate}[(a)]
 \item $ \varrho \in \mathcal{H}_\partial^+$ if $\zeta>0$
  \item $ \varrho \in \mathcal{H}_\partial^-$ if $\zeta<0$.
  \end{enumerate}
  A point $\varrho\in \mathcal{H}_\partial$ is also called an hyperbolic point.
\end{enumerate}
\end{definition}
Introducing the following involution on $ \partial(T^*\mathcal{L})$
  \begin{align*}
      \Sigma(t,x',z = 0,\tau,\xi',\zeta) = (t,x',z = 0,\tau,\xi',-\zeta),
  \end{align*}
  one finds that $\Sigma(\varrho^-) = \varrho^+$  if $\varrho \in  {^\parallel\mathcal{H}_\partial}$.\\
  By \eqref{hamiltA1} and Einstein convention, one has
\begin{align}\label{hpA1}
    H_pz(\varrho) = H_pz(x,\xi) = 2g^{d,j}(x)\xi_j,
\end{align}
where we recall that $x_d=z$ and $\xi_d=\zeta$. 
Since we are working with a $C^1$ metric, it follows from \eqref{hpA1} that $ H_pz$ is a $C^1$ function. In the chosen quasi-normal geodesic coordinates, introduced in Propositon \ref{pro9A1}, we have 
\begin{align*}
    H_pz_{|z=0}= 2\zeta.
\end{align*}
Hence,
\begin{align*}
\mathcal{G}_\partial=\,^\parallel\mathcal{G}_\partial=\{z=H_pz=p=0\}\quad \text{and} \quad \mathcal{H}_\partial^\pm=\{z = p = 0,\; H_pz \gtrless 0\}.
\end{align*}
\subsection{Glancing region, gliding vector field, and generalized bicharacteristics}
As $H_p^2z$ is continuous, one can introduce the following partition of $\mathcal{G}_\partial$.
\begin{definition}[Partition of the glancing region]
The set $\mathcal{G}_\partial$ is divided into three subsets: the diffractive set $\mathcal{G}_\partial^d$, the gliding set $\mathcal{G}_\partial^g$ and the glancing set of order three $\mathcal{G}_\partial^3$, defined as follows
\begin{align*}
&\mathcal{G}_\partial^d = \{ \varrho \in  \mathcal{G}_\partial; H^2_pz(\varrho) > 0\}, 
\\&
\mathcal{G}_\partial^3 = \{\varrho \in  \mathcal{G}_\partial; H^2_pz(\varrho) = 0\}, 
\\&
\mathcal{G}_\partial^g= \{\varrho \in  \mathcal{G}_\partial; H^2_pz(\varrho) < 0\}.
\end{align*}
Note that $\mathcal{G}_\partial^3$ is called the glancing set of order three, as any bicharacteristic that goes through $\varrho_0\in \mathcal{G}_\partial^3$ has a contact
with the boundary of order greater than or equal to three.
\end{definition}
On $^\parallel\partial(T^*\mathcal{L})$, one defines the gliding vector field as
\begin{align*}
    H^{\mathcal{G}}_p(\varrho) = (H_p+\frac{H^2_pz}{ H^2_z p} H_z) (\varrho).
\end{align*}
In quasi-normal geodesic coordinates, $H^2_z p= 2$ at the boundary. Further properties of $H^{\mathcal{G}}_p(\varrho)$ can be found in Section 5.4 of \cite{11A1}.\\
To define the notion of generalized bicharacteristics, introduce the vector field on $T^*\mathcal{L}$
 \begin{equation} ^GX(\varrho)= 
    \begin{cases}
      H_p(\varrho) \quad \text{if}\quad \varrho \in T^*\mathcal{L} \setminus\mathcal{G}_\partial^g, \\
         H^{\mathcal{G}}_p(\varrho) \quad \text{if}\quad \varrho \in \mathcal{G}_\partial^g.
    \end{cases}
\end{equation}
\begin{definition}[Generalized bicharacteristic] \label{generaA1}
 Suppose $J \subset \mathbb{R}$ is an interval, $B$ a discrete subset of $J$. A generalized bicharacteristic of $p$ is a map
    \begin{align*}
    ^G\gamma: J\setminus B \rightarrow \mathrm{Char}(p) \cap T^*\mathcal{L},
    \end{align*}
    satisfying the following properties:
    \begin{enumerate}
        \item For $s \in J\setminus B$, $^G\gamma \notin \mathcal{H}_\partial$ and the map $^G\gamma$ is differentiable at $s$ with
        \begin{align*}
            \frac{d}{ds} {^G\gamma(s)}= {^GX\big(^G\gamma(s)\big)}.
        \end{align*}
        \item If $ S \in B$, then $^G\gamma(s) \in T^*\mathcal{L}\setminus \partial(T^*\mathcal{L})$ for $s \in J \setminus B$ sufficiently close to $S$ and moreover 
        \begin{enumerate}
            \item 
         if $[S -\epsilon,S] \subset J$ for some $\epsilon > 0$, then $^G\gamma(S^-) := \underset{s \rightarrow S^-} {\mathrm{lim} }^G\gamma(s) \in\mathcal{H}_\partial^- $; 
         \item if $[S,S+ \epsilon] \subset J$ for some $\epsilon > 0$, then $^G\gamma(S^+) := \underset{s \rightarrow S^+} {\mathrm{lim} }^G\gamma(s) \in\mathcal{H}_\partial^+ $; 
         \item and if $[S- \epsilon,S +\epsilon] \subset J$ for some $\epsilon > 0$, then $^G\gamma(S^+) = \Sigma ( {^G\gamma(S^-)})$.
        \end{enumerate}
    \end{enumerate}
\end{definition}
The following Theorem asserts that for every point in $T ^*\mathcal{L}$, there exists a
maximal generalized bicharacteristic passing through it.
\begin{theorem} \label{theximaxiA1}
Let $^G\gamma(s) = (t(s), x(s), \tau (s), \xi(s))$ be a generalized bicharacteristic for $s\in J\setminus B$. If $^G\gamma$ is maximal then $J = \mathbb{R}$. Furthermore, $t(\mathbb{R}) = \mathbb{R}$ if
$\tau(s) = Cst \neq 0$.\\
If $\varrho_0 \in \mathrm{Char}(p)\cap T^*\mathcal{L}$, there exists a maximal generalized bicharacteristic 
$s\longmapsto {^G\gamma(s)}$ with $s \in \mathbb{R} \setminus B$ such that 
\begin{align*}
^G\gamma(0)= \varrho_0\;\; \text{if}\; \varrho_0 \notin \mathcal{H}_\partial \quad \text{and}\quad ^G\gamma(0^\pm)= \varrho_0\;\; \text{if}\; \varrho_0 \in \mathcal{H}_\partial^\pm.
\end{align*}
\end{theorem}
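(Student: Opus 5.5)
This statement is the existence and global-extension result for generalized bicharacteristics in the $C^1$-metric, $C^2$-boundary setting. I would establish it by a Peano-type compactness argument combined with a priori bounds coming from the structure of $p$, following the approach of \cite{11A1}.

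\textbf{Conserved quantities.} First I would record the quantities conserved along any generalized bicharacteristic ${}^G\gamma$.

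Since $p$ does not depend on $t$, the Hamiltonian field \eqref{hamiltA1} gives $\dot\tau=-\partial_t p=0$. This holds on the interior pieces. It also holds on gliding pieces, because $H_z$ has no $\partial_\tau$ component. At the reflection points $B$ the involution $\Sigma$ changes only $\zeta$. Hence $\tau(s)=\tau_0$ is constant.

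Moreover $\dot t=\partial_\tau p=-2\tau_0$, so $t(s)=t_0-2\tau_0 s$. This gives $t(\mathbb{R})=\mathbb{R}$ as soon as $J=\mathbb{R}$ and $\tau_0\neq0$.

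On $\mathrm{Char}(p)$ we have $g^{ij}(x)\xi_i\xi_j=\tau_0^2$. Compactness of $\mathcal{M}$ and uniform ellipticity of $g$ then give $|\xi|\le C|\tau_0|$ in every chart. In particular, if $\tau_0=0$ then $\xi=0$, and the curve is stationary in $(x,\xi)$, which is trivially global. So a generalized bicharacteristic stays in a compact subset of $T^*\mathcal{L}$ modulo the $t$ variable.

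\textbf{Global extension.} Given that a priori bound, I would prove $J=\mathbb{R}$ for maximal curves by contradiction. Suppose $\sup J=S^*<\infty$. The vector field ${}^GX$ is bounded on the compact set $\{|\xi|\le C|\tau_0|\}$, because $H_p$ is continuous and the gliding correction $\frac{H_p^2z}{H_z^2p}H_z$ is continuous with $H_z^2p=2$. Therefore ${}^G\gamma$ is Lipschitz on $J\setminus B$. The jumps at points of $B$ affect only $\zeta$, and only at boundary points.

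I would then extract a limit point $\varrho_1$ as $s\to S^*$, if necessary along a subsequence, using the fact that $(t,x,\tau,\xi')$ is uniformly Lipschitz across reflections. Applying the local existence statement at $\varrho_1$ extends the curve beyond $S^*$, contradicting maximality.

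The delicate point here is that $B$ might accumulate at $S^*$. I would handle this as in \cite{11A1}. Near a point where $\zeta$ becomes small, reflections have small jumps, and the curve can be continued through a glancing limit. Near a point where $|\zeta|$ is bounded below, the return time to the boundary is bounded below, since $\ddot z=H_p^2z$ is bounded. This rules out infinitely many large jumps in finite time.

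\textbf{Local existence at a point $\varrho_0$.} This I expect to be the main obstacle, because $H_p$ is only $C^0$, so Cauchy–Lipschitz is unavailable and the classical Melrose–Sjöstrand construction does not apply. There are three cases.

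If $\varrho_0$ is an interior point, Peano's theorem gives a local $C^1$ integral curve in $\mathrm{Char}(p)$, since $H_pp=0$.

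If $\varrho_0\in\mathcal{H}_\partial^\pm$, I would solve forward from $\varrho_0^+$ (respectively backward from $\varrho_0^-$) with Peano's theorem. Because $H_pz=2\zeta\gtrless0$, the curve immediately enters $z>0$. On the other side I would glue using $\Sigma$.

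If $\varrho_0\in\mathcal{G}_\partial$, I would follow \cite{11A1}: solve $\dot\varrho={}^GX(\varrho)$ for a penalized or extended continuous field defined across the boundary, for instance $H_p$ on $\{z>0\}$ with the gliding field on $\{z\le0\}$, or a limit of approximate billiard curves with smoothed metrics. I would then pass to the limit by Arzelà–Ascoli using the uniform Lipschitz bound. The final step is to check that the limit satisfies conditions 1–2 of Definition \ref{generaA1}. For this I would use the sign of $H_p^2z$ to classify the contacts as diffractive, gliding, or of order three.

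Finally, maximality follows from Zorn's lemma applied to extensions ordered by inclusion, and the global extension step above then gives $J=\mathbb{R}$.
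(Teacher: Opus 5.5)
The paper gives no argument of its own for this statement; it defers entirely to Appendix A of \cite{11A1}. The routine parts of your sketch are sound:
\begin{itemize}
\item $\tau$ is constant and $t(s)=t_0-2\tau_0 s$ across all pieces;
\item $\{g^{ij}\xi_i\xi_j=\tau_0^2\}$ is compact modulo $t$;
\item Peano (after a $C^1$ extension of $g$ across $\partial\mathcal{M}$) gives local curves at interior and hyperbolic points;
\item the return-time bound $\ge 4|\zeta^+|/\sup|H_p^2z|$ excludes infinitely many large reflections in finite time;
\item Zorn yields maximal curves.
\end{itemize}

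The first gap is at glancing points. You need local existence there both for $\varrho_0\in\mathcal{G}_\partial$ and to continue past $S^*$ when the limit point is glancing. The field you suggest, $H_p$ on $\{z>0\}$ and the gliding field on $\{z\le 0\}$, is discontinuous along $\{z=0\}$ wherever $H_p^2z\neq 0$, since the two differ by $\frac{H_p^2z}{H_z^2p}H_z$. So Peano does not apply. Moreover, it prescribes gliding at diffractive and hyperbolic boundary points, where Definition \ref{generaA1} requires ${}^GX=H_p$.

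The Arzel\`a--Ascoli alternative needs two things you do not supply. First, equicontinuity despite the $\zeta$-jumps, so you must track $(t,x,\tau,\xi')$ or work in a compressed bundle. Second, and above all, a characterization of generalized bicharacteristics that survives uniform limits. It must show that the limit:
\begin{itemize}
\item moves along $H_p+\frac{H_p^2z}{H_z^2p}H_z$, with exactly this coefficient, while it stays in $\mathcal{G}^g_\partial$;
\item is differentiable with derivative ${}^GX$ at glancing contacts;
\item reflects only at isolated times elsewhere.
\end{itemize}
Classifying contacts by the sign of $H_p^2z$ does not provide this. With a merely continuous $H_p$ and no restriction on the order of contact, this verification is the actual content of the theorem, and it is precisely what is outsourced to \cite{11A1}.

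The second gap is in the extension step when reflection times $s_n\uparrow S^*$. Showing $|\zeta(s_n)|\to 0$ gives a limit $\varrho_1\in\mathcal{G}^g_\partial\cup\mathcal{G}^3_\partial$; accumulation at diffractive points is impossible since $\ddot z>0$ there. But the glued curve must satisfy Definition \ref{generaA1} at $S^*$, i.e.\ be differentiable there with derivative ${}^GX(\varrho_1)$. Since $H_p^2z=-2\partial_z p=2H_p\zeta$ on $\mathcal{G}_\partial$, the $\zeta$-component of ${}^GX(\varrho_1)$ vanishes at such points. Near a gliding point, $\zeta$ oscillates with amplitude $|\zeta(s_n)|\approx \frac14|H_p^2z(\varrho_1)|\,(s_{n+1}-s_n)$. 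Differentiability therefore requires $s_{n+1}-s_n=o(S^*-s_n)$, which fails if, for instance, $|\zeta(s_n)|$ decays geometrically.

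So ``small jumps'' is not enough. You need a quantitative estimate such as $|\zeta(s_{n+1})|^2-|\zeta(s_n)|^2=o(|\zeta(s_n)|^2)$. One way to get it is near-conservation of $\tau^2-\sum_{i,j\le d-1}g^{i,j}(x)\xi_i\xi_j$ over each excursion:
\begin{itemize}
\item the tangential Poisson bracket of this quadratic form with itself cancels;
\item $z\lesssim|\zeta(s_n)|^2$ on the excursion;
\item $\partial_{x'}g^{i,d}$ ($i<d$) and $\partial_{x'}g^{d,d}$ vanish on $\{z=0\}$;
\item $\int\dot z\,ds=0$ over the excursion.
\end{itemize}
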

For the proof of Theorem \ref{theximaxiA1}, we refer the reader to Appendix A in \cite{11A1}.

\section{Semi-classical reduction} \label{reductionA1}
In this section we recall how the interior observability of Definition \ref{intersiwarnew} can
be deduced from the observability of solutions to \eqref{eqimA1} that are localized in frequency with a dyadic scale. The frequency localization of the solutions, described in section \ref{dyaA1}, allows us to apply powerful tools from semi-classical analysis, which are generally easier to manage than those from microlocal analysis and require much less regularity of the symbols.
\subsection{Dyadic decomposition } \label{dyaA1}
The proof of our main theorem relies on the use of a dyadic decomposition. Recall that the selfadjoint operator $A$, defined in \eqref{opA1}, with compact resolvent on $\mathcal{H}$, admits a sequence of eigenvalues $(\lambda_\nu)_{\nu \in \mathbb{Z}}$, compted with their finite multiplicities, such that
\begin{align*}
    \lambda_{-\nu}=-\lambda_\nu, \quad \lambda_\nu>0 \quad \text{if}\quad \nu>0 \quad \text{and}\quad \lambda_\nu \underset{\nu \rightarrow +\infty}{\longrightarrow}+\infty,
\end{align*}
 with associated sequence of eigenfunctions $(e_\nu)_\nu=(e_\nu^0,e_\nu^1)_\nu$, forming a Hilbert basis of $\mathcal{H}$.\\

Let $0<\alpha<1$, $\rho \in ]1,\frac{1}{\alpha}[$, $k\in \mathbb{Z}^*$ and denote $h_k=\rho^{-|k|}$,
\begin{align*}
J_k=\{\nu \in \mathbb{Z};\; k\lambda_\nu>0\;\text{and}\; \alpha\leq h_k |\lambda_\nu|<\alpha^{-1}\}.
\end{align*}
Consider
\begin{align*}
E_k=\mathrm{span}\{e_\nu;\; \nu\in J_k\},
\end{align*}
equipped with the
$\mathcal{H}$-norm 
\begin{align*}
\|U\|^2_{\mathcal{H}}=\underset{\nu\in J_k}{\sum }|U_\nu|^2,
\end{align*}
for all $U=\underset{\nu\in J_k}{\sum }U_\nu e_\nu\in E_k,$ where $U_{\nu}=(U,e_{\nu})_{\mathcal{H}}$ and $(U_\nu)_{\nu}\in\ell^2(\mathbb{C}).$ One has
\begin{align*}
    E_k\subset E^+ \quad \text{if} \quad  k>0 \quad \quad\quad\text{and}\quad \quad \quad E_k  \subset E^- \quad \text{if} \quad k<0.
\end{align*}

Observe that, using that $\mathrm{Card}(J_k)<\infty$, if $U\in E_k $ then $ A^nU \in E_k $ for all $n\in \mathbb{N}$. Hence, $E_k$ is a subspace of all the iterated domains of $A$.
One can identifies $U\in E_k$ with the following solution to the equation \eqref{eq2A1} :
\begin{align*}
U=\sum_{\nu \in J_k} e^{ it\lambda_\nu}U_\nu e_\nu.
\end{align*}
\begin{lemma} \label{equiA1}
For $U \in E_k$, $r \in \mathbb{N}$ and $s \in \mathbb{R}$ the norm
\begin{align*}
h_k^{s+r}\|\partial_t^rA^sU\|_{\mathcal{H}}, 
\end{align*}
is equivalent to $\|U\|_{\mathcal{H}}$, uniformly with respect to $k \in \mathbb{Z}^*.$
\end{lemma}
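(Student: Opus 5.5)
The plan is to compute everything explicitly in the eigenbasis $(e_\nu)_{\nu\in J_k}$, exploiting the fact that $E_k$ is finite dimensional and invariant under $A$. Since $A$ is selfadjoint with compact resolvent, its restriction to $E_k$ is diagonal: $Ae_\nu=\lambda_\nu e_\nu$. For $s\in\mathbb{R}$ we interpret $A^s$ on $E_k$ through the functional calculus, $A^sU=\sum_{\nu\in J_k}\lambda_\nu^sU_\nu e_\nu$. This is well defined because $\lambda_\nu\neq0$ on $J_k$ and all $\lambda_\nu$ share the sign of $k$; for $k<0$ we take a fixed branch of $\lambda^s$, for instance $|\lambda|^s$ times a unimodular factor. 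Only the modulus $|\lambda_\nu|^s$ will matter.

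We identify $U$ with the solution $U(t)=\sum_{\nu\in J_k}e^{it\lambda_\nu}U_\nu e_\nu$. Differentiating termwise, which is legitimate since the sum is finite, gives
\begin{align*}
\partial_t^rA^sU(t)=\sum_{\nu\in J_k}(i\lambda_\nu)^r\lambda_\nu^s e^{it\lambda_\nu}U_\nu e_\nu .
\end{align*}
Because $(e_\nu)$ is an orthonormal basis of $\mathcal{H}$, Parseval yields
\begin{align*}
h_k^{2(s+r)}\|\partial_t^rA^sU(t)\|_{\mathcal{H}}^2=\sum_{\nu\in J_k}\big(h_k|\lambda_\nu|\big)^{2(r+s)}|U_\nu|^2 .
\end{align*}

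The key step is the localization $\alpha\le h_k|\lambda_\nu|<\alpha^{-1}$ for $\nu\in J_k$. It gives $(h_k|\lambda_\nu|)^{2(r+s)}\in[\alpha^{2|r+s|},\alpha^{-2|r+s|}]$; one splits into the cases $r+s\ge0$ and $r+s<0$ to orient the inequalities. Hence
\begin{align*}
\alpha^{|r+s|}\|U\|_{\mathcal{H}}\le h_k^{s+r}\|\partial_t^rA^sU\|_{\mathcal{H}}\le \alpha^{-|r+s|}\|U\|_{\mathcal{H}} .
\end{align*}
The constants depend only on $\alpha$, $r$ and $s$, so the equivalence is uniform in $k\in\mathbb{Z}^*$ and in $t$, since $|e^{it\lambda_\nu}|=1$.

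There is no real obstacle in this argument. The only points needing care are making sense of $A^s$ for non-integer or negative $s$, which the functional calculus on the finite-dimensional invariant space $E_k$ handles, and noting that the norm on $E_k$ is the $\mathcal{H}$-norm, so that Parseval applies.
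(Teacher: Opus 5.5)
Your proof is correct and follows the paper's argument: expand $U$ in the eigenbasis $(e_\nu)_{\nu\in J_k}$, apply Parseval to obtain $\sum_{\nu\in J_k}(h_k|\lambda_\nu|)^{2(r+s)}|U_\nu|^2$, and use $\alpha\le h_k|\lambda_\nu|<\alpha^{-1}$ on $J_k$. Your explicit constants $\alpha^{\pm|r+s|}$ and your definition of $A^s$ by functional calculus on $E_k$ (with a branch choice when $k<0$) make precise what the paper leaves implicit.
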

\begin{proof}[Proof]
One writes 
\begin{align*}
h_k^{2(s+r)}\|\partial_t^rA^sU\|_{\mathcal{H}}^2=\sum_{\nu \in J_k} |h_k\lambda_\nu|^{2(r+s)}|U_\nu|^2\simeq \sum_{\nu \in J_k} |U_\nu|^2=\|U\|_{\mathcal{H}}^2,
\end{align*}
as $h_k |\lambda_\nu| \simeq 1$ for $\nu \in J_k$.
\end{proof}
From Lemma \ref{equiA1}, one has in particular the equivalence
\begin{align} \label{eqqqqA1}
h_k\|\partial_tU\|_{\mathcal{H}}\simeq \|U\|_{\mathcal{H}}.
\end{align} 
In the next sections, we introduce the following sets of sequence of functions
\begin{align*}
B^\pm=\{(U_k)_{k\in \pm\mathbb{N}^*};\; U_k=(u_k,\partial_tu_k)\in E_k \quad \text{and}\quad \|U_k\|_{\mathcal{H}}\leq 1\}.
\end{align*}
 \subsection{Semi-classical observation}
Observability estimates for wave equations can be derived by several methods. Some approaches utilize a multiplier technique, tracing back to the article of Lions \cite{5A1}. Others adopt microlocal methods, following the work \cite{7A1} by Bardos, Lebeau, and Rauch.  In \cite{22A1}, Burq, Dehman, and Le Rousseau demonstrated that observability can be deduced from the observation of very particular types of waves. The waves they considered are frequency-localized, as discussed in section \ref{dyaA1}. In our context, by means of Theorem 1.2.7 in \cite{ben}, which is an adaptation of the semi-classical reduction developed in \cite{22A1}, our main result in Theorem  \ref{mainA1} is a consequence of the following proposition.
\begin{proposition} [Interior semi-classical observability for positive time-frequencies implies interior observability of solutions associated with positive time-frequencies]\label{proobsA1}
Let $\omega$ be a nonempty open subset of $\mathcal{M}$. Suppose there exist constants  $C>0$, $k_0>0$, and $\delta>0$, such that 
\begin{align} \label{semiclassicalinterA1}
\mathcal{E}(u_k)=\frac{1}{2}\|U_{k_{|t=0}}\|_{\mathcal{H}}^2\leq C\int_\delta^{T-\delta}\|\mathds{1}_{\omega}\partial_tu_k(t)\|_{L^2(\mathcal{M})}^2dt,
\end{align}
holds for any $(U_k)_{k\in \mathbb{N}^*}\in B^+$ and $k>k_0$.
Then, the wave equation \eqref{eqimA1} is observable from $\omega$ in time $T>0$ in the sense of Definition \ref{intersiwarnew}.
\end{proposition}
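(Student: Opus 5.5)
The plan is to follow the semi-classical reduction of \cite{22A1}, as adapted in \cite{ben}. Write $U\in E^+$ as $U=\sum_{k\geq 1}U_k+U_{\mathrm{low}}$. Here $U_k$ is the projection of $U$ onto $E_k$ (for $k>0$), and $U_{\mathrm{low}}$ collects the finitely many modes with $0<\lambda_\nu<\alpha^{-1}h_1^{-1}$ that are not covered by the $J_k$. Since $\rho<1/\alpha$, the intervals $[\alpha\rho^k,\alpha^{-1}\rho^k)$ overlap. We therefore replace the sharp projectors by a smooth dyadic partition of unity $\sum_k\psi(h_k A)$, where $\psi\in C_c^\infty(]\alpha,\alpha^{-1}[)$ and $\sum_k\psi(\rho^{-k}\lambda)^2=1$ for $\lambda$ large. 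The components $U_k=\psi(h_kA)U$ then lie in $E_k$ and satisfy $\|U\|_{\mathcal H}^2\simeq\sum_k\|U_k\|_{\mathcal H}^2+\|U_{\mathrm{low}}\|^2_{\mathcal H}$. The hypothesis applies to each normalized $U_k$ with $k>k_0$. Summing gives
\begin{align*}
\sum_{k>k_0}\|U_k\|_{\mathcal H}^2\leq C\sum_{k>k_0}\int_\delta^{T-\delta}\|\mathds 1_\omega\partial_tu_k\|^2_{L^2}dt .
\end{align*}

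The key step is to compare $\sum_k\int\|\mathds 1_\omega\partial_t u_k\|^2$ with $\int_0^T\|\mathds 1_\omega\partial_t u\|^2$ up to a lower-order error. Let $\chi\in C_c^\infty(]0,T[)$ with $\chi=1$ on $[\delta,T-\delta]$. Since $u_k=\psi(h_k D_t/i)u$ in time (the solution is $\sum e^{it\lambda_\nu}U_\nu e_\nu$), each $u_k$ is a time-Fourier multiplier applied to $u$. Multiplication by $\mathds 1_\omega$ commutes with time multipliers. We thus reduce to an almost-orthogonality estimate in $t$ alone:
\begin{align*}
\sum_k\|\chi\,\psi(h_kD_t)v\|^2_{L^2_t}\leq C\|\tilde\chi v\|^2_{L^2_t}+C_N\|v\|^2_{H^{-N}_t}
\end{align*}
for $v=\mathds 1_\omega\partial_tu$, with $\tilde\chi$ supported in $]0,T[$ and equal to $1$ near $\mathrm{supp}\,\chi$. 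This follows from the rapid decay of the commutator $[\chi,\psi(h_kD_t)]$ away from the support, a standard pseudo-differential calculus in one variable, and square summation of the dyadic pieces. The error $\|v\|_{H^{-N}_t}$ is bounded by a compact norm of $U$, for instance $\|U\|_{D(A)^{*}}$ or a negative Sobolev norm in $t$, using Lemma \ref{equiA1}.

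We then obtain the weak observability inequality
\begin{align*}
\mathcal E(u)\leq C\int_0^T\|\mathds 1_\omega\partial_tu\|^2dt+C\|U\|^2_{\mathcal H_{-1}}+C\|\Pi_{\le k_0}U\|^2_{\mathcal H},
\end{align*}
where both remainder terms are compact with respect to $\mathcal H$ because $A$ has compact resolvent (Lemma \ref{selfA1}). The last step removes them by the classical uniqueness-compactness argument. Suppose the inequality \eqref{obsPartielA1} fails on $E^+$. Take normalized $U^n\in E^+$ with vanishing observation. By compactness they converge strongly to some $U\neq0$ in $E^+$ with $\partial_tu=0$ on $]0,T[\times\omega$. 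The set $N_T$ of such invisible data in $E^+$ is finite dimensional, by the weak inequality and compactness, and is stable under $A$. Hence it contains an eigenvector $e$ with $Ae=\lambda e$, $\lambda>0$, and then $\partial_tu=i\lambda e^{it\lambda}e^0$ vanishes on $\omega$, so $e^0=0$ on $\omega$ and $(-\Delta_g+1-\lambda^2)e^0=0$.

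The main obstacle is this unique continuation step. We need that a solution of $(-\Delta_g+1-\lambda^2)w=0$ with a $C^1$ metric that vanishes on the open set $\omega$ vanishes identically. Unique continuation holds for Lipschitz metrics, so $C^1$ suffices. Connectedness of $\mathcal M$ then gives $e^0=0$, hence $e^1=i\lambda e^0=0$, a contradiction. The boundary condition plays no role in this step, since it only uses interior unique continuation.
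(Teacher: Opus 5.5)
Your proposal is correct and follows essentially the route the paper takes. The paper does not prove this step itself: it defers to the abstract semi-classical reduction of \cite{22A1}, as adapted in \cite{ben}. That reduction consists of smooth dyadic spectral localization, almost-orthogonality in time to obtain a weak observability inequality with a compact remainder, and a compactness--uniqueness argument closed by unique continuation for eigenfunctions of $A$ (the paper cites Theorem 2.4 of \cite{33A1}, while you invoke unique continuation for Lipschitz metrics).

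The only step you compress is the $A$-invariance of the invisible space. As usual, this needs the weak inequality on the slightly shorter intervals $]0,T'[$ with $T'\in\,]T-\delta,T]$, which is available thanks to the margin $\delta$. It also needs a monotonicity-of-dimension argument to find an interval of $T'$ on which these finite-dimensional spaces coincide, and hence are invariant under $S(\epsilon)$ for small $\epsilon$.
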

Proposition \ref{proobsA1} states that semi-classical observability for positive time-frequencies on an interval of length
$T-2\delta$ implies observability of solutions associated with positive time-frequencies on an interval of greater length.
Its proof relies in particular on the unique continuation property of the operator 
\begin{align*}
L:U=(u,\partial_tu) \mapsto \mathds{1}_{\omega}\partial_tu,
\end{align*}
which is bounded from $D(L)=\mathcal{H}=H^1(\mathcal{M})\times L^2(\mathcal{M})$ into $L^2(\mathcal{M})$, meaning that if $U$ is an eigenfunction of $A$ such that $LU = 0$ in $]0,T[\times \omega$,
then $U = 0$ (see Theorem 2.4 in \cite{33A1}).

\subsection{Proof strategy: initiation of the contradiction argument}\label{strategyA1}
In view of Proposition \ref{proobsA1}, to prove the interior observability of solutions associated with positive time-frequencies in the time interval $]0,T[$, it suffices to
prove the semi-classical observability for positive time-frequencies inequality \eqref{semiclassicalinterA1} for the time
interval $]\delta,T-\delta[$. To do so, we perform a contradiction argument assuming  that \eqref{semiclassicalinterA1}  does not hold. Thus, by Proposition \ref{proobsA1}, there exists a sequence of solutions $(U_k)_k=(u_k,\partial_tu_k)_k\in B^+$ to the equation \eqref{eq2A1} with initial data $(\underline{U_k})_k=(\underline{u}_k^0,\underline{u}_k^1)_k\subset\mathcal{H}$, such that $h_k^2\|U_{k_{|t=0}}\|_{\mathcal{H}}^2=1$ and $\int_\delta^{T-\delta}h_k^2\|\mathds{1}_{\omega}\partial_tu_k(t)\|_{L^2(\mathcal{M})}^2dt \rightarrow 0$ as $k\rightarrow +\infty$, where $h_k$ is defined as in section \ref{dyaA1}. The functions $u_k$ are solutions to 
\begin{equation}\label{equofcontraA1}
    \begin{cases}
        (\partial_t^2-\Delta_g+1)u_k = 0 \quad \quad \quad\quad\quad\text{in} \;\mathbb{R}\times \mathcal{M}, \\
        \partial_n u_k+i\partial_tu_k= 0 \;\; \quad \quad\quad\quad\quad \quad\text{in} \;\mathbb{R}\times \mathcal{\partial M},\\
        u_{k_{\mid t=0}}=\underline{u}_k^0,\; \partial_tu_{k_{\mid t=0}}=\underline{u}_k^1 \quad\quad\quad \text{in} \;\mathcal{ M}.
    \end{cases}
\end{equation}
In section \ref{uuuA1}, we show that these sequences are associated with a semi-classical measure $\mu$ that satisfies a propagation equation at the boundary as stated in Theorem \ref{th1A1}. This equation along with the result from \cite{11A1}, allow us to conclude the proof of the semi-classical observability inequality of the form \eqref{semiclassicalinterA1} by reaching a contradiction in section \ref{conclusionA1}.

\section{Semi-classical operators and measures} \label{semiopA1}
This section is derived from \cite{1A1}. Here, we review some fundamental facts about semi-classical pseudo-differential operators and semi-classical measures. We state in particlular existence results of semi-classical measures in Propositions \ref{pro1.2µµA1} and \ref{Pro2µµA1}.
\subsection{Euclidean framework} \label{euclideA1}
In this part, we recall the concepts on $\mathbb{R}^d$, starting with the definition of symbols. We use the notation, $$\langle x\rangle  = (1 + |x|^2)^{1/2}.$$ 
\begin{definition}(Symbols)
Let $m, n \in \mathbb{N} \cup \{+\infty\}$, with $n \geq d+1$, and let $N \in \mathbb{R}^+$. We define the symbol class $\Sigma^{m,n}(\langle \xi \rangle^{-N}; \mathbb{R}^{2d})$
as the space of all functions $a(x,\xi)$, with $x, \xi \in \mathbb{R}^d$, such that
\begin{align*}
\partial_x^\alpha \partial_\xi^\beta a \in L^1_{\mathrm{loc}}(\mathbb{R}^{2d}) \quad \text{for all }\alpha, \beta \in \mathbb{N}^d \text{ with } |\alpha| \leq m, \; |\beta| \leq n,
\end{align*}
and satisfying 
\begin{align*}
M_{m,n}^{-N}(a) := \max_{\substack{|\alpha| \leq m \\ |\beta| \leq n}} \operatorname*{ess\,sup}_{(x,\xi) \in \mathbb{R}^{2d}} \left| \partial_x^\alpha \partial_\xi^\beta a(x,\xi) \right| \langle \xi \rangle^N < \infty.
\end{align*}

Moreover, we define $\Sigma^{m,n}_0(\langle\xi\rangle^{-N};\mathbb{R}^{2d})$ as the subset of $ \Sigma^{m,n}(\langle\xi\rangle^{-N};\mathbb{R}^{2d})$ consisting of all symbols $a$ such that 
\begin{align*}
\partial_x^\alpha \partial_\xi^\beta a \in C_0(\mathbb{R}^{2d}) \quad \text{for all } \alpha, \beta \in \mathbb{N}^d \text{ with } |\alpha| \leq m, \; |\beta| \leq n - 1 - d.
\end{align*}

\end{definition}
Equipped with the norm $M^{-N}_{m,n}(.)$, both spaces $\Sigma^{m,n}(\langle\xi\rangle^{-N};\mathbb{R}^{2d})$ and $\Sigma^{m,n}_0(\langle\xi\rangle^{-N};\mathbb{R}^{2d})$ are complete.\\
In what follows, we will work with the spaces
\begin{align*}
\Sigma(\mathbb{R}^{2d})=\Sigma^{0,d+1}(\langle\xi\rangle^{-(d+1)};\mathbb{R}^{2d}),
\end{align*}
and 
\begin{align*}
\Sigma_0(\mathbb{R}^{2d})=\Sigma^{0,d+1}_0(\langle\xi\rangle^{-(d+1)};\mathbb{R}^{2d}).
\end{align*}
We define the following space, which is dense in \( \Sigma^{\infty,\infty}_0(\langle \xi \rangle^{-\infty}; \mathbb{R}^{2d}) \).
\begin{definition} \label{definespaceA1}
We write $ a(y,\eta)\in \Sigma^\mathcal{H}_0(\mathbb{R}^{2d+2})$ with $y=(t,x)\in \mathbb{R}\times \mathbb{R}^d$ and $\eta=(\tau,\xi)\in \mathbb{R}\times \mathbb{R}^d$, if $a \in \Sigma^{\infty,\infty}_0(\langle\xi\rangle^{-\infty};\mathbb{R}^{2d+2})$ and verifies the following properties:
\begin{enumerate}
\item $a(y,\eta)$ is compactly supported in the $y$ variable.
\item  $a(y,\eta)$ has a compactly supported Fourier transform in the $\eta$ variable and, therefore, is holomorphic with respect to the $\eta$ variable.
\end{enumerate}
\end{definition}
We now introduce the definition of tangential symbols.
\begin{definition}(Tangential Symbols)
\label{def 3.2A1}
Let $m, n \in \mathbb{N} \cup \{+\infty\}$, with $n \geq d$, and $N \in \mathbb{R}^+$. We define $\Sigma^{m,n}_T(\langle \eta' \rangle^{-N}; \mathbb{R}^d \times \mathbb{R}^{d-1})$ as the set of functions $a(y, \eta')$, where $y \in \mathbb{R}^d$, $\eta' \in \mathbb{R}^{d-1}$, such that 
\begin{align*}
\partial^\alpha_y \partial^\beta_{\eta'} a \in L^1_{\mathrm{loc}}(\mathbb{R}^{2d-1}) \quad \text{for all } \alpha \in \mathbb{N}^d, \, \beta \in \mathbb{N}^{d-1} \text{ with } |\alpha| \leq m, \; |\beta| \leq n,
\end{align*}
and satisfying
\begin{align*}
M^{-N}_{T,m,n}(a) := \underset{\substack{|\beta| \leq n \\ |\alpha| \leq m}}{\text{max}} \underset{(y, \eta') \in \mathbb{R}^d \times \mathbb{R}^{d-1}}{\text{ess sup}} \, \left| \partial^\alpha_y \partial^\beta_{\eta'} a(y, \eta') \right| \langle \eta' \rangle^N < \infty.
\end{align*}
We denote by $
\Sigma^{m,n}_{T,0}(\langle \eta' \rangle^{-N}; \mathbb{R}^d \times \mathbb{R}^{d-1})$ the subset of $
\Sigma^{m,n}_T(\langle \eta' \rangle^{-N}; \mathbb{R}^d \times \mathbb{R}^{d-1}),$
consisting of all tangential symbols $a$ such that 
\begin{align*}
\partial^\alpha_y \partial^\beta_{\eta'} a \in C_0(\mathbb{R}^{2d-1}) \quad \text{for all } \alpha \in \mathbb{N}^d, \, \beta \in \mathbb{N}^{d-1} \text{ with } |\alpha| \leq m, \;|\beta| \leq n-d.
\end{align*}
\end{definition}
Equipped with the norm
$M^{-N}_{T,m,n}(.)$,
both spaces $\Sigma^{m,n}_T(\langle\eta'\rangle^{-N};\mathbb{R}^{d}\times \mathbb{R}^{d-1})$ and  \\$\Sigma^{m,n}_{T,0}(\langle\eta'\rangle^{-N};\mathbb{R}^{d}\times \mathbb{R}^{d-1})$ are complete. 

From now on,  we denote by $h\leq 1$ a small positive semi-classical parameter.
\begin{definition}(Semi-classical operators)
For  $ a\in \Sigma(\mathbb{R}^{2d})$, we define the associated semi-classical operator by  
\begin{align}
\label{8µA1}
\Op^h(a)u(x)=a(x, hD_x)u(x)= (2\pi)^{-d}\int _{\mathbb{R}^d} e^{ix.\xi} a(x,h\xi)\hat{u}(\xi)d\xi,  
\end{align}
for all $u\in \mathcal{S}(\mathbb{R}^d)$.
\end{definition}
The kernel of $\Op^h(a)$ is expressed as
\begin{align}
K(x,y)&=(2\pi)^{-d}\int e^{i\xi.(x-y)}a(x,h\xi)d\xi=(2\pi h)^{-d}\int e^{i\xi.\frac{(x-y)}{h}}a(x,\xi)d\xi \nonumber
\\&=h^{-d} k(x,\frac{x-y}{h}),
\end{align}
with \begin{align} \label{noyauA1}
k(x,y)=(2\pi)^{-d}\int e^{i\xi.y}a(x,\xi)d\xi.
\end{align}
\begin{remark}
Note that taking $L=(1-iy.\nabla_\xi)/\langle y \rangle ^2$, one has $L\, exp(iy.\xi)=\mathrm{exp}(iy.\xi)$ and so by integration by parts one obtains
\begin{align} \label{4.333A1}
k(x,y)=(2\pi)^{-d}\int e^{i\xi.y} (^tL)^N a(x,\xi)d\xi,
\end{align}
for $N\leq d+1$, with $^tL=(1+iy.\nabla_\xi)/\langle y\rangle^2$. Therefore,
\begin{align}
\label{10µµA1}
|k(x,y)|\leq M_{0,d+1}^{-(d+1)}(a)\langle y\rangle^{-(d+1)}, 
\end{align}
for all $x,y\in \mathbb{R}^d$.
\end{remark}
\begin{proposition}
\label{corappenA1}
Consider the operator $A$ defined on $L^2(\mathbb{R}^d)$ by   \begin{align*} Af(x)=\int  K(x,y) f(y)dy,
\end{align*}
with kernel $K(x,y)=h^{-d}k(x,\frac{x-y}{h})$ such that 
\begin{align*}
|k(x,\nu)|\leq L_0\langle \nu\rangle^{-d-\delta}, \quad \quad \quad x,\nu\in \mathbb{R}^d, 
\end{align*}
for some $\delta>0$ and $L_0 >0$. Then $A$ extends as a continuous operator on $L^2(\mathbb{R}^d)$ with
\begin{align*} \|A\|_{\mathcal{L}(L^2(\mathbb{R}^d))}\leq C_{d,\delta}L_0,
\end{align*}
for some $C_{d,\delta}>0$.
\end{proposition}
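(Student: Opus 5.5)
This is a Schur-type bound, and I will obtain it from Schur's test, or equivalently from Young's convolution inequality after dominating the kernel pointwise. Set
\begin{align*}
G_h(w)=h^{-d}L_0\langle w/h\rangle^{-d-\delta}, \qquad w\in\mathbb{R}^d .
\end{align*}
The hypothesis $|k(x,\nu)|\leq L_0\langle\nu\rangle^{-d-\delta}$ then gives $|K(x,y)|\leq G_h(x-y)$ for all $x,y$. The $x$-dependence of $k$ is harmless, because the bound is uniform in $x$. Hence for $f\in L^2(\mathbb{R}^d)$ we have, pointwise,
\begin{align*}
|Af(x)|\leq \int G_h(x-y)|f(y)|\,dy=(G_h*|f|)(x).
\end{align*}

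Next I compute the $L^1$ norm of $G_h$. The change of variables $w=h\nu$ gives
\begin{align*}
\|G_h\|_{L^1}=L_0\int_{\mathbb{R}^d}\langle\nu\rangle^{-d-\delta}d\nu=:C_{d,\delta}L_0 .
\end{align*}
This integral is finite because $d+\delta>d$; in polar coordinates it is bounded by $|S^{d-1}|\int_0^\infty (1+r^2)^{-(d+\delta)/2}r^{d-1}dr<\infty$. The scaling in $h$ cancels exactly, so the constant is independent of $h$.

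Young's inequality, $\|G_h*|f|\|_{L^2}\leq\|G_h\|_{L^1}\|f\|_{L^2}$, then yields $\|Af\|_{L^2}\leq C_{d,\delta}L_0\|f\|_{L^2}$. To be more explicit, one can run Schur's test directly. Writing $|K|=|K|^{1/2}|K|^{1/2}$ and applying Cauchy--Schwarz,
\begin{align*}
|Af(x)|^2\leq \Big(\int |K(x,y)|dy\Big)\Big(\int |K(x,y)||f(y)|^2dy\Big),
\end{align*}
and then integrating in $x$ using $\sup_x\int|K|dy\leq C_{d,\delta}L_0$ and $\sup_y\int|K|dx\leq C_{d,\delta}L_0$.

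To justify the definition, I first note that $Af$ is well defined as an absolutely convergent integral for $f$ in a dense class such as $\mathcal{S}(\mathbb{R}^d)$, or for any $f\in L^2$, since $y\mapsto K(x,y)$ is in $L^2$ for each fixed $x$. The bound above then shows that $A$ extends (or already acts) continuously on $L^2(\mathbb{R}^d)$, by density. There is no real obstacle. The only point requiring care is that the $x$-dependence of the kernel rules out a direct convolution argument, and this is handled by the uniform majorant $G_h$, or by checking both Schur integrals separately.
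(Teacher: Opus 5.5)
Your proof is correct and is essentially the paper's argument: the paper gives no details beyond saying the result follows from Schur's lemma. Your two bounds $\sup_x\int|K(x,y)|\,dy\le C_{d,\delta}L_0$ and $\sup_y\int|K(x,y)|\,dx\le C_{d,\delta}L_0$, with the $h$-independent constant $C_{d,\delta}=\int\langle\nu\rangle^{-d-\delta}\,d\nu$, are exactly what Schur's lemma needs. The Young's-inequality variant with the majorant $G_h$ is an equivalent repackaging of the same estimate.
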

The Proof of Proposition \ref{corappenA1} is based on Schur's lemma and can be found in \cite{1A1}. 

Using \eqref{10µµA1} and Proposition \ref{corappenA1}, we obtain the following corollary.
\begin{corollary} \label{corolary4.7A1}
If  $ a\in \Sigma(\mathbb{R}^{2d})$, then $\Op^h(a):L^2(\mathbb{R}^d)\longrightarrow L^2(\mathbb{R}^d)$ continuously.
\end{corollary}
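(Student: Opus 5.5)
The plan is to write $\Op^h(a)u(x)=\int K(x,y)u(y)\,dy$ with kernel $K(x,y)=h^{-d}k\bigl(x,\tfrac{x-y}{h}\bigr)$, where $k$ is given by \eqref{noyauA1}, and then to apply Proposition \ref{corappenA1} with $\delta=1$. The only point to check is the kernel bound. For $a\in\Sigma(\mathbb{R}^{2d})=\Sigma^{0,d+1}(\langle\xi\rangle^{-(d+1)};\mathbb{R}^{2d})$, the estimate \eqref{10µµA1} gives
\begin{align*}
|k(x,\nu)|\leq M^{-(d+1)}_{0,d+1}(a)\,\langle \nu\rangle^{-(d+1)},\qquad x,\nu\in\mathbb{R}^d .
\end{align*}
This is exactly the hypothesis of Proposition \ref{corappenA1} with $L_0=M^{-(d+1)}_{0,d+1}(a)$ and $\delta=1$.

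I would check the derivation of \eqref{10µµA1} once more, since it is the key step. The integration by parts in \eqref{4.333A1} uses $({}^tL)^N$ with $N=d+1$. Expanding $({}^tL)^{d+1}a$ gives terms of the form $\langle y\rangle^{-2(d+1)}$ times polynomials in $y$ of degree at most $d+1-j$, multiplied by $\partial_\xi^\beta a$ with $|\beta|\le d+1$. Because $|y|\le\langle y\rangle$, each coefficient is bounded by a constant times $\langle y\rangle^{-(d+1)}$.

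Each derivative satisfies $|\partial_\xi^\beta a(x,\xi)|\le M^{-(d+1)}_{0,d+1}(a)\langle\xi\rangle^{-(d+1)}$, and $\langle\xi\rangle^{-(d+1)}$ is integrable on $\mathbb{R}^d$. So the $\xi$-integral converges absolutely and yields \eqref{10µµA1}, up to a dimensional constant absorbed into $C_{d,\delta}$. The integration by parts is justified without boundary terms by the same integrability. If one prefers, first truncate $a$ in $\xi$ by a cutoff $\chi(\epsilon\xi)$ and let $\epsilon\to0$ by dominated convergence. Only weak derivatives in $L^1_{\mathrm{loc}}$ with essential sup bounds are needed, which is what membership in $\Sigma$ provides.

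With the kernel bound in hand, Proposition \ref{corappenA1} (Schur's lemma) gives, for $u\in\mathcal{S}(\mathbb{R}^d)$,
\begin{align*}
\|\Op^h(a)u\|_{L^2(\mathbb{R}^d)}\leq C_{d}\,M^{-(d+1)}_{0,d+1}(a)\,\|u\|_{L^2(\mathbb{R}^d)},
\end{align*}
uniformly in $h\in(0,1]$. The uniformity comes from the change of variables $\nu=(x-y)/h$: it shows $\int|K(x,y)|\,dy$ and $\int|K(x,y)|\,dx$ are bounded by $L_0\int\langle\nu\rangle^{-d-1}d\nu$ independently of $h$. By density of $\mathcal{S}(\mathbb{R}^d)$ in $L^2(\mathbb{R}^d)$, $\Op^h(a)$ extends uniquely to a continuous operator on $L^2(\mathbb{R}^d)$, which proves the corollary.

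The main obstacle is only the bookkeeping in the integration by parts under low regularity. Everything else is a direct application of earlier results.
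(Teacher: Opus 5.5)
Your proposal is correct and follows the paper's own route. The paper also bounds the kernel $k$ via the integration by parts \eqref{4.333A1} with $N=d+1$, and then applies Proposition \ref{corappenA1} with $\delta=1$. You additionally spell out the details the paper leaves implicit: the dimensional constant, the $h$-independence of the Schur bounds after the change of variables $\nu=(x-y)/h$, and the density extension from $\mathcal{S}(\mathbb{R}^d)$.
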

With Proposition \ref{corappenA1}, we also have the following result. For a detailed proof, we refer the reader to Lemma 5.12 in \cite{1A1}.
\begin{corollary} \label{lem4.7A1}
For $a\in \Sigma_T^{0,d}(\langle\xi\rangle^{-(d+1)};\mathbb{R}^{d}\times \mathbb{R}^{d-1})$,
the associated operator 
\begin{align*}
\Op^h(a)u(y)=a(y, hD'_y)u(z,y')= (2\pi)^{1-d}\int _{\mathbb{R}^{d-1}} e^{iy'.\eta'} a(z,y',h\eta')\hat{u}(z,\eta')d\eta', \quad u\in \mathcal{S}(\mathbb{R}^d)
\end{align*}
with $y = (y',z)\in \mathbb{R}^{d-1}\times \mathbb{R}$, $\eta' \in \mathbb{R}^{d-1}$ (where the Fourier transform is taken with respect to the $y'$ variables) extends to a uniformly bounded operator on $L^2(\mathbb{R}^d)$.
\end{corollary}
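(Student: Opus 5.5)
The statement to prove is Corollary \ref{lem4.7A1}: for a tangential symbol $a\in \Sigma_T^{0,d}(\langle\xi\rangle^{-(d+1)};\mathbb{R}^{d}\times \mathbb{R}^{d-1})$, the operator $a(y,hD'_y)$ is bounded on $L^2(\mathbb{R}^d)$ uniformly in $h$. The plan is to freeze the normal variable $z$ and apply Proposition \ref{corappenA1} in dimension $d-1$ on each slice $\{z=\text{const}\}$.

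\emph{Slice operator and kernel.} For fixed $z\in\mathbb{R}$, the operator acts on $v=u(\cdot,z)\in L^2(\mathbb{R}^{d-1})$ by
\begin{align*}
A_z v(y')=\int_{\mathbb{R}^{d-1}} K_z(y',w')\,v(w')\,dw' .
\end{align*}
As in the derivation of \eqref{noyauA1}, but in $d-1$ dimensions, its kernel is $K_z(y',w')=h^{-(d-1)}k_z\big(y',\tfrac{y'-w'}{h}\big)$, where
\begin{align*}
k_z(y',\nu')=(2\pi)^{1-d}\int_{\mathbb{R}^{d-1}} e^{i\nu'.\eta'}a(z,y',\eta')\,d\eta'.
\end{align*}

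\emph{Kernel decay.} Use the operator $L=(1-i\nu'.\nabla_{\eta'})/\langle\nu'\rangle^2$, which satisfies $L e^{i\nu'.\eta'}=e^{i\nu'.\eta'}$. Integrate by parts $d$ times, as in \eqref{4.333A1}. Each application of ${}^tL$ produces a factor $\langle\nu'\rangle^{-1}$ and involves at most one $\eta'$-derivative of $a$, so only derivatives of order $\le d$ appear, which the class $\Sigma_T^{0,d}$ allows. The decay $\langle\eta'\rangle^{-(d+1)}$ of these derivatives makes the $\eta'$-integrals converge absolutely. Hence, for a.e. $z$ and all $y',\nu'$,
\begin{align*}
|k_z(y',\nu')|\le C\,M^{-(d+1)}_{T,0,d}(a)\,\langle\nu'\rangle^{-d}=C\,M^{-(d+1)}_{T,0,d}(a)\,\langle\nu'\rangle^{-(d-1)-1}.
\end{align*}
This is exactly the hypothesis of Proposition \ref{corappenA1} in dimension $d-1$ with $\delta=1$. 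That proposition gives $\|A_z\|_{\mathcal{L}(L^2(\mathbb{R}^{d-1}))}\le C_{d-1,1}\,C\,M^{-(d+1)}_{T,0,d}(a)$, uniformly in $z$ and in $h$.

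\emph{Integration in $z$.} By Fubini, for $u\in\mathcal{S}(\mathbb{R}^d)$,
\begin{align*}
\|\Op^h(a)u\|^2_{L^2(\mathbb{R}^d)}=\int\|A_z u(\cdot,z)\|^2_{L^2(\mathbb{R}^{d-1})}dz\le C^2\int\|u(\cdot,z)\|^2_{L^2(\mathbb{R}^{d-1})}dz=C^2\|u\|^2_{L^2(\mathbb{R}^d)} .
\end{align*}
The bound is independent of $h$, so the operator extends by density to a uniformly bounded operator on $L^2(\mathbb{R}^d)$.

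\emph{Main obstacle.} The delicate step is the integration by parts under low regularity. The symbol is only $L^\infty$ in $(z,y')$, and its $\eta'$-derivatives are only $L^1_{\mathrm{loc}}$, so the manipulation must be justified. I would first carry it out for symbols smoothed in $\eta'$, or simply note that $a(z,y',\cdot)$ lies in $W^{d,1}$ in $\eta'$ for a.e. $(z,y')$, which is enough to move the derivatives onto $a$. I would also need measurability of $z\mapsto\|A_z\|$, which follows from the measurability of the kernel.
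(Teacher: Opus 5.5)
Your proof is correct and follows essentially the paper's route. The paper only invokes Proposition \ref{corappenA1} and refers to Lemma 5.12 of \cite{1A1}, whose argument is exactly yours: freeze $z$, integrate by parts $d$ times in $\eta'$ to get the kernel decay $\langle\nu'\rangle^{-(d-1)-1}$ (which is why the class requires $n\geq d$), apply the Schur-type bound in dimension $d-1$ uniformly in $z$ and $h$, and conclude by Fubini in $z$.
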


In what follows, we define a sequence of scales $H=(h_k)_k$ as a sequence of positive real numbers converging to $0$. When utilizing this type of sequence, we will use $\Op^h$ instead $\Op^{h_k}$ for the sake of simplicity.\\

We now recall the definition of a semi-classical measure on $\mathbb{R}^d$. 
\begin{definition}(Semi-classical measure) \label{semiinRA1}
Consider a bounded sequence $(u_k)_k$ in $L^2(\mathbb{R}^d)$ and a sequence of scales  $H=(h_k)_k$. Let $\mu$ be a nonnegative Radon measure on $\mathbb{R}^{2d}$. We say that the sequence  $(u_k)_k$ admits $\mu$ as a semi-classical measure at
scale $H$ if the following holds:
\begin{align*}
 (\Op^h(a)u_k, u_k )_{L^2(\mathbb{R}^d)}\underset{k\rightarrow +\infty}{\longrightarrow} \int_ {\mathbb{R}^{2d}} a(x,\xi) d\mu(x,\xi)=\langle\mu, a\rangle,
\end{align*}
for every $a\in \Sigma_0(\mathbb{R}^{2d})$.
\end{definition}
We also recall the notion of mass leakage at infinity.
\begin{definition} [Mass leakage at infinity]\label{massA1} We say that no mass leaks at infinity at scale $H$ if the following condition holds
\begin{align*}
\underset{R\rightarrow +\infty}{\mathrm{lim}}\underset{k\rightarrow +\infty}{\mathrm{lim \; sup}}\;\Big(\int_ {|x|\geq R} |u_k(x)|^2dx+\int_{h_k |\xi|\geq R}|\hat{u}_k(\xi)|^2 d\xi\Big)=0.
\end{align*}
Otherwise, we say that there is mass leakage at scale $H$ at infinity.
\end{definition}
\subsection{On a manifold}
In what follows, let $\mathcal{N}$ be a $C^1$ manifold of dimension $d$ equipped with a density measure $\rho$ that allows one to define $L^2(\mathcal{N})$. We denote by $\Sigma_c(T^*\mathcal{N})$ the space of functions $a\in C_c(T^*\mathcal{N})$ such that 
\begin{align}
\partial^\beta_\xi a\in C_c(T^*\mathcal{N})\quad for \quad |\beta|\leq d+1.
\end{align}
Let  $\mathcal{B}(\mathcal{N})$  represent the algebra of bounded families $B_h$ of operators on
$L^2(\mathcal{N})$ and $\mathcal{R}(\mathcal{N})$ the ideal of $\mathcal{B}(\mathcal{N})$ consisting of operators $B_h$ that converge
uniformly to $0$. The quotient algebra  of $\mathcal{B}(\mathcal{N})$ by $\mathcal{R}(\mathcal{N})$ is denoted by $\mathcal{Q}(\mathcal{N})$.
For $a \in  \Sigma_c(T^*\mathcal{N})$ supported in $T^*\mathcal{N}_{|O}$, where $C = (O, \phi)$ is a local chart of $\mathcal{N}$, we use $a^c$ to denote the local representative of $a$ in this chart. Consequently,
$a^c\in C(V \times \mathbb{R} )$ for some open subset $V \subset \mathbb{R}^d$, furthermore, the derivatives $\partial^\beta_\xi a^c$ are continuous for $ |\beta|\leq d+1$. Let $\theta \in C_c(O)$ be continuous function with $\theta=1$ near the projection on $\mathcal{N}$ of $\mathrm{supp}( a)$.
Consider the family of operators $(A^h(\theta, \phi))\in \mathcal{B}(\mathcal{N})$ defined by 
\begin{align}
\label{13££££A1}
(A^h(\theta, \phi)u)\circ \phi^{-1}=a^c(x,hD_x)(\theta u\circ \phi^{-1}),
\end{align}
for all $u \in L^2(\mathcal{N} )$.
According to Lemma 1.10 in \cite{10A1}, the class of the operator $(A^h(\theta, \phi))$ in $\mathcal{Q}(\mathcal{N})$ does not depend on $\theta$ and $\phi$, and so we write $[\Op^h](a)$ to denote this class. Furthermore, let $B_h$ and $\tilde{B}_h$ be two representatives of a class in $\mathcal{Q}(\mathcal{N})$,
that is, $[B_h] = [\tilde{B}_h]$. Note that, for $\varphi \in C(\mathcal{N})$, one has $[\varphi B_h] = [\varphi \tilde{B}_h]$. Thus, one writes $[\varphi B_h] = [\varphi] [B_h]$ where $[\varphi]$ stands for the image in $\mathcal{Q}(\mathcal{N})$ of the multiplication by $\varphi$. Hence, for $a\in \Sigma_c(T^*\mathcal{N})$
one has
\begin{align}
\label{13££A1}
[\Op^h](\varphi a)=[\varphi][\Op^h](a).
\end{align}
\begin{proposition}
\label{pro5555A1}
There exists a unique linear map $[\Op^h]:\Sigma_c(T^*\mathcal{N})\rightarrow \mathcal{Q}(\mathcal{N})$ extending the one defined in \eqref{13££££A1} and satisfying \eqref{13££A1} for any continuous function $\varphi$ and any $a \in \Sigma_c(T^*\mathcal{N})$.
\end{proposition}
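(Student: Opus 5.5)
The plan is to build $[\Op^h]$ by a partition of unity subordinate to an atlas, and then to show that this choice does not matter and that the resulting map is the only one compatible with \eqref{13££££A1} and \eqref{13££A1}.

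\textbf{Existence.} Take a finite or locally finite atlas $(O_j,\phi_j)_j$ of $\mathcal{N}$ and a continuous partition of unity $(\varphi_j)_j$ subordinate to it, with $\sum_j\varphi_j=1$. For $a\in\Sigma_c(T^*\mathcal{N})$, the projection of $\mathrm{supp}(a)$ is compact, so only finitely many $j$ contribute. Each $\varphi_j a$ is still in $\Sigma_c(T^*\mathcal{N})$: multiplying by a function of $x$ alone preserves continuity of the $\partial_\xi^\beta$ derivatives for $|\beta|\le d+1$. Moreover $\varphi_j a$ is supported over $O_j$, so its class $[\Op^h](\varphi_j a)$ is given by \eqref{13££££A1} and, by Lemma 1.10 of \cite{10A1}, does not depend on the cutoff $\theta$ or the chart. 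We then set
\begin{align*}
[\Op^h](a):=\sum_j [\Op^h](\varphi_j a).
\end{align*}

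\textbf{Independence and consistency.} Let $(\psi_k)_k$ be another partition of unity. Then $\varphi_j\psi_k a$ is supported over $O_j\cap O_k$, so Lemma 1.10 of \cite{10A1} applies on this common chart domain. Using linearity of the local quantization in the symbol, we get
\begin{align*}
\sum_j[\Op^h](\varphi_j a)=\sum_{j,k}[\Op^h](\varphi_j\psi_k a)=\sum_k[\Op^h](\psi_k a).
\end{align*}
If $a$ is already supported in a single chart $O$, we may choose a partition containing that chart and apply the same argument with the refinement $\varphi_j\cdot 1_O$. Locally this gives $\sum_j[\Op^h](\varphi_j a)=[\Op^h](a)$, since the local quantization is linear in $a$ for a fixed chart and cutoff. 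Hence the new map extends the one defined by \eqref{13££££A1}.

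\textbf{Property \eqref{13££A1}.} For continuous $\varphi$, we have $[\Op^h](\varphi a)=\sum_j[\Op^h](\varphi\varphi_j a)$, and $\varphi\varphi_j a$ is supported in a single chart. So it suffices to prove \eqref{13££A1} locally, which is the key step. In coordinates it says that
\begin{align*}
a^c(x,hD_x)\bigl(\theta\,\varphi u\bigr)-\varphi\, a^c(x,hD_x)(\theta u)
\end{align*}
tends to $0$ in operator norm, modulo a choice of cutoff. The operator $\varphi\,a^c(x,hD)$ has symbol $\varphi(x)a^c(x,\xi)$, so it is really the local quantization of $\varphi a$. Under the kernel representation $h^{-d}k(x,(x-y)/h)$, the commutator $[a^c(x,hD),\varphi]$ has kernel
\begin{align*}
h^{-d}k\Bigl(x,\frac{x-y}{h}\Bigr)\bigl(\varphi(y)-\varphi(x)\bigr).
\end{align*}
By \eqref{10µµA1}, $|k(x,\nu)|\le C\langle\nu\rangle^{-d-1}$. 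We split the region $|x-y|\le\delta$, where uniform continuity of $\varphi$ on the compact support gives a factor $\epsilon$, from $|x-y|>\delta$, where $\langle\nu\rangle^{-d-1}\le\langle\nu\rangle^{-d-1/2}(h/\delta)^{1/2}$. Proposition \ref{corappenA1} then shows that the norm is at most $C\epsilon+C(h/\delta)^{1/2}$, which tends to $0$. This commutator estimate, with $\varphi$ only continuous, is the main obstacle; it is handled by this uniform-continuity splitting rather than by symbolic calculus.

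\textbf{Uniqueness.} Suppose $L$ is another linear map extending \eqref{13££££A1} and satisfying \eqref{13££A1}. Then
\begin{align*}
L(a)=\sum_j L(\varphi_j a)=\sum_j[\Op^h](\varphi_j a),
\end{align*}
where the first equality uses linearity and the second holds because each $\varphi_j a$ is supported in a chart, where $L$ agrees with \eqref{13££££A1}. This is exactly the map constructed above, so $L=[\Op^h]$.
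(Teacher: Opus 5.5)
Your proposal is correct and follows the same route the paper takes by citing Proposition 1.11 of \cite{10A1}: a continuous partition of unity subordinate to an atlas, chart-independence from Lemma 1.10 there, and uniqueness from linearity alone.

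One remark concerns the step you flag as the main obstacle. Since $\Op^h$ is a left quantization, $\Op^h(\varphi a^c)=\varphi\,\Op^h(a^c)$ holds exactly. Also, $[\varphi][B_h]$ denotes $[\varphi B_h]$, with multiplication applied after $B_h$. So $[\Op^h](\varphi a)=[\varphi][\Op^h](a)$ is an exact identity in each chart. Your commutator estimate is valid but is not needed for it; what it actually proves is the extra fact that $[\varphi]$ commutes with $[\Op^h](a)$.
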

The proof of Proposition \ref{pro5555A1} follows from the use of continuous partitions of unity (see Proposition 1.11 in \cite{10A1}).

Denote $\lambda = \ell^\infty/c_0$ the space of bounded complex sequences modulo
sequences converging to $0$. Consider $(u_k)_k$ a bounded sequence in $L^2(\mathcal{N} )$ and
$H =(h_k)_k$ sequence of scales. For $a\in \Sigma_c(T^*\mathcal{N})$, 
\begin{align*}
[([\Op^h](a)u_k,u_k)_{L^2(\mathcal{N})}]_\lambda
\end{align*}
stands for the class in $\lambda$ of the sequence $([\Op^h](a)u_k,u_k)_{L^2(\mathcal{N})}$.
In the case where $(u_k)_k$ is bounded in $L^2_{
\mathrm{loc}}(\mathcal{N})$ it is appropriate to compute
$$([\Op^h ](a)\psi u_k , u_k)_{L^2(\mathcal{N})}$$
for $a\in \Sigma_c(T^*\mathcal{N})$ and $\psi\in C^\infty_c(\mathcal{N})$ with $\psi=1$ on $\mathrm{supp}(a)$. 

Using the above notions, we define a semi-classical measure on the case of manifolds as following.
\begin{definition}[Semi-classical measure on a manifold]
\label{def11µA1}
Consider a bounded sequence $(u_k)_k$ in $L^2_{\mathrm{loc}}(\mathcal{N};\mathbb{C})$ and a sequence of scales  $H=(h_k)_k$. Let $\mu$ be a nonnegative Radon measure on $T^*\mathcal{N}$. We say that the sequence  $(u_k)_k$ admits $\mu$ as a semi-classical measure at
scale $H$ if:
\begin{align*}
 [([\Op^h](a)\psi u_k, u_k )_{L^2(\mathcal{N})}]_\lambda \underset{k\rightarrow +\infty}{\longrightarrow}\langle \mu, a\rangle,
\end{align*}
for all $a\in \Sigma_c(T^*\mathcal{N})$ and $\psi\in C^\infty_c(\mathcal{N})$ such that $\psi=1$ on $\mathrm{supp}(a)$.
\end{definition}
\begin{remark} \label{remarK 4.11A1}
The Definition \ref{def11µA1} does not
depend on the choice of the function $\psi$, and aligns with 
the Definition \ref{semiinRA1} of a semi-classical measure in the case of a $L^2(\mathbb{R}^d)$-bounded sequence: Let $\mu$ be a semi-classical measure associated with a sequence $(u_k)_k$ in $L^2_{\mathrm{loc}}(\mathcal{N})$, and let $C =(O, \phi)$ be any local chart. We denote   by $\mu^c=(\phi^{-1})^*\mu$ the local representative of $\mu$ in $C$, and
by $u^c_k$ the local representative of $u_k$, that is, $u^c_k= (\phi^{-1})^*u_k = u_k \circ \phi^{-1}$. Then, for any compact
$K \subset \phi(O)$, $a \in \Sigma_0(\mathbb{R}^{2d})$ with $\mathrm{supp}( a) \subset K \times \mathbb{R}^d$, and $\psi \in C^\infty_c(\phi(O))$ with
$\psi=1$ in a neighborhood of the x-projection of $\mathrm{supp}(a)$, we have
\begin{align*}
(\Op^h(a)\psi u_k^c, u_k^c )_{L^2(\mathbb{R}^d)}\underset{k\rightarrow +\infty}{\longrightarrow}\langle \mu^c, a\rangle.
\end{align*}
\end{remark}
\begin{definition}[Countable at infinity space] \label{countableth}
    A topological space $X$ is called countable at infinity (or $\sigma$-compact) if $X$ can be represented as the union of a sequence of compact subsets of $X$.
\end{definition}
\begin{proposition}
\label{pro1.2µµA1}
Suppose $H = (h_k)_k$ is a sequence of scales and $(u_k)_k$ a
sequence of functions on $\mathcal{N}$.
\begin{itemize}
\item If $(u_k)_k$ is bounded in $L^2(\mathcal{N})$, then there exists a semi-classical measure $\mu$ at scale $H$ associated to the sequence $(u_k)_k$.
\item If $\mathcal{N}$ is countable at infinity and $(u_k)_k$ is bounded in $L^2_
{\mathrm{loc}}(\mathcal{N} )$, then there exists a semi-classical measure $\mu$ at scale $H$ associated to the sequence $(u_k)_k$.
\end{itemize}
\end{proposition}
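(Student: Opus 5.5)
We prove Proposition \ref{pro1.2µµA1} by the classical Gérard / Gérard--Leichtnam route: build the measure as a weak-$*$ limit of positive functionals on a separable space of symbols. In the $L^2(\mathcal{N})$-bounded case we fix a countable dense subset $\{a_j\}_j$ of $\Sigma_c(T^*\mathcal{N})$. Density is taken for the topology given by the seminorms $\sup|\partial^\beta_\xi a|$, $|\beta|\le d+1$, together with a support condition, and using a finite atlas on each compact set.

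\textbf{Step 1: a limiting linear functional.} By Proposition \ref{pro5555A1} and Corollary \ref{corolary4.7A1}, each class $[\Op^h](a)$ has a representative that is uniformly bounded on $L^2(\mathcal{N})$. Its norm is controlled by the seminorm $M^{-(d+1)}_{0,d+1}$ of the local representatives, summed over a fixed partition of unity. Hence the sequence $(\Op^h(a_j)u_k,u_k)$ is bounded for every $j$. A diagonal extraction gives a subsequence, still denoted $(u_k)_k$, along which $\ell(a_j)=\lim_k([\Op^h](a_j)\psi u_k,u_k)$ exists for all $j$. Since $|([\Op^h](a)\psi u_k,u_k)|\le C\,N(a)\sup_k\|u_k\|^2$ for a continuous seminorm $N$, the limit extends by density to a continuous linear functional $\ell$ on $\Sigma_c(T^*\mathcal{N})$. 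The limit does not depend on the representative or on $\psi$, because two representatives differ by an element of $\mathcal{R}(\mathcal{N})$.

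\textbf{Step 2: positivity.} This is the main obstacle. We must show $\ell(a)\ge 0$ whenever $a\ge 0$, and this has to be done with only $C^1$ charts and no smoothness of $a$ in $x$. We first treat $a$ of the form $a=b^2$ with $b\ge0$ smooth and compactly supported in a chart. There we use a sharp Gårding-type inequality, or more simply the identity $\Op^h(b)^*\Op^h(b)=\Op^h(b^2)+o(1)$ in $\mathcal{L}(L^2)$. To get the $o(1)$, we compute the kernel of the difference and apply Proposition \ref{corappenA1}. The kernel is $h^{-d}k_h(x,(x-y)/h)$, where $k_h$ involves $b(x,\cdot)-b(y,\cdot)$ with $|x-y|\lesssim h\langle\nu\rangle$. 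Uniform continuity of $b$ in $x$ together with the decay $\langle\nu\rangle^{-(d+1)}$ gives $\sup|k_h|\langle\nu\rangle^{d+\delta}\to0$. This yields $\ell(b^2)=\lim\|\Op^h(b)\psi u_k\|^2\ge0$. For a general $a\ge0$, we write $a+\varepsilon\chi=(\sqrt{a+\varepsilon\chi})^2$, with $\chi$ a smooth cutoff equal to $1$ on the support of $a$, and then approximate. The estimate $|\ell(\varepsilon \chi)|\le C\varepsilon$ then gives $\ell(a)\ge0$.

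\textbf{Step 3: conclusion.} Positivity makes $\ell$ a positive functional, bounded by $C\sup|a|$ on functions supported in a fixed compact set. By Riesz representation and density of $\Sigma_c$ in $C_c(T^*\mathcal{N})$, there is a nonnegative Radon measure $\mu$ with $\ell(a)=\langle\mu,a\rangle$, which proves the first item. For the second item, where $\mathcal{N}$ is countable at infinity and $(u_k)_k$ is bounded in $L^2_{\mathrm{loc}}$, we take an exhaustion $\mathcal{N}=\bigcup_n K_n$ by compact sets and cutoffs $\psi_n\in C^\infty_c(\mathcal{N})$ equal to $1$ near $K_n$. The sequence $(\psi_n u_k)_k$ is then $L^2$-bounded, so the first item applies to it. A further diagonal extraction over $n$ produces compatible measures $\mu_n$ on $T^*\mathcal{N}_{|\mathrm{int}K_n}$. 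They agree on overlaps because the definition does not depend on $\psi$ (Remark \ref{remarK 4.11A1}), and they glue into a single Radon measure $\mu$.
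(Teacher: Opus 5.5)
Your argument is correct and is essentially the standard proof that the paper defers to (Section 1 of \cite{10A1} and Proposition 5.25 of \cite{1A1}). It runs through a diagonal extraction on a countable dense set of symbols, positivity from the adjoint and composition calculus modulo $o(1)$ for symbols only continuous in $x$, the Riesz representation, and an exhaustion with a second diagonal extraction in the $L^2_{\mathrm{loc}}$ case. The only slip is in Step~2: write $a+\varepsilon\chi^2=(\sqrt{a+\varepsilon\chi^2})^2$ with $\chi\ge 0$ smooth, compactly supported and equal to $1$ on $\mathrm{supp}(a)$. As you wrote it, $\sqrt{\varepsilon\chi}$ need not have continuous $\xi$-derivatives where $\chi$ vanishes, whereas $\sqrt{a+\varepsilon\chi^2}$ equals $\sqrt{\varepsilon}\,\chi$ off $\mathrm{supp}(a)$ and is the square root of a function bounded below where $\chi>0$, so it lies in $\Sigma_c(T^*\mathcal{N})$.
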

For a proof of Proposition \ref{pro1.2µµA1}, we refer the reader to Section 1 in \cite{10A1} and Proposition 5.25 in \cite{1A1}. \\

The Defintion \ref{def11µA1} can be extended to the case of a vector valued sequences as following.
\begin{definition}(Hermitian semi-classical measures)
For $N \in \mathbb{N}^*$, consider $(u_k)_k$ a bounded sequence in $L^2_{\mathrm{loc}}(\mathcal{N};\mathbb{C}^N)$ and $H=(h_k)_k$ a sequence of scales. Let $\mu$ be a nonnegative $N\times N$-matrix valued Radon measure on $T^*\mathcal{N}$. One says that the sequence  $(u_k)_k$ admits $\mu$ as a semi-classical measure at
scale $H$ if:
\begin{align*}
 [([\Op^h](a)\psi u_k, u_k )_{L^2(\mathcal{N})}]_\lambda\underset{k\rightarrow +\infty}{\longrightarrow}\langle \mathrm{tr}\big(a(x,\xi)\mu\big), 1\rangle=\int_{T^*\mathcal{N}}\mathrm{tr}\big(a(x,\xi)d\mu(x,\xi)\big),
\end{align*}
for all $N\times N$-matrix $a$ with entries in $ \Sigma_c(T^*\mathcal{N})$ and $\psi\in C^\infty_c(\mathcal{N})$ such that $\psi=1$ on $\mathrm{supp}(a)$.
\end{definition}
The following result is the analogue of Proposition \ref{pro1.2µµA1}.
\begin{proposition}
\label{Pro2µµA1}
Let $N \in \mathbb{N}^*$ and $H = (h_k)_k$  a sequence of scales. Consider $(u_k)_k$ a sequence of function on $\mathcal{N}$ valued in $\mathbb{C}^N $.
\begin{itemize}
\item If $(u_k)_k$ is bounded in $L^2(\mathcal{N};\mathbb{C}^N)$, then there exists a Hermitian semi-classical measure $\mu$ at scale $H$ associated to the sequence $(u_k)_k$.
\item If $\mathcal{N}$ is countable at infinity and $(u_k)_k$ is bounded in $L^2_{\mathrm{loc}}(\mathcal{N};\mathbb{C}^N)$, then there exists a Hermitian semi-classical measure $\mu$ at scale $H$ associated to the sequence $(u_k)_k$.
\end{itemize}
\end{proposition}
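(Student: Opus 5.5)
The plan is to reduce the vector-valued statement to the scalar Proposition \ref{pro1.2µµA1} by polarization, and to use Hermitian positivity to turn the resulting limits into a nonnegative matrix-valued measure. We treat the $L^2$ case and the $L^2_{\mathrm{loc}}$ case (with $\mathcal{N}$ countable at infinity) in parallel; only the compactness and diagonal extraction step differs.

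\emph{Step 1: scalar measures.} For each pair $(i,j)$ and each $\theta\in\mathbb{C}$, the sequence $u_k^i+\theta u_k^j$ is bounded in $L^2$ (respectively $L^2_{\mathrm{loc}}$). We take $\theta\in\{1,-1,i,-i\}$. By Proposition \ref{pro1.2µµA1} and finitely many successive extractions, we obtain a common subsequence, still denoted $(u_k)_k$, along which every such sequence admits a nonnegative scalar semi-classical measure. We call $\nu_{ij}^{\theta}$ the measure attached to $u_k^i+\theta u_k^j$, and $\nu_{ii}$ the measure attached to $u_k^i$. The sesquilinear identity
\begin{align*}
([\Op^h](a)\psi u^j_k,u^i_k)=\frac14\sum_{\theta\in\{\pm1,\pm i\}}\bar\theta\,([\Op^h](a)\psi(u^i_k+\theta u^j_k),u^i_k+\theta u^j_k)
\end{align*}
holds for scalar $a$. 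Hence the left-hand side converges to $\langle \mu_{ij},a\rangle$, where $\mu_{ij}=\frac14\sum\bar\theta\,\nu^{\theta}_{ij}$ is a complex Radon measure. We check the index convention against $\mathrm{tr}(a\mu)$ so that the limit of $([\Op^h](a)\psi u_k,u_k)$ equals $\sum_{i,j}\langle\mu_{ji},a_{ij}\rangle$. This yields existence of the matrix measure $\mu=(\mu_{ij})$ satisfying the defining convergence for every matrix symbol $a$ with entries in $\Sigma_c(T^*\mathcal{N})$.

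\emph{Step 2: Hermitian nonnegativity.} For a fixed vector $c\in\mathbb{C}^N$, apply the scalar result to $w_k=\sum_i \bar c_i u^i_k$. After one more extraction, $w_k$ admits a nonnegative measure $\nu_c$. Expanding by sesquilinearity gives $\nu_c=\sum_{i,j}\bar c_i c_j\mu_{ij}$, so $c^*\mu c\ge0$ as a measure. Running this over a countable dense set of $c$, with a diagonal extraction, shows that $\mu$ is Hermitian and nonnegative. The same conclusion can also be obtained directly: choose $a=\varphi(x,\xi)\,c c^*$ with $\varphi\ge0$ and use the scalar positivity, i.e. the nonnegativity of scalar semi-classical measures.

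The main obstacle is making the countable extraction compatible with all the test functions. In the scalar proof the subsequence already works for every $a\in\Sigma_c$, by separability of $\Sigma_c$ together with the uniform $L^2$ bound given by Corollary \ref{corolary4.7A1}. In the $L^2_{\mathrm{loc}}$ case we additionally exhaust $\mathcal{N}$ by compacts $K_n$, take cutoffs $\psi_n$, and extract diagonally, exactly as in the scalar case. Independence from $\psi$, as in Remark \ref{remarK 4.11A1}, carries over entrywise.
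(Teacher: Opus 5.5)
Your argument is correct, but it follows a different route from the one the paper implies. The paper gives no separate proof. It presents the statement as the analogue of the scalar existence proposition, whose proof is deferred to \cite{10A1} and \cite{1A1}. That suggests rerunning the scalar construction directly on matrix-valued symbols:
\begin{itemize}
\item diagonal extraction over a countable dense family of symbols, with uniform $L^2$ bounds;
\item positivity of the limit functional on Hermitian nonnegative matrix symbols, via a G{\aa}rding-type argument;
\item a matrix-valued Riesz representation.
\end{itemize}
You instead use the scalar proposition as a black box. Polarization on the finitely many sequences $u_k^i+\theta u_k^j$ yields the entries as complex combinations of nonnegative measures. Nonnegativity of the scalar measure of $c^*u_k=\sum_i\bar c_i u_k^i$ then gives $c^*\mu c\geq 0$, which yields Hermitian nonnegativity. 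Your route needs no matrix G{\aa}rding inequality or matrix Riesz theorem, uses only finitely many extractions, and inherits the $L^2_{\mathrm{loc}}$ case verbatim from the scalar one. The cost is index bookkeeping. The direct route produces the matrix measure in one stroke and makes the positivity structure explicit.

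Two cosmetic points.
\begin{itemize}
\item \emph{Index convention.} You define $\mu_{ij}$ as the limit of $([\Op^h](a)\psi u_k^j,u_k^i)$. Then the limit of $([\Op^h](a)\psi u_k,u_k)$ is $\sum_{i,j}\langle\mu_{ij},a_{ij}\rangle$, not $\sum_{i,j}\langle\mu_{ji},a_{ij}\rangle$. So the matrix measure in the definition is the transpose $(\mu_{ji})_{i,j}$. Likewise $\nu_c=\sum_{i,j}\bar c_i c_j\mu_{ji}$. This is harmless, since nonnegativity for all $c$ is invariant under $c\mapsto \bar c$.
\item \emph{Extractions in Step 2.} Once all $\mu_{ij}$ exist along your subsequence, $([\Op^h](\varphi)\psi w_k,w_k)$ already converges for every $c$ by sesquilinearity. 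Any further extraction only identifies this limit with a nonnegative scalar measure. So each $c\in\mathbb{C}^N$ can be treated separately, and no diagonal argument over a dense set of $c$ is needed. Your ``direct'' variant with $a=\varphi\, cc^*$ says exactly this.
\end{itemize}
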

\section{The measure propagation equation} \label{uuuA1}
Recall that $\mathcal{L}=\mathbb{R}\times \mathcal{M}$ with boundary $\partial\mathcal{L}=\mathbb{R}\times \partial\mathcal{M}$.  Suppose that the sequence $(u_k)_k$ of weak-solutions
to the system \ref{eqimA1}, as given in Proposition \ref{proµµµA1}, is bounded in $L^2_{\mathrm{loc}}(\mathcal{L})$ and that $(u_{k_{\mid \partial \mathcal{L}}})_k$ and $(h_k \partial_n u_{k_{\mid \partial \mathcal{L}}})_k$ are bounded in $L^2_{\mathrm{loc}}(\partial\mathcal{L})$. By Proposition \ref{pro1.2µµA1}, there exists a semi-classical measure $\mu$ at scale $H$ associated to the sequence $(u_k)_k$. Furthermore, by Proposition \ref{Pro2µµA1}, there exists a Hermitian semi-classical measure on $T^*\partial \mathcal{L}$ associated to the vector $\tilde{U}_k= \,^t(u_{k_{\mid \partial \mathcal{L}}},h_k \partial_n u_{k_{\mid \partial \mathcal{L}}})$ given by
\begin{align*}
\nu=
\begin{pmatrix}
    \nu^{0,0} & \nu^{0,1} \\
    \nu^{1,0} & \nu^{1,1} \\
\end{pmatrix}.
\end{align*}
Our measure propagation equation is the following.
\begin{theorem}
\label{th1A1}
Suppose that
\begin{align*}
\mathrm{supp}(\mu)\subset \mathrm{Char}(p)\cap T^*\mathcal{L}\setminus 0 \quad\text{and}\quad \mathrm{supp}(\nu^{i,j})\subset T^*\partial\mathcal{L}\setminus 0 \quad \text{for}\quad i,j \in \{0,1\}.
\end{align*}
Then, one has
\begin{align}
\label{13£££A1}
&H_p\mu=-^t H_p\mu \nonumber
\\&=\int_{\,^\parallel \mathcal{H}_\partial \cup \,^\parallel \mathcal{G}_\partial}  (\delta_{\varrho^{+}}+\delta_{\varrho^{-}}) \;d\big(\mathrm{Im}(\nu^{1,0}) \big)+\int_{\,^\parallel \mathcal{H}_\partial \cup \,^\parallel \mathcal{G}_\partial} \frac{\delta_{\varrho^{+}}-\delta_{\varrho^{-}}}{\zeta^+-\zeta^-} d\big(\nu^{1,1}+R(\tau,x',\xi')\nu^{0,0}\big),
\end{align} 
where \begin{align*}
R(\tau,x',\xi')=\tau^2-\underset{1\leq i,j\leq d-1}{\sum}g^{i,j}(x',0)\xi_i\xi_j.
\end{align*}
Here, $^\parallel \mathcal{H}_\partial$ and  $^\parallel \mathcal{G}_\partial$ denote respectively the hyperbolic and glancing sets as introduced in Definition \ref{defn11A1}, and $ \varrho^\pm$, $ \zeta^\pm$ are given in \eqref{6£££A1}.
\end{theorem}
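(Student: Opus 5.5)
The plan is to derive the equation by the classical energy-commutator argument in the spirit of Gérard–Leichtnam and Burq–Lebeau, adapted to $C^1$ coefficients as in \cite{1A1}. Since both sides are distributions on $T^*\mathcal{L}$ and the interior part is standard, I would localize with a partition of unity. Away from $\partial\mathcal{L}$, the identity $H_p\mu=0$ follows from the usual computation with $(u_k)_k$ solving $Pu_k=0$. So the work is near a point of $\partial\mathcal{M}$, in the quasi-normal geodesic coordinates of Proposition \ref{pro9A1}. There $P=-h^{-2}\big((hD_z)^2+Q(z)\big)$ up to lower order terms, with $Q(z)$ a tangential second order operator whose symbol at $z=0$ is $-R(\tau,x',\xi')$, by \eqref{6***A1}.

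First I would reduce the class of test symbols. Since $\mathrm{supp}(\mu)\subset \mathrm{Char}(p)$ and, near the boundary, $p=\zeta^2-R+O(z)$, it suffices to test against symbols of the form
\begin{align*}
b(t,x,\tau,\xi)=b_0(t,x',z,\tau,\xi')+b_1(t,x',z,\tau,\xi')\,\zeta,
\end{align*}
with $b_0,b_1$ smooth tangential symbols compactly supported away from $(\tau,\xi')=0$. Any test function agrees with such a $b$ on $\mathrm{Char}(p)$ modulo $p$, and the hypothesis on the supports removes the zero section. Both operators $\Op^h(b_0)$ and $\Op^h(b_1)hD_z$ are then well defined on functions on $\{z\geq 0\}$ via Corollary \ref{lem4.7A1}.

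Next, for $B_h=\Op^h(b_0)+\Op^h(b_1)hD_z$, I would compute
\begin{align*}
\frac{i}{h}\Big[(h^2Pu_k,B_h u_k)_{L^2(z>0)}-(B_hu_k,h^2Pu_k)_{L^2(z>0)}\Big]=0,
\end{align*}
integrating by parts in $z$ with Green's formula; the tangential variables give no boundary terms. The interior contribution equals $(\frac{i}{h}[h^2P,B_h]u_k,u_k)+o(1)$, and I expect it to converge to $\pm\langle\mu,H_pb\rangle$. This is where the $C^1$ regularity is the key difficulty. The commutator of $\Op^h(b)$ with $g^{ij}(x)$ has principal part $\frac{h}{i}\{g^{ij},b\}$ with only an $o(h)$ (not $O(h^2)$) remainder. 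To get this I would regularize $g$ at a scale $\varepsilon$, use the kernel bound \eqref{10µµA1} and Proposition \ref{corappenA1}, and let $\varepsilon\to0$ after $k\to\infty$, as in \cite{1A1}.

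The boundary terms are $h$ times pairings at $z=0$ of $u_k$, $hD_zu_k$ and $(hD_z)^2u_k$. For the $b_0$ part they give $\big(\Op^h(b_0)\,h\partial_nu_k,u_k\big)_{\partial\mathcal{L}}-\big(\Op^h(b_0)u_k,h\partial_nu_k\big)_{\partial\mathcal{L}}$. This converges to a multiple of $\langle \mathrm{Im}\,\nu^{1,0},b_0|_{z=0}\rangle$. For the $b_1hD_z$ part, a term $(hD_z)^2u_k$ appears at the boundary. I would eliminate it using the equation, $(hD_z)^2u_k=-Q(0)u_k+O(h)$ in the trace sense, which produces $\langle \nu^{1,1}+R\,\nu^{0,0},b_1|_{z=0}\rangle$. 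Justifying these traces uses the assumed $L^2_{\mathrm{loc}}$ bounds on $u_k|_{\partial\mathcal{L}}$ and $h\partial_nu_k|_{\partial\mathcal{L}}$.

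Finally, I would rewrite the boundary pairings in terms of $b$. At a point of $^\parallel\mathcal{H}_\partial$ one has $b(\varrho^\pm)=b_0+b_1\zeta^\pm$, hence
\begin{align*}
b_0=\tfrac12\big(b(\varrho^+)+b(\varrho^-)\big),\qquad b_1=\frac{b(\varrho^+)-b(\varrho^-)}{\zeta^+-\zeta^-},
\end{align*}
which gives exactly the structure of \eqref{13£££A1}, after fixing normalizations of $\nu$ and signs from $n$ being inward. On $^\parallel\mathcal{G}_\partial$ the second quotient must be read as a limiting difference quotient, namely $\partial_\zeta b$ at $\zeta=0$. On $^\parallel\mathcal{E}_\partial$, I must show the boundary measures do not contribute. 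I would get this from an elliptic estimate at the boundary, showing that $\nu$ is supported in $\{R\geq0\}$; I expect this also needs care at $C^1$ regularity. I expect the main obstacle to be the low-regularity commutator with an $o(1)$ error after division by $h$, together with the trace terms involving second normal derivatives.
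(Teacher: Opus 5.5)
Your skeleton is the paper's. You localize at a boundary point in quasi-normal geodesic coordinates. You replace the test symbol by $c_0+c_1\zeta$ with tangential $c_0,c_1$, via the division of Proposition 8.2 in \cite{1A1} and $\mathrm{supp}(\mu)\subset\mathrm{Char}(p)$. You commute $h_k^2P$ with $\Op^h(c_0)+\Op^h(c_1)h_kD_z$ up to $o(h_k)$ using the $C^1$ calculus of \cite{1A1}. You integrate by parts in $z$ and remove $h_k^2\partial_z^2u_k$ at $z=0$ with the equation, so that $\langle R\nu^{0,0}+\nu^{1,1},c_1\rangle$ appears next to $\langle\mathrm{Im}(\nu^{1,0}),c_0\rangle$. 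Finally you interpolate $c_0,c_1$ from $c(\varrho^\pm)$. Your reading of the quotient on $^\parallel\mathcal{G}_\partial$ as $\partial_\zeta c$ at $\zeta=0$ is a correct precision that the paper leaves implicit.

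The genuine difference is how you handle the elliptic set. The paper (Lemma \ref{vsuppA1}) proves no new estimate there. Over $^\parallel\mathcal{E}_\partial$ the polynomial $p$ has no real root in $\zeta$. So for $d$ supported there one writes $d\zeta=-qp$ with $q=-d\zeta/p$ (resp.\ $d=-qp$ with $q=-d/p$). In identity \eqref{3A1} with $c_0=0,\ c_1=d$ (resp.\ $c_0=d,\ c_1=0$), the interior side is $\langle\mu,H_p(-qp)\rangle=-\langle\mu,p\,H_pq\rangle=0$. This forces $\langle R\nu^{0,0}+\nu^{1,1},d\rangle=0$ and $\langle\mathrm{Im}(\nu^{1,0}),d\rangle=0$. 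The argument is free: it kills exactly the two combinations that occur, and the division step itself makes no use of the boundary condition.

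Your route gives more, namely vanishing of $\nu$ itself over $^\parallel\mathcal{E}_\partial$. It costs a boundary elliptic parametrix at $C^1$ regularity and, above all, the Lopatinskii condition. The symbol you must invert is $\tau+\sqrt{-R}$, which is positive only because $\tau\in[\alpha,\alpha^{-1}]$ (Proposition \ref{hhA1}). For $\tau<0$ it vanishes on $\{g(\xi',\xi')=2\tau^2\}\subset{}^\parallel\mathcal{E}_\partial$, and the inclusion $\mathrm{supp}(\nu)\subset\{R\geq0\}$ is then unavailable. You should therefore say explicitly that this is where positivity of the time frequencies enters. In this paper the estimate you want is essentially already at hand. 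The last display in the proof of Theorem \ref{ellipA1} bounds an elliptic tangential operator applied to $\psi_1u_k|_{z=0}$ by $O(h_k^{1/2})$, which is smallness, not merely boundedness.

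One step you only announce also needs an argument: the convergence of the interior term to $\langle\mu,H_pc\rangle$. This is not automatic, because $H_pc=\tilde c_0+\tilde c_1\zeta+\tilde c_2\zeta^2$ does not decay in $\zeta$ and $u_k$ lives on $\{z\geq0\}$. The paper devotes Lemma \ref{lem11A1} to it, using three tools:
\begin{itemize}
\item a cutoff in $\zeta$ together with the no-mass-at-infinity bound;
\item the trace bound, to control the term $h_k(\psi u_k)_{|z=0}\otimes\delta_{z=0}$ produced when $h_kD_z$ hits $\mathds{1}_{z\geq0}$;
\item the equation, to replace $(h_kD_z)^2$ by operators of order at most one in $h_kD_z$.
\end{itemize}
These are ingredients you already invoke, so this is an omission rather than a flaw.
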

Note that, the identification $T^*\partial \mathcal{L}\simeq\;^\parallel \partial(T^*\mathcal{L})$ allows us to interpret the measures $\nu^{0,1}$, $\nu^{1,0}$ and $\nu^{1,1}$ originally defined on $T^*\partial \mathcal{L}$ as measures on $^\parallel \partial(T^*\mathcal{L})$. As a result, integration over the set  $^\parallel \mathcal{H}_\partial \cup \,^\parallel \mathcal{G}_\partial$ is well-defined.
\subsection{First results and observations}
In this section, we begin by proving in Lemma \ref{lem0A1} that the sequence $(u_k)_k$, introduced in the contradiction argument developed in Section \ref{strategyA1}, and $(h_k\partial_t u_k)_k$ are bounded in $L^2_{\mathrm{loc}}(\mathcal{L})$. Furthermore, we show that $(h_ku_k(t,.))_k$ and $(h^2_ku_k(t,.))_k$ are bounded in $H^1(\mathcal{M})$ and $H^2(\mathcal{M})$, respectively, for any $t \in \mathbb{R}$. The boundedness of $(u_k)_k$ leads, through Proposition \ref{pro1.2µµA1}, to the existence of a semi-classical measure 
$\mu$. Assuming that the traces of $(u_k)_k$ are bounded in $L^2_{\mathrm{loc}}(\partial\mathcal{L})$, which will be verified in Section \ref{exisµA1}, Proposition \ref{Pro2µµA1} further ensures the existence of a Hermitian measure $\nu$. Finally, by applying Propositions \ref{pro*A1} and \ref{mesurenonnulA1}, together with Lemmas \ref{lem0A1} and \ref{massleakA1} stated bellow, we derive  crucial properties of the measures $\mu$ and $\nu$, outlined in Proposition \ref{hhA1}. The results presented here are of fundamental importance to prove, in section \ref{proofpropagA1}, the propagation equation stated in Theorem \ref{th1A1}.

We recall that $n$ denotes the unitary normal inward pointing vector field to $\partial\mathcal{M}$. As mentioned in Section \ref{GeoA1}, in the quasi-normal geodesic coordinates introduced in Propositon \ref{pro9A1}, one has $\partial_n=\partial_z$ at $z=0$, where $n$ stands for the unitary normal inward pointing vector field to $\partial\mathcal{M}$ in the sense of the metric $g$.

\begin{proposition}
\label{pro2.1*A1}
For any $f\in H^{-1}(\mathcal{M})$ and $f_{\partial}\in H^{-\frac{1}{2}}(\partial\mathcal{M})$, there exists a unique $u\in H^1(\mathcal{M})$ that is a solution of
\begin{equation}
\label{10**A1}
    \begin{cases}
        -\triangle_g u+u &= f    \\ 
        \partial_nu_{\mid \mathcal{\partial M}}&= f_{\partial},
    \end{cases}
\end{equation} 
and \begin{align*}
 \|u\|_{H^1(\mathcal{M})}\leq C\Big(\|f\|_{H^{-1}(\mathcal{M})}+\|f_{\partial}\|_{H^{-\frac{1}{2}}(\partial\mathcal{M})}\Big).
\end{align*}
\end{proposition}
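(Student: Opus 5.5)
We prove Proposition \ref{pro2.1*A1} by the Lax--Milgram theorem applied to the variational form of the Neumann problem \eqref{10**A1}. On $H^1(\mathcal{M})$ we use the sesquilinear form
\begin{align*}
a(u,v)=(\nabla_g u,\nabla_g v)_{L^2V(\mathcal{M})}+(u,v)_{L^2(\mathcal{M})},
\end{align*}
which is exactly the $H^1$ inner product. It is therefore continuous and coercive with constant $1$; this is where the choice $m=1$ enters. The right-hand side is the antilinear form
\begin{align*}
\ell(v)=\langle f,\bar v\rangle_{H^{-1},H^{1}}+\langle f_\partial,\bar v_{|\partial\mathcal{M}}\rangle_{H^{-\frac12}(\partial\mathcal{M}),H^{\frac12}(\partial\mathcal{M})}.
\end{align*}
We read $H^{-1}(\mathcal{M})$ as the dual of $H^1(\mathcal{M})$, the natural choice for a Neumann problem.

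The key steps are the following.

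First, we check that the trace map $H^1(\mathcal{M})\to H^{1/2}(\partial\mathcal{M})$ is bounded. This holds for a $C^2$ boundary: using the quasi-normal charts of Proposition \ref{pro9A1} and a partition of unity, we reduce to the half-space. The metric is only $C^1$, but it is continuous and positive definite on a compact manifold. Hence the $H^1$ norms defined with $g$ are equivalent to those defined with a smooth reference metric, and the trace theorem carries over. It follows that $|\ell(v)|\le C(\|f\|_{H^{-1}}+\|f_\partial\|_{H^{-1/2}})\|v\|_{H^1}$.

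Second, Lax--Milgram gives a unique $u\in H^1(\mathcal{M})$ with $a(u,v)=\ell(v)$ for all $v\in H^1(\mathcal{M})$, together with the estimate $\|u\|_{H^1}\le\|\ell\|_{(H^1)'}$. This is the claimed bound.

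Third, we interpret the variational solution as a solution of \eqref{10**A1}. Testing with $v\in C^\infty_c(\mathring{\mathcal{M}})$, and noting that $\mathrm{div}(\nabla_g u)$ makes sense distributionally because the coefficients $(\det g)^{1/2}g^{ij}$ are $C^1$, we obtain $-\Delta_g u+u=f$ in the interior. For the boundary condition we define $\partial_n u\in H^{-1/2}(\partial\mathcal{M})$ by the generalized Green formula. This works because $\nabla_g u\in L^2V$ has divergence in $H^{-1}$, or more precisely because the interior equation holds, so the functional $v\mapsto a(u,v)-\langle f,\bar v\rangle$ vanishes on $H^1_0$ and factors through the trace. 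Surjectivity of the trace onto $H^{1/2}$ then identifies this functional with an element of $H^{-1/2}(\partial\mathcal{M})$, namely the weak normal derivative, and the variational identity shows that it equals $f_\partial$. Note the sign convention: $n$ is the inward normal, so Green's formula reads $\int(-\Delta_g u)\bar v=\int g(\nabla u,\nabla\bar v)+\int_{\partial}\partial_n u\,\bar v$. The sign in $\ell$ must be adjusted accordingly, which means $\ell$ carries $-\langle f_\partial,\bar v\rangle$; this does not affect the estimate.

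Uniqueness follows from coercivity. If $u$ solves the problem with $f=0$ and $f_\partial=0$, then $a(u,u)=0$, so $u=0$.

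The main obstacle is the low regularity in the third step. With a $C^1$ metric and $C^2$ boundary one cannot appeal to elliptic regularity to give $\partial_n u$ a classical meaning, so the boundary condition must be understood weakly as above. One also has to make sure that the trace and its right inverse remain bounded under the $C^2$ charts. We would handle this by flattening the boundary with $C^2$ diffeomorphisms, which preserve $H^1$ and $H^{1/2}$, and quoting the standard half-space results.
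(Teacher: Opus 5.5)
Your Lax--Milgram argument is correct. It is the standard variational proof, which the paper does not reproduce but defers to \cite{24A1}, and it is the same mechanism the paper uses in Appendix~\ref{appenAA1} to solve a related boundary problem in the proof of Lemma~\ref{selfA1}. Reading $H^{-1}(\mathcal{M})$ as $(H^1(\mathcal{M}))'$ and defining $\partial_n u\in H^{-\frac12}(\partial\mathcal{M})$ through the Green formula relative to $f$ is the right, indeed necessary, interpretation: for $f$ merely in $(H^1_0(\mathcal{M}))'$, the normal derivative of an $H^1$ function would not be intrinsically defined, so your first justification (``divergence in $H^{-1}$'') would not suffice on its own.
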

The proof of Proposition \ref{10**A1} can be found in \cite{24A1}.
This proposition leads to the following result.
\begin{proposition}
\label{pro*A1}
In addition to the hypothesis of Proposition \ref{pro2.1*A1} assume that $f\in L^2(\mathcal{M})$ and $f_{\partial}\in H^{\frac{1}{2}}(\partial\mathcal{M})$, 
then $u \in H^2(\mathcal{M})$  if $u$ is the  solution of \eqref{10**A1} and
\begin{align*}
    \|u\|_{H^2(\mathcal{M})}\leq C\Big(\|f\|_{L^2(\mathcal{M})}+\|f_{\partial}\|_{H^{\frac{1}{2}}(\partial\mathcal{M})}\Big).
    \end{align*}
\end{proposition}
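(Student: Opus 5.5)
This is $H^2$ elliptic regularity for the Neumann problem under only $C^1$ metric and $C^2$ boundary. The approach is the classical Nirenberg difference-quotient method. It is applied locally after flattening the boundary with the quasi-normal geodesic coordinates of Proposition \ref{pro9A1}, and combined with the $H^1$ estimate of Proposition \ref{pro2.1*A1}. The point is that the operator is in divergence form with $C^1$ coefficients $(\det g)^{1/2}g^{i,j}$, and that is exactly the regularity needed to differentiate once.

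The first step is to reduce to homogeneous boundary data. Since $\partial\mathcal{M}$ is $C^2$, there is a lifting $w\in H^2(\mathcal{M})$ with $\partial_n w_{|\partial\mathcal{M}}=f_\partial$ and $\|w\|_{H^2}\le C\|f_\partial\|_{H^{1/2}(\partial\mathcal{M})}$. In local coordinates one can take $w=z\,E f_\partial$ with a suitable extension operator $E$ and a cutoff, using $\partial_n=\partial_z$ at $z=0$. Then $v=u-w$ solves $-\Delta_g v+v=f+\Delta_g w-w\in L^2$ with $\partial_n v=0$. Here $\Delta_g w\in L^2$ because the $C^1$ coefficients allow expanding the divergence.

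Next, a partition of unity reduces to interior charts and boundary charts. On each piece $\chi v$ one again gets an equation with $L^2$ right-hand side, since the commutator terms only involve $\nabla v$ and $v$, which are controlled in $L^2$ by Proposition \ref{pro2.1*A1}. In the interior, one takes difference quotients $D_j^h$ in every direction. One tests the weak formulation $\int (\det g)^{1/2} g^{i,j}\partial_j(\chi v)\,\partial_i\varphi+\dots=\int F\varphi$ with $\varphi=D_j^{-h}D_j^h(\chi v)$. Uniform ellipticity together with the Lipschitz (indeed $C^1$) bound on the coefficients, which controls $D_j^h$ of the coefficients, gives $\|D_j^h\nabla(\chi v)\|_{L^2}\le C(\|F\|_{L^2}+\|v\|_{H^1})$. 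Hence $\chi v\in H^2$.

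In a boundary chart $\{z\ge0\}$ the same argument works for tangential difference quotients $D_{x_j}^h$, $j\le d-1$. These preserve the half-space, and because the Neumann condition is natural, $\varphi$ is an admissible test function in $H^1$ with no boundary term. This gives $\partial_{x_j}\nabla(\chi v)\in L^2$ for $j\le d-1$. The remaining normal derivative $\partial_z^2(\chi v)$ is recovered from the equation itself. Indeed $g^{d,d}=1+zh^{d,d}$ is bounded below, so $(\det g)^{1/2}g^{d,d}\partial_z^2(\chi v)$ equals an $L^2$ function minus mixed and tangential second derivatives and first-order terms with $C^0$ coefficients. Since the change of coordinates is $C^2$, pulling back preserves $H^2$, and summing over charts yields the estimate.

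I expect the main obstacle to be the low regularity. The chart is only $C^2$, so the transformed metric coefficients are only $C^1$ (metric $C^1$ composed with a $C^1$ Jacobian), and one must check that every commutator and difference quotient costs at most one derivative of a coefficient. One must also make sure the lifting $w$ of $H^{1/2}$ boundary data is legitimate with a $C^2$ boundary. Expressing $\partial_n$ as $\partial_z$ via Proposition \ref{pro9A1} handles the latter.
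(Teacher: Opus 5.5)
Your proof is correct and follows the same overall plan as the paper: first get tangential second derivatives near the boundary in the quasi-normal coordinates of Proposition \ref{pro9A1}, then recover $\partial_z^2$ from the equation, using that $g^{d,d}$ is bounded below. The tangential step is done differently, though.

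The paper applies $\partial_{x_j}$, $j\le d-1$, directly to the problem. It observes that $\partial_{x_j}u$ formally solves the same Neumann problem with data $\partial_{x_j}f-[\partial_{x_j},-\Delta_g]u\in H^{-1}(\mathcal{M})$ and $\partial_{x_j}f_\partial\in H^{-\frac12}(\partial\mathcal{M})$, and then invokes Proposition \ref{pro2.1*A1}. It keeps the inhomogeneous Neumann datum instead of lifting it. That is shorter, but identifying $\partial_{x_j}u$ with the solution given by Proposition \ref{pro2.1*A1} presupposes $\partial_{x_j}u\in H^1(\mathcal{M})$, since uniqueness there holds only within $H^1$.

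Your Nirenberg difference quotients are exactly what justifies this step, because $D_j^hv\in H^1$ automatically; a tangential mollification controlled by a commutator bound in the spirit of Lemma \ref{2.1***A1} would also work. They also give the quantitative estimate directly.

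The price of your route is the $H^2$ lifting of $f_\partial$, which is legitimate for a $C^2$ boundary. In a chart, $z\,\phi(z\langle D'\rangle)f_\partial$ with $\phi\in C^\infty_c(\mathbb{R})$ and $\phi(0)=1$ has normal derivative $f_\partial$ at $z=0$ and $H^2$ norm $\lesssim\|f_\partial\|_{H^{\frac12}}$. A partition of unity then glues the pieces.

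One detail to tidy concerns localization. After cutting off, $\partial_n(\chi v)=v\,\partial_n\chi$ need not vanish, so the weak formulation for $\chi v$ contains an extra term $\int(\det g)^{\frac12}g^{i,j}v\,\partial_j\chi\,\partial_i\bar\varphi$. It is harmless under tangential difference quotients, because $(\det g)^{\frac12}g^{i,j}v\,\partial_j\chi\in H^1$. Alternatively, you can remove it by taking $\chi=\chi_1(x')\chi_2(z)$ with $\chi_2'(0)=0$.
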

\begin{proof}[Proof]
It suffices to obtain the result locally near the boundary. Recall that in local coordinates $\mathcal{M}$ is given by $\{z\geq0\}$.\\
Let $1\leq j\leq {d-1}$. Applying $\partial_{x_j}$ to the first equation of \eqref{10**A1} we obtain
\begin{align}\label{tildefA1}
\tilde{f}:=-\triangle_g \partial_{x_j}u+\partial_{x_j}u=\partial_{x_j} f-[\partial_{x_j},-\triangle_g]u.
\end{align}
Since $[\partial_{x_j},-\triangle_g]$ is an operator of degree 2, and $u\in H^1(\mathcal{M})$, it follows that $[\partial_{x_j},-\triangle_g]u \in H^{-1}(\mathcal{M})$. Using further $\partial_{x_j}f\in H^{-1}(\mathcal{M})$,
we deduce from \eqref{tildefA1} that
$\tilde{f}\in H^{-1}(\mathcal{M}).$\\
Applying similarly $\partial_{x_j}$ to the second equation of \eqref{10**A1} we get
\begin{align*}
\tilde{f_{\partial}}:=\partial_n\partial_{x_j}u_{\mid \partial\mathcal{M}}=\partial_{x_j}f_{\partial}.
\end{align*}
Hence $\partial_{x_j}f_{\partial}\in H^{-\frac{1}{2}}(\partial\mathcal{M})$ gives $\tilde{f_{\partial}} \in  H^{-\frac{1}{2}}(\partial \mathcal{M})$.\\
Given that $\tilde{f}\in H^{-1}(\mathcal{M})$ and $\tilde{f_{\partial}} \in  H^{-\frac{1}{2}}(\partial \mathcal{M})$, it follows from Proposition \ref{pro2.1*A1} that $\partial_{x_j}u \in H^1(\mathcal{M})$, which in turn gives $\partial_{x_i}\partial_{x_j}u \in L^2(\mathcal{M})$ for $(i,j) \neq (d,d)$.\\
Therefore, writting the first equation of \eqref{10**A1} in local coordinates,
\begin{align*}
-(\text{det}\; g)^{-\frac{1}{2}}\partial_z \Big((\text{det}\; g)^{\frac{1}{2}} &g^{d,d}(x)\partial_zu\Big)
\\&- (\text{det}\; g)^{-\frac{1}{2}}\underset{\underset{(i,j) \neq (d,d)}{1\leq i,j\leq d}}{\sum}\partial_{x_i}\Big((\text{det}\; g)^{\frac{1}{2}}g^{i,j}(x)\partial_{x_j}u\Big)+u=f\in L^2(\mathcal{M}),
\end{align*}
we conclude that $\partial_z^2u \in L^2(\mathcal{M})$, which completes the proof.

\end{proof}
In the following results, we establish certain properties of the sequence introduced in the contradiction argument initiated in Section \ref{strategyA1}. Here we employ the same notations used in sections \ref{GeoA1} and \ref{reductionA1}. We recall that $\mathcal{L}=\mathbb{R}\times \mathcal{M}$ and $\partial\mathcal{L}=\mathbb{R}\times \partial\mathcal{M}$.
\begin{lemma}
\label{lem0A1}
The sequences $(u_k)_k$ and $(h_k\partial_t u_k)_k$ are bounded in $L^2_{\mathrm{loc}}(\mathcal{L})$. Furthermore, 
 $(h_ku_k(t,.))_k$ and $(h^2_ku_k(t,.))_k$ are bounded in $H^1(\mathcal{M})$ and $H^2(\mathcal{M})$, respectively, for any $t \in \mathbb{R}$, uniformly with respect to $k\in \mathbb{N}^*$.
\end{lemma}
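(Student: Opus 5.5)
The plan is to read every bound off the spectral description of $E_k$, using Lemma \ref{equiA1}, the conservation of the $\mathcal{H}$-norm along the unitary group $S(t)$, and the elliptic regularity of Proposition \ref{pro*A1}. By the contradiction setup of Section \ref{strategyA1}, $U_k(t)=(u_k(t),\partial_tu_k(t))\in E_k$ satisfies $\|U_k(t)\|_{\mathcal{H}}=\|U_{k|t=0}\|_{\mathcal{H}}=h_k^{-1}$ for every $t\in\mathbb{R}$. This holds because $S(t)$ is unitary and $E_k$ is invariant under $S(t)$, since it is spanned by eigenvectors of $A$. Consequently every pointwise-in-time bound below is uniform in $t$. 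Integrating over a compact time interval then turns the pointwise $L^2(\mathcal{M})$ bounds into $L^2_{\mathrm{loc}}(\mathcal{L})$ bounds.

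\emph{Step 1: $u_k$ bounded in $L^2$.} The naive estimate $\|u_k\|_{L^2}\le\|U_k\|_{\mathcal{H}}=h_k^{-1}$ is too weak. Instead set $V_k=A^{-1}U_k\in E_k$; this is well defined since $0$ is not an eigenvalue on $E_k$. Write $V_k=(v^0,v^1)$. The definition \eqref{opA1} of $A$ gives $AV_k=i(-v^1,(-\Delta_g+1)v^0)$, so $u_k=-iv^1$. Lemma \ref{equiA1} with $s=-1$, $r=0$ gives $\|V_k\|_{\mathcal{H}}\lesssim h_k\|U_k\|_{\mathcal{H}}$. Hence
\begin{align*}
\|u_k(t)\|_{L^2(\mathcal{M})}\le\|V_k(t)\|_{\mathcal{H}}\lesssim h_k\|U_k(t)\|_{\mathcal{H}}=1.
\end{align*}

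\emph{Step 2: $h_k\partial_tu_k$ in $L^2$ and $h_ku_k$ in $H^1$.} Both come directly from the definition of the $\mathcal{H}$-norm:
\begin{align*}
h_k\|\partial_tu_k(t)\|_{L^2(\mathcal{M})}+h_k\|u_k(t)\|_{H^1(\mathcal{M})}\le 2h_k\|U_k(t)\|_{\mathcal{H}}=2.
\end{align*}

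\emph{Step 3: $h_k^2u_k$ in $H^2$.} Since $E_k$ lies in all iterated domains of $A$, for each $t$ the function $u_k(t)\in H^2(\mathcal{M})$ solves \eqref{10**A1} with
\begin{align*}
f=-\partial_t^2u_k(t),\qquad f_\partial=-i\,\partial_tu_k(t)_{|\partial\mathcal{M}}.
\end{align*}
Here $\partial_tU_k=(\partial_tu_k,\partial_t^2u_k)$, and Lemma \ref{equiA1} with $r=1$ gives $\|\partial_tU_k\|_{\mathcal{H}}\lesssim h_k^{-1}\|U_k\|_{\mathcal{H}}=h_k^{-2}$. Therefore $\|\partial_t^2u_k\|_{L^2}\lesssim h_k^{-2}$. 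The trace theorem then gives
\begin{align*}
\|f_\partial\|_{H^{1/2}(\partial\mathcal{M})}\lesssim\|\partial_tu_k\|_{H^1(\mathcal{M})}\le\|\partial_tU_k\|_{\mathcal{H}}\lesssim h_k^{-2}.
\end{align*}
Proposition \ref{pro*A1} yields $\|u_k(t)\|_{H^2}\lesssim h_k^{-2}$, uniformly in $t$ and $k$.

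The main obstacle is Step 1. The energy normalization only controls $u_k$ at the level $h_k^{-1}$ in $H^1$, and the $O(1)$ bound in $L^2$ requires exploiting the spectral localization through $A^{-1}$ together with the identification of the first component of $U_k$ with the second component of $A^{-1}U_k$. The rest is routine. One point to check is the validity of the trace theorem for $H^1(\mathcal{M})\to H^{1/2}(\partial\mathcal{M})$ with a $C^2$ boundary, which is standard.
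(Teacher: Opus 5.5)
Your proposal is correct and follows essentially the same argument as the paper. The $L^2$ bound comes from $A^{-1}U_k$: the paper's $V_k=\sum_{\nu\in J_k}e^{it\lambda_\nu}\frac{U_\nu}{i\lambda_\nu}e_\nu=-iA^{-1}U_k$, whose second component is $u_k$. The $H^1$ and $L^2$ bounds follow from conservation of the $\mathcal{H}$-norm, and the $H^2$ bound follows from Lemma \ref{equiA1} applied to $AU_k$ (equivalently your $\partial_tU_k=iAU_k$), combined with the boundary condition, the trace theorem and Proposition \ref{pro*A1}.
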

\begin{proof}[Proof]
For $U_k=(u_k,\partial_tu_k)\in B^+$, one writes 
\begin{align*}
U_k=\sum_{\nu \in J_k} e^{ it\lambda_\nu}U_\nu e_\nu,
\end{align*}
where $e_\nu=(e_\nu^0,e_\nu^1)$, and $u_k(t)=\sum\limits_{\nu \in J_k} e^{ it\lambda_\nu}U_\nu e_\nu^0.$
Using the hypothesis $h_k\|U_{k_{\mid t=0}}\|_{\mathcal{H}}=1$, along with the expression
$U_{k_{\mid t=0}}=\sum\limits_{\nu \in J_k} U_\nu e_\nu$,
and the fact that $(e_\nu)_\nu$ forms an Hilbert basis of $\mathcal{H}$, we obtain  \begin{align}\label{2.8888A1}
h_k^2\sum_{\nu \in J_k} |U_\nu|^2=1.
\end{align}
Define \begin{align*}
V_k(t)=(v_k(t),\partial_tv_k(t))=\sum\limits_{\nu \in J_k} e^{ it\lambda_\nu}\frac{U_\nu}{i\lambda_\nu} e_\nu.
\end{align*}
By \eqref{2.8888A1} and the equivalence $h_k |\lambda_\nu| \simeq 1$ for $\nu \in J_k$,
we get
\begin{align} \label{2.9999A1}
\|V_k(t)\|_{\mathcal{H}}^2=\|v_k(t)\|_{H^1(\mathcal{M})}^2+\|\partial_tv_k(t)\|_{L^2(\mathcal{M})}^2 = \sum_{\nu \in J_k} |\frac{U_\nu}{\lambda_\nu}|^2 \lesssim h_k^2\sum_{\nu \in J_k} |U_\nu|^2= 1.
\end{align}
From \eqref{2.9999A1} and $\partial_tv_k(t)=\sum\limits_{\nu \in J_k} e^{ it\lambda_\nu}U_\nu e_\nu^0=u_k(t)$, we obtain
\begin{align}\label{int1A1}
\|u_k(t)\|_{L^2(\mathcal{M})}^2\lesssim 1, \quad \text{uniformly in } k.
\end{align}
Let $I$ be a bounded time interval. Integrating \eqref{int1A1} over $I$, we conclude that
\begin{align*}
\int_I\|u_k(t,.)\|^2_{L^2(\mathcal{M})}dt\lesssim |I|,
\end{align*}
where $|I|$ is the length of $I$. Hence, $(u_k)_k$ is bounded in $L^2_{\mathrm{loc}}(\mathcal{L})$, uniformly with respect to $k\in \mathbb{N}^*$.\\
Now, using the fact that 
\begin{align}\label{5.888sA1}
h_k^2\mathcal{E}(u_k)= \frac{1}{2}\Big(\|h_k u_k(t)\|_{H^1(\mathcal{M})}^2+\|h_k\partial_tu_k(t)\|_{L^2(\mathcal{M})}^2\Big)\lesssim h_k^2\|U_{k_{|t=0}}\|_{\mathcal{H}}^2=1,
\end{align}
we deduce that 
\begin{align}\label{int2A1}
 \|h_k\partial_tu_k(t)\|_{L^2(\mathcal{M})}^2\lesssim 1, \quad \text{uniformly in } k.
\end{align} Integrating \eqref{int2A1} over the time interval $I$, we conclude that
\begin{align*}
 \int_I\|h_k\partial_t u_k(t,.)\|^2_{L^2(\mathcal{M})}dt\lesssim |I|.
\end{align*}
In addition, \eqref{5.888sA1} gives 
\begin{align}
    \|h_k u_k(t)\|_{H^1(\mathcal{M})}^2\lesssim 1, \quad \text{for any } t \in \mathbb{R},\text{ uniformly in } k.
\end{align}
By Lemma \ref{equiA1} and \eqref{5.888sA1}, one has
\begin{align}
\|h_k^2 AU_k\|_{\mathcal{H}}\simeq \|h_k U_k\|_{\mathcal{H}}\lesssim 1, \quad \text{ uniformly in } k.
\end{align}
This gives
\begin{align}\label{5.111111sA1}
\|h_k^2\partial_tu_k(t,.)\|_{H^1(\mathcal{M})}\lesssim 1 \quad \text{and}\quad \|h_k^2 (-\Delta_g+1) u_k(t,.)\|_{L^2(\mathcal{M})}\lesssim 1,
\end{align}
for all $t \in \mathbb{R}$, uniformly in $k$.
The first inequality in \eqref{5.111111sA1}, together with the boundary condition in \eqref{equofcontraA1}, yields
\begin{align}\label{5.1222222222sA1}
 \|h_k^2\partial_n u_k(t,.)\|_{H^{\frac{1}{2}}(\partial \mathcal{M})}=\|h_k^2\partial_tu_k(t,.)\|_{H^{\frac{1}{2}}(\partial \mathcal{M})} \lesssim 1, \quad\text{ uniformly in } k.
\end{align}
Combining the second inequality in \eqref{5.111111sA1} with \eqref{5.1222222222sA1} and applying Proposition \ref{pro*A1}, we deduce
\begin{align*}
\|h_k^2u_k(t,.)\|_{H^2(\mathcal{M})}\lesssim 1, \quad \text{for any } t \in \mathbb{R},\text{ uniformly in } k.
\end{align*} 
This concludes the proof of Lemma \ref{lem0A1}.
\end{proof}
 From Lemma \ref{lem0A1} one deduces the following consequence.
\begin{lemma} \label{massleakA1}
No mass leaks at infinity at scale $H$ for $(\psi(t) u_k)_k$, for any $\psi \in C^\infty_c(\mathbb{R})$.
\end{lemma}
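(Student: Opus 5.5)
To prove Lemma \ref{massleakA1}, I would treat spatial and frequency leakage separately, working with $w_k=\psi(t)u_k$, viewed as a function on $\mathcal{L}$ (or, via a finite atlas and a partition of unity, as finitely many functions on $\mathbb{R}^{d+1}$). The space part is immediate. Since $\mathcal{M}$ is compact and $\psi$ has compact support, $w_k$ is supported in a fixed compact set of $\mathcal{L}$. By Lemma \ref{lem0A1}, $\|w_k\|_{L^2}\lesssim \|\psi\|_{L^2}$ uniformly in $k$. Hence, in each chart, the term $\int_{|y|\ge R}|w_k|^2$ vanishes identically for $R$ large, independently of $k$.

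For the frequency part, I would show that $(h_k\nabla_{t,x}w_k)_k$ is bounded in $L^2$. Once this holds, Plancherel gives
\begin{align*}
\int_{h_k|\eta|\ge R}|\hat w_k(\eta)|^2d\eta\le R^{-2}\int |h_k\eta|^2|\hat w_k(\eta)|^2 d\eta\lesssim R^{-2}\big(\|h_k\nabla w_k\|_{L^2}^2\big),
\end{align*}
which tends to $0$ as $R\to\infty$ uniformly in $k$.

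The needed bound comes from Lemma \ref{lem0A1}. We have $h_k\partial_t w_k=h_k\psi'u_k+\psi\,h_k\partial_tu_k$, and both terms are bounded in $L^2$ by the $L^2_{\mathrm{loc}}$ bounds on $u_k$ and $h_k\partial_tu_k$. The spatial derivative $h_k\nabla_x w_k$ is bounded because $\|h_ku_k(t)\|_{H^1(\mathcal{M})}\lesssim1$ uniformly in $t$, and integrating against $|\psi|^2$ gives the $L^2$ bound. In local coordinates the metric is continuous and positive definite on compacts, so Riemannian and Euclidean gradients have comparable norms.

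The only delicate step is the transfer to $\mathbb{R}^{d+1}$ near the boundary, where the functions live on $\{z\ge0\}$. I would first cut off with a smooth partition of unity $\chi_j$; the commutator terms $h_k(\nabla\chi_j)w_k$ are $O(h_k)$ in $L^2$. I would then extend each $\chi_jw_k$ to $\{z<0\}$ with a reflection-type $H^1$ extension operator, which is bounded on $L^2$ and $H^1$ uniformly. Applying it to $w_k$ and to $h_k w_k$ preserves the bound on $\|h_k\nabla(\cdot)\|_{L^2}+\|\cdot\|_{L^2}$, since the extension is linear and commutes with multiplication by $h_k$. Alternatively, one can simply extend by zero, which interprets Definition \ref{massA1} for functions on the half-space. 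The frequency estimate then uses the extended functions, and the result follows.
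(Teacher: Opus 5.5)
Your half-space bounds are correct. By Lemma \ref{lem0A1}, $\psi u_k$ is bounded in $L^2$ and $h_k\psi u_k$ is bounded in $H^1$ of $\{z\ge 0\}$ in the $(t,x)$ variables, and the spatial part of Definition \ref{massA1} is trivial. The gap is in the extension step, and it is not cosmetic.

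The lemma is needed for the extension by zero $\mathds{1}_{z\ge 0}\psi u_k$. That is the sequence whose semiclassical measure is $\mu$; the factor $\mathds{1}_{z\ge 0}$ appears explicitly in the proof of Lemma \ref{lem11A1}. Absence of mass leakage for this sequence is what allows $\mu$ to be tested, via Proposition 5.21 of \cite{1A1}, against $\tilde c_0$ (tangential, hence not decaying in $\zeta$) and against $|\psi|^2\tau^2$. A reflection extension $E(\psi u_k)$ is a different sequence, so tightness for it is not the statement that gets used.

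Your fallback, extending by zero and reusing the same $H^1$ estimate, does not work here. Unlike the Dirichlet setting of \cite{1A1}, the condition $\partial_n u+i\partial_t u=0$ does not force $u_{k|z=0}=0$. Hence
\[
h_k\partial_z\big(\mathds{1}_{z\ge 0}\psi u_k\big)=\mathds{1}_{z\ge 0}\,h_k\partial_z(\psi u_k)+h_k(\psi u_k)_{|z=0}\otimes\delta_{z=0}
\]
is not in $L^2$, and your bound $\int_{h_k|\eta|\ge R}|\hat w_k|^2\le R^{-2}\|h_k\nabla w_k\|_{L^2}^2$ becomes vacuous.

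The missing idea is that the zero extension is controlled only in $H^s$ for $s<\frac12$. The paper interpolates between the $L^2$ bound on $\psi u_k$ and the $H^1$ bound on $h_k\psi u_k$. This gives $h_k^s\psi u_k$ bounded in $H^s(\mathbb{R}^{d+1}_+)$ for $s\in[0,\frac12[$, a range where extension by zero is continuous into $H^s(\mathbb{R}^{d+1})$. The result is $\int_{h_k|\eta|\ge R}|\widehat{\psi u_k}|^2\lesssim R^{-2s}$, which suffices.

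Alternatively, you could keep your reflection extension and then use that multiplication by $\mathds{1}_{z\ge 0}$ is bounded on $\dot H^s$ for $0\le s<\frac12$. This bound is uniform under semiclassical rescaling because the Heaviside function is scale invariant. Either way, the restriction $s<\frac12$, forced by the nonvanishing trace, is the point your argument misses. The same mechanism reappears in the paper's proof of Lemma \ref{lem11A1}.
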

\begin{proof}[Proof]
By Lemma \ref{lem0A1}, in local coordinates on $\{z\geq 0\}$, the sequences $(\psi(t)u_k)_k$ and $(h_k \psi(t) u_k)_k$ are uniformly bounded with respect to $k\in \mathbb{N}^*$ in $L^2(\mathbb{R}^{d+1}_+)$ and $H^1(\mathbb{R}^{d+1}_+)$ (with respect to the variables $t$ and $x$), respectively. By interpolation, it follows that $(h_k^s \psi(t) u_k)_k$ is bounded in $H^s(\mathbb{R}^{d+1}_+)$ for $s\in [0,\frac{1}{2}[$ and thus by extension $(h_k^s \psi(t) u_k)_k \in H^s(\mathbb{R}^{d+1})$. Then, one writes 
\begin{align}\label{5.133333345A1}
\int_{h_k |\xi|\geq R}|\widehat{\psi u_k}(\xi)|^2 d\xi \leq R^{-2s}\int_{\mathbb{R}^d} h_k^{2s} |\xi|^{2s} |\widehat{\psi u_k}(\xi)|^2 d\xi \lesssim R^{-2s}.
\end{align}
 Letting $R \rightarrow +\infty$, the left-hand side of \eqref{5.133333345A1} tends to zero, which shows that no mass escapes at infinity in the sense of Definition \ref{massA1}. This concludes the proof.
\end{proof}

As proven in Lemma \ref{lem0A1}, the sequence $(u_k)_k$ is bounded in $L^2_{\mathrm{loc}}(\mathcal{L})$. Consequently, by Proposition \ref{pro1.2µµA1}, there exists a semi-classical measure $\mu$ at scale $H=(h_k)_k$ associated with the sequence $(u_k)_k$.
\begin{proposition} \label{boundnessaubordA1}
     The sequences $(u_{k_{\mid \partial \mathcal{L}}})_k$ and $(h_k \partial_n u_{k_{\mid \partial \mathcal{L}}})_k$ are bounded in $L^2_{\mathrm{loc}}(\partial\mathcal{L})$.
\end{proposition}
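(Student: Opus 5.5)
The plan is to derive both bounds from one boundary estimate for $h_k\partial_t u_k$, obtained from a Rellich-type multiplier identity combined with a tangential microlocal splitting at the boundary. Since $\partial_n u_k=-i\partial_t u_k$ on $\partial\mathcal{L}$, it suffices to show that $(h_k\partial_t u_k{}_{\mid\partial\mathcal{L}})_k$ is bounded in $L^2_{\mathrm{loc}}(\partial\mathcal{L})$, and then to deduce the bound for $(u_k{}_{\mid\partial\mathcal{L}})_k$ from it.

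\textbf{Reduction to $h_k\partial_t u_k$.} I will use the time-frequency localization, working with the auxiliary solutions $V_k$ from the proof of Lemma \ref{lem0A1}. These satisfy $\partial_t v_k=u_k$, so that $u_k=\sum_{\nu\in J_k} e^{it\lambda_\nu}U_\nu e^0_\nu$ and $h_k\partial_t u_k=\sum_{\nu\in J_k}(ih_k\lambda_\nu)e^{it\lambda_\nu}U_\nu e^0_\nu$. The multipliers $(ih_k\lambda_\nu)^{\pm1}$ are bounded uniformly in $k$ because $h_k\lambda_\nu\simeq 1$, as in Lemma \ref{equiA1}. I will apply the boundary estimate below to the family of solutions whose coefficients are $U_\nu/(ih_k\lambda_\nu)$; their data have $\mathcal{H}$-norm comparable to $h_k^{-1}$, uniformly in $k$. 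The trace of $h_k\partial_t$ of that solution is exactly $u_k{}_{\mid\partial\mathcal{L}}$. Hence the two statements are the same estimate applied to two families that obey the same normalization.

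\textbf{Multiplier identity.} I will choose a $C^1$ vector field $X$ on $\mathcal{M}$ with $X=\partial_z=n$ at $z=0$, which is possible in the charts of Proposition \ref{pro9A1}. I multiply $h_k^2(\partial_t^2-\Delta_g+1)u_k=0$ by $\psi(t)\,\overline{Xu_k}$, where $\psi\in C^\infty_c(\mathbb{R})$, and take real parts after integrating over $\mathcal{L}$. The interior terms are bounded by $h_k^2\mathcal{E}(u_k)\lesssim 1$, using Lemma \ref{lem0A1} and $g\in C^1$; only one derivative of the metric appears. The boundary term is, up to a factor,
\begin{align*}
\int_{\partial\mathcal{L}}\psi\Big(|h_k\partial_nu_k|^2+|h_k\partial_tu_k|^2-|h_k\nabla'_gu_k|^2-h_k^2|u_k|^2\Big),
\end{align*}
and by the boundary condition it equals $\int\psi\big(2|h_k\partial_tu_k|^2-|h_k\nabla_g'u_k|^2\big)+O(h_k^2\|u_k\|^2_{L^2(\partial\mathcal{L})})$. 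This identity bounds the trace in the region $g(\xi',\xi')\le(2-\varepsilon)\tau^2$, which contains the hyperbolic and glancing sets. To localize it, I insert a tangential semiclassical cutoff $\Op^h(\chi(\tau,x',\xi'))$ with $\chi$ supported in that cone, using Corollary \ref{lem4.7A1}. The resulting commutators have one fewer power of $h_k$ and are controlled by the $H^1$ and $H^2$ bounds of Lemma \ref{lem0A1}.

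\textbf{Elliptic region (main obstacle).} The hard part is the remaining region $g(\xi',\xi')>\tau^2$, where the tangential term dominates and the identity has the wrong sign. There I will use the Lopatinskii condition. In the elliptic region the tangential parametrix gives $h_k\partial_z u\approx-\sqrt{g(\xi',\xi')-\tau^2}\,u$ at $z=0$, up to terms controlled by interior norms. Combined with $h_k\partial_zu=-ih_k\partial_tu$, this yields an equation of the form $(\tau\mp\sqrt{g(\xi',\xi')-\tau^2})u\approx 0$. I expect the symbol to be elliptic on the relevant sign of $\tau$, namely $\tau>0$, which is guaranteed because $(U_k)_k\in B^+$. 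This makes the microlocalized trace small, or at least bounded, by interior quantities.

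With a $C^1$ metric the parametrix cannot be built classically. I will instead run a second multiplier argument: multiply by $\overline{\Op^h(\chi_e)u_k}$ with an elliptic tangential cutoff $\chi_e$, and exploit the positivity of $g(\xi',\xi')-\tau^2$ in a Gårding-type estimate. I expect this step to need the most care, in particular handling the commutators with $C^1$ coefficients and keeping the large-frequency tail under control via Lemma \ref{massleakA1}. Summing the two microlocal pieces then gives the claimed $L^2_{\mathrm{loc}}(\partial\mathcal{L})$ bounds.
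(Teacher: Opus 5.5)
Your plan is sound, but it reaches the trace bounds by a different route than the paper in two of the three boundary regions.

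The paper argues point by point in $L^2_{\varrho_0}(\partial\mathcal{L})$, region by region:
\begin{itemize}
\item \emph{Glancing points.} It uses exactly your Rellich multiplier: the multiplier $\Op^h(q)\psi\partial_z$, elimination of $h_k^2\partial_z^2u_k$ at $z=0$ through the equation, the boundary condition, and G{\aa}rding for $2\tau^2-g(\xi',\xi')$ after freezing coefficients.
\item \emph{Hyperbolic points.} It does not use the boundary condition. It integrates a positive tangential energy $N(W_k)$ of the first-order system $h_k\partial_zW_k=\tilde BW_k+\tilde F$ from $z=Z$ down to $z=0$.
\item \emph{Elliptic points.} It extends $u_k$ by zero, applies $\Op^h(e/p)$ to $h_k^2P\underline{v}$, and computes the $\delta,\delta'$ boundary layers by residues. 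This yields the trace relation $\Op^h\big(-\tfrac{f}{2}(\tau\varphi(\tau)+\sqrt{-r_0})\big)\psi_1u_{k|z=0}=O(h_k^{1/2})$, elliptic because $\tau>0$.
\end{itemize}

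Your remark that $2\tau^2-g(\xi',\xi')>0$ on the whole cone $g(\xi',\xi')<2\tau^2$ lets one Rellich argument replace the hyperbolic energy method.

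Your elliptic multiplier also works, but you should write out the step that actually carries it. Pair $h_k^2Pu_k=0$ with $A^*Au_k$, where $A$ is a tangential cut-off near an elliptic point times a cut-off in $z$, and use $h_k\partial_zu_k=\Op^h(\tau)u_k$ at $z=0$. This gives
\begin{align*}
\|h_k\nabla_gAu_k\|^2-\|h_kD_tAu_k\|^2+h_k\,\mathrm{Re}\big(\Op^h(\tau)Au_{k|z=0},Au_{k|z=0}\big)_{L^2(\partial\mathcal{L})}=O(h_k),
\end{align*}
with commutator errors controlled by Lemma \ref{lem0A1} and \eqref{5.3111111A1}. The interior form is nonnegative up to $O(h_k)$ by ellipticity. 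After the time localization \eqref{4*A1}, the boundary form is at least $\frac{\alpha}{2}\|Au_{k|z=0}\|^2-O(1)$. Hence $\|Au_{k|z=0}\|^2\lesssim1$.

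Interior G{\aa}rding alone gives nothing at $z=0$; the bound comes from the sign of this boundary term. That is why your $\mp$ must be a plus: the relation is $(\tau+\sqrt{g(\xi',\xi')-\tau^2})u\approx0$. With a minus sign the symbol would vanish on $g(\xi',\xi')=2\tau^2$.

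What each route buys:
\begin{itemize}
\item \emph{Yours is shorter.} It needs no residues, no Lemmas \ref{lem1Th}--\ref{lem3Th}, and no evolution in $z$.
\item \emph{Yours handles large $|\xi'|$.} With an order-zero symbol equal to $1$ for large $|\xi'|$, it controls high tangential frequencies uniformly. This control is needed to pass from the pointwise microlocal statements to $L^2_{\mathrm{loc}}$. Lemma \ref{massleakA1} cannot supply it, since it concerns interior, not boundary, mass.
\item \emph{The paper's extends further.} It isolates the genuine Lopatinskii relation and a hyperbolic estimate independent of the boundary condition, which is what would survive for general Lopatinskii conditions. Your elliptic argument rests on the sign of $\tau$, so it fails for $\tau<0$ even where the Lopatinskii condition holds.
\end{itemize}

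Two smaller points. Contrary to your remark, the first-order parametrix $e/p$ is usable with a $C^1$ metric, as the paper shows. Your spectral reduction to $h_k\partial_tu_k$ is correct but unnecessary, since each G{\aa}rding step bounds $Au_{k|z=0}$ directly.
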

The proof of Proposition \ref{boundnessaubordA1} is done in section \ref{exisµA1}.  Due to  Propositions \ref{boundnessaubordA1} and \ref{Pro2µµA1}, there exists a Hermitian semi-classical measure on $T^*\partial \mathcal{L}$ associated to the vector $\tilde{U}_k= \,^t(u_{k_{\mid \partial \mathcal{L}}},h_k \partial_n u_{k_{\mid \partial \mathcal{L}}})$ of the form
\begin{align*}
\nu=
\begin{pmatrix}
    \nu^{0,0} & \nu^{0,1} \\
    \nu^{1,0} & \nu^{1,1} \\
\end{pmatrix}.
\end{align*}
\begin{proposition}\label{mesurenonnulA1} Let $\psi\in C^\infty_c(\mathbb{R})$.
One has 
\begin{align*}
\|\psi(t)h_k \nabla_gu_k\|_{L^2V(\mathcal{L})}\simeq \|\psi(t) h_k\partial_tu_k\|_{L^2(\mathcal{L})}, \quad \text{as}\; k \rightarrow +\infty.
\end{align*}
\end{proposition}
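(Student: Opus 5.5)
We prove the equivalence through a time-localized energy (equipartition) identity. Multiply the equation $(\partial_t^2-\Delta_g+1)u_k=0$ by $\psi(t)^2 h_k^2\bar u_k$, integrate over $\mathcal{L}$ and take real parts. Integrate by parts in $t$ and in $x$ with Green's formula on $\mathcal{M}$. The regularity of Lemma \ref{lem0A1} ($h_k^2u_k(t)\in H^2(\mathcal{M})$, $h_k^2\partial_tu_k(t)\in H^1(\mathcal{M})$ uniformly) justifies both integrations. The outcome is
\begin{align*}
\|\psi h_k\nabla_g u_k\|^2_{L^2V(\mathcal{L})}-\|\psi h_k\partial_tu_k\|^2_{L^2(\mathcal{L})}
= -h_k^2\|\psi u_k\|^2_{L^2(\mathcal{L})}+2h_k^2\,\mathrm{Re}\!\int_{\mathcal{L}}\psi\psi'\,\partial_tu_k\,\bar u_k
+h_k^2\,\mathrm{Re}\!\int_{\partial\mathcal{L}}\psi^2\,\partial_n u_k\,\bar u_k .
\end{align*}
Since $n$ is inward pointing, the boundary term enters with a sign we must track, but it is the real part that matters. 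Using the boundary condition $\partial_nu_k=-i\partial_tu_k$, it becomes
$$h_k^2\,\mathrm{Re}\int_{\partial\mathcal{L}}\psi^2(-i\partial_tu_k)\bar u_k=\tfrac{h_k^2}{2}\int_{\partial\mathcal{L}}\psi^2\,\partial_t\big(\mathrm{Im}\,\ldots\big).$$
More precisely, $\mathrm{Re}(-i\partial_tu\,\bar u)=\mathrm{Im}(\partial_t u\,\bar u)$. This term is not a total derivative, so we keep it and estimate it instead of integrating it away.

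Next we estimate each error term. By Lemma \ref{lem0A1}, $h_k^2\|\psi u_k\|^2_{L^2}=O(h_k^2)$. Also $|h_k^2\int\psi\psi'\partial_tu_k\bar u_k|\le h_k\|\psi' h_k\partial_tu_k\|\,\|\psi u_k\|=O(h_k)$. For the boundary term we write
$$\Big|h_k^2\int_{\partial\mathcal{L}}\psi^2\partial_nu_k\,\bar u_k\Big|\le h_k\,\|\psi h_k\partial_nu_k\|_{L^2(\partial\mathcal{L})}\|\psi u_k\|_{L^2(\partial\mathcal{L})}=O(h_k),$$
using Proposition \ref{boundnessaubordA1} (proved in Section \ref{exisµA1}, which we take as available). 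This shows that the difference of the squares is $O(h_k)$.

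It remains to turn this $o(1)$ difference into the stated equivalence $\simeq$, which means both sides are comparable and in fact asymptotically equal. For that we need a lower bound showing $\|\psi h_k\partial_tu_k\|_{L^2(\mathcal{L})}\gtrsim 1$. We get it from spectral localization. Since $U_k=\sum_{\nu\in J_k}e^{it\lambda_\nu}U_\nu e_\nu$ with $h_k\lambda_\nu\simeq1$, Lemma \ref{equiA1} gives $\|h_k\partial_tU_k(t)\|_{\mathcal{H}}\simeq\|U_k\|_{\mathcal{H}}$. Combining this with conservation of energy, and with the fact that $h_k\|u_k(t)\|_{L^2}=O(h_k)$ is negligible, gives
$$\|h_k\partial_tu_k(t)\|^2_{L^2}+\|h_k\nabla_gu_k(t)\|^2=2h_k^2\mathcal{E}(u_k)+O(h_k^2)\simeq1.$$
Integrating against $\psi^2$ and inserting the identity above, each of the two terms equals half the total up to $O(h_k)$. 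Hence both are $\simeq\|\psi\|_{L^2}^2$, and in particular their ratio tends to $1$.

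The main obstacle is the boundary term. It is controlled only through the trace bounds of Proposition \ref{boundnessaubordA1}, and the argument depends on that estimate being established independently, without circular use of the present statement. If needed, we would instead bound it through the trace inequality $\|u\|_{L^2(\partial\mathcal{M})}^2\lesssim\|u\|_{L^2}\|u\|_{H^1}$ and the $H^2$ bound of Lemma \ref{lem0A1}. Under the boundary condition this still gives $O(h_k)$ decay after rescaling.
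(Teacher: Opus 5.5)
Your main argument is correct and is essentially the paper's: Green's formula, the boundary term made $O(h_k)$ by the trace bounds of Proposition~\ref{boundnessaubordA1} (whose proof does not use the present statement, so there is no circularity), the equation to replace $-\Delta_g+1$ by $-\partial_t^2$, and an integration by parts in $t$. Your additional energy-conservation step, showing that each squared norm equals $\tfrac12\|\psi\|^2_{L^2(\mathbb{R})}+O(h_k)$, sharpens what the paper only uses implicitly in Proposition~\ref{hhA1}. Drop the fallback, however: the interpolation trace inequality only gives $\|u_k(t)\|_{L^2(\partial\mathcal{M})}\lesssim h_k^{-1/2}$ and $\|\partial_t u_k(t)\|_{L^2(\partial\mathcal{M})}\lesssim h_k^{-3/2}$, so it bounds the boundary term only by $O(1)$, which is precisely why the microlocal trace estimates behind Proposition~\ref{boundnessaubordA1} are needed.
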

\begin{proof}[Proof]
Let $\psi\in C^\infty_c(\mathbb{R})$. Integrating by parts and using Proposition \ref{boundnessaubordA1}, one has
\begin{align} \label{5.13338ççA1}
h_k^2\|\psi(t)\nabla_gu_k\|_{L^2V(\mathcal{L})}^2&=(\psi^2(t)h_k\nabla_gu_k,h_k \nabla_gu_k)_{L^2V(\mathcal{L})}\nonumber\\&=\big(\psi^2(t)h_k^2 (-\Delta_gu_k),u_k\big)_{L^2(\mathcal{L})}+(\psi(t)h_k^2\partial_nu_{k_{\mid \partial \mathcal{L}}},\psi(t)u_{k_{\mid \partial \mathcal{L}}})_{L^2(\partial\mathcal{L})}\nonumber
\\&=\big(\psi^2(t)h_k^2 (-\Delta_gu_k),u_k\big)_{L^2(\mathcal{L})} +O(h_k).
\end{align}
Using rescpectivly the fact that $(u_k)_k$ is bounded in $L^2_{\mathrm{loc}}(\mathcal{L})$, uniformly with respect to $k\in \mathbb{N}^*$, \eqref{5.13338ççA1} and the equation $(\partial_t^2-\Delta_g+1)u_k=0$, we get for $k$ large
\begin{align*}
h_k^2\|\psi(t)\nabla_gu_k\|_{L^2V(\mathcal{L})}^2&\simeq h_k^2\|\psi(t)u_k\|_{L^2(\mathcal{L})}^2+h_k^2\|\psi(t)\nabla_gu_k\|_{L^2V(\mathcal{L})}^2
\\&\simeq h_k^2\|\psi(t)u_k\|_{L^2(\mathcal{L})}^2+\big(\psi^2(t)h_k^2 (-\Delta_gu_k),u_k\big)_{L^2(\mathcal{L})}\\&\simeq h_k^2 \big(\psi^2(t)(1-\Delta_g)u_k,u_k\big)_{L^2(\mathcal{L})}\simeq h_k^2 \big(\psi^2(t)(-\partial_t^2u_k),u_k\big)_{L^2(\mathcal{L})}.
\end{align*}
By integration by parts and Lemma \ref{lem0A1}, one writes
\begin{align*}
h_k^2 \big(\psi^2(t)(-\partial_t^2u_k),u_k\big)_{L^2(\mathcal{L})}&=h_k^2\|\psi(t)\partial_tu_k\|_{L^2(\mathcal{L})}^2 +2h_k\big((\partial_t\psi(t))\psi(t)h_k\partial_tu_k,u_k)_{L^2(\mathcal{L})}\\&=h_k^2\|\psi(t)\partial_tu_k\|_{L^2(\mathcal{L})}^2+O(h_k).
\end{align*}
This concludes the proof of Proposition \ref{mesurenonnulA1}.
\end{proof}
From Propositions \ref{pro*A1} and \ref{mesurenonnulA1} along with Lemmas \ref{lem0A1} and \ref{massleakA1} we deduce the following  properties of the measures $\mu$ and $\nu$.
\begin{proposition}
\label{hhA1}
The three subsequent properties hold.
\begin{enumerate}
\item If $J \subset \mathbb{R}$ is a bounded nonempty open interval, one has $\mu\big(T^*(J\times \mathcal{M})\big)>0$.
\item Let $0<\alpha<1$,
\begin{align}
\label{24*A1}
&\mathrm{supp}(\mu)\subset \mathrm{Char}(p)\cap T^*\mathcal{L}\cap \{\alpha \leq \tau \leq \alpha ^{-1}\},
\\&
\label{25*A1}
\mathrm{supp}(\nu^{i,j})\subset  T^*\partial\mathcal{L}\cap \{\alpha \leq \tau \leq \alpha ^{-1}\} \quad \text{for}\quad i,j \in \{0,1\}.
\end{align}
\item The measure $\mu$ vanishes on $T^*(]\delta,T-\delta[ \times \omega)$.
\end{enumerate}
\end{proposition}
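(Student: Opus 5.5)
I would prove the three items separately, each time working in local charts and testing $\mu$ and $\nu$ against symbols $\psi(t)\chi(x)a(\tau,\xi)$. The uniform estimates of Lemma \ref{lem0A1}, the absence of mass leakage (Lemma \ref{massleakA1}) and the energy equipartition of Proposition \ref{mesurenonnulA1} justify the passage to the limit.

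\emph{Item 1.} Take $\psi\in C^\infty_c(J)$ with $0\le\psi\le1$ and $\psi\not\equiv 0$. Since no mass leaks at infinity for $(\psi u_k)_k$, the total mass satisfies
\begin{align*}
\int\psi^2\,d\mu=\lim_k\|\psi u_k\|^2_{L^2(\mathcal{L})}.
\end{align*}
To justify this, I would cover $\mathcal{M}$ by finitely many charts, use a partition of unity, and apply Remark \ref{remarK 4.11A1} with symbols cut off by $\chi(h\xi/R)$, letting $R\to\infty$. It then suffices to show $\liminf_k\|\psi u_k\|_{L^2}>0$. Energy is conserved and $h_k^2\|U_k(0)\|^2_{\mathcal H}=1$, so
\begin{align*}
\int\psi^2\big(\|h_k\partial_tu_k\|^2+\|h_k u_k\|^2_{H^1}\big)\,dt=2\|\psi\|^2_{L^2}.
\end{align*}
Proposition \ref{mesurenonnulA1} turns this into $\|\psi h_k\partial_tu_k\|^2\gtrsim 1$. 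Since $U_k\in E_k$ is spectrally localized with $h_k|\lambda_\nu|\simeq1$, I would write $u_k=\partial_t v_k$ as in the proof of Lemma \ref{lem0A1}. Then $h_k\partial_t u_k=h_k\partial_t^2v_k$ and, by Lemma \ref{equiA1}, $\|u_k(t)\|_{L^2}\simeq\|h_k\partial_tu_k(t)\|_{L^2}$ up to lower-order terms. Alternatively, equipartition gives $\|\psi u_k\|^2\simeq\|\psi h_k\partial_tu_k\|^2$ through the identity $h_k^2(-\partial_t^2u_k,u_k)\approx\|u_k\|^2$, which holds because the time frequencies are $\simeq h_k^{-1}$. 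Either route gives $\mu(T^*(J\times\mathcal M))\ge\int\psi^2d\mu>0$.

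\emph{Item 2.} For the characteristic set, I would take $a\in C^\infty_c$ and use $h_k^2Pu_k=0$. In a chart, $h_k^2P=\Op^h(p)+h_k\Op^h(r_1)+\dots$ with $C^0$ coefficients in $g^{ij}$ and $C^1$ coefficients in first order. Then
\begin{align*}
0=(\Op^h(a)\psi h_k^2Pu_k,u_k)=(\Op^h(ap)\psi u_k,u_k)+o(1).
\end{align*}
The interior part follows by symbolic calculus with continuous coefficients, as in \cite{1A1}. Near the boundary, the integration by parts produces boundary terms of size $h_k\cdot O(1)$, thanks to Proposition \ref{boundnessaubordA1}. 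Hence $p\mu=0$, and $\mu$ is supported in $\mathrm{Char}(p)$.

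For the bound on $\tau$, the time Fourier transform of $u_k$ is supported where $h_k\tau\in[\alpha,\alpha^{-1}]$, since $k>0$ means positive frequencies. For a symbol $a$ vanishing on $\{\alpha\le\tau\le\alpha^{-1}\}$, the operator $\Op^h(a)$ acts, modulo $o(1)$, as a function of $h_kD_t$ composed with a cutoff. The point is that a $t$-cutoff $\psi(t)$ only spreads $\hat u_k$ in $\tau$ by $O(1)$, which is $O(h_k)$ at scale $h_k$. This yields \eqref{24*A1}. The same argument applied to the traces $u_{k|\partial\mathcal L}$ and $h_k\partial_nu_{k|\partial\mathcal L}$, which have the same time-spectral localization, gives \eqref{25*A1}.

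\emph{Item 3.} Let $\varphi\in C_c(]\delta,T-\delta[\times\omega)$. The contradiction hypothesis gives $\|\varphi h_k\partial_tu_k\|_{L^2}\to0$, hence
\begin{align*}
\langle\mu,|\varphi|^2\tau^2\rangle=\lim_k\|\varphi h_kD_tu_k\|^2=0.
\end{align*}
Here, again, no mass leakage is used to include the unbounded symbol $\tau^2$. Since $\tau\ge\alpha>0$ on $\mathrm{supp}\,\mu$ by item 2, we get $\mu=0$ above $]\delta,T-\delta[\times\omega$.

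The main obstacle I expect is the boundary contribution in item 2, namely showing $p\mu=0$ up to $z=0$ with only $C^1$ coefficients. I would handle it by using tangential cutoffs and the trace bounds of Proposition \ref{boundnessaubordA1}, so that every boundary term carries an extra factor $h_k$.
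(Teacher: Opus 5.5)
Your proposal is correct in substance and has the paper's overall structure. Item 3 is exactly the paper's argument: test against $|\varphi|^2\tau^2$, then use $\tau\geq\alpha$ on $\mathrm{supp}(\mu)$. There are three differences.

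\textbf{Item 1.} The paper tests $\mu$ directly against $|\psi(t)|^2\tau^2$, so it never has to compare $\|\psi u_k\|$ with $\|\psi h_k\partial_tu_k\|$. Your detour through the total mass needs that comparison, and your first justification does not work. Lemma \ref{equiA1} concerns $\mathcal H$-norms of the whole vector $U$. It gives no pointwise-in-$t$ equivalence between $L^2$ norms of first components, since the $e^0_\nu$ are not $L^2$-orthogonal. Your time-averaged version is fine, with constants depending on $\alpha$. More simply, once the bound $\tau\leq\alpha^{-1}$ on $\mathrm{supp}(\mu)$ is known, you get $\langle\mu,\psi^2\rangle\geq\alpha^2\langle\mu,\psi^2\tau^2\rangle$ directly.

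\textbf{The characteristic set.} For $\mathrm{supp}(\mu)\subset\mathrm{Char}(p)$ the paper only cites Proposition 6.4 of \cite{1A1}, which is written for Dirichlet data. Your direct argument is more self-contained. It correctly uses Proposition \ref{boundnessaubordA1} to absorb the new term $h_k^2u_{k|z=0}\otimes\delta'$, which is absent in the Dirichlet case. However, the boundary terms are $O(h_k^{1/2})$, not $h_k\,O(1)$, because $\Op^h(a)(f\otimes\delta)$ has $L^2$ norm of order $h_k^{-1/2}\|f\|$. This still suffices.

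\textbf{The $\tau$-localization.} Your sketch leaves implicit how the Schwartz tails coming from $\hat\psi(\tau-\lambda_\nu)$ are summed over $\nu\in J_k$. Neither the $e^0_\nu$ nor their traces are orthogonal, and no bound on $\mathrm{Card}(J_k)$ is at hand. So you must work in $L^2(\mathbb{R};\mathcal H)$, where
$$\|(1-\varphi(h_kD_t))\psi U_k\|^2=\sum_{\nu\in J_k}|U_\nu|^2\,\|(1-\varphi(h_kD_t))(\psi e^{it\lambda_\nu})\|^2_{L^2(\mathbb{R})}=O(h_k^\infty),$$
and likewise after applying $A$. This is what the paper does.

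For the traces, the paper then goes through an $L^2(\mathbb{R};H^2(\mathcal M))$ bound using Proposition \ref{pro*A1}. Your route can be cheaper:
\begin{itemize}
\item the trace map $H^1(\mathcal M)\to L^2(\partial\mathcal M)$ handles $u_{k|\partial\mathcal L}$;
\item the boundary condition $h_k\partial_nu_k=h_kD_tu_k$, together with $\|u^1\|_{H^1(\mathcal M)}\leq\|AU\|_{\mathcal H}$, handles $h_k\partial_nu_{k|\partial\mathcal L}$.
\end{itemize}
Treating both traces directly also covers $\nu^{0,0}$ near $\tau=0$, where the relation $\nu^{1,1}=\tau^2\nu^{0,0}$ gives no information.
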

\begin{proof}[Proof]
Let $\psi\in C^\infty_c(\mathbb{R})$. By Proposition \ref{mesurenonnulA1}, Lemma \ref{massleakA1} and Proposition 5.21 of \cite{1A1}, we have
\begin{align*}
1\simeq \underset{k\rightarrow +\infty}{\mathrm{lim}}\|\psi(t)h_kU_k(t)\|_{\mathcal{H}}^2\simeq 2\underset{k\rightarrow +\infty}{\mathrm{lim}}\|\psi(t)h_k\partial_tu_k\|_{L^2(\mathcal{L})}^2\simeq \langle \mu,|\psi(t)|^2\tau^2\rangle.
\end{align*}
Hence the first item in Proposition \ref{hhA1}.
The result \eqref{24*A1} is proven in Proposition 6.4 of \cite{1A1}. It remains to prove \eqref{25*A1}. For that,
consider $\varphi \in C^\infty_c(\mathbb{R})$ such that $\varphi \equiv 1$ in a neighborhood of $[\alpha,\alpha^{-1}]$. For $U_k=(u_k,\partial_tu_k)\in B^+$, we write
\begin{align}
\label{3*A1}
\big(1-\varphi(h_kD_t)\big)\psi(t) U_k=\sum_{\nu \in J_k}\big(1-\varphi(h_kD_t)\big)(\psi(t)e^{ it\lambda_\nu})U_\nu  e_\nu.
\end{align}
For any $\ell^{'}\geq0$ and $\ell^{''}\geq0$, one has
\begin{align*}
A(1-\varphi(h_kD_t))(\partial_t^{\ell'}&\psi)(t)\partial_t^{\ell''}U_k=(1-\varphi(h_kD_t))(\partial_t^{\ell'}\psi)(t) \partial_t^{\ell''}A U_k
\\&=(1-\varphi(h_kD_t))(\partial_t^{\ell'}\psi)(t)\sum_{\nu \in J_k} ( i\lambda_\nu)^{\ell^{''}}\lambda_\nu e^{ it\lambda_\nu}U_\nu  e_\nu.
\end{align*}
Therefore, arguing as in the proof of Proposition 6.4 in \cite{1A1} (see formula (6.11)), for any $N\in \mathbb{N}$, we have due to the fact that $(h_k^2\partial_t^2u_k(t,.))_k$ is bounded in $L^2(\mathcal{M})$ for any $t \in \mathbb{R}$, uniformly with respect to $k\in \mathbb{N}^*$, by Lemma \ref{equiA1}, $h_k^2\partial_n u_k(t,.)$ is bounded in $H^\frac{1}{2}(\partial\mathcal{M})$ for any $t \in \mathbb{R}$, uniformly with respect to $k\in \mathbb{N}^*$ (see formula \eqref{5.1222222222sA1}), and to Proposition \ref{pro*A1} along with the boundary condition and the trace formula
\begin{align*}
\|\partial^\ell_t\big(1-\varphi(h_kD_t)\big)\psi(t)  u_k\|_{L^2(\mathbb{R};H^2(\mathcal{M}))}^2&\lesssim \|\big(1-\varphi(h_kD_t)\big)(\partial_t^{\ell'}\psi)(t)\big(-\Delta_g+1\big) \partial_t^{\ell''} u_k\|_{L^2(\mathcal{L})} ^2
\\&\quad+ \|\big(1-\varphi(h_kD_t)\big)(\partial_t^{\ell'}\psi)(t) \partial_t^{\ell''+1} u_k\|_{L^2(\mathbb{R};H^1(\mathcal{M}))} ^2
\\&
\lesssim \|\big(1-\varphi(h_kD_t)\big)(\partial_t^{\ell'}\psi)(t) \partial_t^{\ell''} AU_k\|_{L^2(\mathbb{R};\mathcal{H})} ^2
\\&
\lesssim \sum_{\nu \in J_k} \lambda_\nu^{2\ell^{''}}\lambda_\nu^2|U_\nu|^2 \|\big(1-\varphi(h_kD_t)\big)(\partial_t^{\ell'}\psi)(t)e^{ it\lambda_\nu}\|_{L^2(\mathbb{R})}^2
\\&\lesssim  h_k^{N-(\ell^{''}+4)}h_k^2\sum_{\nu \in J_k}|u_\nu |^2
\\&\lesssim  h_k^{N-(\ell^{''}+4)},
\end{align*}
for any $\ell \geqslant0$ such that $\ell^{'}+\ell^{''}=\ell$, 
using that $h_k |\lambda_\nu| \simeq 1$ for $\nu \in J_k$ and 
 $h_k^2\sum\limits_{\nu \in J_k}| U_\nu|^2=1$.\\
Thus, with $\ell=0$, one has
\begin{align}
\label{4*A1}
\|\big(1-\varphi(h_kD_t)\big)\psi(t) u_k\|_{L^2(\mathbb{R};H^2(\mathcal{M}))}=O(h_k^N),
\end{align}
implying, by the trace formula,
\begin{align}
\label{4**A1}
\|\big(1-\varphi(h_kD_t)\big)\psi(t)h_k \partial_n u_{k_{| \partial \mathcal{L}}}\|_{L^2(\partial\mathcal{L})}=O(h_k^N),
\end{align}
for any $N\in \mathbb{N}$. Hence, by Lemma \ref{massleakA1}, Proposition 5.21 of \cite{1A1} and the fact that the measure $\nu^{1,1}$ associated to the sequence $(h_k \partial_n u_{k_\mid \partial \mathcal{L}})_k$ is nonnegative, we obtain $\langle \nu^{1,1}, 
|(1-\varphi)(\tau)\psi(t)|^2\rangle=0 $, which leads to the result in \eqref{25*A1} for the measure $\nu^{1,1}$. By the boundary condition  in \eqref{eqimA1}, the same conclusion holds for the measures $\nu^{0,0}$, $\nu^{1,0}$, and $\nu^{0,1}$.\\
For the third item in Proposition \ref{hhA1}, we have by Proposition 5.21 in \cite{1A1}
\begin{align} \label{5.199---A1}
(\mathds{1}_{]\delta,T-\delta[ \times\omega}h_k\partial_tu_k,h_k\partial_tu_k)_{L^2(\mathcal{L})}
\underset{k\rightarrow +\infty}{\rightarrow} \langle \mu, \tau^2 \mathds{1}_{]\delta,T-\delta[ \times\omega}\rangle.
\end{align}
Using the contradiction argument initiated in section \ref{strategyA1}, one has
\begin{align}\label{5.20---A1}
(\mathds{1}_{]\delta,T-\delta[ \times\omega}h_k\partial_tu_k,h_k\partial_tu_k)_{L^2(\mathcal{L})}
\underset{k\rightarrow +\infty}{\rightarrow} 0.
\end{align}
 Hence, with \eqref{24*A1}, we obtain the desired result.
\end{proof}
\subsection{Existence of semi-classical measures near a boundary point}
\label{exisµA1}
In this section, we prove the boundedness properties of the traces of $(u_k)_k$. This is carried out in the hyperbolic, elliptic and glancing regions introduced in Definition \ref{defn11A1}.
Here, we will often work in local coordinates, using a local chart $C=(O,\phi_{\mathcal{L}})$, with $O$ is a neighborhood of a point $\varrho^0 \in \partial \mathcal{L}$. We recall that $\partial\mathcal{L}$ is given by $\{z=0\}$ and $\partial_n=\partial_z$ at $z=0$. For simplicity, we choose to keep the notation $u_k$ instead of  its local representative $u_k^c$ in $C$.

In the sequel we use the following definition. 
\begin{definition}[Boundary operator]
\label{def1A1}
One says that the sequence $(f_k)_k$ is bounded in $L^2_{\varrho_0}(\partial\mathcal{L})$ if there exists a semi-classical operator $\Op^h(e)$ of degree $0$, elliptic at $\varrho_0 \in T^*\partial\mathcal{L}$, with compact support in $\partial\mathcal{L}$, such that the sequence $(\Op^h(e) f_k)_k$ is bounded in $L^2(\partial\mathcal{L})$.
\end{definition}
 In what follows, we state the following results that gives $(h_k^2\partial_z u_{k_{\mid z=0}})_k$ is bounded in \\$L^2_{\mathrm{loc}}(\mathbb{R}, H^{-\frac{1}{2}}(\partial \mathcal{M}))$, uniformly in $k\in \mathbb{N}^*$.
\begin{lemma}
\label{2.1***A1}
Let $u\in H^1(\mathcal{M})$ and $\Delta_g u \in L^2(\mathcal{M})$. There exists  $C>0$
such that 
\begin{align*}
\|[\Delta_g,\rho_\epsilon^*]u\|_{L^2(\mathcal{M})}\leq C \|u\|_{H^1(\mathcal{M})},
\end{align*}
where $\rho_\epsilon^*$ is the convolution operator by $\rho_\epsilon=\epsilon^{-d}\rho(\frac{x}{\epsilon})$ with $\rho\in C^\infty_c(\mathbb{R}^d)$.
Additionally, $[\Delta_g,\rho_\epsilon^*]u$ converges to 0 in $L^2(\mathcal{M})$ as $\epsilon\rightarrow 0$.
\end{lemma}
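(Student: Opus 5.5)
This is a Friedrichs-type commutator lemma. Since the metric coefficients are only $C^1$, the plan is to work in local coordinates with a partition of unity and write $\Delta_g$ in divergence form. We then commute the mollifier through each piece, using only Lipschitz bounds on the coefficients.

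\textbf{Localization and splitting.} We understand $\rho_\epsilon^*$ as acting on functions extended (or compactly supported) inside a chart. Near the boundary we either extend $u$ across $\{z=0\}$ or restrict to $\rho$ supported in $\{z<0\}$, so that $\rho_\epsilon * u$ only samples the interior. In a chart, set $a^{ij}=(\det g)^{1/2}g^{ij}\in C^1$ and $b=(\det g)^{-1/2}\in C^1$. Then
\begin{align*}
\Delta_g u = b\,\partial_i(a^{ij}\partial_j u),
\end{align*}
and
\begin{align*}
[\Delta_g,\rho_\epsilon^*]u = [b,\rho_\epsilon^*]\,\partial_i(a^{ij}\partial_j u) + \rho_\epsilon^*\big(b\,\cdot\big)\text{-terms} ,
\end{align*}
which after rearranging splits into two types of terms:
\begin{align*}
T_1 &= [b,\rho_\epsilon^*]\,\partial_i F_i, \quad F_i=a^{ij}\partial_j u\in L^2,\\
T_2 &= b\,\partial_i\big([a^{ij},\rho_\epsilon^*]\partial_j u\big).
\end{align*}
In writing $T_2$ we use that $\rho_\epsilon^*$ commutes with $\partial_i$ and $\partial_j$.

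\textbf{Key step (Friedrichs lemma).} For $c\in C^1$ (hence Lipschitz on compacts) and $F\in L^2$, the operator $\partial_i[c,\rho_\epsilon^*]F$, and likewise $[c,\rho_\epsilon^*]\partial_i F$, is bounded on $L^2$ uniformly in $\epsilon$. To see this, write
\begin{align*}
[c,\rho_\epsilon^*]\partial_i F(x) = \int \big(c(x)-c(y)\big)\,\partial_{y_i}\rho_\epsilon(x-y)\,F(y)\,dy - \ldots
\end{align*}
and integrate by parts in $y$. This produces
\begin{itemize}
\end{itemize}
% (placeholder removed)

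Concretely, integration by parts gives two pieces. The first is $\int \partial_i c(y)\,\rho_\epsilon(x-y)F(y)\,dy$. The second has kernel $(c(x)-c(y))\,\partial\rho_\epsilon(x-y)$, bounded by $\|\nabla c\|_\infty\,|x-y|\,\epsilon^{-d-1}|\partial\rho|((x-y)/\epsilon)$, whose $L^1$ norm is $O(1)$. Schur's lemma, or Young's inequality, then gives $\|T_1\|+\|T_2\|\lesssim \|\nabla u\|_{L^2}\le C\|u\|_{H^1}$.

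Applying this to $T_2$ with $c=a^{ij}$ and $F=\partial_j u$ bounds $T_2$ directly. For $T_1$ we use the same lemma with $c=b$ and $F=F_i$. The hypothesis $\Delta_g u\in L^2$ guarantees that the expression $[\Delta_g,\rho_\epsilon^*]u$ makes sense in $L^2$, but the bound itself only uses $\|u\|_{H^1}$.

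\textbf{Convergence to $0$.} The commutators are uniformly bounded operators $H^1\to L^2$. For $u\in C^\infty_c$ (or $H^2$), the commutator tends to $0$ because $\rho_\epsilon^* v\to v$ in $L^2$ for each fixed $v$ built from $u$. Density of smooth functions in $H^1$ then gives convergence for all $u$: approximate $u$ by a smooth $v$, and control the error $u-v$ by the uniform bound.

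\textbf{Main obstacle.} The difficult part is the boundary. Convolution does not preserve the half-space, so the commutator must be defined properly there. I would handle this with a directional mollifier supported in $\{z<0\}$, so that $\rho_\epsilon * u$ at $z\ge0$ uses only values in $z>0$. Alternatively, one can extend $g$ and $u$ to the other side as $C^1$ and $H^1$ objects respectively. With either choice, the kernel estimates above go through unchanged.
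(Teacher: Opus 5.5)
Your proof is correct. Its core is the same Friedrichs-type estimate the paper carries out on each term $\partial_{x_i}(\tilde{g}^{i,j}\partial_{x_j}\cdot)$: you integrate by parts so the derivative falls on the mollifier, and obtain a kernel $(c(x)-c(y))\,\partial\rho_\epsilon(x-y)$ whose $L^1$ norm is $O(\|\nabla c\|_{L^\infty})$ uniformly in $\epsilon$.

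You differ from the paper in two respects.

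First, you isolate the prefactor $b=(\det g)^{-1/2}$ in the separate commutator $T_1=[b,\rho_\epsilon^*]\partial_iF_i$. The paper's definition of $G^\epsilon_{i,j}$ simply moves this factor through the convolution, so your $T_1$ accounts for a term the paper's computation passes over. You also rightly treat $\partial_iF_i$ distributionally, since only $\sum_i\partial_iF_i$ is known to be in $L^2$.

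Second, for the convergence, the paper asserts directly that $\rho_\epsilon*\partial_{x_i}(\tilde{g}^{i,j}\partial_{x_j}u)$ and $\partial_{x_i}(\tilde{g}^{i,j}\partial_{x_j}(\rho_\epsilon*u))$ both tend to $\partial_{x_i}(\tilde{g}^{i,j}\partial_{x_j}u)$ in $L^2$. The second limit is essentially the statement to be proved, and term by term it would require $u\in H^2$, which is not assumed. Your argument (uniform $H^1\to L^2$ bound, convergence on $H^2$ data, then density) is the standard rigorous route. It also makes clear that the hypothesis $\Delta_g u\in L^2$ plays no role in the estimate.

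On the boundary, your first option (zero extension and a mollifier supported in $\{z<0\}$) is how the lemma is actually used in the paper's proof of the $H^{-1/2}$ bound on $\partial_n v$. On $\{z>0\}$ one has $\partial_i(\rho_\epsilon*u^0)=\rho_\epsilon*(\partial_i u)^0$, because the layer at $z=0$ is never sampled. This is what justifies your claim that the kernel estimates go through unchanged. In the density step, approximate by functions smooth up to $\{z=0\}$, which are dense in $H^1$ of the half-space. Do not use $C^\infty_c$ of the open half-space, which is only dense in $H^1_0$.
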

\begin{proof}[Proof]
Recall that in local coordinates, the Laplace-Beltrami operator on a manifold $\mathcal{M}$ is given by
\begin{align*}
\Delta_gu=(\text{det}\; g)^{-\frac{1}{2}}\sum_{1\leq i,j\leq d}\partial_{x_i}\big( \tilde{g}^{i,j}\partial_{x_j}u\big),
\end{align*}
where $\tilde{g}^{i,j}=(\text{det}\; g)^{\frac{1}{2}}g^{i,j}$.
Let
\begin{align} \label{5.2000000A1}
[\Delta_g,\rho_\epsilon^*]u=\sum_{1\leq i,j\leq d}G^\epsilon_{i,j},
\end{align}
with
\begin{align*}
G^\epsilon_{i,j}&:=(\text{det}\; g)^{\frac{1}{2}}\Big(\rho_\epsilon \ast \partial_{x_i}\big( \tilde{g}^{i,j}\partial_{x_j}u\big)-\partial_{x_i}\big( \tilde{g}^{i,j}\partial_{x_j}(\rho_\epsilon \ast u)\big)\Big).
\end{align*}
Using $\rho_\epsilon:=\epsilon^{-d}\rho(\frac{x}{\epsilon})$ and the fact that $\frac{\partial^2}{\partial_{y_i}\partial_{y_j}}\rho(\frac{x-y}{\epsilon})=\frac{\partial^2}{\partial_{x_i}\partial_{x_j}}\rho(\frac{x-y}{\epsilon})$, we get 
\begin{align*}
G^\epsilon_{i,j}&=\frac{1}{\epsilon^d}\int_{\mathcal{M}}\rho(\frac{x-y}{\epsilon})\partial_{y_i}\big( \tilde{g}^{i,j}\partial_{y_j}u\big)(y)dy
\\&\quad-\partial_{x_i}(\tilde{g}^{i,j})\frac{1}{\epsilon^d}\int_{\mathcal{M}} \partial_{x_j}\rho(\frac{x-y}{\epsilon})u(y)dy
-\tilde{g}^{i,j}(x)\frac{1}{\epsilon^d}\int_{\mathcal{M}} \frac{\partial^2}{\partial_{y_i}\partial_{y_j}}\rho(\frac{x-y}{\epsilon})u(y)dy.
\end{align*}
By integartion by parts, we obtain
\begin{align*}
G^\epsilon_{i,j}&=-\frac{1}{\epsilon^d}\int_{\mathcal{M}}\partial_{y_i}\rho(\frac{x-y}{\epsilon}) \tilde{g}^{i,j}(y)\partial_{y_j}u(y)dy\\&\quad\,-\partial_{x_i}(\tilde{g}^{i,j})\frac{1}{\epsilon^d}\int_{\mathcal{M}} \partial_{x_j}\rho(\frac{x-y}{\epsilon})u(y)dy
+\tilde{g}^{i,j}(x)\frac{1}{\epsilon^d}\int_{\mathcal{M}} \partial_{y_j}\rho(\frac{x-y}{\epsilon})\partial_{y_j}u(y)dy
\\&
=\frac{1}{\epsilon^d}\int_{\mathcal{M}}\partial_{y_i}\rho(\frac{x-y}{\epsilon})\big( \tilde{g}^{i,j}(x)-\tilde{g}^{i,j}(y)\big)\partial_{y_j}u(y)dy
-\partial_{x_i}(\tilde{g}^{i,j})\frac{1}{\epsilon^d}\int_{\mathcal{M}} \partial_{x_j}\rho(\frac{x-y}{\epsilon})u(y)dy.
\end{align*}
Then
\begin{align*}
\|[\Delta_g,\rho_\epsilon^*]u\|_{L^2(\mathcal{M})}\lesssim \underset{1\leq i,j\leq d}{\mathrm{sup}}\|\nabla \tilde{g}^{i,j}\|_{L^\infty}  \|u\|_{H^1(\mathcal{M})}\lesssim \|u\|_{H^1(\mathcal{M})},
\end{align*}
where in the last estimate we use $\tilde{g}^{i,j}\in C^1\cap W^{1,\infty}$.
Therefore, since $\partial_{x_i}\big( \tilde{g}^{i,j}\partial_{x_j}u\big) \in L^2(\mathcal{M}) $, one has
\begin{align*}
    \rho_\epsilon \ast \partial_{x_i}\big( \tilde{g}^{i,j}\partial_{x_j}u\big)\rightarrow \partial_{x_i}\big( \tilde{g}^{i,j}\partial_{x_j}u\big)  \quad \text{as} \quad \epsilon \rightarrow 0 \quad \text{in} \quad L^2(\mathcal{M}),
\end{align*}
and 
\begin{align*}
   \partial_{x_i}\big( \tilde{g}^{i,j}\partial_{x_j}(\rho_\epsilon \ast u)\big) \rightarrow  \partial_{x_i}\big( \tilde{g}^{i,j}\partial_{x_j}u\big) \quad \text{as} \quad \epsilon \rightarrow 0 \quad \text{in} \quad L^2(\mathcal{M}).
\end{align*}
Thus, from \eqref{5.2000000A1} we deduce that $[\Delta_g,\rho_\epsilon^*]u$ converges to 0 in $L^2(\mathcal{M})$ as $\epsilon$ goes to 0.
\end{proof}
With Lemma \ref{2.1***A1} one obtains the following proposition.
\begin{proposition}
\label{lem3*A1}
If  $v \in H^1(\mathcal{M})$ and $\Delta_g v \in L^2(\mathcal{M})$, then $\partial_nv \in H^{-\frac{1}{2}}(\partial\mathcal{M})$ and 
\begin{align*}
\|\partial_nv\|_{H^{-\frac{1}{2}}(\partial\mathcal{M})}\lesssim \big(\|\nabla_gv\|_{L^2V(\mathcal{M})}+\|\Delta_gv\|_{L^2(\mathcal{M})}\big).
\end{align*}
\end{proposition}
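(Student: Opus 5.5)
The plan is to define $\partial_n v$ by duality through Green's formula and to reduce everything to smooth approximations, using Lemma \ref{2.1***A1} to control the commutator created by mollification.

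\textbf{Green's formula for smooth $v$.} For $v\in H^2(\mathcal{M})$ and any $\phi\in H^1(\mathcal{M})$, integration by parts with respect to $\mu_g$ gives
\begin{align*}
\int_{\partial\mathcal{M}}\partial_n v\,\bar{\phi}\,d\sigma_g=-\int_{\mathcal{M}}\Delta_g v\,\bar{\phi}\,d\mu_g-\int_{\mathcal{M}}g(\nabla_g v,\nabla_g\bar{\phi})\,d\mu_g .
\end{align*}
This identity is valid with a $C^1$ metric and $C^2$ boundary, since only one derivative falls on the coefficients. The sign comes from $n$ being inward pointing.

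\textbf{Duality bound.} Given $\phi_\partial\in H^{\frac12}(\partial\mathcal{M})$, take a bounded right inverse of the trace, $\phi=E\phi_\partial\in H^1(\mathcal{M})$ with $\|\phi\|_{H^1}\lesssim\|\phi_\partial\|_{H^{\frac12}}$. This lifting exists for a $C^2$ (even Lipschitz) boundary. Cauchy–Schwarz on the right-hand side then yields
\begin{align*}
\big|\langle\partial_n v,\phi_\partial\rangle\big|\lesssim\big(\|\nabla_g v\|_{L^2V(\mathcal{M})}+\|\Delta_g v\|_{L^2(\mathcal{M})}\big)\|\phi_\partial\|_{H^{\frac12}(\partial\mathcal{M})},
\end{align*}
which is the claimed estimate for smooth $v$, because $H^{-\frac12}$ is the dual of $H^{\frac12}$.

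\textbf{Extension to general $v$.} For $v\in H^1$ with $\Delta_g v\in L^2$, I define $\partial_n v\in H^{-\frac12}(\partial\mathcal{M})$ by the right-hand side of Green's formula. This is a bounded functional of $\phi$ and, for a fixed trace, does not depend on the chosen lift. To see that it is the limit of genuine normal derivatives and satisfies the same bound, I approximate $v$ using a partition of unity and local charts. Near the boundary, in the coordinates of Proposition \ref{pro9A1}, I first translate inward (i.e.\ replace $v(x',z)$ by $v(x',z+\eta)$, or equivalently use a mollifier $\rho$ supported in $\{z<0\}$) so that the convolution is well defined up to $z=0$. I then mollify with $\rho_\epsilon^*$, obtaining $v_\epsilon$ smooth up to the boundary. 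Two convergences hold:
\begin{itemize}
\item $v_\epsilon\to v$ in $H^1$ by standard mollifier properties;
\item $\Delta_g v_\epsilon=\rho_\epsilon\ast\Delta_g v-[\Delta_g,\rho_\epsilon^*]v\to\Delta_g v$ in $L^2$, by Lemma \ref{2.1***A1}.
\end{itemize}
Cutoff commutators $[\Delta_g,\chi]$ are first order and cost only $\|v\|_{H^1}$. Applying the smooth estimate to differences $v_\epsilon-v_{\epsilon'}$ shows that $\partial_n v_\epsilon$ is Cauchy in $H^{-\frac12}(\partial\mathcal{M})$, and its limit coincides with the duality definition. Passing to the limit gives the bound for $v$.

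\textbf{Main obstacle.} The delicate step is the mollification near the boundary, because a $C^1$ metric rules out smoothing the coefficients. The commutator must therefore be handled exactly as in Lemma \ref{2.1***A1}, with the shift keeping the support inside $\mathcal{M}$. A lower-order term $\|v\|_{L^2}$ also appears from the cutoffs. I would absorb it by stating the bound with the full $H^1$ norm, or by a Poincaré-type argument, treating it as harmless in the applications where $v=h_k^2u_k$.
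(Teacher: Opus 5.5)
Your proof is correct and follows the paper's argument. The shared ingredients are Green's formula for approximants obtained by mollifying with a kernel supported in $\{z<0\}$, Lemma \ref{2.1***A1} to get $\Delta_g v_\epsilon\to\Delta_g v$ in $L^2(\mathcal{M})$, and duality against an $H^1$ lift of $H^{\frac12}$ boundary data. The paper lifts via the $\Delta_g$-harmonic extension $\Theta$, but never uses $\Delta_g\Theta=0$, so any bounded right inverse of the trace works. Your duality definition of $\partial_n v$ also gives the estimate directly, with the approximation needed only to identify it as a limit of genuine normal derivatives.

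Two small points. First, the inward shift should be built into the single kernel $\rho_\epsilon$, as in the paper; a fixed translation followed by mollification is not what Lemma \ref{2.1***A1} controls. Second, the feared $\|v\|_{L^2(\mathcal{M})}$ term never enters the estimate: the duality bound is already sharp, and the approximation step needs only convergence, not uniform bounds. So there is no need to weaken the statement to the full $H^1$ norm or to invoke a Poincaré argument.
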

\begin{proof}[Proof]
It suffices to prove the result locally near the boundary. We recall that in local coordinates $\mathcal{M}=\{z\geq0\}$ and  $\partial\mathcal{M}=\{z=x_d=0\}$.\\
Consider the extension $v^0$ of $v$ by 0 for $z<0$. Let $(\rho_n)_n=(n^{-d}\rho(\frac{x}{n}))_n$ be a regularizing sequence with support included in $\{z\leq 0\}$. Denote $w_n=(v^0\ast \rho_n)_{\mid \mathcal{M}}$, then $(\nabla_gw_n)_n$ converges to $\nabla_g v$ in $L^2V(\mathcal{M})$.\\
One has 
\begin{align*}
\Delta_gw_n=(\rho_n \ast \Delta_gv^0)_{\mid \mathcal{M}}+([\Delta_g,\rho_n^\ast]v^0)_{\mid \mathcal{M}},
\end{align*}
where $\rho_n^\ast$ is the convolution operator by $\rho_n$. Since $\Delta_gv^0-(\Delta_gv)^0$ is supported in $\{z=0\}$, then  $(\rho_n \ast \Delta_gv^0)_{\mid \mathcal{M}}=(\rho_n \ast (\Delta_gv)^0)_{\mid \mathcal{M}}$ by the support theorem (see, e.g., Theorem 4.2.4 in \cite{15A1}) which converges to $\Delta_gv$ in $L^2(\mathcal{M})$. Moreover, by  Lemma \ref{2.1***A1}, $[\Delta_g,\rho_n^\ast]v^0$ converges to 0 in $L^2(\mathcal{M})$. Thus $(\Delta_gw_n)_n$ converges to $\Delta_gv$ in $L^2(\mathcal{M})$.
\\
Consider $\widetilde{\Theta}\in H^{\frac{1}{2}}(\partial\mathcal{M})$ and $\Theta$ solution of
 \begin{equation*}
\label{10*A1}
    \begin{cases}
        -\triangle_g \Theta &= 0  \\ 
        \Theta_{\mid \mathcal{\partial M}}&= \widetilde{\Theta},
    \end{cases}
\end{equation*} 
which satisfies the following inequality:
\begin{align}
\label{14**A1}
\|\Theta\|_{H^1(\mathcal{M})}\lesssim \|\widetilde{\Theta}\|_{H^{\frac{1}{2}}(\partial\mathcal{M})}.
\end{align}
By integration by parts, we write
\begin{align}
\label{13**A1}
(-\Delta_gw_n,\Theta)_{L^2(\mathcal{M})}=(\nabla_gw_n,\nabla_g\Theta)_{L^2V(\mathcal{M})}+(\partial_nw_n{_{\mid\partial\mathcal{M}}},\Theta_{\mid\partial\mathcal{M}})_{L^2(\partial \mathcal{M})}.
\end{align}
Using \eqref{13**A1} and \eqref{14**A1}, one finds
\begin{align*}
|(\partial_nw_n{_{\mid\partial\mathcal{M}}},\widetilde{\Theta})_{L^2(\partial \mathcal{M})}|&\lesssim \|\nabla_gw_n\|_{L^2V(\mathcal{M})}\|\nabla_g\Theta\|_{L^2V(\mathcal{M})}+\|\Delta_gw_n\|_{L^2(\mathcal{M})}\|\Theta\|_{L^2(\mathcal{M})}
\\&
\lesssim \big(\|\nabla_gw_n\|_{L^2V(\mathcal{M})}+\|\Delta_gw_n\|_{L^2(\mathcal{M})}\big)\|\Theta\|_{H^1(\mathcal{M})}
\\&
\lesssim \big(\|\nabla_gw_n\|_{L^2V(\mathcal{M})}+\|\Delta_gw_n\|_{L^2(\mathcal{M})}\big)\|\widetilde{\Theta}\|_{H^\frac{1}{2}(\partial\mathcal{M})}.
\end{align*}
Hence, we get
\begin{align}
\|\partial_nw_n\|_{H^{-\frac{1}{2}}(\partial\mathcal{M})}\lesssim \big(\|\nabla_gw_n\|_{L^2V(\mathcal{M})}+\|\Delta_gw_n\|_{L^2(\mathcal{M})}\big).
\end{align}
By passing to the limits, if  $v \in H^1(\mathcal{M})$ and $\Delta_g v \in L^2(\mathcal{M})$ then $\partial_nv$ has a well-defined meaning in  $H^{-\frac{1}{2}}(\partial\mathcal{M})$ and 
\begin{align*}
\|\partial_nv\|_{H^{-\frac{1}{2}}(\partial\mathcal{M})}\lesssim \big(\|\nabla_gv\|_{L^2V(\mathcal{M})}+\|\Delta_gv\|_{L^2(\mathcal{M})}\big).
\end{align*}
\end{proof}
\begin{remark}
Note that in the proof of the Proposition \ref{lem3*A1}, there is no need of boundary conditions.
\end{remark}
Applying Proposition \ref{lem3*A1} along with Lemma \ref{lem0A1}, one concludes that $(h_k^2\partial_z u_{k_{\mid z=0}})_k$ is bounded in $L^2_{\mathrm{loc}}(\mathbb{R}, H^{-\frac{1}{2}}(\partial \mathcal{M}))$, uniformly in $k\in \mathbb{N}^*$. This result is not sufficient to prove the existence of the measure $\nu$, for which we need the sequences $(u_{k_{\mid z=0}})_k$ and $(h_k\partial_zu_{k_{\mid z=0}})_k$ to be bounded in $L^2_{\mathrm{loc}}(\partial\mathcal{L})$. This will be established in the following three sections.
\subsubsection{Glancing region}
The properties of the traces of $(u_k)_k$ in the glancing region ${^\parallel\mathcal{G_\partial}}$ are stated in Theorem \ref{lopA1} and Corollary \ref{cor1**A1} bellow.
\begin{theorem}
\label{lopA1}
The sequence $( u_{k_{\mid z=0}})_k$ is bounded in $L^2_{\varrho_0}(\partial\mathcal{L})$ for $\varrho_0\in {^\parallel\mathcal{G_\partial}}$.
\end{theorem}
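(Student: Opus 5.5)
Near a glancing point $\varrho_0=(t_0,x_0',0,\tau_0,\xi_0')\in{^\parallel\mathcal{G}_\partial}$ we have $\tau_0^2=g(\xi_0',\xi_0')$ at $z=0$, with $\tau_0\in[\alpha,\alpha^{-1}]$ by \eqref{24*A1}. The plan is an energy (Rellich-type) identity in the normal variable, using a tangential semi-classical operator, with the boundary condition absorbing the normal-derivative trace. We work in the quasi-normal geodesic coordinates of Proposition \ref{pro9A1}. We fix a tangential symbol $e(t,x',\tau,\xi')\in\Sigma_{T}$ that is elliptic at $\varrho_0$ and compactly supported in $(t,x')$, and a cutoff $\chi(z)$ equal to $1$ near $z=0$. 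We write $w_k=\Op^h(e)\chi u_k$. Since $\Op^h(e)$ is tangential, the equation $h^2P u_k=0$ gives
\begin{align*}
-h^2\partial_z^2 w_k + R_h w_k = h f_k,
\end{align*}
where $R_h$ is the tangential operator with principal symbol $-\tau^2+g(\xi',\xi')$ (up to $O(z)$ terms coming from $g^{d,d}$ and $g^{i,d}$). The term $f_k$ collects commutators with $C^1$ coefficients. Those commutators will be controlled by Corollary \ref{lem4.7A1} together with regularization of the symbols, as in Lemma \ref{2.1***A1}, and Lemma \ref{lem0A1} then gives $\|f_k\|_{L^2_{\mathrm{loc}}}\lesssim 1$.

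The core step is the standard identity
\begin{align*}
\|w_k(0)\|^2=-2\,\mathrm{Re}\int_0^\infty(\partial_z w_k,w_k)\,dz .
\end{align*}
Used directly, this only gives $\|w_k(0)\|^2\lesssim h^{-1}$. To gain, we instead multiply the reduced equation by $\overline{w_k}$ and take the \emph{imaginary} part, integrating in $z\ge0$:
\begin{align*}
\mathrm{Im}\,(h\partial_z w_k(0),w_k(0))_{L^2(\partial\mathcal{L})}=O(1)+\mathrm{Im}\,(R_hw_k,w_k)/h .
\end{align*}
Here $R_h$ is self-adjoint up to $O(h)$, so the last term is $O(1)$. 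By the boundary condition, $h\partial_z u_k=-ih\partial_t u_k=-\Op^h(\tau)u_k$ at $z=0$. Commuting with $\Op^h(e)$ at cost $O(h)$ then turns the left side into $-(\Op^h(\tau)w_k(0),w_k(0))+O(h)\|u_k(0)\|\,\|\cdot\|$. Near $\varrho_0$ we have $\tau\ge\alpha>0$, which is precisely where the Lopatinskii condition holds. Choosing $e$ supported in $\{\tau>\alpha/2\}$ and using the sharp Gårding inequality (valid for tangential symbols with the stated regularity) therefore gives
\begin{align*}
\tfrac{\alpha}{2}\|w_k(0)\|^2\lesssim 1+\text{lower order terms involving } \|u_k(0)\|_{L^2_{\mathrm{loc}}} .
\end{align*}

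For the lower order terms, the commutator remainders are of the form $h\|\tilde{\Op}^h(\tilde e)u_k(0)\|^2$ with $\tilde e$ supported on a slightly larger set. These can be absorbed by a bootstrap. First, a crude bound $\|u_k(0)\|_{L^2_{\mathrm{loc}}}\lesssim h^{-1/2}$ follows from the trace theorem and Lemma \ref{lem0A1}, using the $H^s$ interpolation $h^s u_k\in H^s$ for $s<1$ close to $1$. Inserting this crude bound shows the remainders are $O(1)$, which closes the estimate and gives the boundedness of $\Op^h(e)u_{k|z=0}$ in $L^2(\partial\mathcal{L})$.

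The main obstacle is the $C^1$ regularity. The $z$-dependent terms $zh^{i,d}$, the commutators of $\Op^h(e)$ with $\partial_{x_j}(\tilde g^{ij}\,\cdot)$, and the lack of a symbolic calculus all have to be handled. The approach is to use a mollification of $g$ at scale $h^{1/2}$, or any $\epsilon(h)\to0$, so that the error is $o(1)\,h\|\nabla u_k\|$, combined with Lemma \ref{2.1***A1}. The boundary identity itself carries no such difficulty, since the boundary term enters with the favorable sign because $\tau>0$.
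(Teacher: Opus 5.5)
\textbf{There is a genuine gap at your core step.} On $z=0$ the boundary condition reads $h\partial_z u_k=-ih\partial_t u_k=hD_tu_k=+\Op^h(\tau)u_k$. You wrote $-\Op^h(\tau)$, but the sign is immaterial to what follows. Since $e$ does not depend on $z$ and $\chi=1$ near $0$, you get $h\partial_z w_k(0)=\Op^h(\tau)w_k(0)+ih\Op^h(\partial_t e)u_{k|z=0}$. Now $\Op^h(\tau)=hD_t$ is self-adjoint on $L^2(\partial\mathcal{L})$, so $(\Op^h(\tau)w_k(0),w_k(0))$ is real. Hence the left side of your identity is
\[
\mathrm{Im}\,(h\partial_z w_k(0),w_k(0))_{L^2(\partial\mathcal{L})}=h\,\mathrm{Re}\,(\Op^h(\partial_t e)u_{k|z=0},w_k(0))_{L^2(\partial\mathcal{L})},
\]
which is not $-(\Op^h(\tau)w_k(0),w_k(0))$: you silently dropped the imaginary part. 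This is structural, not a technicality. On $\partial\mathcal{L}$ one has $\mathrm{Im}(\partial_n u\,\bar u)=-\mathrm{Re}(\partial_t u\,\bar u)=-\tfrac12\partial_t|u|^2$, so the boundary flux in the ``multiply by $\bar u$, take imaginary parts'' identity is a pure time derivative; the boundary condition is conservative. With the crude bound $\|u_{k|z=0}\|_{L^2_{\mathrm{loc}}}\lesssim h^{-1/2}$, your identity therefore reduces to $O(1)=O(1)$. The positivity of $\tau$ never enters. Taking real parts instead only yields $h(\Op^h(\tau)w_k(0),w_k(0))=O(1)$, which is again the crude bound $\|w_k(0)\|^2\lesssim h^{-1}$.

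\textbf{The missing idea is a Rellich-type multiplier in the normal direction.} The paper pairs $h_k^2Pu_k=0$ with $\Op^h(q)\psi\,\partial_z u_k$, where $q$ is tangential and elliptic at $\varrho_0$. Because $h_k^{-1}[h_k^2P,\Op^h(q)\psi h_k\partial_z]$ is bounded on the norms controlled by Lemma \ref{lem0A1}, the resulting boundary terms are bounded. These terms form an order-zero quadratic form in the semi-classical Cauchy data: $(h_k\partial_zu_k,\Op^h(q)\psi h_k\partial_zu_k)+(u_k,\Op^h(q)\psi h_k^2R(t,x,D)u_k)=O(1)$ on $z=0$. To reach this form, the equation is used to eliminate $h_k^2\partial_z^2u_k$ at $z=0$, and $h_k^{1/2}\|u_{k|z=0}\|\lesssim 1$ controls the remainders. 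Substituting $h_k\partial_zu_k=\Op^h(\tau)u_k$ turns this into a quadratic form with symbol $\tau^2+(\tau^2-g(\xi',\xi'))=2\tau^2-g(\xi',\xi')$. This equals $\tau_0^2>0$ at a glancing point. A microlocal Gårding inequality then gives the bound, after freezing the $C^1$ coefficients at $x^0$ and absorbing the Taylor remainder by shrinking $\mathrm{supp}(q)$. Note that this argument only needs $\tau_0\neq 0$, not $\tau_0>0$. The sign of $\tau$, i.e.\ the Lopatinskii condition, is what matters in the elliptic region, not the glancing one.
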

A consequence of Theorem \ref{lopA1}, is the following result.
\begin{corollary}
\label{cor1**A1}
The sequence $(h_k\partial_zu_{k_{\mid z=0}})_k$ is bounded in $L^2_{\varrho_0}(\partial \mathcal{L})$ for $\varrho_0\in {^\parallel\mathcal{G_\partial}}$.
\end{corollary}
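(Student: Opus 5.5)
The plan is to read Corollary \ref{cor1**A1} off from Theorem \ref{lopA1} via the boundary condition in \eqref{equofcontraA1}. In the quasi-normal geodesic coordinates of Proposition \ref{pro9A1}, $\partial_n=\partial_z$ at $z=0$. Since the trace of $u_k$ on $\partial\mathcal{L}$ is well defined (by Lemma \ref{lem0A1}, $h_k^2u_k(t,\cdot)$ is bounded in $H^2(\mathcal{M})$), we have
\begin{align*}
h_k\partial_z u_{k\mid z=0}=-i\,h_k\partial_t u_{k\mid z=0}=-\big(h_kD_t\big)u_{k\mid z=0},\qquad D_t=-i\partial_t .
\end{align*}
So it suffices to show that the semi-classical tangential operator $h_kD_t$ maps sequences bounded in $L^2_{\varrho_0}(\partial\mathcal{L})$ to sequences bounded in $L^2_{\varrho_0}(\partial\mathcal{L})$.

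By Theorem \ref{lopA1}, there is a semi-classical operator $\Op^h(e)$ of degree $0$ on $\partial\mathcal{L}$, elliptic at $\varrho_0=(t_0,x'_0,\tau_0,\xi'_0)$ and compactly supported, such that $(\Op^h(e)u_{k\mid z=0})_k$ is bounded in $L^2(\partial\mathcal{L})$. Since $\partial\mathcal{L}$ is $C^2$ and we may take smooth coordinates in $t$, I choose $e$ smooth with compact support in all variables. I then pick a new symbol $\tilde e\in C^\infty_c$, elliptic at $\varrho_0$, with $\mathrm{supp}(\tilde e)\subset\{e\neq 0\}$. On that support, $\tilde e=q\,e$ with $q=\tilde e/e$ smooth and compactly supported.

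The key identity is
\begin{align*}
\Op^h(\tilde e)\,h_kD_t=\Op^h(\tilde e\,\tau)+h_k\Op^h(\partial_t\tilde e/i)\quad\text{(exactly, for the standard quantization)},
\end{align*}
so $\Op^h(\tilde e)h_kD_t=\Op^h(\tau q)\Op^h(e)+h_k R_k$, where $R_k$ is a uniformly $L^2$-bounded tangential operator by Corollary \ref{lem4.7A1}. All these symbols are smooth with compact support, so the symbolic calculus holds with $O(h_k)$ remainders.

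Applying this to $u_{k\mid z=0}$:
\begin{enumerate}
\item the main term $\Op^h(\tau q)\Op^h(e)u_{k\mid z=0}$ is bounded, by the choice of $e$;
\item the remainder $h_kR_ku_{k\mid z=0}$ is $O(h_k)\,\|\chi u_{k\mid z=0}\|_{L^2}$ for a cutoff $\chi$, and is bounded using the crude trace bound $\|\chi u_{k\mid z=0}\|_{L^2}\lesssim h_k^{-1}$, which follows from Lemma \ref{lem0A1} and the trace theorem (since $h_ku_k(t,\cdot)$ is bounded in $H^1(\mathcal{M})$).
\end{enumerate}

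The main obstacle is this remainder control. The losses have to be counted precisely. If $h_k\cdot h_k^{-1}$ only gives boundedness, that is sufficient here. If finer remainders appear, I would instead write the composition with $\Op^h(e)$ on both sides, so that every remainder again carries a factor $\Op^h(e)u_{k\mid z=0}$ up to $O(h_k^\infty)$ terms with compactly supported smooth symbols. It is also possible to use the a priori localization $\tau\in[\alpha,\alpha^{-1}]$ from \eqref{4**A1} to replace $h_kD_t$ by $\varphi(h_kD_t)h_kD_t$, a bounded operator, modulo $O(h_k^N)$.
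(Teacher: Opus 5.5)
Your argument is correct and is essentially the paper's: you substitute the boundary condition $h_k\partial_zu_{k\mid z=0}=\Op^h(\tau)u_{k\mid z=0}$ and compose with an elliptic boundary operator. Your factorization $\tilde e=(\tilde e/e)\,e$ only makes explicit why $\Op^h(e\tau)\psi u_{k\mid z=0}$ is bounded; the paper attributes that bound directly to Theorem~\ref{lopA1}, whose proof works for any symbol with small support near $\varrho_0$. There are two harmless slips: your minus sign is spurious since $-i\partial_t=D_t$, and in the standard quantization $\Op^h(\tilde e)\,h_kD_t=\Op^h(\tilde e\tau)$ holds exactly, because the term $\frac{h_k}{i}\Op^h(\partial_t\tilde e)$ belongs instead to $h_kD_t\Op^h(\tilde e)$.
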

\begin{proof}[Proof of Corollary \ref{cor1**A1}]
Let $e(t,x,\tau,\xi') \in C^\infty_c(\phi_{\mathcal{L}}(O)\times\mathbb{R}^{d})$ be elliptic at $\varrho_0$ with $e(\varrho_0) =1$. Let $\psi\in C^\infty_c(\phi_{\mathcal{L}}(O))$ with $\psi$ equal to $1$ in a neighborhood of the $(t,x)$-projection of $\mathrm{supp}(e)$. Using the boundary condition  $h_k\partial_zu_{k_{\mid z=0}}=\Op^h(\tau)u_{k_{\mid z=0}}$ and Theorem \ref{lopA1}, it follows that 
\begin{align*}
\|\Op^h(e)\psi h_k\partial_zu_{k_{\mid z=0}}\|_{L^2(\partial\mathcal{L})}\leq\|\Op^h(e\tau)\psi u_{k_{\mid z=0}}\|_{L^2(\partial\mathcal{L})}+\|\Op^h(e)h_k[\psi,\partial_t] u_{k_{\mid z=0}}\|_{L^2(\partial\mathcal{L})}\lesssim 1.
\end{align*}
By Definition \ref{def1A1}, this implies that the sequence $(h_k\partial_zu_{k_{\mid z=0}})_k$ is bounded in $L^2_{\varrho_0}(\partial \mathcal{L})$, which gives the result. 
\end{proof}
\begin{proof}[Proof of Theorem \ref{lopA1}]
Let $q(t,x,\tau,\xi')\in C^\infty_c(\phi_{\mathcal{L}}(O)\times\mathbb{R}^{d})$ elliptic near $\varrho_0$. Consider $\psi\in C^\infty_c(\phi_{\mathcal{L}}(O))$ with $\psi$ equal to $1$ in a neighborhood of the $(t,x)$-projection of $\mathrm{supp}(q)$. In local coordinates, one has
\begin{align}\label{exppA1}
Pu_k&=-g^{d,d}(x)\partial_{z}^2u_k-R(t,x,D)u_k,
\end{align}
where \begin{align} \label{exp of RA1}
R(t,x,D)&:= -\partial_t^2+(\text{det}\; g)^{-\frac{1}{2}}\sum_{1\leq i\leq d-1}\partial_{x_i}\Big((\text{det}\; g)^{\frac{1}{2}}g^{i,d}(x)\partial_{z}\Big)  \nonumber
\\&\quad
+(\text{det}\; g)^{-\frac{1}{2}}\partial_{z}\Big((\text{det}\; g)^{\frac{1}{2}}g^{d,d}(x)\Big)\partial_{z}
+(\text{det}\; g)^{-\frac{1}{2}}\sum_{1\leq j\leq d-1}\partial_{z}\Big((\text{det}\; g)^{\frac{1}{2}}g^{d,j}(x)\partial_{x_j}\Big)\nonumber
\\&\quad
+\underset{1 \leq i,j \leq d-1}{\sum}(\text{det}\; g)^{-\frac{1}{2}}\partial_{x_i}\Big((\text{det}\; g)^{\frac{1}{2}}g^{i,j}(x)\partial_{x_j}\Big)-1.\end{align}
Using \eqref{exppA1}, integrating by parts and taking into account the conditions $g^{d,d}(x)=1$ and $g^{j,d}(x)=g^{d,j}(x)=0 $ at $z=0$ for $j\neq d$ in the chosen local coordinates, we obtain
\begin{align*}
&\big(h_k^2Pu_k,\Op^h(q)\psi\partial_zu_k\big)_{L^2(z\geq 0)}+\big(\partial_z(\psi\Op^h(q)^*u_k),h_k^2Pu_k\big)_{L^2(z\geq0)}
\\&\quad=\big(u_k,h_k^2[P,\Op^h(q)\psi\partial_z]u_k\big)_{L^2(z\geq 0)}
+\big(h_k \partial_zu_{k_{\mid z=0}},\Op^h(q)\psi h_k\partial_zu_{k_{\mid z=0}}\big)_{L^2(\partial \mathcal{L})}
\\&\quad \quad-\big(h_k^2u_{k_{\mid z=0}},\partial_z\big(\Op^h(q)\psi\partial_zu_k\big)_{\mid z=0}\big)_{L^2(\partial \mathcal{L})}
-\big(\psi\Op^h(q)^*u_{k_{\mid z=0}},h_k^2(Pu_k)_{\mid z=0}\big)_{L^2(\partial \mathcal{L})}.
\end{align*}
Therefore, due to $Pu_k=0$, we  get
\begin{align}
\label{2A1}
\big(h_k^2u_{k_{\mid z=0}},\partial_z\big(\Op^h(q)\psi\partial_zu_k\big)_{\mid z=0}&\big)_{L^2(\partial \mathcal{L})} \nonumber
-\big(h_k^2 \partial_zu_{k_{\mid z=0}},\Op^h(q)\psi\partial_zu_{k_{\mid z=0}}\big)_{L^2(\partial \mathcal{L})} 
\\&
=\big(u_k,h_k^{-1}[h_k^2P,\Op^h(q)\psi h_k\partial_z]u_k\big)_{L^2(z\geq 0)}.
\end{align}
Then, one writes
\begin{align}\label{5.26SSA1}
h_k^{-1}[h_k^2P,\Op^h(q)\psi h_k\partial_z]&=h_k^{-1}[h_k^2P,\Op^h(q)h_k\partial_z\psi]+ h_k^{-1}[h_k^2P,\Op^h(q)h_k[\psi,\partial_z]]\nonumber
\\&= h_k^{-1}[h_k^2P,\Op^h(q)h_k\partial_z\psi] \nonumber -h_k^{-1}[h_k^2P,\Op^h(q)h_k(\partial_z\psi)]
\\&=h_k^{-1}[h_k^2P,\Op^h(q)h_k\partial_z\psi] -[h_k^2P,\Op^h(q)](\partial_z\psi) \nonumber
\\&\quad-h_k^2\Op^h(q)[P,(\partial_z\psi)]\nonumber
\\&
=h_k^{-1}[h_k^2P,\Op^h(q)h_k\partial_z\psi] -[h_k^2P,\Op^h(q)] (\partial_z\psi)+ h_k^2O(1)_{\mathcal{L}(H^1,L^2)},
\end{align}
where in the last line we use the fact that $[P,(\partial_z\psi)]$ is a differential operator of order one with continuous coefficients.
By symbolic calculus, we obtain a sum of this form
\begin{align}\label{5.28dA1}
h_k^{-1}[h_k^2P,\Op^h(q)h_k\partial_z\psi]-[h_k^2P,\Op^h(q)] (\partial_z\psi)&=\Op^h(q_0)\psi_0+ \Op^h(q_1)\psi_1h_kD_z  \nonumber
\\&\quad +\Op^h(q_2)\psi_2 h_k^2D_z^2,
\end{align}
where $q_0$, $q_1$ and $q_2$ are continuous functions with compact support in the $x$-variable and smooth with compact support in the variables $(t,\tau,\xi')$  and where $\psi_0$, $\psi_1$ and $\psi_2$ are smooth functions such that $\psi_i=1$ on  $\mathrm{supp}(\psi)$ for $i=0,1,2$.\\
Inserting \eqref{5.28dA1} in \eqref{5.26SSA1} then combining the result with \eqref{2A1} and using Lemma \ref{lem0A1}, we get
\begin{align*}
|\big(u_k,h_k^{-1}[h_k^2P,\Op^h(q)\psi h_k\partial_z] u_k\big)_{L^2(z\geq 0)}|\leq C(q).
\end{align*}
Hence, with \eqref{2A1}
\begin{align}
\label{6*A1}
|\big(h_k^2u_{k_{\mid z=0}},\partial_z\big(\Op^h(q)\psi \partial_zu_k\big)_{\mid z=0}\big)_{L^2(\partial \mathcal{L})} 
-\big(h_k^2 \partial_zu_{k_{\mid z=0}},\Op^h(q)\psi\partial_zu_{k_{\mid z=0}}\big)_{L^2(\partial \mathcal{L})}|\leq C(q).
\end{align}
One writes
\begin{align*}
h_k^2\partial_z(\Op^h(q)\psi\partial_zu_k)_{\mid z=0}=-h_k^2\Op^h(q)\psi R(t,x,D)u_{k_{\mid z=0}}+h_k^2g_k,
\end{align*}
with
\begin{align*}
h_k^2g_k&=[\partial_z,\Op^h(q)\psi]h_k^2\partial_zu_{k_{\mid z=0}}-h_k^2\Op^h(q)\psi(Pu_k)_{\mid z=0}\nonumber
\\&=[\partial_z,\Op^h(q)]\psi h_k^2\partial_zu_{k_{\mid z=0}}+\Op^h(q)[\partial_z,\psi]h_k^2\partial_zu_{k_{\mid z=0}},
\end{align*}
where in the last equation we use $(Pu_k)_{\mid z=0}=0$. Using the following semi-classical traces inequalities
\begin{align*}
 \|h_k^\frac{3}{2}\partial_zu_{k_{\mid z=0}}\|_{L^2(\partial\mathcal{L})}&\lesssim h_k \|\partial_zu_k\|_{H^1_{sc}(\mathbb{R}^{d+1}_+)}\\&\lesssim  \|h_k\partial_z u_k\|_{L^2(\mathbb{R}^{d+1}_+)}+ \|h_k^2\nabla  \partial_z u_k\|_{L^2(\mathbb{R}^{d+1}_+)}+\|h_k^2\partial_t\partial_z u_k\|_{L^2(\mathbb{R}^{d+1}_+)},
\end{align*}
and
\begin{align}\label{5.3111111A1}
h_k^\frac{1}{2}\|u_{k_{\mid z=0}}\|_{L^2(\partial\mathcal{L})}&\lesssim \|u_k\|_{H^1_{sc}(\mathbb{R}^{d+1}_+)} \nonumber
\\&\lesssim \|u_k\|_{L^2(\mathbb{R}^{d+1}_+)}+ \|h_k\nabla u_k\|_{L^2(\mathbb{R}^{d+1}_+)}+\|h_k\partial_tu_k\|_{L^2(\mathbb{R}^{d+1}_+)},
\end{align}
together with Lemma \ref{lem0A1} and the first inequality in \eqref{5.111111sA1}, we deduce that the sequences $(h_k^\frac{3}{2}g_k)_k$ and $(h_k^\frac{1}{2}u_{k_{\mid z=0}})_k$ are bounded in $L^2_{\mathrm{loc}}(\partial\mathcal{L})$. Then, it follows that 
\begin{align*}
\big|\big(u_{k_{\mid z=0}},h_k^2g_k\big)_{L^2(\partial\mathcal{L})}\big|=\big|\big(h_k^\frac{1}{2}u_{k_{\mid z=0}},h_k^\frac{3}{2}g_k\big)_{L^2(\partial\mathcal{L})}\big|\leq C'(q).
\end{align*}
Thus with \eqref{6*A1}, one obtains
\begin{align}
\label{3iA1}
\big|\big(h_k^2u_{k_{\mid z=0}}&,\Op^h(q)\psi R(t,x,D)u_{k_{\mid z=0}}\big)_{L^2(\partial \mathcal{L})} \nonumber
\\&+\big(h_k \partial_zu_{k_{\mid z=0}},\Op^h(q)\psi h_k\partial_zu_{k_{\mid z=0}}\big)_{L^2(\partial \mathcal{L})}\big| \leq C^{''}(q).
\end{align}
Using the boundary condition $h_k\partial_zu_{k_{\mid z=0}}=\Op^h(\tau)u_{k_{\mid z=0}}$, we obtain 
\begin{align}
\big(h_k^2&u_{k_{\mid z=0}},\Op^h(q)\psi R(t,x,D)u_{k_{\mid z=0}}\big)_{L^2(\partial \mathcal{L})} 
+\big( h_k\partial_zu_{k_{\mid z=0}},\Op^h(q)\psi h_k\partial_zu_{k_{\mid z=0}}\big)_{L^2(\partial \mathcal{L})} \nonumber
\\&=\big(u_{k_{\mid z=0}}, \big(\Op^h(q)\psi h_k^2 R(t,x,D)+\Op^h(\tau)^*\Op^h(q)\psi \Op^h(\tau)\big)u_{k_{\mid z=0}}\big)_{L^2(\partial \mathcal{L})}\nonumber
\\&=\Big(u_{k_{\mid z=0}},\Op^h(q)\psi \big(h_k^2 R(t,x,D)+\Op^h(\tau^2)\big)u_{k_{\mid z=0}}\Big)_{L^2(\partial \mathcal{L})}\nonumber
\\&\quad+i \Big(h_k^\frac{1}{2}u_{k_{\mid z=0}},[\partial_t,\Op^h(q)\psi \Op^h(\tau)]h_k^\frac{1}{2}u_{k_{\mid z=0}}\Big)_{L^2(\partial \mathcal{L})}\nonumber
\\&
=\Big(u_{k_{\mid z=0}},\Op^h(q)\psi \big(h_k^2 R(t,x,D)+\Op^h(\tau^2)\big)u_{k_{\mid z=0}}\Big)_{L^2(\partial \mathcal{L})}+ O(1), \label{11A1}
\end{align}
where in \eqref{11A1} we use \eqref{5.3111111A1} along with Lemma \ref{lem0A1}.\\
Replacing $\Op^h(q)\psi $ by $\psi \Op^h(q)^*\Op^h(q)\psi$ in \eqref{11A1}, we deduce according to \eqref{3iA1}
\begin{align*}
\Big|\Big(\Op^h(q)\psi u_{k_{\mid z=0}},\Op^h(q)\psi\big(h_k^2 R(t,x,D)+\Op^h(\tau^2)\big)u_{k_{\mid z=0}}\Big)_{L^2(\partial \mathcal{L})}\Big|\leq C''(q).
\end{align*}
Writing
\begin{align}
&\Big(\Op^h(q)\psi u_{k_{\mid z=0}},\Op^h(q)\psi \big(h_k^2 R(t,x,D)+\Op^h(\tau^2)\big)u_{k_{\mid z=0}}\Big)_{L^2(\partial \mathcal{L})}\nonumber
\\&
=\Big(\Op^h(q)\psi u_{k_{\mid z=0}},\big(h_k^2 R(t,x,D)+\Op^h(\tau^2)\big)\Op^h(q)\psi u_{k_{\mid z=0}}\Big)_{L^2(\partial \mathcal{L})}\nonumber
\\&\quad
+\Big(\Op^h(q)\psi u_{k_{\mid z=0}},[\Op^h(q)\psi ,h_k^2 R(t,x,D)+\Op^h(\tau^2)]u_{k_{\mid z=0}}\Big)_{L^2(\partial \mathcal{L})}\nonumber
\\&=\Big(\Op^h(q)\psi u_{k_{\mid z=0}},\big(h_k^2 R(t,x,D)+\Op^h(\tau^2)\big)\Op^h(q)\psi u_{k_{\mid z=0}}\Big)_{L^2(\partial \mathcal{L})}+ O(1),\label{22A1}
\end{align}
where in the last line we use \eqref{5.3111111A1} and Lemma \ref{lem0A1},
one gets
\begin{align}\label{5.34444A1}
\Big|\Big(\Op^h(q)\psi u_{k_{\mid z=0}},\big(h_k^2 R(t,x,D)+\Op^h(\tau^2)\big)\Op^h(q)\psi u_{k_{\mid z=0}}\Big)_{L^2(\partial \mathcal{L})}\Big|\leq C^{''}(q).
\end{align}
Recall that
\begin{align} \label{5.35555sssA1}
h_k^2R(t,x,D)=\underset{(i,j)\neq (d,d)}{\underset{1\leq i,j\leq d}{\sum}}g^{i,j}(x)h_k^2\partial_{x_i} \partial_{x_j}-h_k^2\partial_t^2+\sum_{1\leq j\leq d} h_k^2r_0^j(x)\partial_{x_j}-h_k^2,
\end{align}
where $r_0^j(x)=(\text{det}\; g)^{-\frac{1}{2}}\sum\limits_{1\leq i\leq d}\partial_{x_i}\Big((\text{det}\; g)^{\frac{1}{2}}g^{i,j}(x)\Big)$ is a continuous function. Then, by \eqref{5.3111111A1} and Lemma \ref{lem0A1}, we get
\begin{align}\label{5.35555A1}
\Big(\Op^h(q)\psi u_{k_{\mid z=0}},\big(\sum_{1\leq j\leq d}h_k^2r_0^j(x)\partial_{x_j}-h_k^2\big)\Op^h(q)\psi u_{k_{\mid z=0}}\Big)_{L^2(\partial \mathcal{L})}=O(1).
\end{align}
From \eqref{5.34444A1}, \eqref{5.35555sssA1} and \eqref{5.35555A1}, we obtain
\begin{align}\label{5.3777777A1}
\Big|\Big(\Op^h(q)\psi u_{k_{\mid z=0}},\Op^h\big(r(x,\tau,\xi)\big)\Op^h(q)\psi u_{k_{\mid z=0}}\Big)_{L^2(\partial \mathcal{L})}\Big|\leq C^{'''}(q),
\end{align}
where $r(x,\tau,\xi)=2\tau^2-\underset{(i,j)\neq (d,d)}{\underset{1\leq i,j\leq d}{\sum}}g^{i,j}(x) \xi_i \xi_j$ is a $C^1$ function on the $x$-variable. Applying Taylor’s formula to the function $r$ at a point $x^0$, we obtain
\begin{align}\label{taylA1}
r(x,\tau,\xi)=r(x^0,\tau,\xi)+\sum\limits_{1\leq i \leq d}(x_i-x_i^0)\tilde{r}_i(x,\tau,\xi),
\end{align}
where $\tilde{r}_i$ is a continuous function on $x$.
Using \eqref{taylA1}, we write
\begin{align} \label{G_1+G_2A1}
(\Op^h(q)\psi u_{k_{\mid z=0}}&, \Op^h(r(x,\tau,\xi))\Op^h(q)\psi u_{k_{\mid z=0}})_{L^2(\partial \mathcal{L})}=G_1+G_2,
\end{align}
with 
\begin{align*}
G_1=(\Op^h(q)\psi u_{k_{\mid z=0}}&, \Op^h(r(x^0,\tau,\xi))\Op^h(q)\psi u_{k_{\mid z=0}})_{L^2(\partial \mathcal{L})},
\end{align*}
and 
\begin{align*}
G_2=(\Op^h(q)\psi u_{k_{\mid z=0}}&, \sum\limits_{1\leq i \leq d}\Op^h\big((x_i-x_i^0)\tilde{r}_i(x,\tau,\xi)\big)\Op^h(q)\psi u_{k_{\mid z=0}})_{L^2(\partial \mathcal{L})}.
\end{align*}
Noting that there exist $c_0,c_1>0$ such that $$|r(x^0,\tau,\xi)|\geq c_0(\tau^2+|\xi|^2)\quad \text{for}\quad |(\tau,\xi)|\geq c_1,$$
 $q$ is compactly supported in the $(\tau,\xi')$-variables, it follows from the semi-classical microlocal Gårding inequality (see Theorem 2.29 in \cite{20A1}) that there exists a constant $C_0>0$ such that
\begin{align}  \label{G_1A1}
|G_1|\geq C_0 \|\Op^h(q)\psi u_{k_{\mid z=0}}\|^2_{L^2(\partial \mathcal{L})}.
\end{align}
For the second term, consider $\chi \in C^\infty_c(\mathbb{R}^{d+1})$ with $\chi(\tau,\xi)=1$ in the $(\tau,\xi')$-projection of $\mathrm{supp}(q)$  such that
\begin{align}\label{G_2 sA1}
G_2=G_2'+G_2'',
\end{align}
with 
\begin{align*}
G_2'=(\Op^h(q)\psi u_{k_{\mid z=0}}, \sum\limits_{1\leq i \leq d}\Op^h\big((x_i-x_i^0)\chi(\tau,\xi)\tilde{r}_i(x,\tau,\xi)\big)\Op^h(q)\psi u_{k_{\mid z=0}})_{L^2(\partial \mathcal{L})},
\end{align*}
and 
\begin{align*}
G_2''=(\Op^h(q)\psi u_{k_{\mid z=0}}, \sum\limits_{1\leq i \leq d}\Op^h\big((x_i-x_i^0)\big(1-\chi\big)\tilde{r}_i(x,\tau,\xi)\big)\Op^h(q)\psi u_{k_{\mid z=0}})_{L^2(\partial \mathcal{L})}.
\end{align*}
Now, let us consider the first term $G_2'$. By Cauchy Schwarz inequality, one obtains
\begin{align*}
|G_2'|&\leq \sum\limits_{1\leq i \leq d} \Big(\underset{ \mathrm{supp}(q)}{\mathrm{sup}}|x_i-x_i^0| \;\|\Op^h(q)\psi u_{k_{\mid z=0}}\|_{L^2(\partial \mathcal{L})} 
\\& \quad\quad\quad\quad\quad
\times\| \Op^h\big(\chi(\tau,\xi)\tilde{r}_i(x,\tau,\xi)\big)\Op^h(q)\psi u_{k_{\mid z=0}}\|_{L^2(\partial \mathcal{L})} \Big)
\\&
\leq C_1\sum\limits_{1\leq i \leq d} \underset{ \mathrm{supp}(q)}{\mathrm{sup}}|x_i-x_i^0| \|\Op^h(q)\psi u_{k_{\mid z=0}}\|_{L^2(\partial \mathcal{L})}^2.
\end{align*}
Taking $\mathrm{supp}(q)$ sufficiently small so that
\begin{align*}
C_1\sum\limits_{1\leq i \leq d} \underset{ \mathrm{supp}(q)}{\mathrm{sup}}|x_i-x_i^0| \leq \frac{C_0}{2},
\end{align*}
one gets
\begin{align}  \label{G_2A1}
|G_2'|\leq \frac{C_0}{2} \|\Op^h(q)\psi u_{k_{\mid z=0}}\|^2_{L^2(\partial \mathcal{L})}.
\end{align}
For $G_2''$,  using  \eqref{5.3111111A1} with Lemma \ref{lem0A1} and the fact that $\chi(\tau,\xi)=1$ in the $(\tau,\xi')$-projection of $\mathrm{supp}(q)$, we obtain
\begin{align}\label{G_2''A1}
G_2''= O(h_k^\infty)C(q).
\end{align}
Thus, combining \eqref{G_2 sA1}, \eqref{G_2A1} and \eqref{G_2''A1}, we deduce
\begin{align} \label{G_22A1}
|G_2|\leq \frac{C_0}{2} \|\Op^h(q)\psi u_{k_{\mid z=0}}\|^2_{L^2(\partial \mathcal{L})}+O(h_k^\infty)C(q).
\end{align}
Collecting  \eqref{5.3777777A1}, \eqref{G_1+G_2A1}, \eqref{G_1A1} and \eqref{G_22A1}, we conclude
\begin{align*}
\|\Op^h(q)\psi u_{k_{\mid z=0}}\|^2_{L^2(\partial \mathcal{L})}\leq \frac{2}{C_0}C^{'''}(q)+O(h_k^\infty)\frac{2C(q)}{C_0}.
\end{align*}
Hence, by Definition \ref{def1A1}, $( u_{k_{\mid z=0}})_k$  is bounded in $L^2_{\varrho_0}(\partial \mathcal{L})$.
\end{proof}

\subsubsection{Elliptic region}
The properties of the traces of $(u_k)_k$ in the elliptic  region ${^\parallel\mathcal{E_\partial}}$ are presented in Theorem \ref{ellipA1} and Corollary \ref{corllellipA1}. To prove Theorem \ref{ellipA1} we need the following elementary
results. 
\begin{lemma} \label{lem1Th}
   Let $a(x',z,\xi',\zeta)$ be compactly supported in the $(x',z,\xi')$-variables, $C^0(\mathbb{R}^d)$ in $x=(x',z)$, and $C^\infty(\mathbb{R}^{d})$ in $\xi=(\xi',\zeta)$ with decay $\langle \zeta\rangle ^{-2}$. For any bounded function $u$ in $L^2(\mathbb{R}^d)$, one has
   \begin{align*}
       \|(\Op^h(a)u)_{|z=0}\|_{L^2(\mathbb{R}^{d-1})} \lesssim h^{-\frac{1}{2}}.
   \end{align*}
\end{lemma}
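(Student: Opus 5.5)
The plan is to write the trace of $\Op^h(a)u$ explicitly through the kernel, and then treat the result as an operator from $L^2(\mathbb{R}^d)$ to $L^2(\mathbb{R}^{d-1})$. The natural route is a $TT^*$-type (or Schur) argument in the tangential variables, combined with a Cauchy--Schwarz argument in the normal frequency $\zeta$; this is where the $\langle\zeta\rangle^{-2}$ decay enters.

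\textbf{Step 1: explicit trace.} By \eqref{8µA1}, with $\hat u(\xi',\zeta)$ the full Fourier transform,
\begin{align*}
(\Op^h(a)u)(x',0)=(2\pi)^{-d}\int e^{ix'\cdot\xi'}\Big(\int a(x',0,h\xi',h\zeta)\hat u(\xi',\zeta)\,d\zeta\Big)d\xi'.
\end{align*}
Set $v(\xi',\zeta)=\hat u(\xi',\zeta)$, so that $\|v\|_{L^2}\simeq\|u\|_{L^2}$. The trace is then $\Op^h_{x'}$ applied, in the tangential variables, to a family of symbols $b_\zeta(x',\xi')=a(x',0,\xi',h\zeta)$, integrated against $v(\cdot,\zeta)$ in $\zeta$.

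\textbf{Step 2: reduction to a tangential bound.} Write
\begin{align*}
(\Op^h(a)u)_{|z=0}=\int \Op^h(b_\zeta)\,w_\zeta\,d\zeta,
\end{align*}
where $w_\zeta=\mathcal F_{\xi'}^{-1}v(\cdot,\zeta)$ is a function of $x'$. By Minkowski's inequality,
\begin{align*}
\|(\Op^h(a)u)_{|z=0}\|_{L^2(\mathbb{R}^{d-1})}\le\int \|\Op^h(b_\zeta)\|_{\mathcal L(L^2(\mathbb{R}^{d-1}))}\,\|w_\zeta\|_{L^2}\,d\zeta.
\end{align*}
The operator norm is controlled by Corollary \ref{corolary4.7A1}, or more precisely by the Schur bound of Proposition \ref{corappenA1} via \eqref{10µµA1}. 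Assuming enough $\xi'$-regularity and compact $\xi'$ support of $a$, this gives the uniform bound $\|\Op^h(b_\zeta)\|\lesssim \sup_{x',\xi'}\langle h\zeta\rangle^{2}\ldots$; more usefully,
\begin{align*}
\|\Op^h(b_\zeta)\|\lesssim \langle h\zeta\rangle^{-2},
\end{align*}
using the decay of $a$ and its $\xi'$-derivatives in $\zeta$. This uniformity is where continuity in $x$ (no $x$-derivatives needed) suffices, since Proposition \ref{corappenA1} only uses $\xi$-derivatives.

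\textbf{Step 3: Cauchy--Schwarz in $\zeta$.} Applying Cauchy--Schwarz,
\begin{align*}
\int\langle h\zeta\rangle^{-2}\|w_\zeta\|\,d\zeta\le\Big(\int\langle h\zeta\rangle^{-4}d\zeta\Big)^{1/2}\Big(\int\|w_\zeta\|^2d\zeta\Big)^{1/2}.
\end{align*}
The first factor equals $C h^{-1/2}$ after the substitution $\zeta\mapsto\zeta/h$. The second factor equals $\|u\|_{L^2}$ by Plancherel, which proves the claim.

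\textbf{Main obstacle.} The delicate point is Step 2: obtaining the operator bound for $\Op^h(b_\zeta)$ uniformly in $\zeta$ with the decay $\langle h\zeta\rangle^{-2}$. This requires the assumed $\langle\zeta\rangle^{-2}$ decay to hold for enough $\xi'$-derivatives, namely up to order $d$ as in Corollary \ref{lem4.7A1}. I would either interpret the hypothesis in this way, or use the compact $\xi'$ support to bound the kernel directly: integrating by parts in $\xi'$ yields a kernel bounded by $\langle h\zeta\rangle^{-2}h^{1-d}\langle (x'-y')/h\rangle^{-d}$, and Schur's lemma then applies.
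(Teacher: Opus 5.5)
Your argument is correct, but it is organized differently from the paper's. The paper does not take a partial Fourier transform in $z$. It writes the trace map $u\mapsto(\Op^h(a)u)_{|z=0}$ as a single operator $L^2(\mathbb{R}^d)\to L^2(\mathbb{R}^{d-1})$ with kernel $K'(x',y)=h^{-d}k_{a'}\big(x',\tfrac{x'-y'}{h},-\tfrac{y_d}{h}\big)$. It then obtains $|k_{a'}(x',\nu)|\lesssim\langle\nu\rangle^{-(d+1)}$ by integrating by parts in all of $\xi=(\xi',\zeta)$. Finally it applies Schur's lemma with the asymmetric constants $\sup_{x'}\int|K'|\,dy\lesssim 1$ and $\sup_y\int|K'|\,dx'\lesssim h^{-1}$, so the loss $h^{-1/2}=(1\cdot h^{-1})^{1/2}$ comes from the unmatched variable $y_d$. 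In your route the same loss appears instead as $\|\langle h\zeta\rangle^{-2}\|_{L^2(d\zeta)}\simeq h^{-1/2}$, after a tangential operator bound that is uniform in $\zeta$.

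Your version buys weaker hypotheses. It uses only $\xi'$-derivatives, with no regularity in $\zeta$, and only square-integrable decay $\langle\zeta\rangle^{-s}$ with $s>\tfrac12$, which is the natural trace threshold. The paper's kernel bound, by contrast, needs $d+1$ derivatives in $\zeta$ as well and integrable decay in $\zeta$.

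The paper's version buys reusability. The same kernel $K'$ and the same two Schur integrals are used verbatim in Lemma \ref{lem2Th}, where the kernel is multiplied by $\theta(y)-\theta(x',0)$ and a Taylor expansion extracts a factor $h\nu$. Since $\theta$ depends on $y_d$, that commutator couples the $\zeta$-fibres under a Fourier transform in $z$, so your decomposition would not transfer as directly.

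Two minor remarks. First, in Step 2 discard the garbled first bound involving $\langle h\zeta\rangle^{2}$; only $\|\Op^h(b_\zeta)\|\lesssim\langle h\zeta\rangle^{-2}$ is used, and it is exactly what Proposition \ref{corappenA1} in dimension $d-1$ gives. Second, your reading of the hypothesis as decay of the $\xi'$-derivatives is the same one the paper uses implicitly when it bounds $({}^tL)^{d+1}a'$.
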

\begin{lemma} \label{lem2Th}
  Let $a(x',z,\xi',\zeta)$ be compactly supported in the $(x',z,\xi')$-variables, $C^0(\mathbb{R}^d)$ in $x=(x',z)$, and $C^\infty(\mathbb{R}^{d})$ in $\xi=(\xi',\zeta)$ with decay $\langle \zeta\rangle ^{-2}$. Let $\theta \in C^1(\mathbb{R}^d)$ and set
   \begin{align*}
      \mathcal{K}:u \mapsto ([\Op^h(a),\theta]u)_{|z=0}. 
   \end{align*}
  Then, $$\|\mathcal{K}\|_{L^2(\mathbb{R}^d)\rightarrow L^2(\mathbb{R}^{d-1})} \lesssim h^\frac{1}{2}.$$
\end{lemma}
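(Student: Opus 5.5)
The plan is to work directly with the Schwartz kernel of the commutator and apply a Schur-type (Cauchy--Schwarz) estimate for an operator from $L^2(\mathbb{R}^d)$ to functions on the hyperplane $\{z=0\}$. As in \eqref{noyauA1}, write $\Op^h(a)u(x)=\int K(x,y)u(y)\,dy$ with $K(x,y)=h^{-d}k\big(x,\tfrac{x-y}{h}\big)$. Then $[\Op^h(a),\theta]$ has kernel $K(x,y)\big(\theta(y)-\theta(x)\big)$. Since $\theta\in C^1$, with bounded gradient on the relevant compact set (the symbol is compactly supported in $x$), we get
\begin{align*}
|\theta(y)-\theta(x)|\le \|\nabla\theta\|_{L^\infty}|x-y|=h\|\nabla\theta\|_{L^\infty}\,|w|,\qquad w=\tfrac{x-y}{h}.
\end{align*}
Hence the commutator kernel is bounded by $\tilde K(x,y):=h^{1-d}\,|w|\,|k(x,w)|$. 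This is the only place where the $C^1$ regularity of $\theta$ is used, and it is what produces the gain of one power of $h$.

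The first step is a pointwise decay bound on $k$. I will use that $a$ is compactly supported in $\xi'$, smooth in $\xi$, and that its $\zeta$-derivatives keep the decay $\langle\zeta\rangle^{-2}$, which I read as part of the hypothesis "$C^\infty$ in $\xi$ with decay $\langle\zeta\rangle^{-2}$". The aim is
\begin{align*}
|k(x,w)|\lesssim \langle w'\rangle^{-N}\langle w_d\rangle^{-N}\quad\text{uniformly in }x.
\end{align*}
To get it, integrate by parts in $\xi$ as in \eqref{4.333A1}: each factor $w_j$ turns into a derivative $\partial_{\xi_j}a$, and all of these are integrable in $\xi$, since they are compactly supported in $\xi'$ and $O(\langle\zeta\rangle^{-2})$ in $\zeta$. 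In particular $|w|\,|k(x,w)|$ is integrable in $w\in\mathbb{R}^d$. Moreover, $\sup_{w_d}\int_{\mathbb{R}^{d-1}}|w|\,|k(x,w',w_d)|\,dw'$ is finite. This step is the main obstacle: everything rests on checking that the assumed $\zeta$-regularity really gives enough decay of $k$ in $w_d$ to absorb the extra factor $|w_d|$. If the hypothesis only gave $\langle w_d\rangle^{-2}$-type control, I would instead split $\theta(y)-\theta(x)$ into its $y'-x'$ and $z_y$ parts. I would then write the $z_y$-part as $h$ times an operator with symbol built from $\partial_\zeta a$, which still has the decay $\langle\zeta\rangle^{-2}$, and apply Lemma \ref{lem1Th} to it.

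The second step is the Schur estimate restricted to $z=0$. For $x=(x',0)$, Cauchy--Schwarz gives
\begin{align*}
|\mathcal{K}u(x')|^2\le\Big(\int\tilde K((x',0),y)\,dy\Big)\Big(\int\tilde K((x',0),y)|u(y)|^2dy\Big).
\end{align*}
The first factor equals $h\int|w|\,|k|\,dw=O(h)$, uniformly in $x'$, after the change of variables $dy=h^d\,dw$. Integrating the second factor in $x'$ and using Fubini leaves $\sup_y\int\tilde K((x',0),y)\,dx'$. Here only the $d-1$ tangential variables are rescaled, so this quantity is
\begin{align*}
h^{1-d}\,h^{d-1}\sup_{w_d}\int|w|\,|k(\cdot,w',w_d)|\,dw'=O(1).
\end{align*}
Therefore $\|\mathcal{K}u\|^2_{L^2(\mathbb{R}^{d-1})}\lesssim h\,\|u\|^2_{L^2(\mathbb{R}^d)}$, which is the claimed bound $h^{1/2}$. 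The loss of $h^{1/2}$ compared with the full gain $h$ is exactly the trace loss in the normal direction, as in Lemma \ref{lem1Th}.
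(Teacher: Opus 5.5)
Your proposal is correct and follows the paper's proof: the same commutator kernel $K((x',0),y)\big(\theta(y)-\theta(x',0)\big)$, a first-order Taylor bound on $\theta$ giving the factor $h\,\nu$, absorption of that factor through $\xi$-derivatives of $a$ (the paper rewrites $\nu_j k_{a'}$ as $k_{(\partial_{\xi_j}a)'}$ and reuses the estimates of Lemma \ref{lem1Th}), and Schur's lemma with the asymmetric bounds $h$ and $1$, which you reproduce via Cauchy--Schwarz. Your fallback splitting is not needed; just make sure your Lipschitz bound on $\theta$ holds for all $y$ and not only near the $x$-support of $a$ (for instance $\theta\in W^{1,\infty}$, which the paper's $\Theta_j$ tacitly uses as well).
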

\begin{lemma} \label{lem3Th}
    Let $a(x',z,\xi',\zeta)$ be compactly supported in the $(x',z,\xi')$-variables, $C^0(\mathbb{R}^d)$ in $x=(x',z)$, and $C^\infty(\mathbb{R}^{d})$ in $\xi=(\xi',\zeta)$ with decay $\langle \zeta\rangle ^{-2}$. Define
    \begin{align*}
    \mathcal{K}:u \mapsto (\Op^h(a)u\; \delta)_{|z=0},
    \end{align*}
    where $\delta$ is the Dirac function on $z=0$. One has  \begin{align*}
    \|\mathcal{K}u\|_{L^2(\mathbb{R}^{d-1})} \lesssim h^{-1}\|u_{|z=0}\|_{L^2(\mathbb{R}^{d-1})}.
    \end{align*}
    \end{lemma}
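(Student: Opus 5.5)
The plan is to compute the operator $\mathcal{K}$ explicitly and show that it is $h^{-1}$ times a tangential semi-classical operator on $\mathbb{R}^{d-1}$ with a bounded symbol. I read $\mathcal{K}u$ as $\big(\Op^h(a)(u_{|z=0}\otimes\delta_{z=0})\big)_{|z=0}$, where $f:=u_{|z=0}$ is a function on $\mathbb{R}^{d-1}$. The Fourier transform of $f\otimes\delta_{z=0}$ in $(x',z)$ is $\hat f(\xi')$, independent of $\zeta$. Inserting this into \eqref{8µA1} gives
\begin{align*}
\Op^h(a)(f\otimes\delta)(x',z)=(2\pi)^{-d}\int_{\mathbb{R}^{d-1}}\int_{\mathbb{R}} e^{ix'\cdot\xi'+iz\zeta}\,a(x',z,h\xi',h\zeta)\,\hat f(\xi')\,d\zeta\, d\xi' .
\end{align*}
The $\zeta$-integral converges absolutely because of the decay $\langle\zeta\rangle^{-2}$. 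This is exactly where that hypothesis is used; without it the trace of $\Op^h(a)$ applied to a Dirac layer would not make sense.

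Next I restrict to $z=0$ and make the change of variables $\zeta\mapsto\zeta/h$ in the inner integral. This yields
\begin{align*}
\mathcal{K}u(x')=\frac{1}{2\pi h}\,(2\pi)^{1-d}\int_{\mathbb{R}^{d-1}} e^{ix'\cdot\xi'}\,b(x',h\xi')\,\hat f(\xi')\,d\xi',
\qquad b(x',\xi'):=\int_{\mathbb{R}} a(x',0,\xi',\zeta)\,d\zeta .
\end{align*}
Thus $\mathcal{K}u=(2\pi h)^{-1}\Op^h(b)\,u_{|z=0}$, where $\Op^h(b)$ is the tangential quantization of Corollary \ref{lem4.7A1} in the variables $x'$ only.

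It remains to show that $\Op^h(b)$ is bounded on $L^2(\mathbb{R}^{d-1})$ uniformly in $h$. For this I verify that $b$ belongs to the symbol class required by Corollary \ref{lem4.7A1}, or equivalently that its kernel satisfies the bound \eqref{10µµA1} in dimension $d-1$, so that Proposition \ref{corappenA1} applies. Since $a$ is continuous in $x$, $b$ is continuous, hence bounded, in $x'$; only an $L^\infty$ bound in $x'$ is needed, since no $x'$-derivatives enter. Since $a$ is compactly supported in $\xi'$, so is $b$, which gives the weight $\langle\xi'\rangle^{-(d+1)}$ for free. For the $\xi'$-derivatives up to order $d$, I differentiate under the integral sign: $\partial^\beta_{\xi'}b=\int\partial^\beta_{\xi'}a(x',0,\xi',\zeta)\,d\zeta$. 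This step is the main obstacle, because it requires that the $\xi'$-derivatives of $a$ keep an integrable decay in $\zeta$, for instance $|\partial^\beta_{\xi'}a|\lesssim\langle\zeta\rangle^{-2}$ uniformly in $(x',\xi')$ for $|\beta|\le d$. I would read the hypothesis "$C^\infty$ in $\xi$ with decay $\langle\zeta\rangle^{-2}$" as including these derivatives, which is the situation in the applications, where $a$ comes from $q(t,x,\tau,\xi')$ times a rational function of $\zeta$ such as $(\zeta^2+c)^{-1}$.

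Granting this, dominated convergence gives $\sup|\partial^\beta_{\xi'}b|<\infty$ for $|\beta|\le d$. Then $\|\Op^h(b)\|_{L^2\to L^2}\lesssim 1$, and consequently
\begin{align*}
\|\mathcal{K}u\|_{L^2(\mathbb{R}^{d-1})}\lesssim h^{-1}\|u_{|z=0}\|_{L^2(\mathbb{R}^{d-1})} .
\end{align*}
The same computation also sheds light on Lemma \ref{lem1Th}: there the trace of $\Op^h(a)u$ involves the $\zeta$-integral against $\hat u$ rather than the pure layer, and a Cauchy–Schwarz estimate in $\zeta$ produces the factor $h^{-1/2}$ instead of $h^{-1}$.
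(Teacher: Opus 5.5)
Your proposal is correct and follows the paper's proof essentially verbatim. Both arguments compute $\mathcal{K}u=(2\pi h)^{-1}\Op^h(b)\,u_{|z=0}$ with $b(x',\xi')=\int a(x',0,\xi',\zeta)\,d\zeta$ and conclude from the $L^2$-boundedness of this tangential operator. Your remark that differentiating $b$ in $\xi'$ requires the $\xi'$-derivatives of $a$ to keep integrable decay in $\zeta$ makes explicit a point the paper passes over when it asserts that $t$ is $C^\infty$ in $\xi'$.
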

Note that in all the above lemmas, the traces at $z=0^+$, $z=0^-$ and $z=0$ coincide. The proofs of the Lemmas \ref{lem1Th}, \ref{lem2Th} and \ref{lem3Th} are provided in Appendix \ref{appenAA1}. 
\begin{theorem}
\label{ellipA1}
The sequence $( u_{k_{\mid z=0}})_k$ is bounded in $L^2_{\varrho_0}(\partial\mathcal{L})$ for $\varrho_0\in {^\parallel\mathcal{E_\partial}}$.
\end{theorem}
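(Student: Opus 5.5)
We prove Theorem \ref{ellipA1} by building an approximate parametrix for $h_k^2P$ near $\varrho_0$. At $\varrho_0\in{^\parallel\mathcal{E}_\partial}$ the tangential part $g(\xi',\xi')-\tau^2$ is positive, so the principal symbol $\zeta^2+g(\xi',\xi')-\tau^2$ of $h_k^2P$ never vanishes in $\zeta$. We extend $u_k$ by zero to $z<0$, writing $\underline{u}_k=u_k\mathds{1}_{z\geq 0}$. Because the metric is only $C^1$ and we use Lemmas \ref{lem1Th}--\ref{lem3Th}, we work with full symbols in $\xi=(\xi',\zeta)$ rather than a tangential factorization. In the sense of distributions on $\mathbb{R}^{d+1}$, the jump formula gives
\begin{align*}
h_k^2P\underline{u}_k=-h_k^2\,\partial_zu_{k\mid z=0}\,\delta_{z=0}-h_k^2\,u_{k\mid z=0}\,\delta'_{z=0}+\text{(first-order terms)},
\end{align*}
with $g^{d,d}=1$ and $g^{i,d}=0$ at $z=0$; the first-order terms come from the $C^0$ coefficients times $\partial_z$. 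We then use the boundary condition $h_k\partial_zu_{k\mid z=0}=\Op^h(\tau)u_{k\mid z=0}$ (up to $h_k[\psi,\partial_t]$ errors) to rewrite the single layer in terms of $u_{k\mid z=0}$. The right-hand side is then $h_k\,\Op^h(\tau)u_{k\mid z=0}\otimes\delta$ plus $h_k^2\,u_{k\mid z=0}\otimes\delta'$.

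Next we take $q(t,x,\tau,\xi')$ elliptic at $\varrho_0$, supported where $g(x;\xi',\xi')-\tau^2\geq c>0$, and a cutoff $\psi$ as in the proof of Theorem \ref{lopA1}. We set
\begin{align*}
a(t,x,\tau,\xi)=\frac{q(t,x,\tau,\xi')}{\zeta^2+\sum_{(i,j)\neq(d,d)}g^{i,j}(x)\xi_i\xi_j-\tau^2+C\,\chi(\zeta)}.
\end{align*}
For $\operatorname{supp}q$ small the denominator is bounded below uniformly in $\zeta$, so $a$ decays like $\langle\zeta\rangle^{-2}$ and is continuous in $x$. This puts $a$ exactly in the class of Lemmas \ref{lem1Th}--\ref{lem3Th}. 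Applying $\Op^h(a)\psi$ to the identity above gives
\begin{align*}
\Op^h(q)\psi\underline{u}_k=\Op^h(a)\psi h_k^2P\underline{u}_k+E_k\underline{u}_k,
\end{align*}
where $E_k$ collects the lower-order symbolic remainders. Since the coefficients are only $C^1$ (second derivatives of $g$ never appear), $E_k$ consists of operators whose symbols are small times $\langle\zeta\rangle^{-1}$, or $O(h_k)$ times bounded symbols.

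Then we take the trace at $z=0$ of both sides.
\begin{itemize}
\item The left side is $\Op^h(q)\psi u_{k\mid z=0}$, up to commutators with $\psi$, which Lemma \ref{lem2Th} shows are $O(h_k^{1/2})$ times $\|u_k\|_{L^2}$.
\item The $\delta$ layer terms are handled with Lemma \ref{lem3Th}. Explicitly, the trace of $\Op^h(a)(f\otimes\delta)$ is $\Op^h(\tilde a)f$, with $\tilde a=(2\pi)^{-1}h^{-1}\int a\,d\zeta$ tangential. The factor $h_k\cdot h_k^{-1}$ is then $O(1)$, with symbol $\tilde a\,\tau$.
\item The $\delta'$ term is integrated by parts in $z$, which converts it to a $\delta$ term with symbol $i\zeta a/h$. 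Its trace (principal value or average of the $0^\pm$ traces) is an operator whose leading symbol is odd in $\zeta$, so it contributes $\tfrac12\Op^h(q)u_{k\mid z=0}$ plus a small remainder.
\item The interior remainder terms are bounded by Lemma \ref{lem1Th}, using the $L^2_{\mathrm{loc}}$ bounds of Lemma \ref{lem0A1}.
\end{itemize}
Collecting these, we obtain an identity of the form
\begin{align*}
\Big(\tfrac12\,\Op^h(q)-\Op^h\big(q\,\tau\,\tfrac{1}{2\sqrt{g(\xi',\xi')-\tau^2}}\big)\Big)\psi u_{k\mid z=0}=\text{(bounded)}+\text{small}\cdot u_{k\mid z=0}.
\end{align*}
The boundary symbol here is $\tfrac12\big(1-i\tau/\sqrt{g(\xi',\xi')-\tau^2}\big)$ times $q$ (with a suitable sign convention), and it never vanishes. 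This is the Lopatinskii condition, and in the elliptic region it holds for every real $\tau$. By the same Gårding argument and Taylor localization used at the end of the proof of Theorem \ref{lopA1}, we absorb the small terms into the left-hand side. This yields $\|\Op^h(q)\psi u_{k\mid z=0}\|_{L^2}\lesssim 1$, which is the claim by Definition \ref{def1A1}.

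The main obstacle is the bookkeeping of the error terms. The difficulty is that the interior bound from Lemma \ref{lem1Th} alone costs $h_k^{-1/2}$. So every interior remainder must carry at least a factor $h_k^{1/2}$, which the commutator gain of Lemma \ref{lem2Th} provides, or else be reduced to boundary terms that can be absorbed. This is delicate because, with a $C^1$ metric, symbolic calculus gives only $o(1)$ rather than $O(h_k)$ remainders after one commutator. First I would try shrinking $\operatorname{supp}q$ and using the Taylor decomposition $r(x)=r(x^0)+\sum(x_i-x_i^0)\tilde r_i$, exactly as in the treatment of $G_2$ above, to turn those $o(1)$ terms into small multiples of $\|\Op^h(q)\psi u_{k\mid z=0}\|$.
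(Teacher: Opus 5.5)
There is a genuine gap at the decisive step. The boundary symbol is not $\tfrac12\bigl(1-i\tau/\rho\bigr)q$ with $\rho=\sqrt{g(\xi',\xi')-\tau^2}$, and the Lopatinskii condition does \emph{not} hold for every real $\tau$ in ${}^\parallel\mathcal{E}_\partial$.

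Every contribution is real. Use your own conventions, $h_k\partial_zu_{k\mid z=0}=\Op^h(\tau)u_{k\mid z=0}$ and $h_k^2P\underline u_k=-h_k^2\partial_zu_{k\mid z=0}\,\delta-h_k^2u_{k\mid z=0}\,\delta'+\dots$. Since $\int a\,d\zeta=\pi q/\rho$ at $z=0$, the single layer gives $-\tfrac{q}{2\rho}\tau$. The double layer gives $\pm\tfrac12 q$ at $z=0^\pm$. Its average is $0$, so the $\tfrac12$ you use only appears with the one-sided trace $0^+$; that same trace is what makes your left side equal to $\Op^h(q)\psi u_{k\mid z=0}$. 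Either one-sided trace yields
\[
\Op^h\Big(\frac{q}{2\rho}\,(\rho+\tau)\Big)\psi u_{k\mid z=0}=O(h_k^{1/2})\quad\text{in }L^2 .
\]
This is the paper's relation $\Op^h\bigl(-\tfrac f2(\tau+\sqrt{-r_0})\bigr)\psi_1u_{k\mid z=0}=h^{1/2}O(1)$. It is not your $\tfrac12 q(1-\tau/\rho)$, and that sign would even vanish at $\tau=\rho>0$.

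The true symbol $\tau+\sqrt{g(\xi',\xi')-\tau^2}$ vanishes on $\{\tau<0,\ 2\tau^2=g(\xi',\xi')\}\subset{}^\parallel\mathcal{E}_\partial$. There, $e^{i(t\tau+x'\cdot\xi')/h}e^{-z\rho/h}$ satisfies $\partial_zu+i\partial_tu=0$. This is exactly the failure of the Lopatinskii condition for $\tau<0$ mentioned in the introduction. So ellipticity of the boundary symbol cannot be invoked, and no G\aa rding or Taylor absorption closes the argument from this identity alone.

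What is missing is the positive time-frequency localization of the sequence. Because $U_k\in E_k\subset E^+$, argue as for \eqref{4*A1} with the $H^2$ bound of Lemma \ref{lem0A1}. This gives $\psi_1u_{k\mid z=0}=\varphi(h_kD_t)\psi_1u_{k\mid z=0}+O(h_k^N)$, with $0\le\varphi\in C^\infty_c$ supported in $[\alpha/2,2\alpha^{-1}]$. Inserting this in the single-layer term replaces $\tau$ by $\tau\varphi(\tau)\ge0$. The boundary symbol becomes $-\tfrac f2\bigl(\tau\varphi(\tau)+\sqrt{-r_0}\bigr)$, which is elliptic at every $\varrho_0\in{}^\parallel\mathcal{E}_\partial$. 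This is precisely where the restriction to positive time-frequencies in Theorem \ref{mainA1} enters.

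With this ingredient added and the signs corrected, your layer-potential route becomes the paper's. The paper takes $q=e/p$ with $e=\sqrt{-r_0}\,f$ tangential, and takes traces at $z=0^-$ so that the interior term $(\Op^h(e)\underline v)_{\mid z=0^-}$ vanishes. It then evaluates the two layers by residues at $\zeta_-$.

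You should also drop the $C\chi(\zeta)$ in your denominator. Near an elliptic point $p$ already has no real roots in $\zeta$. The added term destroys $a\,p=q$, since the error $q\,C\chi(\zeta)/(\cdots)$ is $O(1)$, not small, and it also destroys the exact residue evaluation of the traces.
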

As a direct consequence of Theorem \ref{ellipA1} one has the following result.
\begin{corollary} \label{corllellipA1}
The sequence $(h_k\partial_zu_{k_{\mid z=0}})_k$ is bounded in $L^2_{\varrho_0}(\partial \mathcal{L})$ for $\varrho_0\in {^\parallel\mathcal{E_\partial}}$.
\end{corollary}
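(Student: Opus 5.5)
The plan is to copy the proof of Corollary \ref{cor1**A1}, with Theorem \ref{ellipA1} playing the role that Theorem \ref{lopA1} played there. The central point is that the boundary condition in \eqref{equofcontraA1} gives, on $z=0$ in the quasi-normal geodesic coordinates of Proposition \ref{pro9A1}, where $\partial_n=\partial_z$, the exact identity
\begin{align*}
h_k\partial_z u_{k_{\mid z=0}}=-ih_k\partial_t u_{k_{\mid z=0}}=\Op^h(\tau)u_{k_{\mid z=0}} .
\end{align*}
So the normal trace is a tangential semiclassical operator of order one applied to the Dirichlet trace. Microlocal boundedness of the latter near $\varrho_0\in{^\parallel\mathcal{E}_\partial}$ should therefore carry over directly to the former.

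The steps are as follows.

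\emph{Choice of cutoffs.} By Theorem \ref{ellipA1} there is an operator $\Op^h(\tilde e)$ that is elliptic at $\varrho_0$, compactly supported, and satisfies $\|\Op^h(\tilde e)u_{k_{\mid z=0}}\|_{L^2(\partial\mathcal{L})}\lesssim 1$. Choose $e\in C^\infty_c(\phi_{\mathcal{L}}(O)\times\mathbb{R}^d)$ with $e(\varrho_0)=1$ and support inside the region where $\tilde e$ is elliptic. Choose $\psi\in C^\infty_c(\phi_{\mathcal{L}}(O))$ equal to $1$ near the $(t,x)$-projection of $\mathrm{supp}(e)$.

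\emph{Commuting the cutoff.} Write
\begin{align*}
\Op^h(e)\psi h_k\partial_z u_{k_{\mid z=0}}=\Op^h(e)\Op^h(\tau)\psi u_{k_{\mid z=0}}+\Op^h(e)h_k[\psi,\partial_t]u_{k_{\mid z=0}} .
\end{align*}
The commutator $[\psi,\partial_t]=-\partial_t\psi$ is a bounded multiplication. By the trace estimate \eqref{5.3111111A1} together with Lemma \ref{lem0A1}, $h_k^{1/2}u_{k_{\mid z=0}}$ is bounded in $L^2_{\mathrm{loc}}$, so the second term is $O(h_k^{1/2})$.

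\emph{Handling the first term.} By tangential symbolic calculus, $\Op^h(e)\Op^h(\tau)=\Op^h(e\tau)+O(h)$, and the remainder is again harmless by \eqref{5.3111111A1}. Since $e\tau$ is compactly supported, we can factor it by ellipticity. Take a parametrix $\Op^h(b)$, where $b=e\tau/\tilde e$ is smooth because $\tilde e$ does not vanish on $\mathrm{supp}(e)$. Then $\Op^h(e\tau)=\Op^h(b)\Op^h(\tilde e)+h\,\mathcal{R}_h$ with $\mathcal{R}_h$ uniformly bounded on $L^2$. This gives
\begin{align*}
\|\Op^h(e\tau)\psi u_{k_{\mid z=0}}\|_{L^2}\lesssim \|\Op^h(\tilde e)\psi u_{k_{\mid z=0}}\|_{L^2}+h_k\|\psi u_{k_{\mid z=0}}\|_{L^2}\lesssim 1 .
\end{align*}
Definition \ref{def1A1} then yields the claim.

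The only point that needs care is this microlocal factorization: bounding $\Op^h(e\tau)$ by the given elliptic $\Op^h(\tilde e)$ up to $O(h)$ errors. The symbols here are smooth in $(t,x',\tau,\xi')$ and only the trace lives on $\partial\mathcal{L}$, so standard calculus applies. The $O(h)$ losses are absorbed because $h_k^{1/2}u_{k_{\mid z=0}}$ is bounded in $L^2_{\mathrm{loc}}$. Alternatively, one can simply take $e=\tilde e$, as was done implicitly in the glancing case.
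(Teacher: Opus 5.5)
Your argument is correct and is essentially the paper's proof, which simply refers back to Corollary~\ref{cor1**A1}. It uses the boundary condition to write $h_k\partial_z u_{k\mid z=0}=\Op^h(\tau)u_{k\mid z=0}$, controls the commutator term $h_k[\psi,\partial_t]$ by the trace bound \eqref{5.3111111A1}, and invokes Theorem~\ref{ellipA1}, with your explicit factorization through that theorem's elliptic operator making precise a step the paper leaves implicit. One small correction: that operator's symbol, $-\tfrac{f}{2}\big(\tau\varphi(\tau)+\sqrt{-r_0}\big)$, is only $C^1$ in $x'$ because it is built from the metric, so $b=e\tau/\tilde e$ is not smooth; however, one $x'$-derivative is enough for an $O(h_k)$ composition remainder, so your estimate stands.
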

The proof of Corollary \ref{corllellipA1} follows similarly as in the proof of Corollary \ref{cor1**A1}.
\begin{proof} [Proof of Theorem \ref{ellipA1}]
In local coordinates, the wave equation can be expressed near $\varrho_0\in {^\parallel\mathcal{E_\partial}}$ as follows
\begin{align}
\label{36££A1}
Pu_k=-\Big(g^{d,d}(x)\partial_z^2+R(t,x,D)\Big)u_k=0,
\end{align}
where $R(t,x,D)$ is given in  \eqref{exp of RA1}. The principal symbol of the operator associated to the equation
\eqref{36££A1} is given by
 \begin{align*}
p(x,\tau,\xi)=g^{d,d}(x)\zeta^2+2\underset{1\leq i\leq d-1}{\sum}g^{d,i}(x)\xi_i\zeta-r_0(x,\tau,\xi'),
\end{align*}
with
\begin{align*}
r_0(x,\tau,\xi')=\tau^2-\underset{1\leq i,j\leq d-1}{\sum}g^{i,j}(x)\xi_i\xi_j.
\end{align*} 
Near $\varrho_0\in {^\parallel\mathcal{E_\partial}}$, one has
 \begin{align}\label{5;32'A1}
r_0(x,\tau,\xi')<0 \quad  \text{and} \quad p(x,\tau,\xi)>0.
\end{align}
 Let $f(t,x,\tau,\xi') \in C^\infty_c(\phi_{\mathcal{L}}(O)\times\mathbb{R}^{d})$ supported near $\varrho_0$ such that $f(\varrho_0)=1$ and set 
 \begin{align} \label{qqqA1}
     q(t,x,\tau,\xi)=\frac{e(t,x,\tau,\xi')}{p(x,\tau,\xi)}, \quad \text{with} \quad e(t,x,\tau,\xi')=\sqrt{-r_0(x,\tau,\xi') }f(t,x,\tau,\xi').
 \end{align}
 The symbol $q$ is compactly supported in $(t,x,\tau,\xi')$-variables, $C^1(\mathbb{R}^d)$ in $x$-variable and $C^\infty(\mathbb{R}^{d+2})$ in $(t,\tau,\xi)$-variables with decay $\langle \zeta\rangle ^{-2}$. By Schur's lemma, the operator $\Op^h(q)$ is bounded on $L^2(\mathbb{R}^{d+1})$.
 Denote $\underline{u}$ the extension by $0$ of $u_k$ on $\{z<0\}$. Consider $\psi\in C^\infty_c(\phi_{\mathcal{L}}(O))$ with $\psi$ equal to $1$ in a neighborhood of the $(t,x)$-projection of $\mathrm{supp}(q)$ such that $\psi(t,x)=\psi_1(t,x')\psi_2(z)$ with $\psi_2(0)=1$ and $\partial_z \psi_2(0)=0$. We fix the following notations:
\begin{align*}
    \underline{v}=\psi\underline{u}, \quad g^{\pm \frac{1}{2}}=(\text{det}\;g)^{\pm\frac{1}{2}}, \quad \tilde{g}^{i,j}=g^\frac{1}{2} g^{i,j}, \quad  u=u_k \quad \text{and}\quad h=h_k.
\end{align*}
Writing
 \begin{align*}  P=\partial_t^2-\Delta_g'-\underset{\underset{(i,j)\neq (d,d)}{1\leq i,j\leq d}}{\sum}g^{-\frac{1}{2}}\partial_{i}\Big( g^\frac{1}{2}g^{i,j}\partial_{j}\Big)+1,
 \end{align*}
 with $\Delta_g'= \underset{1\leq i,j\leq d-1}{\sum}g^{-\frac{1}{2}}\partial_{i}\Big( g^\frac{1}{2}g^{i,j}\partial_{j}\Big)$,
 one has
 \begin{align} \label{expressTh}
    h^2P\underline{v}= \psi h^2P\underline{u} +h^2[P,\psi] \underline{u},
    \end{align}
where
    \begin{align*}
     P\underline{u}=\underline{(\partial_t^2-\Delta_g')u}-g^{-\frac{1}{2}} \Big(\partial_i\big(\tilde{g}^{i,d} \partial_{z} \underline{u}\big)+\partial_z\big(\tilde{g}^{d,j} \partial_{j}\underline{u}\big)+ \partial_z\big(\tilde{g}^{d,d} \partial_{z}\underline{u}\big) \Big) +\underline{u}.
 \end{align*}
 Using 
 \begin{align*}
    &\partial_z\underline{u}= u_{|z=0} \;\delta+\underline{\partial_zu},
 \end{align*}
 and the fact that $g^{i,d}=g^{d,j}=0$ for $i,j\neq d$, $g^{d,d}=1$ in $z=0$, one gets
 \begin{align*}
      &\quad g^\frac{1}{2} g^{i,d}\partial_z\underline{u}= \underline{g^\frac{1}{2}g^{i,d}\partial_zu}, \quad \quad \text{for}\quad i\neq d, \quad \text{and}\quad g^\frac{1}{2} g^{d,d} \partial_z\underline{u}=( g^\frac{1}{2} u)_{|z=0} \; \delta+ \underline{g^\frac{1}{2}g^{d,d}\partial_zu}.
      \end{align*}   
 Thus, 
 \begin{align*}
     g^{-\frac{1}{2}}\partial_i\big(\tilde{g}^{i,d} \partial_{z} \underline{u}\big)=g^{-\frac{1}{2}}\partial_i\big(\tilde{g}^{i,d} )\partial_z\underline{u}+g^{i,d}\partial_i\partial_z \underline{u}
     =\underline{g^{-\frac{1}{2}}\partial_i(\tilde{g}^{i,d})\partial_zu} +  \underline{g^{i,d}\partial_i\partial_z u}=\underline{g^{-\frac{1}{2}}\partial_i\big(\tilde{g}^{i,d} \partial_{z}u\big)},
     \end{align*}
     \begin{align*}
     g^{-\frac{1}{2}}\partial_z\big(\tilde{g}^{d,j} \partial_{j}\underline{u}\big)=\underline{g^{-\frac{1}{2}}\partial_z(\tilde{g}^{d,j}) \partial_ju}+ g^{d,j} \partial_j\partial_z\underline{u}=\underline{g^{-\frac{1}{2}}\partial_z(\tilde{g}^{d,j}) \partial_ju}+ \underline{g^{d,j} \partial_j\partial_zu} =\underline{g^{-\frac{1}{2}}\partial_z\big(\tilde{g}^{d,j}\partial_{j}u\big)},
     \end{align*}
     \begin{align*}
     g^{-\frac{1}{2}}\partial_z\big(\tilde{g}^{d,d} \partial_{z}\underline{u}\big)&= g^{-\frac{1}{2}} \partial_z\Big(\underline{\tilde{g}^{d,d}\partial_z u}+ (g^\frac{1}{2}u)_{|z=0}\;\delta\Big)= \underline{g^{-\frac{1}{2}}\partial_z(\tilde{g}^{d,d}\partial_z u)}+ (\partial_zu)_{|z=0}\delta+ g^{-\frac{1}{2}}(g^{\frac{1}{2}} u)_{|z=0}\; \delta'
     \\& =\underline{g^{-\frac{1}{2}}\partial_z(\tilde{g}^{d,d}\partial_z u)}+ (\partial_zu)_{|z=0}\delta+  u_{|z=0}\; \delta'-(\partial_zg^{-\frac{1}{2}})_{|z=0}(g^{\frac{1}{2}} u)_{|z=0} \delta.
 \end{align*}
 Note that the traces of $u$ and $\partial_zu$ are taken at $z=0^+$.\\
One then obtains,
 \begin{align*}
     P\underline{u}&=\underline{Pu}-u_{|z=0}\; \delta'+ \Big((\partial_zg^{-\frac{1}{2}})_{|z=0}(g^{\frac{1}{2}} u)_{|z=0}-(\partial_zu)_{|z=0}\Big)\delta
     \\&=- u_{|z=0}\; \delta'+ \Big((\partial_zg^{-\frac{1}{2}})_{|z=0}(g^{\frac{1}{2}} u)_{|z=0}-(\partial_zu)_{|z=0}\Big)\delta.
 \end{align*}
 Consequently, using $\psi(t,x)=\psi_1(t,x')\psi_2(z)$ with $\psi_2(0)=1$ and $\partial_z \psi_2(0)=0$, we deduce
 \begin{align} \label{275555TH}
     \psi h^2P \underline{u}&=-h^2  (\psi u)_{|z=0} \;\delta'+h^2\Big( (\partial_zg^{-\frac{1}{2}})_{|z=0}(g^{\frac{1}{2}} \psi u)_{|z=0}-( \psi \partial_zu)_{|z=0}\Big)\;\delta \nonumber
     \\&=-h^2 \psi_1 u_{|z=0}\; \delta'+h^2\Big( (g^{\frac{1}{2}}\partial_zg^{-\frac{1}{2}})_{|z=0} \psi_1 u_{|z=0}-\psi_1 (\partial_zu)_{|z=0}\Big)\;\delta.
 \end{align}
 One writes 
      \begin{align} \label{1TH}
    \Op^h(q)h^2P\underline{v} &
    =\Op^h(-q \tau^2)\underline{v}+ \Op^h(q) h^2 \underline{v}
    +I^{(1)}+I^{(2)}+I^{(3)}+ I^{(4)}, 
     \end{align}
     with 
     \begin{align*}
       &I^{(1)} = -\Op^h(q) g^{-\frac{1}{2}}h^2\partial_{i}\big( g^{\frac{1}{2}}g^{i,j}\partial_{j}\underline{v}\big), \quad I^{(2)}=-\Op^h(q) g^{-\frac{1}{2}} h \partial_z \big(g^\frac{1}{2} g^{d,j} h\partial_j\underline{v}\big),
       \\& I^{(3)}=-\Op^h(q) g^{-\frac{1}{2}} h \partial_i\big(g^\frac{1}{2} g^{i,d} h\partial_z \underline{v}\big), \quad  I^{(4)}=-\Op^h(q) g^{-\frac{1}{2}} h \partial_z\big(g^\frac{1}{2} g^{d,d} h\partial_z v\big).
     \end{align*}
     By Lemmas \ref{lem0A1} and \ref{lem1Th}, one obtains
\begin{align}\label{4Th}
   \Big( \Op^h(q) h^2 \underline{v}\Big)_{|z=0^-}=h^\frac{3}{2} O(1)_{L^2(\partial\mathcal{L})}.
\end{align}
For $(i,j)\neq (d,d)$, one has
     \begin{align}
       I_{|z=0^-}^{(1)}
    &=\Big(-\Op^h(q) g^{-\frac{1}{2}}h[\partial_i,g^{\frac{1}{2}}g^{i,j}]h \partial_j\underline{v}\Big)_{|z=0^-}-\Big(\Op^h(q)g^{i,j}h^2 \partial_i\partial_j \underline{v} \Big)_{|z=0^-} \nonumber
    \\&=h^\frac{1}{2} O(1)_{L^2(\partial \mathcal{L})}-\Big([\Op^h(q),g^{i,j}] h^2 \partial_i\partial_j \underline{v}\Big)_{|z=0^-}-\Big(g^{i,j} \Op^h(q) h^2\partial_i\partial_j\underline{v} \Big)_{|z=0^-} \label{2.76i1TH}
    \\&=\Big(\Op^h\big(q g^{i,j}\xi_i\xi_j\big) \underline{v}\Big)_{|z=0^-}+h^\frac{1}{2} O(1)_{L^2(\partial\mathcal{L})}, \label{2.77i1TH}
     \end{align}
     where in \eqref{2.76i1TH} we use Lemmas \ref{lem0A1} and \ref{lem1Th}, then in \eqref{2.77i1TH}, we use the fact that $g^{i,j}\in C^1$ along with Lemmas \ref{lem0A1} and \ref{lem2Th}. \\
     For $i=d$, $j\neq d$, one has
     \begin{align}
         I_{|z=0^-}^{(2)}&= \Big(-\Op^h(q) h g^{-\frac{1}{2}}[\partial_z,g^\frac{1}{2}g^{d,j}]h \partial_j \underline{v} \Big)_{|z=0^-}- \Big( \Op^h(q) g^{d,j} h \partial_z h \partial_j \underline{v}\Big)_{|z=0^-} \nonumber
         \\&= h^\frac{1}{2} O(1)_{L^2(\partial \mathcal{L})}-\Big([\Op^h(q), g^{d,j}] h\partial_z h \partial_j \underline{v}\Big)_{|z=0^-}+\Big( \Op^h(q g^{d,j} \zeta \xi_j) \underline{v} \Big)_{|z=0^-} \label{2.78I2TH}
         \\&
         =h^\frac{1}{2} O(1)_{L^2(\partial\mathcal{L})}+\Big( \Op^h(q g^{d,j} \zeta \xi_j) \underline{v}\Big)_{|z=0^-}- \Big([\Op^h(q), g^{d,j}]  h^2  \underline{\partial_j\partial_z (\psi u})\Big)_{|z=0^-} \nonumber
         \\&
         \quad- \Big([\Op^h(q), g^{d,j}]  h^2 \partial_j (\psi u)_{|z=0} \delta \Big)_{|z=0^-} \nonumber
         \\&=h^\frac{1}{2} O(1)_{L^2(\partial\mathcal{L})}+\Big( \Op^h(q g^{d,j} \zeta \xi_j) \underline{v}\Big)_{|z=0^-}+ h^\frac{1}{2} O(1)_{L^2(\partial\mathcal{L})} \label{2.799I2TH}\\&\quad
         - \Big([\Op^h(q), g^{d,j}]  h^2\partial_j (\psi u)_{|z=0} \delta \Big)_{|z=0^-} 
        \nonumber \\&=h^\frac{1}{2} O(1)_{L^2(\partial\mathcal{L})}+\Big( \Op^h(q g^{d,j} \zeta \xi_j) \underline{v}\Big)_{|z=0^-}+ h^\frac{1}{2} O(1)_{L^2(\partial\mathcal{L})}\nonumber\\&\quad- \Big(\Op^h(q)g^{d,j} h^2 \partial_j (\psi u)_{|z=0} \delta \Big)_{|z=0^-} + \Big(g^{d,j} \Op^h(q)h^2 \partial_j(\psi u)_{|z=0} \delta \Big)_{|z=0^-} \nonumber
         \\&=\Big( \Op^h(q g^{d,j} \zeta \xi_j) \underline{v}\Big)_{|z=0^-}+ h^\frac{1}{2} O(1)_{L^2(\partial\mathcal{L})},  \label{2.80I2TH}
     \end{align}
where in \eqref{2.78I2TH} we apply Lemmas \ref{lem0A1} and \ref{lem1Th}, \eqref{2.799I2TH} follows from Lemmas \ref{lem0A1} and \eqref{lem2Th} and in the last line we use the fact that $g^{d,j}=0$ in $z=0$ for $j\neq d$. \\
For $i\neq d$, $j=d$, one has
\begin{align}
    I_{|z=0^-}^{(3)}&=-\Big(\Op^h(q)g^{-\frac{1}{2}} [\partial_i, g^\frac{1}{2}g^{i,d}] h^2\partial_z\underline{v} \Big)_{|z=0^-}- \Big(\Op^h(q) g^{i,d}h^2 \partial_i \partial_z\underline{v} \Big)_{|z=0^-} \nonumber
    \\&
    =-\Big(\Op^h(q)g^{-\frac{1}{2}} [\partial_i, g^\frac{1}{2}g^{i,d}] h^2 \partial_z\underline{v} \Big)_{|z=0^-}+ \Big(\Op^h(q g^{i,d} \xi_i \zeta) \underline{v}\Big)_{|z=0^-}- \Big([\Op^h(q),g^{i,d}] h^2 \partial_i \partial_z\underline{v} \Big)_{|z=0^-} \nonumber
    \\&
    =-\Big(\Op^h(q)g^{-\frac{1}{2}} [\partial_i, g^\frac{1}{2}g^{i,d}] h^2 \underline{\partial_z v}\Big)_{|z=0^-}- \Big(h^\frac{3}{2} \Op^h(q) [\partial_i, g^\frac{1}{2}g^{i,d}] (\psi h^\frac{1}{2} u)_{|z=0} \delta \Big)_{|z=0^-} \nonumber
\\& \quad
+ \Big(\Op^h(q g^{i,d} \xi_i \zeta) \underline{v}\Big)_{|z=0^-}
- \Big([\Op^h(q),g^{i,d}] h^2 \partial_i \partial_z\underline{v} \Big)_{|z=0^-} 
\nonumber
\\&
=h^\frac{1}{2} O(1)_{L^2(\partial\mathcal{L})}+ h^\frac{1}{2}O(1)_{L^2(\partial\mathcal{L})} + \Big(\Op^h(q g^{i,d} \xi_i \zeta) \underline{v} \Big)_{|z=0^-} 
- \Big([\Op^h(q),g^{i,d}] h^2 \underline{\partial_i \partial_zv} \Big)_{|z=0^-}\label{2.80I3TH}
\\& \quad - \Big([\Op^h(q),g^{i,d}] h^2 \partial_i (\psi u)_{|z=0} \delta \Big)_{|z=0^-}
\nonumber
\\&=  \Big(\Op^h(q g^{i,d} \xi_i \zeta) \underline{v}\Big)_{|z=0^-}+ h^\frac{1}{2} O(1)_{L^2(\partial\mathcal{L})}, \label{2.81I3th}
\end{align}
where \eqref{2.80I3TH} follows from Lemmas \ref{lem0A1}, \ref{lem1Th} and \ref{lem3Th} along with \eqref{5.3111111A1} and in the last line we use Lemmas \ref{lem0A1}, \ref{lem2Th} as well as $g^{i,d}=0$ in $z=0$ for $i\neq d$.\\
     For $(i,j)=(d,d)$, one has 
     \begin{align}
         I_{|z=0^-}^{(4)}&=\Big(-\Op^h(q) g^{d,d}  h^2 \partial_z^2 \underline{v}\Big)_{|z=0^-}-\Big(\Op^h(q) g^{-\frac{1}{2}}[\partial_z,g^\frac{1}{2} g^{d,d}] h^2 \partial_z\underline{v}\Big)_{|z=0^-} \nonumber
         \\&=\Big(\Op^h(q g^{d,d}\zeta^2)\underline{v}\Big)_{|z=0^-}-\Big([\Op^h(q),g^{d,d}]h^2 \partial_z^2\underline{v}\Big)_{|z=0^-}-\Big(\Op^h(q) g^{-\frac{1}{2}}[\partial_z,g^\frac{1}{2} g^{d,d}] h^2 \partial_z\underline{v}\Big)_{|z=0^-}
         \nonumber
         \\&
         =\Big(\Op^h(q g^{d,d}\zeta^2)\underline{v}\Big)_{|z=0^-}-\Big([\Op^h(q),g^{d,d}]h^2 \partial_z^2\underline{v}\Big)_{|z=0^-}
         -\Big(\Op^h(q) g^{-\frac{1}{2}}[\partial_z,g^\frac{1}{2} g^{d,d}] h^2 \psi \underline{\partial_z u} \Big)_{|z=0^-} \nonumber
         \\& \quad
         -\Big(h^\frac{3}{2}\Op^h(q) g^{-\frac{1}{2}}[\partial_z,g^\frac{1}{2} g^{d,d}]  (\psi h^\frac{1}{2} u)_{|z=0} \delta \Big)_{|z=0^-} \nonumber
         - \Big(\Op^h(q) g^{-\frac{1}{2}}[\partial_z,g^\frac{1}{2} g^{d,d}] h^2 (\partial_z\psi) \underline{ u} \Big)_{|z=0^-} \nonumber
         \\&
         =\Big(\Op^h(q g^{d,d}\zeta^2)\underline{v}\Big)_{|z=0^-}-\Big([\Op^h(q),g^{d,d}]h^2 \partial_z^2\underline{v}\Big)_{|z=0^-} \nonumber
         \\& \quad
         +h^\frac{1}{2}O(1)_{L^2(\partial\mathcal{L})} +h^\frac{1}{2}   O(1)_{L^2(\partial\mathcal{L})}+h^\frac{3}{2} O(1)_{L^2(\partial\mathcal{L})} \label{2.88444TH}
         \\&
         = \Big(\Op^h(q g^{d,d}\zeta^2)\underline{v}\Big)_{|z=0^-}+h^\frac{1}{2} O(1)_{L^2(\partial\mathcal{L})}-\Big([\Op^h(q),g^{d,d}]h^2 \partial_z^2\underline{v}\Big)_{|z=0^-}, \label{2.8885TH}
     \end{align}
     where in \eqref{2.88444TH} we apply Lemmas \ref{lem0A1}, \ref{lem1Th} and \ref{lem3Th} along with \eqref{5.3111111A1}.
     Using
     \begin{align*}
     \partial_z^2\underline{v}=(\partial_z^2\psi)\underline{u}+(\partial_z\psi) \big(u \;\delta + \underline{\partial_z u} \big)+(\partial_z\psi) \underline{\partial_z u}+ \psi \big(\partial_zu \;\delta+ \underline{\partial_z^2u} \big)+(\psi u)_{|z=0} \;\delta',
     \end{align*}
     and $(\partial_z \psi)_{|z=0}=0$,  we rewrite the last term in $I_{|z=0^-}^{(4)}$ as
     \begin{align}
     \Big([\Op^h(q),g^{d,d}]&h^2 \partial_z^2\underline{v}\Big)_{|z=0^-}\\&=  \Big([\Op^h(q),g^{d,d}]h^2  (\partial_z^2\psi)\underline{u}\Big)_{|z=0^-}+ 2\Big([\Op^h(q),g^{d,d}]h^2(\partial_z\psi) \underline{\partial_z u} \Big)_{|z=0^-} \nonumber
     \\& \quad
     +\Big([\Op^h(q),g^{d,d}]h^2\psi \big(\partial_zu\; \delta+ \underline{\partial_z^2u} \big)\Big)_{|z=0^-}+ \Big([\Op^h(q),g^{d,d}]h^2(\psi u)_{|z=0} \delta'\Big)_{|z=0^-} \nonumber
     \\&=h^\frac{5}{2}O(1)_{L^2(\partial\mathcal{L})}+h^\frac{3}{2}O(1)_{L^2(\partial\mathcal{L})} +h^\frac{1}{2}O(1)_{L^2(\partial\mathcal{L})}\label{2.8666}
     \\& \quad
     + \Big([\Op^h(q),g^{d,d}]h^2\psi \big(\partial_zu) \delta\Big)_{|z=0^-} + \Big([\Op^h(q),g^{d,d}]h^2(\psi u)_{|z=0} \delta'\Big)_{|z=0^-} \nonumber
     \\&=h^\frac{1}{2}O(1)_{L^2(\partial\mathcal{L})}+\Big(\Op^h(q) g^{d,d} h^2 \psi \big(\partial_zu) \delta\Big)_{|z=0^-}-\Big( g^{d,d} \Op^h(q)  h^2 \psi \big(\partial_zu) \delta\Big)_{|z=0^-} \nonumber
     \\&\quad + \Big([\Op^h(q),g^{d,d}]h^2(\psi u)_{|z=0} \delta'\Big)_{|z=0^-} \nonumber
     \\&=h^\frac{1}{2}O(1)_{L^2(\partial\mathcal{L})}+ \Big([\Op^h(q),g^{d,d}]h^2(\psi u)_{|z=0} \delta'\Big)_{|z=0^-}
     \label{2.877Th}
 \end{align}
 where \eqref{2.8666} is a consequence of Lemmas \ref{lem0A1} and \ref{lem2Th} and \eqref{2.877Th} follows from $g^{d,d}=1$ in $z=0$. Using further that $g^{d,d}=1$ in $z=0$, together with Lemmas \ref{lem0A1} and \ref{lem3Th}, one has 
\begin{align}
     \Big([\Op^h(q)&,g^{d,d}]h^2(\psi u)_{|z=0} \delta'\Big)_{|z=0^-}\nonumber
     \\&= \Big(\Op^h(q) h^2 g^{d,d} (\psi u)_{|z=0} \delta'\Big)_{|z=0^-}- \Big(g^{d,d}\Op^h(q) h^2 (\psi u)_{|z=0} \delta'\Big)_{|z=0^-} \nonumber
     \\&= \Big(\Op^h(q) h^2 (\psi u)_{|z=0} g^{d,d}_{|z=0}\delta'\Big)_{|z=0^-} 
     -\Big(\Op^h(q) h^2 (\psi u)_{|z=0} 
     (\partial_z g^{d,d})_{|z=0} \delta \Big)_{|z=0^-} \nonumber
     \\& \quad
     - \Big(\Op^h(q) h^2(\psi u)_{|z=0} \delta'\Big)_{|z=0^-}  \nonumber
     \\&=  -\Big( h^\frac{3}{2}\Op^h(q)  h^2 (\psi h^\frac{1}{2} u)_{|z=0} 
     (\partial_z g^{d,d})_{|z=0} \delta \Big)_{|z=0^-}= h^\frac{1}{2} O(1)_{L^2(\partial \mathcal{L})}, \label{2.8999999TH}
\end{align}
Combining  \eqref{2.8885TH}, \eqref{2.877Th} and \eqref{2.8999999TH} yields
\begin{align}\label{2Th}
    I_{|z=0^-}^{(4)}= \Big(\Op^h(q g^{d,d}\zeta^2)\underline{v}\Big)_{|z=0^-}+h^\frac{1}{2} O(1)_{L^2(\partial\mathcal{L})}.
\end{align}
Collecting \eqref{1TH},\eqref{4Th}, \eqref{2.77i1TH}, \eqref{2.80I2TH}, \eqref{2.81I3th} and \eqref{2Th}, we deduce
\begin{align}
     \Big(\Op^h(q)h^2P\underline{v}\Big)_{|z=0^-}&= \underset{1\leq i,j\leq d}{\sum}\Big(\Op^h(q(\tau^2-g^{i,j}\xi_i\xi_j)\underline{v}\Big)_{|z=0^-}+h^\frac{1}{2} O(1)_{L^2(\partial\mathcal{L})} \nonumber
     \\&
     = \Big(\Op^h(qp) \underline{v}\Big)_{|z=0^-}+h^\frac{1}{2} O(1)_{L^2(\partial\mathcal{L})}= \Big(\Op^h(e) \underline{v}\Big)_{|z=0^-}+h^\frac{1}{2} O(1)_{L^2(\partial\mathcal{L})}.
     \label{28666Th}
\end{align}
To deal with the second term in \eqref{expressTh}, we argue as following. Since $[P,\psi]$ is a first-order differential operator, we distinguish two cases: if the derivatives act with respect to the variable $ z$, we apply Lemmas \ref{lem0A1}, \ref{lem1Th} and \ref{lem3Th}; otherwise, we use Lemmas \ref{lem0A1} and \ref{lem1Th}. Consequently, we obtain
     \begin{align} \label{3Th}
        \Big(\Op^h(q)h^2[P,\psi]\underline{u} \Big)_{|z=0^-}= h^\frac{1}{2} O(1)_{L^2(\partial\mathcal{L})}.
     \end{align}
From  \eqref{expressTh}, \eqref{275555TH}, \eqref{28666Th} and \eqref{3Th}, we obtain the following semi-classical pseudo-differential trace equation on $u$,
\begin{align} \label{traceequaTh}
   \Big(\Op^h(e) \underline{v}\Big)_{|z=0^-}+h^\frac{1}{2} O(1)_{L^2(\partial\mathcal{L})} \nonumber
 &= \Big(\Op^h(q) \big(-h^2 \psi_1 u_{|z=0}\; \delta'\big) \Big)_{|z=0^-} \nonumber
    \\& \quad
    +\Big(\Op^h(q) h^2 \big(\big( (g^{\frac{1}{2}}\partial_zg^{-\frac{1}{2}})_{|z=0} \psi_1 u_{|z=0}-\psi_1 (\partial_zu)_{|z=0}\big)\delta \big)\Big)_{|z=0^-}.
\end{align}
Note that
\begin{align*}
    \Op^h(e) \underline{v}&= \frac{1}{(2\pi)^{d+1}} \int e^{i(t-s).\tau+i(x'-y').\xi'+i(z-y_d).\zeta} e(t,x,h\tau,h\xi') \underline{v}(s,y) \; ds\; dy \;d\xi' d\tau\; d\zeta
    \\&=\frac{1}{(2\pi)^{d+1}} \int e^{i(t-s).\tau+i(x'-y').\xi'} e(t,x,h\tau,h\xi') \underline{v}(s,y) \delta(z-y_d) \; ds\; dy \;d\xi' d\tau
    \\&=\frac{1}{(2\pi)^{d+1}} \int e^{i(t-s).\tau+i(x'-y').\xi'} e(t,x,h\tau,h\xi') \underline{v}(s,y) \delta(z-y_d) \; ds\; dy \;d\xi' d\tau
    \\&=\frac{1}{(2\pi)^{d+1}} \int e^{i(t-s).\tau+i(x'-y').\xi'} e(t,x,h\tau,h\xi') \underline{v}(s,y',z) \; ds\; dy \;d\xi' d\tau.
\end{align*}
Thus, \begin{align} \label{thirdoneTH}
\Big(\Op^h(e) \underline{v}\Big)_{|z=0-}=0.
\end{align}
Since $p>0$ near $\varrho_0$, it follows from \eqref{qqqA1} that the symbol $q$ has two complex poles 
\begin{align} \label{polesTH}
    \zeta_\pm = \frac{
-\displaystyle\sum_{i=1}^{d-1} g^{d,i}(x)\,\xi_i
\;\pm i\;
\sqrt{
-\left(\sum_{i=1}^{d-1} g^{d,i}(x)\,\xi_i\right)^2
- g^{d,d}(x)\, r_0(x,\tau,\xi')
}
}{
g^{d,d}(x)
}.
\end{align}
At $z=0$, these simplify to $\zeta_\pm=\pm i \sqrt{-r_0(x,\tau,\xi')_{|z=0}}$. Denoting 
\begin{align*}
w=(g^{\frac{1}{2}}\partial_zg^{-\frac{1}{2}})_{|z=0} \psi_1 u_{|z=0}-\psi_1 (\partial_zu)_{|z=0},
\end{align*}
 one writes
\begin{align}
    \Op^h(q)\big(h^2 w \;\delta\big)&= \frac{h^2}{(2\pi h)^{d+1}} \int_{\mathbb{R}^{2d+1}} e^{\frac{i}{h}\big((t-s)\tau+ (x'-y').\xi' +z. \zeta \big)} q(t,x,\tau,\xi',\zeta)  w(s,y')\; d\zeta\;d\xi'\;d\tau \;dy' \; ds \nonumber
    \\&=\frac{h}{(2\pi h)^{d}} \int_{\mathbb{R}^{2d}} e^{\frac{i}{h}\big((t-s)\tau+ (x'-y').\xi' \big)} t_1(t,x,\tau,\xi')  w(s,y')\;d\xi'\;d\tau \;dy' \; ds, \label{theniwa7daTh}
\end{align}
where
\begin{align*}
    t_1(t,x,\tau,\xi')= \frac{1}{2 \pi} \int_{\mathbb{R}} e^{\frac{i}{h}z\zeta} q(t,x,\tau,\xi',\zeta) d\zeta.
\end{align*}
Let $\Gamma_M$ be the lower semicircle in the region $\mathrm{Im}(\zeta) \leq 0$. If $\zeta \in \Gamma_M$, then $|e^{\frac{i}{h}z\zeta}|=e^{|z|\frac{\mathrm{Im}\zeta}{h}} \leq 1$. Since $q$ is with decay $\langle \zeta\rangle ^{-2}$, one obtains 
\begin{align} \label{29777TH}
    \underset{M \rightarrow +\infty}{ \mathrm{lim}}  \int_{\Gamma_M }e^{\frac{i}{h}z\zeta} q(t,x,\tau,\xi',\zeta) d\zeta =0.
\end{align}
Hence,
\begin{align*}
t_1(t,x,\tau,\xi')= \underset{M \rightarrow +\infty}{ \mathrm{lim}} \frac{1}{2 \pi} \int_{\Gamma_M \cup [-M,M]} e^{\frac{i}{h}z\zeta} q(t,x,\tau,\xi',\zeta) d\zeta.
\end{align*}
For large $M$, the pole $\zeta_-$ lies inside the contour.  Since $q$ is holomorphic in $\zeta$ except at the pole $\zeta_-$, which lies inside the contour, the residue theorem gives
\begin{align*}
 t_1(t,x,\tau,\xi')= -i e^{\frac{i}{h}z \zeta_-} \frac{e(t,x,\tau,\xi')}{g^{d,d}(\zeta_--\zeta_+)}.
\end{align*}
Thus, for $z<0$,
\begin{align}
    \Op^h(q)\big(h^2 w \;\delta\big)= \frac{-i h}{(2\pi h)^{d}} e^{\frac{i}{h}z \zeta_-} \int_{\mathbb{R}^{2d}} e^{\frac{i}{h}\big((t-s)\tau+ (x'-y').\xi' \big)}  \frac{e(t,x,\tau,\xi')}{g^{d,d}(\zeta_--\zeta_+)} w(s,y')\;d\xi'\;d\tau \;dy' \; ds.
\end{align}
Then taking the trace for $z=0^-$ and using that $g^{d,d}=1$ along with $\zeta_-=- \zeta_+=-i \sqrt{-r_0(x,\tau,\xi')_{|z=0}}$ in $z=0$, one has
\begin{align} \label{secononeTh}
   \Big(\Op^h(q)&\big(h^2 w \;\delta\big) \Big)_{|z=0} =\Op^h(r_1)h w,
\end{align}
with
\begin{align} \label{2.9999TH}
    r_1= \frac{f(t,x',z=0,\tau,\xi')}{2}.
\end{align}
Similarly, we write
\begin{align} 
\Op^h&(q)\big(-h^2 \psi_1 u_{|z=0} \;\delta'\big)\nonumber
\\&=\frac{-h}{(2\pi h)^{d+1}}\int_{\mathbb{R}^{2d+1}} e^{\frac{i}{h}\big((t-s)\tau+ (x'-y').\xi'+z\zeta\big)} q(t,x,\tau,\xi',\zeta) \;i\zeta \;  (\psi_1 u_{|z=0})(s,y')\;d\zeta \; d\xi'\;d\tau \;dy' \; ds \nonumber
\\&
=\frac{-1}{(2\pi h)^{d}}\int_{\mathbb{R}^{2d}} e^{\frac{i}{h}\big((t-s)\tau+ (x'-y').\xi'+z\zeta\big)} t_2(t,x,\tau,\xi') \;  (\psi_1 u_{|z=0})(s,y') \; d\xi'\;d\tau \;dy' \; ds, \label{2.9444TH}
\end{align}
where
\begin{align*}
   t_2(t,x,\tau,\xi') =\frac{1}{2\pi}\int_{\mathbb{R}} e^{\frac{i}{h}z \zeta} q(t,x,\tau,\xi',\zeta)i \zeta\; d\zeta
   = \Big(\frac{h}{2 \pi} \partial_{\widehat{z}} \int_{\mathbb{R}}
e^{\frac{i}{h}\widehat{z} \zeta} q(t,x,\tau,\xi',\zeta) d\zeta \Big)_{|\widehat{z}=z}.
\end{align*}
For $\widehat{z}<0$, by the residue theorem, one has
\begin{align*}
  \frac{1}{2 \pi} \int_{\mathbb{R}}
e^{\frac{i}{h}\widehat{z} \zeta} q(t,x,\tau,\xi',\zeta) d\zeta = -i e^{\frac{i}{h}\widehat{z} \zeta_-} \frac{e(t,x,\tau,\xi')}{g^{d,d}(\zeta_--\zeta_+)}.
\end{align*}
Hence 
\begin{align*}
    t_2(t,x,\tau,\xi')= \Big(\zeta_-e^{\frac{i}{h}\widehat{z} \zeta_-} \frac{e(t,x,\tau,\xi')}{g^{d,d}(\zeta_--\zeta_+)} \Big)_{|\widehat{z}=z}=\zeta_-e^{\frac{i}{h}z \zeta_-} \frac{e(t,x,\tau,\xi')}{g^{d,d}(\zeta_--\zeta_+)}.
\end{align*}
Consequently, for $z<0$,
\begin{align*}
    \Op^h(q)&\big(-h^2 \psi_1 u_{|z=0} \;\delta'\big)\\&=\frac{-1}{(2\pi h)^{d}}e^{\frac{i}{h}z \zeta_-}\int_{\mathbb{R}^{2d}} e^{\frac{i}{h}\big((t-s)\tau+ (x'-y').\xi'+z\zeta\big)} \zeta_- \frac{e(t,x,\tau,\xi')}{g^{d,d}(\zeta_--\zeta_+)} \;  (\psi_1 u_{|z=0})(s,y') \; d\xi'\;d\tau \;dy' \; ds, 
\end{align*}
Taking the trace at $z=0^-$ and using that $g^{d,d}=1$ along with $\zeta_-=- \zeta_+$ in $z=0$, one gets
\begin{align} \label{firstonaTH}
   \Big(\Op^h(q) \big(-h^2 \psi_1 u_{|z=0}\; \delta'\big) \Big)_{|z=0}= \Op^h(r_2)(\psi_1 u_{|z=0}),
\end{align}
with
\begin{align}
   r_2= \frac{-e(t,x',z=0,\tau,\xi')}{2}. \label{2.955Th}
\end{align}
Using the boundary condition $h(\partial_zu)_{|z=0}=h D_t u_{|z=0}$, we get
\begin{align} \label{fourthoneTH}
-\Op^h(r_1 )h\psi_1( \partial_zu)_{|z=0}=-\Op^h(r_1 \tau)\psi_1 u_{|z=0}+\Op^h(r_1)(D_t\psi_1) hu_{|z=0}.
\end{align}
Collecting \eqref{traceequaTh}, \eqref{thirdoneTH}, \eqref{secononeTh}, \eqref{firstonaTH} and \eqref{fourthoneTH}, we deduce
\begin{align*}
-\Op^h(r_1\tau)&\psi_1 u_{|z=0}+\Op^h(r_1)(D_t\psi_1) hu_{|z=0}\\&+\Op^h(r_1)(g^{\frac{1}{2}} \partial_zg^{-\frac{1}{2}})_{|z=0} \psi_1 h u_{|z=0}+\Op^h(r_2) \psi_1 u_{|z=0}=h^\frac{1}{2} O(1)_{L^2(\partial\mathcal{L})}.
\end{align*}
With the trace formula \eqref{5.3111111A1}
along with Lemma \ref{lem0A1}, we conclude
\begin{align} \label{exx1th}
    \Op^h(r_1)(D_t\psi_1) hu_{|z=0}+\Op^h(r_1) (g^{\frac{1}{2}} \partial_zg^{-\frac{1}{2}})_{|z=0} \psi_1 h u_{|z=0} =h^\frac{1}{2} O(1)_{L^2(\partial\mathcal{L})}.
\end{align}
Let $\varphi \in C^\infty_c(\mathbb{R})$ such that $\varphi \equiv 1$ in a neighborhood of $[\alpha,\alpha^{-1}]$. Writing 
\begin{align}
-h^2\Delta_g &\big(1-\varphi(h D_t)\big) \psi_1(t,x')u+ h^2 \big(1-\varphi(h D_t)\big) \psi_1(t,x')u\nonumber\\&=-\big(1-\varphi(h D_t)\big) \psi_1(t,x')h^2\partial_t^2u-\big(1-\varphi(h D_t)\big)[\Delta_g,\psi_1] h^2u,
\end{align}
then using that $[\Delta_g,\psi_1]$ is a differential operator of order one with continuous coefficients along with the fact that $h^2 u(t,.)$ is bounded in $H^2(\mathcal{M})$, for any $t \in \mathbb{R}$ by Lemma \ref{lem0A1} and 
arguying as in \eqref{4*A1},  we deduce
\begin{align} \label{exx2th}
    \psi_1 u_{|z=0}=\varphi(h D_t) \psi_1 u_{|z=0} +O(h^N),
\end{align}
for all $N\in \mathbb{N}$.
From \eqref{exx1th} and \eqref{exx2th}, we deduce
\begin{align*}
   h^\frac{1}{2} O(1)_{L^2(\partial\mathcal{L})}&= \Op^h(-r_1\tau)\varphi(h_kD_t) \psi_1 u_{|z=0}+\Op^h(r_2)\psi_1 u_{|z=0}\\&= \Op^h(-r_1\tau \varphi(\tau)+r_2 )\psi_1 u_{|z=0}\\&=\Op^h\Big(-\frac{f}{2}\big(\tau \varphi(\tau)+\sqrt{-r_0(x,\tau,\xi')_{|z=0}}\big)\Big) \psi_1 u_{|z=0}.
\end{align*}
 Taking $\varphi$ supported in $[\frac{\alpha}{2}, 2 \alpha^{-1}]$ so that $]-\infty,0] \cap \mathrm{supp}(\varphi)=\varnothing$, one has $\tau \varphi(\tau)\geq 0.$ Then, recalling that $-r_0(x,\tau,\xi')_{|z=0}>0$ near $\varrho_0$, we conclude that $(u_{k_{|z=0}})_k$ is bounded in $L^2_{\varrho_0}(\partial \mathcal{L})$ in the sense of Definition \ref{def1A1}. 
\end{proof}

\subsubsection{Hyperbolic region}
The properties of the traces of $(u_k)_k$ in the hyperbolic region ${^\parallel\mathcal{H_\partial}}$ are outlined in the following two results.
\begin{theorem}
\label{hyppA1}
The sequence $(h_k\partial_zu_{k_{\mid z=0}})_k$ is bounded in $L^2_{\varrho_0}(\partial\mathcal{L})$ for $\varrho_0\in {^\parallel\mathcal{H_\partial}}$.
\end{theorem}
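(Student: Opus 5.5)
The plan is to rerun the multiplier argument from the proof of Theorem \ref{lopA1} near $\varrho_0\in{^\parallel\mathcal{H}_\partial}$. First I would show that $(u_{k\mid z=0})_k$ is bounded in $L^2_{\varrho_0}(\partial\mathcal{L})$. The bound for $(h_k\partial_z u_{k\mid z=0})_k$ then follows from the boundary condition $h_k\partial_z u_{k\mid z=0}=\Op^h(\tau)u_{k\mid z=0}$, exactly as in the proof of Corollary \ref{cor1**A1}.

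Concretely, take $q(t,x,\tau,\xi')\in C^\infty_c(\phi_{\mathcal{L}}(O)\times\mathbb{R}^d)$ elliptic at $\varrho_0$ with small support, and a cutoff $\psi$ as before. The integration by parts against $h_k^2Pu_k=0$ with the multiplier $\Op^h(q)\psi h_k\partial_z$ led to \eqref{2A1} and \eqref{6*A1}. That step, together with the symbolic expansion \eqref{5.28dA1} and the semi-classical trace bounds \eqref{5.3111111A1}, used nothing about the glancing nature of $\varrho_0$, only Lemma \ref{lem0A1} and the quasi-normal geodesic coordinates. So it carries over verbatim and gives \eqref{3iA1}.

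Inserting the boundary condition and commuting as in \eqref{11A1}--\eqref{22A1}, and discarding the lower order terms \eqref{5.35555A1}, I again arrive at \eqref{5.3777777A1}. This is a bound on
\begin{align*}
\big(\Op^h(q)\psi u_{k\mid z=0},\Op^h(r)\Op^h(q)\psi u_{k\mid z=0}\big)_{L^2(\partial\mathcal{L})},
\end{align*}
where at $z=0$ the symbol is $r=\tau^2+R(\tau,x',\xi')=2\tau^2-g(\xi',\xi')$.

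The key point is that $r$ does not vanish near a hyperbolic point. There $p(\varrho_0)<0$ means $\tau^2>g(\xi',\xi')$, hence $r\geq\tau^2>0$. Note also that $\tau\neq0$ automatically at a hyperbolic point, since $g(\xi',\xi')\geq 0$. So on a sufficiently small neighbourhood of $\varrho_0$ we have $r(x^0,\tau,\xi)\geq c_0>0$ on the $(\tau,\xi')$-support of $q$. The sign structure is in fact the same as at glancing points, where $r=\tau^2$.

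I then repeat the Taylor splitting \eqref{taylA1} into $G_1+G_2$. The microlocal Gårding inequality gives $|G_1|\geq C_0\|\Op^h(q)\psi u_{k\mid z=0}\|^2$. Using that $g$ is only $C^1$, the remainder $G_2$ is absorbed by shrinking $\mathrm{supp}(q)$ in $x$, with the cutoff $\chi$ handling large frequencies up to $O(h_k^\infty)$. This yields $\|\Op^h(q)\psi u_{k\mid z=0}\|^2\lesssim 1$, and the corollary-type step finishes the argument.

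The step I expect to need the most care is checking that no term in the identity \eqref{3iA1} secretly required $p=0$ on $\mathrm{supp}(q)$. In particular I need $\partial_z(\Op^h(q)\psi\partial_z u_k)_{\mid z=0}$ to be replaced by $-\Op^h(q)\psi R u_{k\mid z=0}$ using only $(Pu_k)_{\mid z=0}=0$ and the $H^2$ bound on $h_k^2u_k(t,\cdot)$. I also need the $O(1)$ commutator terms to remain controlled merely by the $h_k^{1/2}$-weighted trace bounds.

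If positivity of $r$ turned out to be insufficient for some reason, the fallback would be a direct energy estimate on $h_k\partial_z u$. That estimate would use the symbol $\tau^2+r_0$ together with the fact that $\nu$ and $\mu$ are localized in $\tau\in[\alpha,\alpha^{-1}]$, via \eqref{exx2th}, which yields $\tau>0$ and hence the Lopatinskii sign.
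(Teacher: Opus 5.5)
Your argument is correct, but it closes the estimate differently from the paper. The paper does not reuse the multiplier identity of Theorem~\ref{lopA1}. Instead it cuts off in $z$ with $\theta$ and writes the equation for $W_k=(\Op^h(e)\psi v_k,\Op^h(e)\psi h_k\partial_z v_k)$ as a first-order system $h_k\partial_z W_k=\tilde{B}W_k+\tilde{F}$ in the normal variable. It then introduces the energy $N(W)=(Qw^0,w^0)+\|w^1\|^2$, with $Q$ equal to $\tilde{R}$ microlocally near $\varrho_0$ and to $\Lambda^2$ elsewhere, and integrates $\frac{d}{dz}N(h_kW_k)$ from $z=Z$, where $W_k=0$, down to $z=0$. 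Coercivity of $N$ comes from $r_0>0$ alone, through the same G\aa rding-plus-freezing step you use. So both $w_k$ and $h_k\partial_z w_k$ are controlled at $z=0$, and the boundary condition is never used: in the hyperbolic region the $z$-evolution is itself of wave type, with positive-definite energy $|h_k\partial_z u|^2+r_0|u|^2$. Your \eqref{3iA1} is the integrated, $q$-weighted form of the same conservation law. You close it instead by inserting $h_k\partial_z u_{k\mid z=0}=\Op^h(\tau)u_{k\mid z=0}$ and using positivity of $\tau^2+r_0$. Your route is shorter, since it reuses the glancing computation verbatim. It also shows that glancing and hyperbolic points are covered by one argument, because $\tau^2+r_0>0$ whenever $r_0\geq 0$ and $(\tau,\xi')\neq 0$. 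The paper's route has the advantage of being independent of the boundary condition, which matters for the intended extension to general Lopatinskii conditions. You can recover that too: replace $\Op^h(q)\psi$ by $\psi\Op^h(q)^*\Op^h(q)\psi$ in \eqref{3iA1}. The normal-derivative term then becomes $\|\Op^h(q)\psi h_k\partial_z u_{k\mid z=0}\|^2$, and the other term is coercive in $\Op^h(q)\psi u_{k\mid z=0}$ by G\aa rding when $r_0>0$, so the boundary condition is not needed.
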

With Theorem \ref{hyppA1} one obtains the following result.
\begin{corollary} \label{corrlhyperA1}
The sequence $(u_{k_{\mid z=0}})_k$ is bounded in $L^2_{\varrho_0}(\partial \mathcal{L})$ for $\varrho_0\in {^\parallel\mathcal{H_\partial}}$.
\end{corollary}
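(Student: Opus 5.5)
The corollary is the statement immediately above: near a hyperbolic point $\varrho_0\in{^\parallel\mathcal{H}_\partial}$, the trace $(u_{k\mid z=0})_k$ is bounded in $L^2_{\varrho_0}(\partial\mathcal{L})$. The plan is to deduce it from Theorem \ref{hyppA1} using the boundary condition, inverting the tangential symbol $\tau$ microlocally. This works because $\tau$ does not vanish on the relevant region. By \eqref{25*A1}, and more precisely by the argument leading to \eqref{4*A1} and \eqref{exx2th}, the traces are concentrated where $\alpha\le\tau\le\alpha^{-1}$.

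The first step is to reduce to positive time frequencies. I fix $\varphi\in C^\infty_c(\mathbb{R})$ supported in $[\alpha/2,2\alpha^{-1}]$ with $\varphi\equiv1$ near $[\alpha,\alpha^{-1}]$. Arguing exactly as in \eqref{4*A1} and \eqref{exx2th}, I get
\begin{align*}
\psi_1 u_{k\mid z=0}=\varphi(h_kD_t)\psi_1 u_{k\mid z=0}+O(h_k^N)_{L^2(\partial\mathcal{L})}.
\end{align*}
This uses the $H^2$ bound of Lemma \ref{lem0A1} for the time-frequency-truncated part, together with the trace theorem.

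The second step is to build a microlocal inverse of $h_kD_t$. Let $e$ be the elliptic symbol at $\varrho_0$ given by Theorem \ref{hyppA1}, so that $\Op^h(e)\psi h_k\partial_z u_{k\mid z=0}$ is bounded in $L^2$. Since $\tau_0>0$ at $\varrho_0$, I choose $e'\in C^\infty_c$, elliptic at $\varrho_0$, supported where $e\ne0$ and $\tau\ge\alpha/2$. I then set $b=e'\varphi(\tau)/(\tau e)$, a smooth compactly supported symbol. Its $x$-dependence is only as regular as that of $e$; I would choose $e$ independent of $x$, or of the same regularity as in Corollary \ref{cor1**A1}, so that the calculus applies.

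The third step applies $\Op^h(b)$. The boundary condition gives $h_k\partial_z u_{k\mid z=0}=\Op^h(\tau)u_{k\mid z=0}$, up to the sign conventions used in Corollary \ref{cor1**A1}. Symbolic calculus then yields
\begin{align*}
\Op^h(e')\psi_1 u_{k\mid z=0}=\Op^h(b)\Op^h(e)\psi h_k\partial_z u_{k\mid z=0}+h_kO(1)\,\|\tilde\psi u_{k\mid z=0}\|+O(h_k^N),
\end{align*}
where the commutators with $\psi$ and the composition remainders carry a factor $h_k$. The first term on the right is bounded by Theorem \ref{hyppA1}.

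The main obstacle is the remainder. It involves $h_k u_{k\mid z=0}$ on a larger support, and the trace estimate \eqref{5.3111111A1} only gives $h_k^{1/2}\|u_{k\mid z=0}\|=O(1)$. I would try two ways around this. First, $h_k\|u_{k\mid z=0}\|=h_k^{1/2}\cdot h_k^{1/2}\|u_{k\mid z=0}\|=O(h_k^{1/2})$, which already suffices. Alternatively, the composition errors are exactly of the $[\partial_t,\cdot]$ type treated in \eqref{11A1}, with $h_k^{1/2}$ split onto each factor. Either way the remainder is $O(1)$. Hence $\Op^h(e')\psi_1u_{k\mid z=0}$ is bounded in $L^2(\partial\mathcal{L})$, which is the claim by Definition \ref{def1A1}.
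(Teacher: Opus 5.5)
Your argument is correct and is essentially the paper's. The paper only says to argue as in Corollary \ref{cor1**A1} using the boundary condition $h_k\partial_z u_{k\mid z=0}=\Op^h(\tau)u_{k\mid z=0}$, i.e.\ to invert $\tau$ microlocally near $\varrho_0$ and absorb the $O(h_k)\|\tilde\psi u_{k\mid z=0}\|=O(h_k^{1/2})$ remainders via \eqref{5.3111111A1}, exactly as you do.

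One small correction: $\tau_0>0$ is not automatic at every point of ${}^\parallel\mathcal{H}_\partial$, so as written your $e'$ cannot be elliptic at $\varrho_0$ when $\tau_0<0$. You do not need positivity, nor the $\varphi(h_kD_t)$ reduction: $p<0$ there gives $\tau_0^2>g(\xi_0',\xi_0')\ge 0$, so $\tau\neq 0$ near $\varrho_0$ and $b=e'/(\tau e)$ works directly.
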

For the proof of Corollary \ref{corrlhyperA1} we argue as in the proof of Corollary \ref{cor1**A1} using the boundary condition.
\begin{proof}[Proof of Theorem \ref{hyppA1}]

Locally near $\varrho_0 \in {^\parallel\mathcal{H_\partial}}$, the wave equation is given by
\begin{align} \label{hyyypA1}
Pu_k=-\Big(g^{d,d}(x)\partial_z^2+R(t,x,D)\Big)u_k=0,
\end{align}
where $R(t,x,D)$ is given in  \eqref{exp of RA1}.  The principal symbol of the operator associated with equation \eqref{hyyypA1} is given by
\begin{align*}
p(x,\tau,\xi)=g^{d,d}(x)\zeta^2+2\underset{1\leq i\leq d-1}{\sum}g^{d,i}(x)\xi_i\zeta-r_0(x,\tau,\xi'),
\end{align*}
with
\begin{align}\label{5.61111r0A1}
r_0(x,\tau,\xi')=\tau^2-\underset{1\leq i,j\leq d-1}{\sum}g^{i,j}(x)\xi_i\xi_j.
\end{align}
Near $\varrho_0 \in {^\parallel\mathcal{H_\partial}}$, we have 
$(g^{d,d}(x))^{-1} r_0(x,\tau,\xi')>0.$\\
Let $Z>0$. Consider $\theta \in C^\infty(\mathbb{R})$  such that
\begin{equation}
    \theta(z)= \begin{cases}
       1 \quad \text{for} \quad z\leq\frac{Z}{2}   \\ 
        0 \quad \text{for} \quad z\geq Z.
    \end{cases}
\end{equation} Set
\begin{align} \label{5.58expV_kA1}
v_k=\theta(z)u_k.
\end{align}
Locally, near $\varrho_0 \in {^\parallel\mathcal{H_\partial}}$ with $ \tilde{g}^{i,j}(x)= (\mathrm{det}g)^\frac{1}{2} g^{i,j}(x)$, we write
\begin{align} h_k^2(&g^{d,d}(x))^{-1}P=h_k^2\partial_z^2+h_k^2\underset{1\leq i\leq d-1}{\sum}(\tilde{g}^{d,d}(x))^{-1}\partial_{x_i}\Big(\tilde{g}^{i,d}(x)\partial_z\Big)+h_k^2(\tilde{g}^{d,d}(x))^{-1} \Big(\partial_z\tilde{g}^{d,d}(x)\Big)\partial_z\nonumber\\&\quad\quad\quad+h_k^2\underset{1\leq j\leq d-1} {\sum}(\tilde{g}^{d,d}(x))^{-1}\partial_z \Big(\tilde{g}^{d,j}(x)\partial_{x_j}\Big)
+ h_k^2\underset{1\leq i,j\leq d-1}{\sum}[(\tilde{g}^{d,d}(x))^{-1},\partial_{x_i}]\tilde{g}^{i,j}(x) \partial_{x_j}\nonumber \\&\hspace{8cm}-h_k^2(g^{d,d}(x))^{-1}+\tilde{R}(t,x,h_kD_t,h_kD'),
\end{align}
with 
\begin{align}\label{tildeRA1}
\tilde{R}(t,x,h_kD_t,h_kD')=-h_k^2(g^{d,d}(x))^{-1}\partial_t^2&+h_k^2\underset{1\leq i,j\leq d-1}{\sum}  \partial_{x_i}\Big((g^{d,d}(x))^{-1} g^{i,j}(x) \partial_{x_j}\Big). \nonumber\\&
\end{align}
This gives
\begin{align} \label{expressiondepA1}
& h_k^2\partial_z^2v_k
+2h_k^2\underset{1\leq i\leq d-1}{\sum} (g^{d,d}(x))^{-1}g^{i,d}(x)\partial_{x_i}\partial_zv_k+\tilde{R}(t,x,h_kD_t,h_kD') v_k=h_k^2f,
\end{align}
where 
\begin{align*}
    f&=\theta''(z)u_k+2\theta'(z)\partial_zu_k+\theta'(z)(\tilde{g}^{d,d}(x))^{-1}\Big(\partial_z\tilde{g}^{d,d}(x) \Big)u_k
    \\&\quad+\theta'(z)\underset{1\leq i\leq d-1}{\sum} (\tilde{g}^{d,d}(x))^{-1} \partial_{x_i} \Big(\tilde{g}^{i,d}(x) u_k\Big)+\theta'(z)\underset{1\leq j\leq d-1}{\sum} (g^{d,d}(x))^{-1}g^{d,j}(x)\partial_{x_j}u_k
   \\& \quad-\underset{1\leq j\leq d-1} {\sum}(\tilde{g}^{d,d}(x))^{-1} \Big(\partial_z\tilde{g}^{d,j}(x)\Big)\partial_{x_j}v_k
    -\underset{1\leq i\leq d-1}{\sum}(\tilde{g}^{d,d}(x))^{-1}\Big(\partial_{x_i}\tilde{g}^{i,d}(x)\Big)\partial_zv_k\\&\quad-(\tilde{g}^{d,d}(x))^{-1} \Big(\partial_z\tilde{g}^{d,d}(x)\Big) \partial_zv_k- h_k^2\underset{1\leq i,j\leq d-1}{\sum}[(\tilde{g}^{d,d}(x))^{-1},\partial_{x_i}]\tilde{g}^{i,j}(x) \partial_{x_j}v_k+h_k^2(g^{d,d}(x))^{-1}v_k.
\end{align*}
Let $e(t,x',\tau,\xi')$ be a $C^\infty_c(\mathbb{R}^{2d})$ elliptic symbol near $\varrho_0$, compactly supported on the $(t,x')$-projection of $\phi_{\mathcal{L}}(O)$ and $\chi(t,x',\tau,\xi')\in C^\infty_c(\mathbb{R}^{2d})$ with $\chi=1$  on $\mathrm{supp}(e)$. The size of the supports will be adjusted below. Define the semi-classical operator
\begin{align}
    Q(t,x,h_kD_t,h_kD')=\Op^h(\chi)^* \tilde{R}(t,x,h_kD_t,h_kD') \Op^h(\chi)+\Op^h(1-\chi)^*\Lambda^2\Op^h(1-\chi),
\end{align}
with $\Lambda^2=\Op^h(\lambda^2)$ and $\lambda^2=\tau^2+|\xi'|^2$. Note that $Q(t,x,h_kD_t,h_kD')$ is symmetric and its  real part symbol is given by
\begin{align}\label{qqqqA1}
\mathrm{Re}(q)=\chi^2 (g^{d,d}(x))^{-1}r_0+(1-\chi)^2\lambda^2 \geq 0,
\end{align}
with $r_0$ as in \eqref{5.61111r0A1}. The symbol $q$ is tangential and elliptic on $\mathrm{supp}(\theta)$. Let $\psi\in C^\infty_c(\mathbb{R}_t\times\mathbb{R}_{x'}^{d-1})$ with $\psi $ equal to $1$ in a neighborhood of the $(t,x')$-projection of $\mathrm{supp}(e)$.
Set
\begin{align}\label{ptildeA1}
    h_k^2\tilde{P}=h_k^2\partial_z^2+Q(t,x,h_kD_t,h_kD') \quad \text{and} \quad W_k=\begin{pmatrix}
    w_k \\
    h_k \partial_zw_k
\end{pmatrix}=\begin{pmatrix}
    \Op^h(e)\psi v_k \\
     \Op^h(e)\psi h_k\partial_z v_k
\end{pmatrix}.
\end{align}
Since $\chi=1$ on $\mathrm{supp}(e)$, using \eqref{expressiondepA1}, one can write
\begin{align}\label{tildepA1}
h_k^2\tilde{P}w_k&=\Op^h(e)\psi \Big(h_k^2\partial_z^2+\tilde{R}(t,x,h_kD_t,h_kD')\Big)v_k\nonumber
\\&\quad+\Op^h(\chi)^*[\tilde{R}(t,x,h_kD_t,h_kD'),\Op^h(e)\psi]v_k
+O(h_k^\infty)_{\mathcal{L}(L^2)}\psi v_k\nonumber
\\&\quad +\Op^h(r_1)\psi\tilde{R}(t,x,h_kD_t,h_kD')v_k+\Op^h(\chi)^* \tilde{R}(t,x,h_kD_t,h_kD')\Op^h(r_2)\psi v_k \nonumber
\\&=\Op^h(e)\psi\Big(-2h_k^2 \underset{1\leq i\leq d-1}{\sum} (g^{d,d}(x))^{-1}g^{i,d}(x)\partial_{x_i}\partial_zv_k+ h_k^2f\Big)\nonumber
\\&\quad+\Op^h(\chi)^*[\tilde{R}(t,x,h_kD_t,h_kD'),\Op^h(e)]\psi v_k +O(h_k^\infty)_{\mathcal{L}(L^2)}\psi v_k \nonumber
\\&\quad+\Op^h(r_1)\psi\tilde{R}(t,x,h_kD_t,h_kD')v_k+\Op^h(\chi)^*\Op^h(e)[\tilde{R}(t,x,h_kD_t,h_kD'),\psi]v_k,
\end{align}
where $\Op^h(r_1), \Op^h(r_2) \in h_k^\infty \Psi^{-\infty}$.
From \eqref{tildeRA1}, one has
\begin{align}\label{5.62222A1}
     [\tilde{R}(t,x,h_kD_t,h_kD'),\Op^h(e)]=J_1+J_2,
\end{align}
with 
\begin{align*}
    &J_1= [-h_k^2(g^{d,d}(x))^{-1}\partial_t^2,\Op^h(e)],
    \\&J_2= \underset{1\leq i,j\leq d-1}{\sum}[h_k^2 \partial_{x_i}\Big((g^{d,d}(x))^{-1} g^{i,j}(x) \partial_{x_j}\Big),\Op^h(e)].
\end{align*}
One rewrite $J_1$ as follows
\begin{align}
    J_1=[\Op^h(e),(g^{d,d}(x))^{-1}]h_k^2\partial_t^2+(g^{d,d}(x))^{-1}[\Op^h(e),h_k^2\partial_t^2].
\end{align}
By symbolic calculus, one gets
\begin{align}\label{5.28888A1}
 (g^{d,d}(x))^{-1}[h_k^2\partial_t^2,\Op^h(e)]=i(g^{d,d}(x))^{-1}h_k\Op^h(2\tau \partial_te)+o(h_k)_{\mathcal{L}(L^2)}.
\end{align}
Using formula (5.12) from Proposition 5.10 in \cite{1A1}, we obtain
\begin{align}\label{J111A1}
    [&\Op^h(e),(g^{d,d}(x))^{-1}]h_k^2\partial_t^2\nonumber\\&=-ih_k^3\sum\limits_{1 \leq k \leq d-1}\Big(\partial_{x_k} (g^{d,d}(x))^{-1}\Big)\Op^h(\partial_{\xi_k}e)\partial_t^2\nonumber+o(h_k^3)_{\mathcal{L}(L^2)}\partial_t^2\nonumber
\\&=ih_k\sum\limits_{1 \leq k \leq d}(\partial_{x_k} g^{i,j})\Op^h(\tau^2\partial_{ \xi_k}e)+o(h_k^3)_{\mathcal{L}(L^2)}\partial_t^2.
\end{align}
Thus, from \eqref{5.28888A1} and \eqref{J111A1}, we deduce
\begin{align}\label{J111111A1}
    J_1&=i(g^{d,d}(x))^{-1}h_k\Op^h(2\tau \partial_te)+ih_k\sum\limits_{1 \leq k \leq d}(\partial_{x_k} g^{i,j})\Op^h(\tau^2\partial_{ \xi_k}e)\nonumber
   \\&\quad+o(h_k)_{\mathcal{L}(L^2)}+o(h_k^3)_{\mathcal{L}(L^2)}\partial_t^2.
    \end{align}
Setting $\alpha^{i,j}=(g^{d,d}(x))^{-1}g^{i,j}(x)$ and $\varrho^{i,j}=[\partial_{x_i},\alpha^{i,j}]$, one writes
\begin{align} \label{soommkkkA1}
  J_2=K_1+K_2+K_3+K_4,
\end{align}
with
\begin{align*}
&K_1=h_k^2\varrho^{i,j}[\partial_{x_j},\Op^h(e)], \quad \quad\quad K_3=h_k^2\alpha^{i,j}[\partial_{x_i}\partial_{x_j},\Op^h(e)],
\\&K_2=h_k^2[\varrho^{i,j},\Op^h(e)]\partial_{x_j},\quad\quad\quad K_4= h_k^2[\alpha^{i,j},\Op^h(e)]\partial_{x_i}\partial_{x_j}.
\end{align*}
We have
\begin{align} \label{kk1A1}
    K_1=h_k^2\varrho^{i,j}\Op^h(\partial_{x_j}e)=O(h_k^2)_{\mathcal{L}(L^2)},
    \end{align}
    and
    \begin{align}\label{kk3A1}
  K_3&=ih_k\alpha^{i,j}\Op^h(\xi_i\partial_{x_j}e+\xi_j\partial_{x_i}e)+h_k^2\alpha^{i,j}\Op^h(\partial_{x_i}\partial_{x_j}e)\nonumber
    \\&=h_k\alpha^{i,j} \Op^h(\partial_{x_j}e)h_k\partial_{x_i}+h_k\alpha^{i,j}\Op^h(\partial_{x_i}e)h_k\partial_{x_j}+O(h_k^2)_{\mathcal{L}(L^2)}.
\end{align}
Since $\varrho^{i,j}\in C^0\cap L^\infty$, we deduce from Proposition 5.10 (see formula (5.8)) in \cite{1A1},
\begin{align}\label{kk2A1}
    K_2=h_k^2\sum\limits_{1 \leq j \leq d}o(1)_{\mathcal{L}(L^2)}\partial_{x_j}=o(h_k^2)_{\mathcal{L}(H^1,L^2)}.
\end{align}
Moreover, using formula (5.12) from Proposition 5.10 in \cite{1A1}, we get
\begin{align}\label{kk4A1}
K_4&=ih_k^3\sum\limits_{1 \leq k \leq d-1} \Big(\partial_{x_k}\alpha^{i,j}\Big)\Op^h(\partial_{\xi_k}e)\partial_{x_i}\partial_{x_j}+o(h_k^3)_{\mathcal{L}(H^2,L^2)}\nonumber
\\&=-ih_k\sum\limits_{1 \leq k \leq d-1} \Big(\partial_{x_k}\alpha^{i,j}\Big)\Op^h(\xi_i \xi_j\partial_{ \xi_k}e)+o(h_k^3)_{\mathcal{L}(H^2,L^2)}.
\end{align}
Furthermore, arguing as for \eqref{kk1A1} and \eqref{kk4A1}, one has
\begin{align}\label{5.7555555A1}
-2&\Op^h(e)\psi(h_k^2 \underset{1\leq i\leq d-1}{\sum} \alpha^{i,d}\partial_{x_i}\partial_zv_k)\nonumber\\&=-2\underset{1\leq i\leq d-1}{\sum}\alpha^{i,d}\Op^h(e)\psi h_k^2\partial_{x_i}\partial_zv_k
\nonumber-2\underset{1\leq i\leq d-1}{\sum}[\Op^h(e)\psi,\alpha^{i,d}]h_k^2\partial_{x_i}\partial_zv_k \nonumber\nonumber\\&=
-2\underset{1\leq i\leq d-1}{\sum}\alpha^{i,d}[\Op^h(e)\psi,\partial_{x_i}]h_k^2\partial_zv_k
 -2\underset{1\leq i\leq d-1}{\sum}\alpha^{i,d}h_k^2\partial_{x_i}\partial_zw_k +O(h_k)_{\mathcal{L}(L^2)}\psi_1v_k\nonumber
 \\&=-2\underset{1\leq i\leq d-1}{\sum}\alpha^{i,d}h_k^2\partial_{x_i}\partial_zw_k+O(h_k)_{\mathcal{L}(L^2)}\psi_2v_k,
\end{align}
where $\psi_i\in C^\infty_c(\mathbb{R}^d)$ such that $\psi_i=1$ on $\mathrm{supp}(\psi)$ for $i\in \{1,2\}$.\\
Collecting \eqref{5.62222A1}, \eqref{J111111A1}, \eqref{soommkkkA1}, \eqref{kk1A1}, \eqref{kk3A1}, \eqref{kk2A1},\eqref{kk4A1} and \eqref{5.7555555A1}, inserting the result in \eqref{tildepA1}, then using Lemma \ref{lem0A1}, we deduce
\begin{align*}
   & h_k^2\tilde{P}w_k+2\underset{1\leq i\leq d-1}{\sum}\alpha^{i,d}h_k^2\partial_{x_i}\partial_zw_k
    \\&= \Op^h(e)\psi h_k^2f+O(h_k)_{\mathcal{L}(L^2)}\psi_3v_k
    + \Op^h(r_1)\psi \tilde{R}(t,x,h_kD_t,h_kD')v_k=O(h_k)_{L^2(\mathcal{L})},
\end{align*}
where $\psi_3\in C^\infty_c(\mathbb{R}^d)$ such that $\psi_3=1$ on $\mathrm{supp}(\psi)$.
This implies along with \eqref{ptildeA1},
\begin{align}\label{exprematricA1}
  h_k\partial_zW_k = \tilde{B}W_k+\tilde{F},
\end{align}
where  
\begin{align*}
    \tilde{B}=\begin{pmatrix}
    0&\quad 1\\
 -Q(t,x,h_kD_t,h_kD')& \quad -2 \underset{1\leq i\leq d-1}{\sum} \alpha^{i,d} h_k\partial_{x_i} 
\end{pmatrix}
\quad \text{and}\quad \tilde{F}=\begin{pmatrix}
    0 \\
 O(h_k)_{L^2(\mathcal{L})}
\end{pmatrix}.
\end{align*}
Let $\mathcal{L'}=\mathbb{R}_t\times\mathbb{R}_{x'}^{d-1}$ and consider the measure $\mu'=dtdx_1...dx_{d-1}$ on $\mathcal{L'}$. Define $N(.)$ and $\tilde{N}(.,.)$, as follows
\begin{align} \label{expnormA1}
    & \tilde{N}(W,Y)=(Q(t,x,h_kD_t,h_kD')w^0,y^0)_{H^{-1}(\mathcal{L}'),H^1(\mathcal{L}')}+(w^1,y^1)_{L^2(\mathcal{L}')}^2,\nonumber
     \\&N(W)=\tilde{N}(W,W),
\end{align}
for $W=(w^0,w^1)$ and $Y=(y^0,y^1)$ in $H^1(\mathcal{L'})\times L^2(\mathcal{L'})$. Note that, by integration by parts, the quadratic form $(Q(t,x,h_kD_t,h_kD')w^0,w^0)_{H^{-1}(\mathcal{L}'),H^1(\mathcal{L}')}$ is symmetric.\\
One writes
\begin{align}\label{G1plusG2A1}
  &(Q(t,x,h_kD_t,h_kD')w_k,w_k)_{H^{-1}(\mathcal{L}'),H^1(\mathcal{L}')} =G_1+G_2,
\end{align}
with \begin{align*}
    &G_1=(\tilde{R}(t,x,h_kD_t,h_kD') \Op^h(\chi)w_k,\Op^h(\chi)w_k)_{L^2(\mathcal{L}')},
    \\& G_2=(\Lambda^2\Op^h(1-\chi)w_k,\Op^h(1-\chi)w_k)_{L^2(\mathcal{L}')}.
\end{align*}
From \eqref{tildeRA1} and by integration by parts, one gets
\begin{align*}
    G_1&=\big((g^{d,d}(x))^{-1}h_k\partial_t\Op^h(\chi)w_k,h_k\partial_t\Op^h(\chi)w_k\big)_{L^2(\mathcal{L}')}\\& \quad-\underset{1\leq i,j\leq d-1}{\sum}(h_k^2 \alpha^{i,j} \partial_{x_j}\Op^h(\chi)w_k,  \partial_{x_i}\Op^h(\chi)w_k)_{L^2(\mathcal{L}')}.  
\end{align*}
Applying Taylor’s formula to the functions $(g^{d,d})^{-1}$ and $\alpha^{i,j}$ at a point $x^0$, we obtain
\begin{align*}
    (g^{d,d})^{-1}(x)=(g^{d,d})^{-1}(x^0)+\sum\limits_{1\leq k \leq d}(x_k-x_k^0)(e^{d,d})^{-1}_k(x),
    \end{align*}
and
    \begin{align*}
    \alpha^{i,j}(x)=\alpha^{i,j}(x^0)+\sum\limits_{1\leq k \leq d}(x_k-x_k^0)e^{i,j}_k(x),
\end{align*}
where $(e^{d,d})_k^{-1}$ and $e^{i,j}_k$ are continuous functions.
Thus, 
\begin{align}\label{G'1plusG'2A1}
    G_1&=G_1'+G''_1,
\end{align}
with 
\begin{align*}
    G_1'&=\big((g^{d,d}(x^0))^{-1}h_k\partial_t\Op^h(\chi)w_k,h_k\partial_t\Op^h(\chi)w_k\big)_{L^2(\mathcal{L}')}\\& \quad-\underset{1\leq i,j\leq d-1}{\sum}(h_k^2 \alpha^{i,j}(x^0) \partial_{x_j}\Op^h(\chi)w_k,  \partial_{x_i}\Op^h(\chi)w_k)_{L^2(\mathcal{L}')}
\end{align*}
and
\begin{align*}
    G_1''&=\sum\limits_{1\leq k \leq d}\Big(\big((x_k-x_k^0)(e^{d,d})^{-1}_k(x)h_k\partial_t\Op^h(\chi)w_k,h_k\partial_t\Op^h(\chi)w_k\big)_{L^2(\mathcal{L}')}\\& \quad-\underset{1\leq i,j\leq d-1}{\sum}\big( (x_k-x_k^0)e^{i,j}_k(x)h_k\partial_{x_j}\Op^h(\chi)w_k,  h_k\partial_{x_i}\Op^h(\chi)w_k\big)_{L^2(\mathcal{L}')}\Big)
\end{align*}
Then, one obtains
\begin{align*}
    G'_1+G_2=(Q(t,x^0,h_kD_t,h_kD')w_k,w_k)_{H^{-1}(\mathcal{L}'),H^1(\mathcal{L}')}.
\end{align*}
Since the symbol of $Q(t,x^0,h_kD_t,h_kD')$ is elliptic with smooth coefficients and positive real part, it follows from the semi-classical Gårding inequality that there exists a constant $C_0>0$ such that 
\begin{align}\label{G'1plusG2A1}
    G'_1+G_2\geq C_0\|w_k\|_{H^1_{sc}(\mathcal{L}')}^2.
\end{align}
Let $\tilde{\chi}(t,x,\tau,\xi')\in C^\infty_c(\mathbb{R}^{2d+1})$ with $\tilde{\chi}=1$  on $\mathrm{supp}(\chi)$. Using that $\chi=1$ on $\mathrm{supp}(e)$, one has
\begin{align*}
\Op^h(\chi)w_k=\Op^h(\tilde{\chi})\Op^h(e)\psi v_k+O(h_k^\infty)_{\mathcal{L}(L^2)}\psi v_k,
\end{align*}
and using Cauchy Schwarz along with Lemma \ref{lem0A1}, one gets
\begin{align*}
    G_1''\leq C_1\sum\limits_{1\leq k \leq d} \underset{ \mathrm{supp}(\tilde{\chi})}{\mathrm{sup}}|x_k-x_k^0|\; \Big(\|w_k\|_{H^1_{sc}(\mathcal{L}')}^2+O(h_k^\infty)\Big).
\end{align*}
Taking $\mathrm{supp}(\tilde{\chi})$ sufficiently small so that
\begin{align*}
C_1\sum\limits_{1\leq k \leq d} \underset{ \mathrm{supp}(\tilde{\chi})}{\mathrm{sup}}|x_k-x_k^0| \leq \frac{C_0}{2},
\end{align*}
one gets
\begin{align} \label{G''1A1}
    G_1''\leq \frac{C_0}{2}\|w_k\|_{H^1_{sc}(\mathcal{L}')}^2+O(h_k^\infty).
\end{align}
Collecting \eqref{G1plusG2A1}, \eqref{G'1plusG'2A1}, \eqref{G'1plusG2A1} and \eqref{G''1A1}, we conclude
\begin{align} 
    (Q(t,x,h_kD_t,h_kD')w_k,w_k)_{H^{-1}(\mathcal{L}'),H^1(\mathcal{L}')}&\geq \frac{C_0}{2}\|w_k\|_{H^1_{sc}(\mathcal{L}')}^2-O(h_k^\infty).\nonumber
\end{align}
Hence, \eqref{expnormA1} gives
\begin{align}\label{5.877777777A1}
    N(W_k)\geq C\Big(\|w_k\|_{H^1_{sc}(\mathcal{L}')}^2+\|h_k \partial_zw_k\|_{L^2(\mathcal{L}')}^2\Big)-O(h_k^\infty).
\end{align}
From \eqref{exprematricA1}, we obtain
\begin{align*}
\frac{d}{dz}N(h_kW_k)&=2 \mathrm{Re}\;\tilde{N}(\frac{d}{dz}h_kW_k,h_kW_k)
\\&
=2 \mathrm{Re}\;\tilde{N}(\tilde{B}W_k,h_kW_k)
+\mathrm{Re}\big(O(h_k)_{L^2(\mathcal{L})},h_k^2\Op^h(e)\psi\partial_zv_k\big)_{L^2(\mathcal{L'})}
\\&
=I_1+I_2+I_3+I_4.
\end{align*}
where 
\begin{align*}
    &I_1=2\mathrm{Re}(Q(t,x,h_kD_t,h_kD')\Op^h(e)\psi h_k\partial_zv_k,h_k\Op^h(e)\psi v_k)_{L^2(\mathcal{L'})},
    \\&I_2=-2\mathrm{Re}(Q(t,x,h_kD_t,h_kD')\Op^h(e)\psi v_k,h_k^2\Op^h(e)\psi \partial_zv_k)_{L^2(\mathcal{L'})},
    \\&I_3=-4\underset{1\leq i\leq d-1}{\sum} \mathrm{Re}(\alpha^{i,d}h_k^2\partial_{x_i}\Op^h(e)\psi\partial_zv_k, h_k^2\Op^h(e)\psi\partial_zv_k)_{L^2(\mathcal{L'})},
    \\&I_4= \mathrm{Re}(O(h_k)_{L^2(\mathcal{L})},h_k^2\Op^h(e)\psi\partial_zv_k)_{L^2(\mathcal{L'})}.
\end{align*}
Using the symmetry of $Q$, one obtains $I_1+I_2=0$.\\
One has
\begin{align*}
 I_3&=-2\underset{1\leq i\leq d-1}{\sum}(\alpha^{i,d}h_k^2\partial_{x_i}\Op^h(e)\psi\partial_zv_k, h_k^2\Op^h(e)\psi\partial_zv_k)_{L^2(\mathcal{L'})}\\&\quad-2\underset{1\leq i\leq d-1}{\sum}(h_k^2\Op^h(e)\psi\partial_zv_k,\alpha^{i,d}h_k^2\partial_{x_i}\Op^h(e)\psi\partial_zv_k)_{L^2(\mathcal{L'})}
 \\&=2\underset{1\leq i\leq d-1}{\sum}\big(h_k^2\Op^h(e)\psi\partial_zv_k, h_k^2(\partial_{x_i}\alpha^{i,d})\Op^h(e)\psi\partial_zv_k\big)_{L^2(\mathcal{L'})}.
 \end{align*}
Therefore,
\begin{align*}
  \frac{d}{dz}&N(h_kW_k)=2\underset{1\leq i\leq d-1}{\sum}\big(h_k^2\Op^h(e)\psi\partial_zv_k, h_k^2(\partial_{x_i}\alpha^{i,d})\Op^h(e)\psi\partial_zv_k\big)_{L^2(\mathcal{L'})}+ I_4.
\end{align*}
Thus, using that $(h_k\partial_zv_k)_k$ is bounded in $L^2_{\mathrm{loc}}(\mathcal{L})$ by Lemma \ref{lem0A1} and $\partial_{x_i}\alpha^{i,d}\in C^0\cap L^\infty$, one gets
\begin{align} \label{5.62hypA1}
 h_k^2N(W_k)(z)&\lesssim\int_Z^z  \Big(h_k^2\|\Op^h(e)\psi h_k\partial_{z}v_k\|^2_{L^2(\mathcal{L'})}(z')+h_k^2 \nonumber\\&\quad+
\underset{1\leq i\leq d-1}{\sum}h_k^2\|(\partial_{x_i}\alpha^{i,d})\Op^h(e)\psi h_k\partial_{z}v_k\|^2_{L^2(\mathcal{L'})}(z')
\Big) dz'\nonumber
 \\&\lesssim h_k^2(1+|z-Z|).
\end{align}
Taking $z=0$ in \eqref{5.62hypA1}, one has 
\begin{align}
    N(W_k)(0) \lesssim 1+|Z|.
\end{align}
Hence, by \eqref{ptildeA1} and \eqref{5.877777777A1}, we obtain
\begin{align*}
\|\Op^h(e)\psi h_k\partial_zv_{k_{|z=0}}\|_{L^2(\partial\mathcal{L})}^2\lesssim 1+|Z|+O(h_k^\infty).
    \end{align*}
This concludes the proof since $\partial_zv_{k_{|z=0}}= \partial_zu_{k_{|z=0}}$ by \eqref{5.58expV_kA1}.
\end{proof}
\subsection{Proof of the propagation equation} \label{proofpropagA1}
The measure equation \eqref{13£££A1} is local. Consequently,
its proof can be carried out in local charts. So,
let $C=(O,\phi_\mathcal{L})$ be a local chart, where $O$ is a neighborhood of a point $\varrho^0 \in \partial \mathcal{L}$. We recall that in local coordinates $\mathcal{L}$ is given by $\{z\geq0\}$. Let $\chi \in C^\infty_c([\alpha,\alpha^{-1}])$ such that $\chi \equiv 1$ in a neighborhood of $[\alpha,\alpha^{-1}]$. We consider a function $c(\varrho)=c(t,x,\tau,\xi)$ of the space $ \Sigma_0^\mathcal{H} (\mathbb{R}^{2d+2}) $. By Definition \ref{definespaceA1}, the symbol $c$ has compact support in the $y=(t,x)$ variable, denoted as $\mathrm{supp}(c) \subset K \times \mathbb{R}^{d+1}$, where K is a compact subset of $\phi_\mathcal{L}(O)$, and exhibits rapid decay in the $\eta=(\tau,\xi)$ variable.
A decomposition of the symbol $c$, presented in Proposition 8.2 of \cite{1A1}, makes tangential symbols appear, so that one can writes,
\begin{align}\label{decomA1}
\chi(\tau)c(\varrho)=c_0(t,x,\tau,\xi')+c_1(t,x,\tau,\xi')\zeta+q(\varrho)p(\varrho),
\end{align}
where $c_0$ and $c_1$ are tangential symbols satisfying 
\begin{align}\label{5;37''(A1}
|&\partial_y^\alpha\partial_{\eta'}^\beta c_i(y,\eta')|\leq C_{N,\beta} \langle \eta'\rangle^{-N},
\end{align}
for $ N\in \mathbb{N},\; \alpha \in \mathbb{N}^{d+1},\; |\alpha|\leq 1,\; \beta \in  \mathbb{N}^d,\; i=0,1,\; y\in\mathbb{R}^{d+1},\; \eta'=(\tau,\xi')\in \mathbb{R}\times \mathbb{R}^{d-1}$
and $q$ fulfills property (8.4) of Proposition 8.2 in \cite{1A1}.
We recall that in local coordinates the Hamiltonian vector field associated with the principal symbol $p$ of the wave operator is given by
\begin{align} \label{hpcA1}
H_p(\varrho) =-2\tau\partial_t+2 g^{ij}(x)\xi_i\partial_{x_j}-\partial_{x_k} g^{i,j}(x)\xi_i\xi_j\partial_{\xi_k}.
\end{align}
By the definition of $\chi$ and formula \eqref{24*A1} of Proposition \ref{hhA1}, one has $\chi=1$ on $\mathrm{supp}(\mu)$. Thus, one writes
\begin{align} \label{5.39effA1}
\langle ^t H_p\mu,c\rangle=\langle \mu,H_p c\rangle=\langle \mu,\chi H_p c\rangle=\langle \mu, H_p (\chi c)\rangle.
\end{align}
Inserting \eqref{decomA1} in \eqref{5.39effA1} and using $H_p(p)=\{p,p\}=0$ and the fact that by Proposition \ref{hhA1}, $\mathrm{supp}(\mu)\subset \mathrm{Char}(p)$, we get
\begin{align}\label{5.38''(A1}
\langle ^t H_p\mu,c\rangle&=\langle \mu, H_p (c_0)\rangle+\langle \mu, H_p (c_1)\zeta\rangle+ \langle \mu, H_p (q)p\rangle \nonumber
\\&=\langle \mu, H_p (c_0)+H_p (c_1)\zeta\rangle.
\end{align}
In view of \eqref{5.38''(A1}, we deduce that the symbol $c$ can be chosen of the form 
\begin{align*}
c(\varrho)= c_0(t,x,\tau,\xi')+c_1(t,x,\tau,\xi')\zeta.
\end{align*}
With this form one computes \begin{align} \label{5.40RRA1}
H_pc(\varrho)=\{p,c\}(\varrho)= \tilde{c}_0(y,\tau,\xi')+\tilde{c}_1(y,\tau,\xi')\zeta+\tilde{c}_2(y,\tau,\xi')\zeta^2,
\end{align}
where, by \eqref{5;37''(A1}, the tangential symbols $\tilde{c}_0,\tilde{c}_1$  and  $\tilde{c}_2$ are in $\Sigma^{0,n}_{T,0}(\langle\eta'\rangle^{-N};\mathbb{R}^{d+1}\times \mathbb{R}^{d})$, for any $n \in \mathbb{N} \cup \{+\infty\}$, with $n \geq d$ and $N \in \mathbb{N}$ as defined in Definition \ref{def 3.2A1}.\\
Note that $-ih_kH_pc=-ih_k\{p,c\}$ is the principal symbol of the commutator $[\Op^h(p),\Op^h(c)]$.
Hence, in order to evaluate $\langle\mu,H_pc\rangle$, we compute the limit as $k$ tends to infinity of 
\begin{align*}
i(  h_k [P, \Op^h(c)]\psi u_k,u_k)_{L^2(\mathcal{L})},
\end{align*}
where $\psi \in C^\infty_c(\phi_\mathcal{L}(O))$ such that $\psi \equiv 1$ in a neighborhood of $K$. Note that $\psi=1$ in a
neighborhood of the $y$-projection of $\mathrm{supp}(\tilde{c}_i)$, for $i=0,1,2$. One Writes 
\begin{align}
     [h_k^2 P, \Op^h(c)\psi]&= [h_k^2P,\Op^h(c)]\psi+h_k^2\Op^h(c)[P,\psi]\nonumber
     \\&=[h_k^2P,\Op^h(c)]\psi+h_k^2O(1)_{\mathcal{L}(H^1,L^2)}, \label{operlimiteTh}
\end{align}
where in the last line we use the fact that $[P,\psi]$ is a differential operator of order one with continuous coefficients.
The commutator $[h_k^2P,\Op^h(c)]$ is given by the following lemma.
\begin{lemma}
\label{lem4.1££A1}
One has
\begin{align*}
[h_k^2P,\Op^h(c)]=-ih_k \Op^h(H_pc)+ o(h_k)_{\mathcal{L}(L^2)}.
\end{align*}
\end{lemma}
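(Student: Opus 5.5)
The plan is to expand $h_k^2P$ in local coordinates term by term and compute each commutator with $\Op^h(c)$, where $c=c_0+c_1\zeta$ has rapid decay in $\eta$. Write
\begin{align*}
h_k^2P=-h_k^2\partial_t^2-\sum_{1\leq i,j\leq d}h_k^2\,\partial_{x_i}\big(g^{i,j}\partial_{x_j}\,\cdot\big)-\sum_{1\leq j\leq d}h_k^2\, r_0^j(x)\partial_{x_j}+h_k^2,
\end{align*}
where $r_0^j$ is continuous, as in \eqref{5.35555sssA1}. The lower order parts contribute little. The terms $h_k^2$ and $h_k^2\partial_t^2$ have smooth symbols in $(t,\tau)$, and $c$ depends smoothly on $(t,\tau)$, so exact symbolic calculus gives $[-h_k^2\partial_t^2,\Op^h(c)]=-ih_k\Op^h(-2\tau\partial_t c)+O(h_k^2)$. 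For the first order term, $h_k^2 r_0^j\partial_{x_j}=h_k\, r_0^j\,(h_k\partial_{x_j})$, and $\Op^h(c)$ maps $L^2$ into semiclassical $H^1$ uniformly because of the decay of $c$. Hence its commutator with $\Op^h(c)$ is $h_k\big([r_0^j,\Op^h(c)]h_k\partial_{x_j}+r_0^j[h_k\partial_{x_j},\Op^h(c)]\big)$. The first piece is $h_k\, o(1)$ by formula (5.8) of Proposition 5.10 in \cite{1A1} (commutators with continuous functions are $o(1)$). The second piece is $O(h_k^2)$.

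The principal part is where the $C^1$ regularity is used. Write $h_k^2\partial_{x_i}(g^{i,j}\partial_{x_j})=g^{i,j}h_k^2\partial_{x_i}\partial_{x_j}+h_k(\partial_{x_i}g^{i,j})h_k\partial_{x_j}$; the second piece has continuous coefficients and is handled like the first order term. For the main piece,
\begin{align*}
[g^{i,j}h_k^2\partial_{x_i}\partial_{x_j},\Op^h(c)]=g^{i,j}[h_k^2\partial_{x_i}\partial_{x_j},\Op^h(c)]+[g^{i,j},\Op^h(c)]h_k^2\partial_{x_i}\partial_{x_j}.
\end{align*}
The first commutator is computed exactly, as in the treatment of $K_3$ in \eqref{kk3A1}, and gives $-ih_k\Op^h(2g^{i,j}\xi_i\partial_{x_j}c)+O(h_k^2)$. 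Note that $c$ is $C^1$ in $x$ (being in $\Sigma_0^{\mathcal{H}}$), so $\partial_{x_j}c$ is a legitimate symbol. The second term requires the first order expansion for a $C^1$ function, formula (5.12) of Proposition 5.10 in \cite{1A1}:
\begin{align*}
[g^{i,j},\Op^h(c)]=ih_k\sum_k(\partial_{x_k}g^{i,j})\Op^h(\partial_{\xi_k}c)+o(h_k)_{\mathcal{L}(L^2)}.
\end{align*}
Composing with $h_k^2\partial_{x_i}\partial_{x_j}$ gives the principal contribution $-ih_k\Op^h(\partial_{x_k}g^{i,j}\,\xi_i\xi_j\,\partial_{\xi_k}c)$, up to sign bookkeeping. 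Adding all contributions reproduces $-ih_k\Op^h(H_pc)$ by \eqref{hpcA1}.

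The main obstacle is the remainder in this last step. The $o(h_k)_{\mathcal{L}(L^2)}$ error is multiplied by the unbounded operator $h_k^2\partial_{x_i}\partial_{x_j}$, so the composition is not automatically $o(h_k)$ on $L^2$. I would avoid the issue by placing the derivatives on the symbol side before expanding. Since $\Op^h(c)$ is a quantization in which $\partial_x$ acts on the function variable, write $\Op^h(c)h_k^2\partial_{x_i}\partial_{x_j}=-\Op^h(c\,\xi_i\xi_j)+h_k\,\Op^h(\cdot)+\dots$. Here $c\,\xi_i\xi_j$ still satisfies the rapid decay bounds \eqref{5;37''(A1} in $\eta'$, and it has polynomial growth in $\zeta$ of degree at most $3$. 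I would then apply (5.12) to $[g^{i,j},\Op^h(c\,\xi_i\xi_j)]$ directly, which yields an honest $o(h_k)_{\mathcal{L}(L^2)}$ error. This requires checking that the $\zeta$-growth is compatible with the symbol class $\Sigma$. If it is not, I would use the fact that $c\in\Sigma_0^{\mathcal{H}}$ has compactly supported Fourier transform in $\eta$, so its kernel is compactly supported in $(x-y)/h$. The Taylor remainder $g(y)-g(x)-\nabla g(x)(y-x)=o(|x-y|)$ can then be bounded uniformly via Schur's lemma (Proposition \ref{corappenA1}).
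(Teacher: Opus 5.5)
The splitting you start from is the paper's own. The $\partial_t^2$ term is handled by exact symbolic calculus, the Laplacian is written in non-divergence form, and the four commutators $K_1,\dots,K_4$ are treated with formula (5.8) of Proposition 5.10 in \cite{1A1} for continuous coefficients and formula (5.12) for $C^1$ ones. The gap is in the repair you propose for the obstacle you (rightly) single out. The identity $\Op^h(c)h_k^2\partial_{x_i}\partial_{x_j}=-\Op^h(c\,\xi_i\xi_j)$ is exact, but it does not let you replace $[g^{i,j},\Op^h(c)]h_k^2\partial_{x_i}\partial_{x_j}$ by $-[g^{i,j},\Op^h(c\,\xi_i\xi_j)]$. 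In fact
\[ [g^{i,j},\Op^h(c)]\,h_k^2\partial_{x_i}\partial_{x_j}=-[g^{i,j},\Op^h(c\,\xi_i\xi_j)]+\Op^h(c)\,[h_k^2\partial_{x_i}\partial_{x_j},g^{i,j}], \]
and the last term is of order $h_k$, the same order as what you are computing.

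If you drop that term, (5.12) applied to $c\,\xi_i\xi_j$ produces, besides $\partial_{x_k}g^{i,j}\xi_i\xi_j\partial_{\xi_k}c$, extra terms proportional to $h_k(\partial_{x_i}g^{i,j})\Op^h(\xi_jc)$ and $h_k(\partial_{x_j}g^{i,j})\Op^h(\xi_ic)$, coming from $\partial_{\xi_k}(\xi_i\xi_j)$. These are not in $H_pc$, and nothing in your decomposition cancels them: your piece $h_k(\partial_{x_i}g^{i,j})h_k\partial_{x_j}$ only contributes $o(h_k)$. If you keep the correction, the spurious terms do cancel. But the correction equals $h_k\Op^h(i\xi_ic)(\partial_{x_j}g^{i,j})+h_k\Op^h(c)(\partial_{x_i}g^{i,j})h_k\partial_{x_j}$, so a derivative again falls on the input through a merely continuous coefficient. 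The obstacle has only moved down one order. Your Schur/Taylor fallback is just the proof of (5.12) and does not address the composition with $h_k^2\partial_{x_i}\partial_{x_j}$.

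The same issue already affects your first-order terms. In $[r_0^j,\Op^h(c)]h_k\partial_{x_j}$ the derivative acts on the $L^2$ input, and since $r_0^j$ is only continuous it cannot be moved onto the symbol; (5.8) gives $o(1)$ only in $\mathcal{L}(H^1,L^2)$. Once first-order coefficients are merely continuous, $[h_k^2P,\Op^h(c)]$ need not extend to a bounded operator on $L^2$ at all. So no rearrangement will yield a genuine $o(h_k)_{\mathcal{L}(L^2)}$ remainder.

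The paper does not attempt this. It keeps $K_2=o(h_k^2)_{\mathcal{L}(H^1,L^2)}$ and the $K_4$ remainder $o(h_k^3)_{\mathcal{L}(H^2,L^2)}$, which is exactly what your first, unrepaired computation gives. These suffice because the lemma is only used in \eqref{5.50sssA1}, applied to $\psi u_k$. There Lemma \ref{lem0A1} gives $\|h_ku_k(t)\|_{H^1(\mathcal{M})}\lesssim 1$ and $\|h_k^2u_k(t)\|_{H^2(\mathcal{M})}\lesssim 1$, so each remainder contributes $o(h_k)$, hence $o(1)$ after dividing by $h_k$. So drop the repair, state the remainders in these weaker operator spaces, and invoke Lemma \ref{lem0A1} where the lemma is used. (Minor: $h_k^2P$ contains $+h_k^2\partial_t^2$, not $-h_k^2\partial_t^2$; the commutator formula you wrote is the correct one for that sign.)
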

\begin{proof}[Proof]
 Recalling that $P=\partial_t^2-\Delta_g+1$, one can write
\begin{align}\label{5;44dvvxgA1}
    [h_k^2P,\Op^h(c)]=[h_k^2\partial_t^2,\Op^h(c)]-[h_k^2\Delta_g,\Op^h(c)].
\end{align}
By symbolic calculus, one has
\begin{align*}
[h_k^2\partial_t^2,\Op^h(c)]=ih_k\Op^h(2\tau \partial_tc)+o(h_k)_{\mathcal{L}(L^2)}.
\end{align*}
Next, with repeated indices convention, we write the Laplace–Beltrami operator
\begin{align*}
\Delta_g= g^{i,j}(x) \partial_{x_i}\partial_{x_j}+\varrho^{i,j}\partial_{x_j},
\end{align*}
where  $\tilde{g}^{i,j}=(\text{det}\; g)^{\frac{1}{2}}g^{i,j}$ and $\varrho^{i,j}=(\text{det}\; g)^{-\frac{1}{2}}[\partial_{x_i},\tilde{g}^{i,j}]$.
We then compute the commutator
\begin{align}\label{commuA1}
[h_k^2\Delta_g,\Op^h(c)]=K_1+K_2+K_3+K_4,
\end{align}
with
\begin{align*}
&K_1=h_k^2\varrho^{i,j}[\partial_{x_j},\Op^h(c)], \quad\quad\quad\quad \quad K_2=h_k^2[\varrho^{i,j},\Op^h(c)]\partial_{x_j},
\\&K_3=h_k^2g^{i,j}(x)[\partial_{x_i}\partial_{x_j},\Op^h(c)],\quad\quad K_4= h_k^2[g^{i,j}(x),\Op^h(c)]\partial_{x_i}\partial_{x_j}.
\end{align*}
Since $\partial_{x_j}c\in \Sigma^{\infty,\infty}_0(\langle\xi\rangle^{-\infty};\mathbb{R}^{2d+2})$, applying Lemma 7.1 in \cite{1A1} we get
\begin{align}\label{k_1A1}
K_1&=h_k^2\varrho^{i,j} \Op^h(\partial_{x_j}c)+O(h_k^4)_{\mathcal{L}(L^2)}
=O(h_k^2)_{\mathcal{L}(L^2)}.
\end{align}
 Moreover, using the fact that $\varrho^{i,j}\in C^0\cap L^\infty$, we deduce from Proposition 5.10 (see formula (5.8)) in \cite{1A1},
\begin{align}\label{K_2A1}
K_2=h_k^2\sum\limits_{1 \leq j \leq d}o(1)_{\mathcal{L}(L^2)}\partial_{x_j}=o(h_k^2)_{\mathcal{L}(H^1,L^2)}.
\end{align}
By symbolic calculus, we have
\begin{align}\label{K_3A1}
K_3=ih_k g^{i,j}(x)\Op^h(\{\xi_i\xi_j,c\})+O(h_k^2)_{\mathcal{L}(L^2)}.
\end{align}
Furthermore, using formula (5.12) from Proposition 5.10 in \cite{1A1}, we get
\begin{align}\label{K_4A1}
K_4&=ih_k^3\sum\limits_{1 \leq k \leq d}(\partial_{x_k} g^{i,j})\Op^h(\partial_{\xi_k}c)\partial_{x_i}\partial_{x_j}+o(h_k^3)_{\mathcal{L}(H^2,L^2)}\nonumber
\\&=-ih_k\sum\limits_{1 \leq k \leq d}(\partial_{x_k} g^{i,j})\Op^h(\xi_i \xi_j\partial_{ \xi_k}c)+o(h_k^3)_{\mathcal{L}(H^2,L^2)}.
\end{align}
Inserting \eqref{k_1A1}, \eqref{K_2A1}, \eqref{K_3A1} and \eqref{K_4A1} in \eqref{commuA1} and combining the result with \eqref{5;44dvvxgA1} and \eqref{hpcA1}, we conclude the proof.
\end{proof}
By \eqref{5.40RRA1} and Lemma \ref{lem4.1££A1}
\begin{align}\label{5.50sssA1}
i(h_k[P,\Op^h(c)]\psi u_k,u_k)_{L^2(\mathcal{L})}&=( \Op^h(\tilde{c}_0)\psi u_k,u_k)_{L^2(\mathcal{L})}+(\Op^h(\tilde{c}_1)h_kD_d\psi u_k,u_k)_{L^2(\mathcal{L})}\nonumber
\\&\quad+(\Op^h(\tilde{c}_2)h_k^2D_d^2 \psi u_k,u_k)_{L^2(\mathcal{L})}+ o(1)_{k \rightarrow +\infty}.
\end{align}
Applying Lemma \ref{massleakA1} together with Proposition 5.21 in \cite{1A1}, one has
\begin{align} \label{c_0 tildeA1}
( \Op^h(\tilde{c}_0)\psi u_k,u_k)_{L^2(z\geq 0)} \underset{k\rightarrow +\infty}{\longrightarrow}\langle\mu,\tilde{c}_0\rangle.
\end{align}
The limits of the two remaining terms on the right-hand side of \eqref{5.50sssA1} are given by the following lemma.
\begin{lemma}
\label{lem11A1}
One has
\begin{align}
(\Op^h(\tilde{c}_1) h_k D_d \psi u_k,u_k)_{L^2(z\geq 0)} \underset{k\rightarrow +\infty}{\longrightarrow} \langle\mu,\tilde{c}_1\zeta \rangle,
\end{align}
and
\begin{align}
(\Op^h(\tilde{c}_2) h_k^2 D_d^2\psi u_k,u_k)_{L^2(z\geq 0)} \underset{k\rightarrow +\infty}{\longrightarrow} \langle \mu, \tilde{c}_2\zeta^2\rangle.
\end{align}

\end{lemma}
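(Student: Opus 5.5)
The plan is to reduce both limits to the interior statement \eqref{c_0 tildeA1}, i.e. to Lemma \ref{massleakA1} combined with Proposition 5.21 of \cite{1A1}. Two effects must be controlled: the normal derivatives $h_kD_d$, $h_k^2D_d^2$ hit the boundary $\{z=0\}$, and the symbols $\tilde c_1\zeta$, $\tilde c_2\zeta^2$ do not decay in $\zeta$. Throughout, $u_k$ is identified with its extension $\underline{u}_k$ by $0$ to $\{z<0\}$, as in the proof of Theorem \ref{ellipA1}. We use the jump formulas
\begin{align*}
h_kD_z\underline{\psi u_k}=\underline{h_kD_z(\psi u_k)}-ih_k(\psi u_k)_{|z=0}\,\delta,\qquad
h_k^2D_z^2\underline{\psi u_k}=\underline{h_k^2D_z^2(\psi u_k)}-h_k^2(\partial_z(\psi u_k))_{|z=0}\,\delta-h_k^2(\psi u_k)_{|z=0}\,\delta'.
\end{align*}

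\emph{Step 1 (boundary terms).} The symbols $\tilde c_i$ are tangential, so $\Op^h(\tilde c_i)$ acts only in $(t,x')$ and commutes with the structure in $z$. A layer term therefore pairs with $u_k$ as a boundary integral. For the first limit this gives
\begin{align*}
h_k\big(\Op^h(\tilde c_1)(\psi u_k)_{|z=0},u_{k|z=0}\big)_{L^2(\partial\mathcal L)},
\end{align*}
which is $O(h_k)$ by Proposition \ref{boundnessaubordA1}; the localisation is supplied by $\psi$ and by the compact $(t,x')$-support of $\tilde c_i$. For the second limit the $\delta$ term is
\begin{align*}
h_k\big(\Op^h(\tilde c_2)(h_k\partial_z\psi u_k)_{|z=0},u_{k|z=0}\big)=O(h_k).
\end{align*}
The $\delta'$ term, after one transfer of $\partial_z$, becomes
\begin{align*}
h_k\big(\Op^h(\tilde c_2)(\psi u_k)_{|z=0},(h_k\partial_z u_k)_{|z=0}\big),
\end{align*}
which is again $O(h_k)$ by Proposition \ref{boundnessaubordA1}. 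Consequently the two pairings in the lemma equal, up to $o(1)$, the pairings $(\Op^h(\tilde c_1\zeta)\underline{\psi u_k},\underline{u}_k)$ and $(\Op^h(\tilde c_2\zeta^2)\underline{\psi u_k},\underline{u}_k)$ taken on the whole space.

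\emph{Step 2 (cutoff in $\zeta$).} Fix $\theta\in C^\infty_c(\mathbb R)$ with $\theta=1$ near $0$, and split each symbol as
\begin{align*}
\tilde c_1\zeta=\tilde c_1\zeta\,\theta(\zeta/R)+\tilde c_1\zeta\,(1-\theta(\zeta/R)),
\end{align*}
and similarly for $\tilde c_2\zeta^2$. The first piece lies in $\Sigma_0$, since $\tilde c_i$ decays rapidly in $\eta'$ and the cutoff handles $\zeta$. Lemma \ref{massleakA1} and Proposition 5.21 of \cite{1A1} then give convergence to $\langle\mu,\tilde c_1\zeta\theta(\zeta/R)\rangle$. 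Because $\mathrm{supp}(\mu)\subset\mathrm{Char}(p)$, we have $|\zeta|\lesssim|(\tau,\xi')|$ on the support, and the support of $\tilde c_i$ has bounded $(\tau,\xi')$. So for $R$ large this limit equals $\langle\mu,\tilde c_1\zeta\rangle$, and likewise $\langle\mu,\tilde c_2\zeta^2\rangle$ for the second piece.

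\emph{Step 3 (the high-$\zeta$ remainder; main obstacle).} It remains to show that the pieces carrying $1-\theta(\zeta/R)$ are small uniformly in $k$ as $R\to\infty$. This is the step I expect to be hardest. For $\tilde c_1$, I would use Cauchy–Schwarz together with the bound of $h_kD_z(\psi u_k)$ in $L^2$ from Lemma \ref{lem0A1}, and I need smallness of $\|(1-\theta(h_kD_z/R))\Op^h(\tilde c_1)\psi u_k\|$.

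To get that smallness I would exploit ellipticity. On the support of $\tilde c_1(1-\theta(\zeta/R))$ the quantity $|(\tau,\xi')|$ is bounded and $|\zeta|\geq R$, so $|p|\gtrsim\zeta^2$ and $p$ is elliptic there. The equation $h_k^2Pu_k=0$ then lets one write, for the operator with this symbol,
\begin{align*}
\Op^h\big(\tilde c_1(1-\theta(\zeta/R))\big)\psi u_k=\Op^h\big(\tilde c_1(1-\theta(\zeta/R))p^{-1}\big)\,h_k^2P(\psi u_k)+O(R^{-1})+o(1),
\end{align*}
using the symbolic calculus with $C^1$ coefficients (Proposition 5.10 of \cite{1A1}). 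The main term involves $h_k^2P(\psi u_k)=h_k^2[P,\psi]u_k$ plus the boundary layer terms of $h_k^2P\underline{\psi u_k}$ produced by the jump formulas, exactly as in \eqref{275555TH}. These layer terms are controlled by the trace bounds of Proposition \ref{boundnessaubordA1}, combined with the decay $\langle\zeta\rangle^{-2}$ of the symbol as in Lemmas \ref{lem1Th}–\ref{lem3Th}. For $\tilde c_2\zeta^2$ the symbol $\zeta^2p^{-1}$ is bounded, and the same argument, placing one factor $h_kD_z$ on each side, yields an $O(R^{-1})+o(1)$ bound.

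If this route proves too delicate, I would fall back on an alternative for the second limit: replace $h_k^2D_z^2u_k$ directly using \eqref{exppA1}, namely $h_k^2\partial_z^2u_k=-(g^{d,d})^{-1}h_k^2R(t,x,D)u_k$. This reduces $\tilde c_2\zeta^2$ to tangential symbols plus terms of the form $\tilde c\,\zeta$, which are handled by the first limit and by \eqref{c_0 tildeA1}. One then identifies $-(g^{d,d})^{-1}\times$(symbol of $h_k^2R$) with $\zeta^2$ on $\mathrm{Char}(p)\cap\mathrm{supp}(\mu)$.
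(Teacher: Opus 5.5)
Your plan has the right ingredients for the first limit: layer terms of size $O(h_k)$, a low-$\zeta$ piece identified through the measure, and a high-$\zeta$ piece that is small uniformly in $k$. However, the order in which you use them breaks the argument.

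\textbf{Step 1 produces undefined pairings.} The operators $\Op^h(\tilde c_i)$ are tangential and do not smooth in $z$. So the term $-ih_k\Op^h(\tilde c_1)\big((\psi u_k)_{|z=0}\,\delta\big)$ is still a Dirac mass in $z$, and you pair it with $\underline{u}_k$, which jumps at $z=0$. Its value depends on a convention, so the whole-space pairing $(\Op^h(\tilde c_1\zeta)\underline{\psi u_k},\underline{u}_k)$ you hand to Steps 2--3 is not defined. For $\tilde c_2$ it is worse: $\langle\delta',\underline{u}_k\rangle=-\langle\delta,\partial_z\underline{u}_k\rangle$ contains the divergent term $\langle\delta,\delta\rangle\,u_{k|z=0}$, and your ``one transfer of $\partial_z$'' silently drops it. The defect persists in Step 3: $1-\theta(h_kD_z/R)$ does not remove the $\delta$, so Cauchy--Schwarz against $\|h_kD_z(\psi u_k)\|_{L^2}$ does not apply to your high-frequency piece.

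\textbf{The missing idea (this is how the paper proceeds).} Cut in $\zeta$ first, on the well-defined quantity $(\Op^h(\tilde c_1)\mathds{1}_{z\geq 0}h_kD_d\psi u_k,\mathds{1}_{z\geq 0}u_k)_{L^2(\mathbb{R}^{d+1})}$.
\begin{itemize}
\item The high part acts on the zero extension of $h_kD_z(\psi u_k)$. Since $h_kD_z(\psi u_k)$ is bounded in $H^1_{sc}$ of the half space by Lemma~\ref{lem0A1}, this extension is bounded in $H^s_{sc}$ for every $s<\frac12$. That gives $O(R^{-s})$ uniformly in $k$, with no ellipticity at all.
\item The jump formula is used only in the low part. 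There $\Op^h(\phi(\zeta/R))\delta_{z=0}=h_k^{-1}R\,\widehat{\phi}(Rz/h_k)$ is an $L^2$ function of norm $\lesssim R^{1/2}h_k^{-1/2}$. By Proposition~\ref{boundnessaubordA1} the layer contribution is then $O(R^{1/2}h_k^{1/2})$, which vanishes as $k\to+\infty$ for fixed $R$.
\end{itemize}

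\textbf{Your Step 3 ellipticity premise is false.} The elliptic-parametrix plan is not only unnecessary, it rests on a wrong claim. $\tilde c_1$ is only rapidly decaying in $(\tau,\xi')$, not compactly supported. Moreover $p$ vanishes at $z=0$, $\zeta=R$, $\tau^2=R^2+g(\xi',\xi')$, so $p$ is not elliptic on $\mathrm{supp}\,\tilde c_1(1-\theta(\zeta/R))$. The same bounded-support claim in Step 2 is harmless only because $\mu$ is compact in the fibres by \eqref{24*A1}.

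\textbf{Second limit.} Your primary route cannot be repaired the same way, for two reasons.
\begin{itemize}
\item Lemma~\ref{lem0A1} gives no uniform $H^1_{sc}$ bound on $h_k^2D_z^2u_k$; that would need $H^3$ regularity, out of reach for a $C^1$ metric. So the $H^s$ argument is unavailable.
\item ``Placing one $h_kD_z$ on each side'' requires moving $h_kD_z$ across $\Op^h(\tilde c_2)$, which produces $\partial_z\tilde c_2$. This does not exist, since $\tilde c_2$ involves $\partial_x g^{i,j}$ and is only continuous in $y$.
\end{itemize}
Your fallback is exactly the paper's proof and should be the main argument:
\begin{itemize}
\item Use $h_k^2Pu_k=0$ inside $\{z\geq 0\}$ to replace $g^{d,d}h_k^2D_d^2u_k$ by $h_k^2D_t^2u_k$, tangential second-order terms, mixed terms $h_k^2D_dD_ju_k$, and first-order terms of size $O(h_k)$.
\item Commute the $C^1$ coefficients $\psi g^{i,j}/g^{d,d}$ through $\Op^h(\tilde c_2)$ using Proposition 5.10 of \cite{1A1}, at cost $O(h_k)$.
\item Handle the mixed terms by the first limit applied to the tangential symbols $\tilde c_2\xi_j$.
\item Conclude with $\frac{\tau^2}{g^{d,d}}-\sum_{(i,j)\neq(d,d)}\frac{g^{i,j}}{g^{d,d}}\xi_i\xi_j=\zeta^2-\frac{p}{g^{d,d}}$ on $\mathrm{supp}(\mu)\subset\mathrm{Char}(p)$.
\end{itemize}
No boundary terms arise along this route.
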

\begin{proof}[Proof]
Consider $\phi \in C^\infty_c(\mathbb{R})$ such that $\phi(\sigma)=1$ for $|\sigma| \leq 1$ and $R>1$. One writes
\begin{align*}
( \Op^h(\tilde{c}_1) h_kD_d \psi u_k,u_k)_{L^2(z\geq 0)}&=( \Op^h(\tilde{c}_1) \mathds{1}_{z\geq 0} h_kD_d \psi u_k,\mathds{1}_{z\geq 0} u_k) _{L^2(\mathbb{R}^{d+1})}
\\&=I_1+I_2,
\end{align*}
with 
\begin{align*}
    &I_1=( \Op^h(\tilde{c}_1)\Op^h\big(\phi(\frac{\zeta}{R})\big)\mathds{1}_{z\geq 0}h_k D_d\psi u_k, \mathds{1}_{z\geq 0} u_k)_{L^2(\mathbb{R}^{d+1})},
\end{align*}
and
\begin{align*}
 \quad\quad I_2=( \Op^h(\tilde{c}_1)\Op^h\big(1-\phi(\frac{\zeta}{R})\big)\mathds{1}_{z\geq 0} h_kD_d\psi u_k,\mathds{1}_{z\geq 0}u_k)_{L^2(\mathbb{R}^{d+1})}.
\end{align*}
Since $\Op^h(\tilde{c}_1)$ is bounded on $L^2(\mathbb{R}^{d+1})$ (see Corollary \ref{lem4.7A1}) and $(u_k)_k$ is bounded in $L^2_{\mathrm{loc}}(\mathcal{L})$, uniformly with respect to $k$, we get
\begin{align*}
| I_2 | \lesssim \|\Op^h\big(1-\phi(\frac{\zeta}{R})\big)  \mathds{1}_{z\geq 0} h_k D_d \psi u_k\|_{L^2(\mathbb{R}^{d+1})}.
\end{align*}
Furthermore, by Lemma \ref{lem0A1}, one has $\mathds{1}_{z\geq 0}h_k D_d\psi  u_k \in H^1_{sc}(\mathcal{L})$, then
$\mathds{1}_{z\geq 0} h_k D_d\psi u_k \in H^s_{sc}(\mathcal{L})$,
for any $s\in [0,\frac{1}{2}[$, where the semi-classical Sobolev space $H^s_{sc}(\mathcal{L})$ is defined by the set of $u\in \mathcal{S}'(\mathcal{L})$ such that $\Op^h(\langle \xi \rangle^s)u\in L^2(\mathcal{L})$ equipped with the norm $\| u\|_{H^s_{sc}(\mathcal{L})}=\|\Op^h(\langle \xi \rangle^s)u\|_{L^2(\mathcal{L})}$. Then, applying Lemma 4.3 from \cite{2A1}, for such $s$, we have
\begin{align*}
| I_2|\lesssim  R^{-s},
\end{align*}
uniformly in $k$ for all $R>1$. Thus, we conclude that
\begin{align}\label{lim22A1}
\underset{k,R\rightarrow +\infty }{\lim} I_2= 0.
\end{align}
Now, let us consider the first term $I_1$.
Observing that \begin{align*}
\mathds{1}_{z\geq 0} h_kD_d\psi u_k=h_kD_d(\psi\mathds{1}_{z\geq 0}u_k)+ih_k(\psi u_k)_{\mid z=0}\otimes\delta_{z=0},
\end{align*}
 we write
\begin{align*}
I_1=I_1^{'}+I_2^{'},
\end{align*}
where 
\begin{align*}
I_1^{'}=( \Op^h(\tilde{c}_1)\Op^h(\phi(\frac{\zeta}{R}))h_kD_d(\psi \mathds{1}_{z\geq 0}u_k),\mathds{1}_{z\geq 0} u_k )_{L^2(\mathbb{R}^{d+1})},
\end{align*}
and
\begin{align*}
I_2^{'}=ih_k( \Op^h(\tilde{c}_1)\Op^h(\phi(\frac{\zeta}{R}))((\psi u_k)_{\mid z=0}\otimes\delta_{z=0}),\mathds{1}_{z\geq 0} u_k)_{L^2(\mathbb{R}^{d+1})}.
\end{align*}
Note that \begin{align*}
\Op^h\big(\phi(\frac{\zeta}{R})\big)h_kD_d=\Op^h\big(\zeta\phi(\frac{\zeta}{R})\big),
\end{align*} and
\begin{align*}
\Op^h(\tilde{c}_1)\Op^h\big(\zeta\phi(\frac{\zeta}{R})\big)=\Op^h(\zeta\phi(\frac{\zeta}{R})\tilde{c}_1).
\end{align*}
Then, using the fact that $(u_k)_k$ is bounded in $L^2_{\mathrm{loc}}(\mathcal{L})$, uniformly with respect to $k$, we get 
\begin{align} \label{limgA1}
I_1^{'}&=( \Op^h\big(\zeta\phi(\frac{\zeta}{R})\tilde{c}_1\big)\psi\mathds{1}_{z\geq 0}u_k,\mathds{1}_{z\geq 0}u_k)_{L^2(\mathbb{R}^{d+1})}\nonumber
\\&\underset{k\rightarrow +\infty}{\longrightarrow}\langle \mu, \zeta\phi(\frac{\zeta}{R})\tilde{c}_1\rangle\underset{R\rightarrow +\infty}{\longrightarrow}\langle \mu,\zeta \tilde{c}_1\rangle.
\end{align}
For $ I_2^{'}$ one writes
\begin{align*}
\Op^h\big(\phi(\frac{\zeta}{R})\big)\delta_{z=0}=h_k^{-1}R \widehat{\phi}(\frac{Rz}{h_k}),
\end{align*}
 where  $\widehat{\phi}$ is the Fourier transform of $\phi$. Thus, since $\Op^h(\tilde{c}_1)$ is bounded on $L^2(\mathbb{R}^{d+1})$ we get
\begin{align}
| I_2^{'}| &\leq | ( \Op^h(\tilde{c}_1)\big((\psi u_k)_{\mid z=0} R \widehat{\phi}(\frac{Rz}{h})\big),\mathds{1}_{z\geq 0} u_k)_{L^2(\mathbb{R}^{d+1})} |\nonumber
\\&\leq R \|(\psi u_k)_{\mid z=0}\|_{L^2(\mathbb{R}^d)}\|\widehat{\phi}(\frac{Rz}{h_k})\|_{L^2(\mathbb{R})}\|u_k\|_{L^2(z\geq 0)}
\nonumber\\&  \label{5?54A1}\lesssim R^{\frac{1}{2}} h_k^{\frac{1}{2}} \|(\psi u_k)_{\mid z=0}\|_{L^2(\mathbb{R}^d)}\\&\label{5?55A1}\lesssim  R^{\frac{1}{2}} h_k^{\frac{1}{2}},
\end{align}
where in \eqref{5?54A1} we used that $(u_k)_k$ is bounded in $L^2_{\mathrm{loc}}(\mathcal{L})$, uniformly with respect to $k$ and the fact that $\phi \in C^\infty_c(\mathbb{R})$ and in \eqref{5?55A1} we used that $({u_k}_{\mid z=0})_k$ is bounded in $L^2_{\mathrm{loc}}(\partial \mathcal{L})$.\\
This yields
\begin{align}\label{LIMé"A1}
 I_2^{'} \underset{k\rightarrow +\infty}{\longrightarrow} 0.
 \end{align}
With \eqref{lim22A1}, \eqref{limgA1} and \eqref{LIMé"A1} we conclude the proof of the first part of Lemma \ref{lem11A1}.
For the second part, we first  write
\begin{align*}
( \Op^h(\tilde{c}_2) h_k^2 D_d^2 \psi u_k,u_k )_{L^2(z\geq 0)}&=  ( \Op^h(\tilde{c}_2)\psi h_k^2 D_d^2 u_k,u_k )_{L^2(z\geq 0)}\\&\quad+( \Op^h(\tilde{c}_2) h_k^2 [D_d^2,\psi] u_k,u_k )_{L^2(z\geq 0)}
 \\&=( \Op^h(\tilde{c}_2)\psi h_k^2 D_d^2 u_k,u_k )_{L^2(z\geq 0)}+ O(h_k),
\end{align*}
where in the last line we use $[D_d^2,\psi]=D_d^2\psi+2(D_d\psi)D_d$ and the sequences $(u_k)_k$ and $(h_kD_du_k)_k$ are bounded in $L^2_{\mathrm{loc}}(\mathcal{L})$ by Lemma \ref{lem0A1}.\\
Recall that using the equation $h_k^2Pu_k=0$,
one has
\begin{align*}
h_k^2 &g^{d,d}(x) D_d^2 u_k= h_k^2 D_t^2u_k-\sum\limits_{1\leq j\leq d-1}h_k^2 g^{d,j}(x)D_dD_ju_k- \sum\limits_{1\leq i,j\leq d-1}h_k^2 g^{i,j}(x)D_iD_ju_k \\& -\sum\limits_{1\leq i\leq d-1}h_k^2 g^{i,d}(x)D_iD_du_k
-h_k^2(\text{det}\; g)^{-\frac{1}{2}}\sum \limits_{1\leq i,j\leq d}D_i\Big((\text{det}\; g)^{\frac{1}{2}}g^{i,j}(x)\Big)D_ju_k
+h_k^2u_k.
\end{align*}
Then, given that $g^{d,d}(x)\neq 0$ and $(u_k)_k$ is bounded in $L^2_{\mathrm{loc}}(\mathcal{L})$, one writes
\begin{align}\label{A1A1}
( &\Op^h(\tilde{c}_2)\psi h_k^2 D_d^2 u_k,u_k )_{L^2(z\geq 0)}= J_1+J_2+J_3+J_4+J_5 +O(h_k^2),
\end{align}
where
\begin{align*}
&J_1= (\Op^h(\tilde{c}_2)\psi\frac{1}{g^{d,d}(x)} h_k^2 D_t^2 u_k,u_k)_{L^2(z\geq 0)}  
\\&J_2=- \sum_{1\leq j\leq d-1} ( \Op^h(\tilde{c}_2)\psi \frac{1}{g^{d,d}(x)} h_k^2 g^{d,j}(x)D_dD_ju_k,u_k)_{L^2(z\geq 0)}
\\&J_3=- \sum_{1\leq i\leq d-1} ( \Op^h(\tilde{c}_2)\psi \frac{1}{g^{d,d}(x)} h_k^2 g^{i,d}(x)D_iD_du_k,u_k)_{L^2(z\geq 0)}
\\&J_4=- \sum_{1\leq i,j\leq d-1} (\Op^h(\tilde{c}_2)\psi \frac{1}{g^{d,d}(x)} h_k^2 g^{i,j}(x)D_iD_ju_k,u_k)_{L^2(z\geq 0)}
\\&J_5=- \sum_{1\leq i,j\leq d} (\Op^h(\tilde{c}_2)\psi \frac{(\text{det}\; g)^{-\frac{1}{2}}}{g^{d,d}(x)} D_i\big((\text{det}\; g)^{\frac{1}{2}}g^{i,j}(x)\big)h_k^2 D_j u_k, u_k)_{L^2(z\geq 0)}.
\end{align*}
By Lemma \ref{lem0A1} more precisely the fact that $(h_kD_ju_k)_k$ is bounded in $L^2_{\mathrm{loc}}(\mathcal{L})$, we get
\begin{align}
J_5=O(h_k).\label{J5A1}
\end{align}
For $J_1$, consider $\tilde{\psi}\in C^\infty_c(\phi_\mathcal{L}(O))$ equal
to $1$ in a neighborhood of  $\mathrm{supp}(\psi)$. Due to Lemma \ref{lem0A1}, one has 
\begin{align}
J_1&
=([\Op^h(\tilde{c}_2),\frac{\psi}{g^{d,d}(x)}]\tilde{\psi}h_k^2 D_t^2u_k,u_k)_{L^2(z\geq 0)}
+( \frac{\psi}{g^{d,d}(x)}\Op^h(\tilde{c}_2)\tilde{\psi}h_k^2D_t^2 u_k,u_k)_{L^2(z\geq 0)}\nonumber
\\&
=(\frac{\psi}{g^{d,d}(x)}\Op^h(\tilde{c}_2\tau^2)\tilde{\psi} u_k,u_k)_{L^2(z\geq 0)}
+( \frac{\psi}{g^{d,d}(x)}\Op^h(\tilde{c}_2)[\tilde{\psi},D_t^2 ]h_k^2u_k,u_k)_{L^2(z\geq 0)}+O(h_k)\label{2.163333TH}
\\&
=(\frac{\psi}{g^{d,d}(x)}\Op^h(\tilde{c}_2\tau^2)\tilde{\psi} u_k,u_k)_{L^2(z\geq 0)}+O(h_k),\label{A2A1}
\end{align}
where in \eqref{2.163333TH} we use Proposition 5.10 of \cite{1A1} with $\tilde{c}_2 \in \Sigma^{0,n}_{T,0}(\langle\eta'\rangle^{-N};\mathbb{R}^{d+1}\times \mathbb{R}^{d})$, for any $n \in \mathbb{N} \cup \{+\infty\}$, where $n \geq d$ and $N \in \mathbb{N}$ and in the last line we use Lemma \ref{lem0A1} along with $[\tilde{\psi},D_t^2 ]=-D_t^2\tilde{\psi}-2(D_t\tilde{\psi})D_t$.
To treat the second term, we write
\begin{align}
J_2&
=-\sum_{1\leq j\leq d-1} ( [\Op^h(\tilde{c}_2), \psi \frac{g^{d,j}(x)}{g^{d,d}(x)} ] \tilde{\psi} h_k^2D_dD_ju_k,u_k)_{L^2(z\geq 0)}\nonumber
\\&\quad
-\sum_{1\leq j\leq d-1} (\psi\frac{g^{d,j}(x)}{g^{d,d}(x)} \Op^h(\tilde{c}_2)h_k^2 \tilde{\psi}D_dD_ju_k,u_k)_{L^2(z\geq 0)}\nonumber
\\&
=-\sum_{1\leq j\leq d-1}  (\psi \frac{g^{d,j}(x)}{g^{d,d}(x)} \Op^h(\tilde{c}_2\zeta\xi_j) \tilde{\psi}u_k,u_k)_{L^2(z\geq 0)}\nonumber\\&\quad-\sum_{1\leq j\leq d-1}  (\psi \frac{g^{d,j}(x)}{g^{d,d}(x)} \Op^h(\tilde{c}_2) [\tilde{\psi},D_dD_j]h_k^2u_k,u_k)_{L^2(z\geq 0)}+O(h_k)\label{J_21A1}
\\&
= -\sum_{1\leq j\leq d-1}  (\psi \frac{g^{d,j}(x)}{g^{d,d}(x)} \Op^h(\tilde{c}_2\zeta\xi_j) \tilde{\psi}u_k,u_k)_{L^2(z\geq 0)}+O(h_k)\label{J_22A1},
\end{align}
where the term $O(h_k)$ in \eqref{J_21A1} arises from Proposition 5.10 of \cite{1A1} and in \eqref{J_22A1} from Lemma \ref{lem0A1} along with $[\tilde{\psi},D_dD_j]=-(D_dD_j \tilde{\psi})-(D_j\tilde{\psi})D_d- (D_d\tilde{\psi})D_j$.
Proceeding similarly as for $J_2$, we find that
\begin{align}\label{A3A1}
J_3=-\sum_{1\leq i\leq d-1}  (\psi \frac{g^{i,d}(x)}{g^{d,d}(x)} \Op^h(\tilde{c}_2\zeta\xi_i) \tilde{\psi}u_k,u_k)_{L^2(z\geq 0)}+O(h_k),
\end{align}
and
\begin{align}\label{A4A1}
J_4=-\sum_{1\leq i,j\leq d-1}  (\psi \frac{g^{i,j}(x)}{g^{d,d}(x)} \Op^h(\tilde{c}_2\xi_j\xi_i) \tilde{\psi}u_k,u_k)_{L^2(z\geq 0)}+O(h_k).
\end{align}
Inserting  \eqref{J5A1}, \eqref{A2A1}, \eqref{J_22A1}, \eqref{A3A1} and \eqref{A4A1} in \eqref{A1A1} and using respectively Lemma 5.24, formula (5.28) in \cite{1A1}, $\psi=1$ in a
neighborhood of the $y$-projection of $\mathrm{supp}(\tilde{c}_2)$ and the fact that $\mathrm{supp}(\mu)\subset \mathrm{Char}(p)$ by Proposition\ref{hhA1}, we get
\begin{align*}
 \underset{k\rightarrow +\infty}{\mathrm{lim}}( \Op^h(\tilde{c}_2)\psi h_k^2 D_d^2 u_k,u_k )_{L^2(z\geq 0)}&=\langle\mu,\tilde{c}_2(\frac{\tau^2}{g^{d,d}(x)}- \underset{(i,j)\neq (d,d)}{\underset{1\leq i,j\leq d}{\sum}}\frac{g^{i,j}(x)}{g^{d,d}(x)} \xi_i\xi_j)\rangle\\&=\langle\mu,\tilde{c}_2(\zeta^2-\frac{p}{g^{d,d}(x)})\rangle\\&=\langle\mu,\tilde{c}_2\zeta^2\rangle.
 \end{align*}
 This completes the proof of the second item in Lemma  \ref{lem11A1}.
\end{proof}
From \eqref{operlimiteTh}, \eqref{5.50sssA1}, \eqref{c_0 tildeA1}, Lemma \ref{lem11A1},  along with \eqref{5.40RRA1}, we conclude that
\begin{align}\label{5.66finA1}
i([ h_kP, \Op^h(c)\psi]u_k,u_k)_{L^2(z\geq 0)}  \underset{k \rightarrow +\infty}{\longrightarrow}  \langle \mu,H_pc\rangle= \langle {}^t H_p\mu,c\rangle.
\end{align}
An integration by parts leads to
\begin{align}
\label{3**A1}
i([h_k P,\Op^h(c)\psi]u_k,u_k)_{L^2(z\geq 0)} & =-i h_k({\Op^h(c)\psi u_k}_{\mid z=0},
\partial_zu_{k_{\mid z=0}})_{L^2(\partial \mathcal{L})}
\\& \quad
+i h_k ({\partial_z \Op^h(c)\psi u_k}_{\mid z=0},{u_k}_{\mid z=0})_{L^2(\partial \mathcal{L})}.\nonumber
 \end{align}
Writing further
\begin{align*}
 \partial_z \Op^h(c)\psi &= \Op^h(c)\psi \partial_z +\Op^h(\partial_z c)\psi +\Op^h(c)(\partial_z\psi)
 \\&=\Op^h(c)\psi \partial_z+Op^h\big(\partial_zc_0+(\partial_z c_1) \zeta\big)\psi +\Op^h(c)(\partial_z\psi),
 \end{align*}
 implies
 \begin{align} \label{5.68sixbA1}
 i h_k ({\partial_z \Op^h(c)\psi u_k}_{\mid z=0},{u_k}_{\mid z=0})_{L^2(\partial \mathcal{L})}&=i  ({\Op^h(c)\psi h_k \partial_z u_k }_{\mid z=0}, {u_k}_{\mid z=0})_{L^2(\partial \mathcal{L})}\nonumber
 \\&
 \quad+i h_k({\Op^h(\partial_zc_0)\psi u_k}_{\mid z=0},{u_k}_{\mid z=0})_{L^2(\partial \mathcal{L})}\nonumber
 \\&\quad+ h_k({\Op^h(\partial_zc_1)h_k\partial_z\psi u_k}_{\mid z=0},{u_k}_{\mid z=0})_{L^2(\partial \mathcal{L})}\nonumber
 \\&
 \quad +i h_k ({\Op^h(c)(\partial_z \psi) u_k}_{\mid z=0},{u_k}_{\mid z=0})_{L^2(\partial \mathcal{L})}\nonumber
\\&=i h_k ({\Op^h(c)\psi \partial_z u_k}_{\mid z=0},{u_k}_{\mid z=0})_{L^2(\partial \mathcal{L})}+O(h_k),
 \end{align}
where the term $O(h_k)$ arises from the fact that the sequences $({u_k}_{\mid z=0})_k$ and $(h_k{\partial_zu_k}_{\mid z=0})_k$ are bounded in $L^2_{\mathrm{loc}}(\partial \mathcal{L})$.\\
Hence, \eqref{3**A1} and \eqref{5.68sixbA1} give
\begin{align}\label{comfi1A1}
i([h_k P,\Op^h(c)\psi]u_k,u_k)_{L^2(z\geq 0)} & =-i h_k({\Op^h(c)\psi u_k}_{\mid z=0},
\partial_zu_{k_{\mid z=0}})_{L^2(\partial \mathcal{L})}\nonumber
\\& \quad
+i  ({\Op^h(c)\psi h_k\partial_z u_k}_{\mid z=0},{u_k}_{\mid z=0})_{L^2(\partial \mathcal{L})}+O(h_k).
\end{align}
Writing
\begin{align*}
\Op^h(c)\psi &=\Op^h(c_0)\psi +\Op^h(c_1)h_kD_z\psi
\\&=\Op^h(c_0)\psi+\Op^h(c_1)\psi h_k D_z-i h_k \Op^h(c_1)(\partial_z\psi),
\end{align*}
implies 
 \begin{align} 
 ({\Op^h(c)\psi u_k} _{\mid z=0}, h_k \partial_z&u_{k_{\mid z=0}})_{L^2(\partial \mathcal{L})}=({\Op^h(c_0)\psi u_k}_{\mid z=0}, h_k \partial_z u_{k_{\mid z=0}})_{L^2(\partial \mathcal{L})}\nonumber\\&-i( {\Op^h(c_1)\psi h_k \partial_z u_k}_{\mid z=0}, h_k \partial_z u_{k_{\mid z=0}})_{L^2(\partial \mathcal{L})}+O(h_k),\label{confi20A1}
 \end{align}
and
\begin{align}
(\Op^h(c)\psi h_k \partial_z u_{k_{\mid z=0}},{u_k}_{\mid z=0})_{L^2(\partial \mathcal{L})}
&=({\Op^h(c_0)\psi h_k \partial_z u_k}_{\mid z=0},{u_k}_{\mid z=0})_{L^2(\partial \mathcal{L})}
\nonumber\\&\quad-i({\Op^h(c_1)\psi h_k^2 \partial_z^2 u_k}_{\mid z=0}, {u_k}_{\mid z=0})_{L^2(\partial \mathcal{L})} 
+O(h_k)
\nonumber\\& =L_1+L_2+L_3 +O(h_k),\label{confi21A1}
\end{align}
where
\begin{align*}
&L_1=({\Op^h(c_0)\psi h_k \partial_z u_k}_{\mid z=0},{u_k}_{\mid z=0})_{L^2(\partial \mathcal{L})}\\&L_2= 
-i ({\Op^h(c_1)\psi h_k^2 \partial_t^2 u_k}_{\mid z=0}, {u_k}_{\mid z=0})_{L^2(\partial \mathcal{L})}-i({\Op^h(c_1)\psi h_k^2  u_k}_{\mid z=0}, {u_k}_{\mid z=0})_{L^2(\partial \mathcal{L})}
\\&\quad\quad+i \underset{1\leq i,j \leq d-1}{\sum} ({\Op^h(c_1)\psi h_k^2(\text{det}\; g)^{-\frac{1}{2}}\partial_{x_i}\big((\text{det}\; g)^{\frac{1}{2}} g^{i,j}(x)  \partial_{x_j} u_k}\big)_{\mid z=0}, {u_k}_{\mid z=0})_{L^2(\partial \mathcal{L})}
\\&\quad \quad+i\underset{1\leq j \leq d}{\sum}\big({\Op^h(c_1)\psi h_k^2 (\text{det}\; g)^{-\frac{1}{2}}\partial_{z}\big((\text{det}\; g)^{\frac{1}{2}}g^{d,j}(x) \big) {\partial_{x_j} u_k}}_{\mid z=0}, {u_k}_{\mid z=0}\big)_{L^2(\partial \mathcal{L})}
\\&\quad:=i({\Op^h(c_1)\psi h_k^2R(t,x',D_t,D_x)u_k}_{\mid z=0},{u_k}_{\mid z=0})_{L^2(\mathcal{L})}
\\&L_3=i\underset{1\leq j \leq d-1}{\sum}\big({\Op^h(c_1)\psi h_k^2 g^{d,j}(x)  {\partial_{z}\partial_{x_j} u_k}}_{\mid z=0}, {u_k}_{\mid z=0}\big)_{L^2(\partial \mathcal{L})}
\\&\quad\quad
+i\underset{1\leq i \leq d-1}{\sum}\big({\Op^h(c_1)\psi h_k^2 (\text{det}\; g)^{-\frac{1}{2}}\partial_{x_i}\big((\text{det}\; g)^{\frac{1}{2}}g^{i,d}(x)  \partial_{z} u_k}\big)_{\mid z=0}, {u_k}_{\mid z=0}\big)_{L^2(\partial \mathcal{L})}.
\end{align*}
Note that the term $O(h_k)$ in \eqref{confi20A1} arises from the fact that the sequences  $({u_k}_{\mid z=0})_k$ and $(h_k{\partial_zu_k}_{\mid z=0})_k$  are bounded in $L^2_{\mathrm{loc}}(\partial \mathcal{L})$ and \eqref{confi21A1} follows from the same argument together with the equation $h_k^2Pu_k=0$.\\
From proposition \ref{pro9A1}, one has $g^{i,d}(x)=g^{d,j}(x)=0$ at $z=0$ for $i,j\neq d$, then 
\begin{align}\label{L_3A1}
L_3=0.
\end{align}
Arguing as in the proof of Lemma \ref{lem4.1££A1}, we obtain
\begin{align}\label{L_2A1}
L_2=i({\Op^h\big(c_1R(\tau,x',\xi')\big)\psi u_k}_{\mid z=0},{u_k}_{\mid z=0})_{L^2(\partial \mathcal{L})}+O(h_k),
\end{align}
where \begin{align*}
R(\tau,x',\xi')=\tau^2-\underset{1\leq i,j\leq d-1}{\sum}g^{i,j}(x',0)\xi_i\xi_j.
\end{align*}
Collecting \eqref{confi20A1}, \eqref{confi21A1}, \eqref{L_3A1} and \eqref{L_2A1} together with \eqref{comfi1A1}, yields
\begin{align}\label{5.74finnnA1}
i([h_k P,\Op^h(c)\psi]u_k,u_k)_{L^2(z\geq 0)} &=
-i({\Op^h(c_0)\psi u_k}_{\mid z=0}, h_k {\partial_z u_k}_{\mid z=0})_{L^2(\partial \mathcal{L})}\nonumber
\\&\quad
-( {\Op^h(c_1)\psi h_k \partial_z u_k}_{\mid z=0}, h_k {\partial_z u_k}_{\mid z=0})_{L^2(\partial \mathcal{L})}\nonumber
\\&\quad
+i({\Op^h(c_0)\psi h_k \partial_z u_k}_{\mid z=0},{u_k}_{\mid z=0})_{L^2(\partial \mathcal{L})}\nonumber
\\&\quad
-({\Op^h\big(c_1R(\tau,x',\xi'))\psi u_k}_{\mid z=0},{u_k}_{\mid z=0})_{L^2(\partial \mathcal{L})}
+O(h_k).
\end{align}
Given that the sequences $({u_k}_{\mid z=0})_k$ and $({h_k \partial_z u_k}_{\mid z=0})_k$ are bounded in $L^2_{\mathrm{loc}}(\partial \mathcal{L})$, Proposition \ref{Pro2µµA1},  ensures the existence of a Hermitian semi-classical measure on $T^*\partial \mathcal{L}$ associated to the vector $\tilde{U}_k= \,^t({u_k}_{\mid z=0},{h_k \partial_z u_k}_{\mid z=0})$ given by
\begin{align*}
\nu=
\begin{pmatrix}
    \nu^{0,0} & \nu^{0,1} \\
    \nu^{1,0} & \nu^{1,1} \\
\end{pmatrix}.
\end{align*}
Hence, using the fact that $\nu^{0,1}=\overline{ \nu^{1,0}}$, it follows from \eqref{5.66finA1} and \eqref{5.74finnnA1} that
\begin{align}
\label{3A1}
\langle ^t H_p\mu,c\rangle &=-i \langle \nu^{0,1},c_0 \rangle - \langle \nu^{1,1}, c_1\rangle+i\langle \nu^{1,0} , c_0 \rangle -\langle \nu^{0,0}, c_1 R(\tau,x',\xi')\rangle  \nonumber
\\&=-2\langle \mathrm{Im} (\nu^{1,0}), c_0\rangle -\langle R(\tau,x',\xi')\nu^{0,0}+\nu^{1,1} ,c_1\rangle. 
\end{align}
\begin{lemma}\label{vsuppA1}
 One has 
 \begin{align}
 &\mathrm{supp}\big(R(\tau,x',\xi')\nu^{0,0}+\nu^{1,1}\big)\subset \big(\;^\parallel \mathcal{H_\partial} \cup \,^\parallel \mathcal{G}_\partial\big) \cap \{\alpha \leq \tau\leq \alpha^{-1}\}, \label{5.12999A1}
\\&\mathrm{supp}\big( \mathrm{Im} (\nu^{1,0})\big)\subset \big(\;^\parallel \mathcal{H_\partial} \cup \,^\parallel \mathcal{G}_\partial\big) \cap \{\alpha \leq \tau\leq \alpha^{-1}\}. \label{5.1300000A1}
  \end{align}
\end{lemma}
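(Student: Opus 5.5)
The $\tau$-localization $\{\alpha\leq\tau\leq\alpha^{-1}\}$ in both \eqref{5.12999A1} and \eqref{5.1300000A1} is already contained in \eqref{25*A1} of Proposition \ref{hhA1}. So the real content is to show that all four entries of $\nu$ put no mass on the elliptic region ${}^\parallel\mathcal{E}_\partial=\{R(\tau,x',\xi')<0\}$. Recall that at $z=0$ one has $p=-R+\zeta^2$. Since ${}^\parallel\partial(T^*\mathcal{L})$ is partitioned into ${}^\parallel\mathcal{E}_\partial$, ${}^\parallel\mathcal{G}_\partial$ and ${}^\parallel\mathcal{H}_\partial$, and the latter two form the closed set $\{R\geq 0\}$, it suffices to prove that $\nu^{0,0}$, $\nu^{1,1}$ and $\nu^{1,0}$ vanish on ${}^\parallel\mathcal{E}_\partial$.

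I would proceed by localizing near an arbitrary $\varrho_0\in{}^\parallel\mathcal{E}_\partial$ and reusing the trace equation derived in the proof of Theorem \ref{ellipA1}. That argument yielded
\begin{align*}
\Op^h\Big(-\tfrac{f}{2}\big(\tau\varphi(\tau)+\sqrt{-r_0(x',0,\tau,\xi')}\big)\Big)\psi_1 u_{k_{\mid z=0}}=h_k^{\frac12}O(1)_{L^2(\partial\mathcal{L})},
\end{align*}
where $f$ is any cutoff supported near $\varrho_0$ with $f(\varrho_0)=1$, and the symbol $\tau\varphi(\tau)+\sqrt{-r_0}$ is bounded below by a positive constant on $\mathrm{supp}(f)$. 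The key upgrade is that the right-hand side is not merely $O(1)$ but tends to $0$. Because the principal symbol $e_1=-\tfrac f2(\tau\varphi+\sqrt{-r_0})$ is elliptic on a neighbourhood of $\varrho_0$, I would test against $\Op^h(\bar e_1 a)$, with $a\geq0$ supported where $f=1$. Passing to the limit with the symbolic calculus used in Section \ref{proofpropagA1} (Proposition 5.10 and Lemma 5.24 of \cite{1A1}) gives
\begin{align*}
\langle \nu^{0,0},|e_1|^2a\rangle=\lim_k\|\Op^h(e_1)\Op^h(a^{1/2})\psi_1u_{k_{\mid z=0}}\|^2_{L^2(\partial\mathcal{L})}=0 .
\end{align*}
Hence $\nu^{0,0}=0$ near $\varrho_0$.

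The boundary condition then transfers this to the other entries. It gives $h_k\partial_zu_{k_{\mid z=0}}=\Op^h(\tau)u_{k_{\mid z=0}}+O(h_k)$ locally, so the Hermitian measure satisfies $\nu^{1,1}=\tau^2\nu^{0,0}$ and $\nu^{1,0}=\tau\nu^{0,0}$ in the sense of measures; this is the same argument used at the end of the proof of \eqref{25*A1}. Both therefore vanish on ${}^\parallel\mathcal{E}_\partial$. Alternatively, the Cauchy–Schwarz inequality for the nonnegative Hermitian matrix $\nu$, namely $|\nu^{1,0}|^2\leq\nu^{0,0}\nu^{1,1}$, already gives $\nu^{1,0}=0$ wherever $\nu^{0,0}=0$. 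It follows that $R\nu^{0,0}+\nu^{1,1}$ and $\mathrm{Im}(\nu^{1,0})$ are supported in $\{R\geq0\}\cap\{\alpha\leq\tau\leq\alpha^{-1}\}$, which is the claim.

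The main obstacle is the passage from the $h_k^{1/2}O(1)$ remainder to the vanishing of $\nu^{0,0}$. One has to check that every error term in the trace computation of Theorem \ref{ellipA1} is genuinely $o(1)$ in $L^2(\partial\mathcal{L})$, which relies on the boundedness of the traces from Proposition \ref{boundnessaubordA1}. One also has to check that composing the nonsmooth, $C^1$-in-$x$ symbol $f\sqrt{-r_0}$ with the test operator is legitimate modulo $o(1)$. I expect this to follow from the same Schur-lemma bounds and commutator estimates (Lemma \ref{lem2Th}) already used in that proof, with compact support in $(\tau,\xi')$ keeping everything bounded.
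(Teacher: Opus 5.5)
Your argument is correct, but it follows a different route from the paper.

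\textbf{The paper's route.} The paper's proof never uses the boundary condition, beyond the trace bounds needed to define $\nu$. It applies the boundary identity \eqref{3A1} to decompositions \eqref{decomA1} of the zero symbol. Concretely, it writes $0=c_0+c_1\zeta+qp$ with $(c_0,c_1)=(0,d)$ and $q=-d\zeta/p$, respectively $(c_0,c_1)=(d,0)$ and $q=-d/p$. Here $d$ is a tangential cutoff over ${}^\parallel\mathcal{E}_\partial$, chosen so that $p$ does not vanish for real $\zeta$ on its support. The interior side of \eqref{3A1} vanishes, because $\mu$ only sees $H_p(qp)=pH_pq$ and $\mathrm{supp}(\mu)\subset\mathrm{Char}(p)$. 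This gives directly $\langle R\nu^{0,0}+\nu^{1,1},d_{|z=0}\rangle=0$ and $\langle\mathrm{Im}(\nu^{1,0}),d_{|z=0}\rangle=0$. The argument is a structural consequence of the interior equation and Green's formula, so it would survive a change of boundary condition. However, it only controls exactly the two combinations that enter Theorem \ref{th1A1}.

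\textbf{Your route.} You instead exploit the Lopatinskii-type ellipticity of $\partial_n+i\partial_t$ for $\tau>0$: the symbol $\tau\varphi(\tau)+\sqrt{-r_0}$ is bounded below. You do this through the elliptic trace equation of Theorem \ref{ellipA1}. You obtain the stronger conclusion that the whole matrix $\nu$ vanishes over ${}^\parallel\mathcal{E}_\partial$. The price is that the argument is tied to this specific boundary condition and rests on the heavier residue and parametrix computation.

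\textbf{Two remarks.}
\begin{enumerate}
\item The ``upgrade'' you single out as the main obstacle is already in the paper. The remainder in that trace identity is stated as $h_k^{1/2}O(1)_{L^2(\partial\mathcal{L})}$. It is obtained only from the semiclassical trace bound \eqref{5.3111111A1} and Lemma \ref{lem0A1}. What remains is the symbolic-calculus step needed to pass to the quadratic form defining $\nu^{0,0}$; this is where Proposition \ref{boundnessaubordA1} is actually used.
\item The relation $\nu^{1,0}=\tau\nu^{0,0}$ is derived in Section \ref{conclusionA1} independently of this lemma, so there is no circularity. Once you invoke it, \eqref{5.1300000A1} is immediate, since $\mathrm{Im}(\nu^{1,0})=0$ identically. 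Only \eqref{5.12999A1} genuinely needs your elliptic vanishing of $\nu^{0,0}$.
\end{enumerate}
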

\begin{proof}[Proof] 
The inclusion in $ \{\alpha \leq \tau\leq \alpha^{-1}\}$ follows from \eqref{25*A1}. \\
Let $d(t,x,\tau,\xi')\in C^\infty_c(\mathbb{R}^{d+1}\times \mathbb{R}^d)$ be a smooth function compactly supported near $\{z=0\}$, and assume that $d_{|z=0}$ is supported in the elliptic region, that is, 
\begin{align} \label{suppgA1}
\mathrm{supp}(d_{|z=0})\subset(^\parallel \mathcal{G}_\partial\cup ^\parallel \mathcal{H}_\partial)^c.
\end{align}
 We choose the support of $d$ sufficiently small such that  
\begin{align*}
d(t,x,\tau,\xi')\neq 0 \Rightarrow p(x,\tau,\xi',\zeta)\neq 0, \quad \text{for all }  \zeta \in \mathbb{R}.
\end{align*}
Consider \begin{align} \label{eucdivisioA1}
    q(t,x,\tau,\xi)=-\frac{d(t,x,\tau,\xi')\zeta}{p(x,\tau,\xi',\zeta)}.
    \end{align}
Note that $q$ is $C^1(\mathbb{R}^d)$ in $x$-variable, smooth and compactly supported in $(t,\tau,\xi')$-variables and admits a polyhomogeneous development in the $\zeta$ variable (see formulas (8.5) and (8.6) of Proposition 8.2 in \cite{1A1}). The expression \eqref{eucdivisioA1} takes the form given by \eqref{decomA1} with 
$c=0$, $c_0 = 0$ and $c_1 = d$. Then,
using formula \eqref{3A1}, we obtain
\begin{align*}
0= \langle R(\tau,x',\xi')\nu^{0,0}+\nu^{1,1} ,d_{|z=0}\rangle.
\end{align*}
Hence, with \eqref{suppgA1} we conclude \eqref{5.12999A1}. For \eqref{5.1300000A1}, using the same function $d$, we define
\begin{align}
q(t,x,\tau,\xi)=-\frac{d(t,x,\tau,\xi')}{p(x,\tau,\xi',\zeta)}.
\end{align}
Applying formula \eqref{3A1} with $c=0$, $c_0 = d$ and $c_1 = 0$, we get
\begin{align*}
0= \langle \mathrm{Im} (\nu^{1,0}) ,d_{|z=0}\rangle.
\end{align*}
Thus, by \eqref{suppgA1} we deduce \eqref{5.1300000A1}, which completes the proof of Lemma \ref{vsuppA1}.
\end{proof}
Let $\varrho \in \big(\,^\parallel \mathcal{H_\partial} \cup \,^\parallel \mathcal{G}_\partial\big)  \cap \{\alpha \leq \tau\leq \alpha^{-1}\}$.
One set $\varrho^\pm=(\varrho,\zeta^\pm)=(t,x',z=0,\tau,\xi',\zeta^\pm)$
where $\zeta^\pm$ is defined as in \eqref{6£££A1}.
Recall that for $c(t,x,\tau,\xi) \in \Sigma_0^\mathcal{H} (\mathbb{R}^{2d+2}) $, we write
\begin{align*}
\chi(\tau)c(t,x,\tau,\xi)=c_0(t,x,\tau,\xi')+c_1(t,x,\tau,\xi')\zeta+q(t,x,\tau,\xi)p(t,x,\tau,\xi),
\end{align*}
where $c_0$ and $c_1$ are tangential symbols and  $\chi$ is equal to $1$ in a neighborhood of $[\alpha,\alpha^{-1}]$.\\
Since $\varrho^\pm\in \mathrm{Char}(p)$ and $\zeta^+=-\zeta^-$, we have
\begin{align*}
c_0(t,x',z=0,\tau,\xi')&=\frac{1}{2}\big( c(\varrho^{+})+c(\varrho^{-})\big)
\\&=\frac{1}{2}\langle\delta_{\varrho^{+}}+\delta_{\varrho^{-}},c\rangle,
\end{align*}
and
\begin{align*}
c_1(t,x',z=0,\tau,\xi')&=\frac{c(\varrho^{+})-c(\varrho^{-})}{\zeta^+-\zeta^-}
\\&=\frac{\langle\delta_{\varrho^{+}}-\delta_{\varrho^{-}},c\rangle}{\zeta^+-\zeta^-}.
\end{align*}
Plugged in (\ref{3A1}) and using Lemma \ref{vsuppA1}, we obtain
\begin{align*}
\langle& H_p\mu,c\rangle=-\langle ^t H_p\mu,c\rangle \nonumber
\\&=
\int_{^\parallel \mathcal{H}_\partial \cup ^\parallel \mathcal{G}_\partial} 	\langle \delta_{\varrho^{+}}+\delta_{\varrho^{-}},c\rangle d\big(\mathrm{Im}(\nu^{1,0}) \big)+\int_{\,^\parallel \mathcal{H}_\partial \cup \,^\parallel \mathcal{G}_\partial} \frac{\langle\delta_{\varrho^{+}}-\delta_{\varrho^{-}},c\rangle}{\zeta^+-\zeta^-} d\big(\nu^{1,1}+R(\tau,x',\xi')\nu^{0,0}\big),
\end{align*}
which concludes the proof of Theorem \ref{th1A1}.
\section{Proof of the main result} \label{conclusionA1}
After verifying above the assumptions of Theorem \ref{th1A1} for the sequence $(u_k)_k$, we derive the equation of the measures given in \eqref{13£££A1}.
To conclude the main result of this paper, we employ this equation along with the boundary condition given in \eqref{eqimA1}. 
To proceed, let $C =(O, \phi_\mathcal{L})$ be a local chart, where $O$ is a neighborhood of a point $\varrho^0 \in \partial \mathcal{L}$ and let $a(t,x,\tau,\xi)$ be a symbol in $ \Sigma_0(\phi_\mathcal{L}(O)\times \mathbb{R}^{2d})$. Consider $\psi\in C^\infty_c(\phi_\mathcal{L}(O))$ such that $\psi=1$ on the $(t,x)$-projection of $\mathrm{supp}(a)$. We keep the notations $a$, $\nu$ instead of their local representative $a^c$ and $\nu^c$ in the local chart $C$ as given in Remark \ref{remarK 4.11A1}. Using the boundary condition $h_k\partial_zu_{k_{\mid z=0}}=\Op^h(\tau)u_{k_{\mid z=0}}$, Lemma 5.24 and formula (5.28) in \cite{1A1}, we obtain
\begin{align*}
(\Op^h(a) \psi h_k \partial_z u_{k_{\mid z=0}},u_{k_{\mid z=0}})_{L^2(\partial \mathcal{L})}&=(\Op^h(a) \psi \Op^h(\tau)u_{k_{\mid z=0}}, u_{k_{\mid z=0}})_{L^2(\partial \mathcal{L})}
\\&
\underset{k\rightarrow +\infty}{\longrightarrow} \langle \nu^{0,0},\tau a \rangle.
\end{align*}
Applying the same arguments as above, we find
\begin{align*}
(\Op^h(a) \psi h_k \partial_z u_{k_{\mid z=0}},h_k \partial_z u_{k_{\mid z=0}})_{L^2(\partial \mathcal{L})}&=(\Op^h(\tau)^*\Op^h(a) \psi \Op^h(\tau)u_{k_{\mid z=0}}, u_{k_{\mid z=0}})_{L^2(\partial \mathcal{L})}
\\&
\underset{k\rightarrow +\infty}{\longrightarrow} \langle \nu^{0,0},\tau^2 a \rangle.
\end{align*}
Thus, we conclude the following relations
\begin{align*}
\nu^{1,1}=\tau^2\nu^{0,0}  \quad \quad \text{and} \quad \quad \nu^{1,0}=\tau\nu^{0,0}. 
\end{align*}
Therefore, since $\nu^{0,0}$ is real then $\mathrm{Im} (\nu^{1,0})=0$ and we rewrite \eqref{13£££A1} as follows
\begin{align}\label{tranA1}
H_p\mu=\int_{\,^\parallel \mathcal{H}_\partial \cup \,^\parallel \mathcal{G}_\partial} \frac{\delta_{\varrho^{+}}-\delta_{\varrho^{-}}}{\zeta^+-\zeta^-} d\Big(\big(\tau^2+ R(\tau,x',\xi')\big)\nu^{0,0}\Big).
\end{align} 
Using \eqref{tranA1} and noting that the measure $\big(\tau^2+ R(\tau,x',\xi')\big)\nu^{0,0}$ is positive on $\,^\parallel \mathcal{H}_\partial \cup \,^\parallel \mathcal{G}_\partial$, one obtains the following theorem proven in \cite{11A1}.
\begin{theorem}\label{generabichaA1} If a nonnegative measure $\mu$ satisfies a transport equation of the form given in \eqref{tranA1}, then $\mathrm{supp}(\mu)$ is a union of maximal generalized bicharacteristics.
\end{theorem}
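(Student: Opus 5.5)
The plan is to reduce the statement to the abstract propagation result of \cite{11A1}, so most of the work is checking its hypotheses. First I check that the measure driving the right-hand side of \eqref{tranA1},
\begin{align*}
\nu_\partial:=\big(\tau^2+R(\tau,x',\xi')\big)\nu^{0,0}=\Big(2\tau^2-\sum_{1\leq i,j\leq d-1}g^{i,j}(x',0)\xi_i\xi_j\Big)\nu^{0,0},
\end{align*}
is a nonnegative Radon measure carried by $\big(\,^\parallel\mathcal{H}_\partial\cup\,^\parallel\mathcal{G}_\partial\big)\cap\{\alpha\leq\tau\leq\alpha^{-1}\}$. The support claim is Lemma \ref{vsuppA1}. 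Positivity follows because $\nu^{0,0}\geq 0$ and, on $\,^\parallel\mathcal{H}_\partial\cup\,^\parallel\mathcal{G}_\partial$, one has $g(\xi',\xi')\leq\tau^2$, so the weight is at least $\tau^2\geq\alpha^2>0$. I would also record from Proposition \ref{hhA1} that $\mu\geq 0$, $\mathrm{supp}(\mu)\subset\mathrm{Char}(p)\cap\{\alpha\leq\tau\leq\alpha^{-1}\}$, and that $\tau$ is conserved along $H_p$. Consequently the singular factor $(\zeta^+-\zeta^-)^{-1}$ is only tested against symbols of the form $c(\varrho^+)-c(\varrho^-)$, and these vanish at glancing points, so the right-hand side of \eqref{tranA1} is a well-defined distribution.

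Next I would prove the statement locally, according to the position of a point $\varrho_0\in\mathrm{supp}(\mu)$, with the aim of producing a generalized bicharacteristic through $\varrho_0$ that stays in $\mathrm{supp}(\mu)$ for $s\in[-\varepsilon,\varepsilon]$.
\begin{enumerate}
\item If $\varrho_0\notin\partial(T^*\mathcal{L})$, then near $\varrho_0$ one has $H_p\mu=0$ with $H_p$ only continuous, and bicharacteristics need not be unique. Here I would use the argument of \cite{1A1,11A1}: test the equation against $c\,\theta(\varphi)$, where $\varphi$ is a function with $H_p\varphi\simeq 1$ near $\varrho_0$ (for instance $\varphi=-t/(2\tau)$) and $c\geq 0$ is localized in a thin tube. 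This shows that mass in a small ball around $\varrho_0$ is transported into nearby balls along the flow. A Peano-type compactness argument then gives a genuine integral curve of $H_p$ inside $\mathrm{supp}(\mu)$. Equivalently, one can invoke a superposition principle for nonnegative measures that are invariant under a continuous vector field.
\item If $\varrho_0\in\mathcal{H}_\partial^\pm$, the right-hand side near $\pi_\parallel(\varrho_0)$ is $(\delta_{\varrho^+}-\delta_{\varrho^-})\otimes\nu_\partial/(2\zeta^+)$. Testing against symbols supported near $\varrho^+$, respectively near $\varrho^-$, shows that incoming and outgoing mass are both equal to $\nu_\partial/(2\zeta^+)$. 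Hence, if mass arrives at $\varrho^-$ it leaves from $\varrho^+=\Sigma(\varrho^-)$, and conversely. Combining this with the interior step gives a broken bicharacteristic satisfying condition 2 of Definition \ref{generaA1}.
\item The glancing points $\varrho_0\in\mathcal{G}_\partial$ are the remaining case. I split them further into $\mathcal{G}^d_\partial$, $\mathcal{G}^g_\partial$ and $\mathcal{G}^3_\partial$ and treat them in the next paragraph.
\end{enumerate}

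I expect the glancing case, and especially $\mathcal{G}^g_\partial\cup\mathcal{G}^3_\partial$, to be the main obstacle. At these points $\zeta^+-\zeta^-\to 0$, the curves are not unique, and $H_p$ and $H^{\mathcal{G}}_p$ are only $C^0$. What I would try first is to use $z$, $H_pz$ and $H_p^2z$ as test quantities, following Section 5.4 of \cite{11A1}. Applying \eqref{tranA1} to symbols such as $\theta(z/\varepsilon)\,H_pz\,c$ shows that the boundary contribution localizes the restriction $\mu_{|z=0}$ on $\mathcal{G}^g_\partial$. Passing to the limit $\varepsilon\to0$ should then give $H^{\mathcal{G}}_p\mu_{|\mathcal{G}^g_\partial}=0$ up to terms supported on hyperbolic points, where $\mu_{|z=0}$ denotes the part of $\mu$ carried by $\partial(T^*\mathcal{L})$. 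With this, the interior step 1 can be rerun with $^GX$ in place of $H_p$, and the needed compactness comes from the uniform bound on $\tau$.

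Finally I pass from local curves to maximal ones. I define $\mathcal{A}$ as the set of generalized bicharacteristics contained in $\mathrm{supp}(\mu)$, ordered by extension, and apply Zorn's lemma. Since $\mathrm{supp}(\mu)$ is closed, limits of curves in $\mathcal{A}$ stay in $\mathrm{supp}(\mu)$, and the local step applied at an endpoint shows that a maximal element cannot stop at a finite parameter. Theorem \ref{theximaxiA1} then gives that it is defined on all of $\mathbb{R}$. Therefore $\mathrm{supp}(\mu)$ is a union of maximal generalized bicharacteristics.
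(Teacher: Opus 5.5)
Your first paragraph is exactly the paper's argument. The paper does not reprove this theorem; it only observes that $(\tau^2+R(\tau,x',\xi'))\nu^{0,0}$ is a nonnegative measure carried by $\,^\parallel\mathcal{H}_\partial\cup\,^\parallel\mathcal{G}_\partial$, and then applies \cite{11A1}. Your verification of this is correct and more explicit than the paper's. It combines Lemma \ref{vsuppA1} with $\nu^{1,1}=\tau^2\nu^{0,0}$ and $\nu^{0,0}\geq 0$, and uses $2\tau^2-g(\xi',\xi')\geq\tau^2\geq\alpha^2$, which holds because $g(\xi',\xi')\leq\tau^2$ on that set. At that point you should simply invoke the theorem of \cite{11A1} and stop. (For the well-definedness remark, the relevant fact is $|c(\varrho^+)-c(\varrho^-)|\leq|\zeta^+-\zeta^-|\sup|\partial_\zeta c|$, not merely that the difference vanishes at glancing points.)

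The rest of your proposal tries to re-derive \cite{11A1}, and it would not stand as a proof on its own. The gap is where you expected it, at the glancing set.

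\textbf{Glancing step.} Your step 1 relies on continuity of the vector field, through Peano compactness or superposition for measures invariant under a continuous field. But $^GX$ is discontinuous. On $\mathcal{G}^g_\partial$ it differs from $H_p$ by the nonzero term $\frac{H^2_pz}{H^2_zp}H_z$, while it equals $H_p$ at nearby interior points. Moreover, generalized bicharacteristics are not integral curves of any vector field: they jump by $\Sigma$ at hyperbolic points, and reflection times may accumulate near $\mathcal{G}^g_\partial\cup\mathcal{G}^3_\partial$. So ``rerun step 1 with $^GX$'' is not available. The invariance $H^{\mathcal{G}}_p\mu_{|\mathcal{G}^g_\partial}=0$ is only conjectured, and diffractive and order-three points are not treated at all. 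Handling exactly these points is the content of \cite{11A1}.

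\textbf{Maximality step.} An element maximal among generalized bicharacteristics contained in $\mathrm{supp}(\mu)$ is not maximal in the sense of Theorem \ref{theximaxiA1}. So that theorem cannot give $J=\mathbb{R}$. You need your endpoint-extension argument instead, and that argument itself requires the limit at a finite endpoint to exist even when reflection times accumulate there.
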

Let $\varrho^0 \in \mathrm{supp}(\mu)$ (such a point exists by the the first item of Proposition \ref{hhA1}). Since $\mathrm{supp}(\mu) \subset \mathrm{Char}(p)\cap T^*\mathcal{L}$ (again by Proposition \ref{hhA1}), Theorem \ref{theximaxiA1}  ensures the existence of a maximal generalized bicharacteristic $^G\gamma$ passing through $\varrho^0$. With the interior geometric control condition fulfilled by $(\omega, T-2\delta)$, the bicharacteristic $^G\gamma$ reaches a point above $\omega \times ]\delta,T-\delta[$, where the measure $\mu$ vanishes by the last item of Proposition \ref{hhA1}. Thus, in view of Theorem \ref{generabichaA1}, this leads to a contradiction and concludes the proof of our main result.
\section*{\large \textbf{Acknowledgements.}}  
The auther thanks her PhD advisor Jérôme Le Rousseau for his constant support and guidance.

\begin{appendix}
\section{Some technical results}\label{appenAA1}
\renewcommand{\theequation}{\thesection.\arabic{equation}}
In this appendix, we prove some technical results used in sections \ref{intro2.1tH} and \ref{uuuA1}.

\begin{proof}[Proof of Lemma \ref{selfA1}]
Let $U=(u^0,u^1)\in D(A)$ and $V=(v^0,v^1)\in D(A)$. An integration by parts yields
\begin{align*}
(AU,V)_{\mathcal{H}}&=(-iu^1,v^0)_{H^1(\mathcal{M})}+(i(1-\Delta_g)u^0,v^1)_{L^2(\mathcal{M})}
\\&=-i(u^1,v^0)_{L^2(\mathcal{M})}-i(\nabla_gu^1,\nabla_gv^0)_{L^2V(\mathcal{M})}+i(u^0,v^1)_{L^2(\mathcal{M})}-i(\Delta_gu^0,v^1)_{L^2(\mathcal{M})}
\\&=-i(u^1,v^0)_{L^2(\mathcal{M})}+i(u^0,v^1)_{L^2(\mathcal{M})}-i(u^1_{|\partial\mathcal{M}},\partial_nv^0_{|\partial\mathcal{M}})_{L^2(\partial\mathcal{M})}+i(u^1,\Delta_gv^0)_{L^2(\mathcal{M})}
\\&\quad
-i(\partial_nu^0_{|\partial\mathcal{M}},v^1_{|\partial\mathcal{M}})_{L^2(\partial\mathcal{M})}+i(\nabla_gu^0,\nabla_gv^1)_{L^2V(\mathcal{M})}.
\end{align*}
Using the boundary conditions $\partial_nv^0_{|\partial\mathcal{M}}=-iv^1_{|\partial\mathcal{M}}$ and $\partial_nu^0_{|\partial\mathcal{M}}=-iu^1_{|\partial\mathcal{M}}$, we get
\begin{align*}
(AU,V)_{\mathcal{H}}&=-i(u^1,v^0)_{L^2(\mathcal{M})}+i(u^0,v^1)_{L^2(\mathcal{M})}+(u^1_{|\partial\mathcal{M}},v^1_{|\partial\mathcal{M}})_{L^2(\partial\mathcal{M})}+i(u^1,\Delta_gv^0)_{L^2(\mathcal{M})}
\\&\quad
-(u^1_{|\partial\mathcal{M}},v^1_{|\partial\mathcal{M}})_{L^2(\partial\mathcal{M})}+i(\nabla_gu^0,\nabla_gv^1)_{L^2V(\mathcal{M})}
\\&=(u^0,-iv^1)_{L^2(\mathcal{M})}+i(\nabla_gu^0,\nabla_gv^1)_{L^2V(\mathcal{M})}+(u^1,iv^0)_{L^2(\mathcal{M})}+(u^1,-i\Delta_gv^0)_{L^2(\mathcal{M})}
\\&=(U,AV)_{\mathcal{H}},
\end{align*}
 implying that $(A,D(A))$ is symmetric. \\
Next, we show that the operator $\mathrm{Id} +iA:D(A)\rightarrow \mathcal{H}$ is surjective. Consider $(f^0,f^1)\in \mathcal{H}$. We seek $(u^0,u^1)\in D(A)$ such that
 \begin{align*} (\mathrm{Id} +iA)
\begin{pmatrix}
u^0 \\
 u^1
\end{pmatrix}=\begin{pmatrix}
f^0 \\
 f^1
\end{pmatrix},
\end{align*}
which reads
\begin{equation} \label{laxA1}
    \begin{cases}
    \quad \quad  u^1&=f^0-u^0 \\
     -\Delta_gu^0+2u^0&=f^0-f^1 \\
     (\partial_nu^0+iu^1)_{|\partial\mathcal{M}}&= 0.
    \end{cases}
\end{equation}
We introduce the following sesquilinear form on $H^1(\mathcal{M})$ 
\begin{align*}
a(u^0,v^0)=( \nabla_gu^0,\nabla_gv^0)_{L^2V(\mathcal{M})}+2(u^0,v^0)_{L^2(\mathcal{M})}-i(u^0_{|\partial\mathcal{M}},v^0_{|\partial\mathcal{M}})_{L^2(\partial\mathcal{M})}.
\end{align*}
By Cauchy Schwarz and trace formula, this sesquilinear form is continuous. On the other hand, $a(.,.)$ is coercive since 
\begin{align*}
|a(u^0,u^0)|\geq\|\nabla_gu^0\|^2_{L^2V(\mathcal{M})}+2\|u^0\|^2_{L^2(\mathcal{M})}\gtrsim \|u^0\|^2_{H^1(\mathcal{M})}.
\end{align*} 
Consider the continuous antilinear form on $H^1(\mathcal{M})$ $$L(v^0)=(f^0-f^1,v^0)_{L^2(\mathcal{M})}-i(f^0_{|\partial\mathcal{M}},v^0_{|\partial\mathcal{M}})_{L^2(\partial\mathcal{M})}.$$
Therefore, by Lax-Milgram’s theorem (see Theorem 1.1 in \cite{2333A1}) there exists a unique $u^0\in H^1(\mathcal{M})$ such that 
\begin{align*}
a(u^0,v^0)=L(v^0), \quad\quad \quad \quad\forall\, v^0\in H^1(\mathcal{M}).
\end{align*}
By Proposition \ref{pro*A1}, one has $u^0\in H^2(\mathcal{M})$ satisfying
\begin{equation*}
    \begin{cases}
     -\Delta_gu^0+2u^0&=f^0-f^1 \\
     \quad\quad\partial_nu^0_{|\partial\mathcal{M}}&=i(u^0- f^0)_{|\partial\mathcal{M}}\in H^{\frac{1}{2}}(\partial\mathcal{M}).
    \end{cases}
\end{equation*}
Hence, $u^1=f^0-u^0 \in H^1(\mathcal{M})$ and \eqref{laxA1} is verified, which implies that $\mathrm{Id} +iA:D(A)\rightarrow \mathcal{H}$ is surjective. In the same way, we show that the operator $A+i \;\mathrm{Id}:D(A)\rightarrow \mathcal{H}$ is surjective by considering the coercive sesquilinear form on $H^1(\mathcal{M})$ defined by
\begin{align*}
a(u^0,v^0)=( \nabla_gu^0,\nabla_gv^0)_{L^2V(\mathcal{M})}+2(u^0,v^0)_{L^2(\mathcal{M})}+i(u^0_{|\partial\mathcal{M}},v^0_{|\partial\mathcal{M}})_{L^2(\partial\mathcal{M})},
\end{align*}
and the continuous antilinear form on $H^1(\mathcal{M})$
 \begin{align*}
 L(v^0)=-i(f^1+f^0,v^0)_{L^2(\mathcal{M})}+(f^0_{|\partial\mathcal{M}},v^0_{|\partial\mathcal{M}})_{L^2(\partial\mathcal{M})}.
 \end{align*}
By Theorem 8.3 in \cite{66A1}, it follows that $(A,D(A))$ is selfadjoint.
Consequently, the operator $\mathrm{Id} +iA:D(A)\rightarrow \mathcal{H}$ is bijective and so the operator $(A-i \;\mathrm{Id})^{-1}:\mathcal{H}\rightarrow D(A)$ is well defined. Furthermore, for $U=(u^0,u^1)\in D(A)$ and $\lambda>0$, using the fact that $A$ is symmetric, one writes
\begin{align}\label{symmmA1}
\|(\lambda \mathrm{Id}+iA)U\|_{\mathcal{H}}^2&=\lambda^2\|U\|_{\mathcal{H}}^2+\|AU\|_{\mathcal{H}}^2-2\lambda\; \mathrm{Im}(AU,U)_{\mathcal{H}}\nonumber
\\&=\lambda^2\|U\|_{\mathcal{H}}^2+\|AU\|_{\mathcal{H}}^2\geq \lambda^2\|U\|_{\mathcal{H}}^2.
\end{align}
This yields,
\begin{align}\label{sy122A1}
\|( A-i \;\mathrm{Id})^{-1}U\|_{\mathcal{H}}^2=\|( \mathrm{Id}+iA)^{-1}U\|_{\mathcal{H}}^2\leq \|U\|_{\mathcal{H}}^2, \quad \text{for all} \quad U\in \mathcal{H}.
\end{align}
As $i$ is in the resolvent
set $\rho(A)$ of $A$, $(A-i \;\mathrm{Id})^{-1}:\mathcal{H}\rightarrow D(A)$ is bounded by \eqref{sy122A1}
and the injection $\iota:D(A)\rightarrow \mathcal{H}$ is compact by the
Rellich–Kondrachov theorem [\cite{24A1}, Theorem 9.16], then  $\iota\circ (A-i \;\mathrm{Id})^{-1}$ is also compact. Therefore, $A$ has a compact resolvent on $\mathcal{H}$, which concludes the proof.
\end{proof}
\begin{proof}[Proof of Lemma \ref{semigroupeA1}]By Lemma  \ref{selfA1}, the operators $(\pm iA,D(A))$, with $D(A)$ dense in $\mathcal{H}$, are maximal monotone. Then, $(\pm iA, D(A))$ satisfy the assumptions of the Lumer–Phillips Theorem (see, for example, Theorem 1.4.3 in \cite{1888A1} and Theorem 12.23 in \cite{20A1}), which ensures that $(A, D(A))$ is the infinitesimal generator of a strongly continuous unitary group $S(t)$ on $\mathcal{H}$.
\end{proof}
The proofs of Lemmas \ref{lem1Th} and \ref{lem2Th} rely on an application of Schur’s Lemma, which is recalled below.
\begin{lemma}[Schur’s Lemma] \label{shurlem} Let the operator $\mathcal{K}: \mathcal{S}(\mathbb{R}^n)\rightarrow \mathcal{S}(\mathbb{R}^m)$ with Schwartz kernel $K(x,y)$ on $\mathbb{R}^m \times \mathbb{R}^n$ such that $K(.,y)$ and $K(x,.)$ are $L^1$-functions for almost all $x\in \mathbb{R}^m$ and $y\in \mathbb{R}^n$ respectively, with moreover
\begin{align*}
    \underset{x\in\mathbb{R}^m}{\mathrm{ess\;sup}}\int_{\mathbb{R}^n} |K(x,y)| dy \leq A \quad \text{and}\quad  \underset{y\in\mathbb{R}^n}{\mathrm{ess \; sup}}\int_{\mathbb{R}^m} |K(x,y)| dx \leq B,
\end{align*}
for some $A\geq 0$ and $B\geq 0$. Then, the operator $\mathcal{K}$ extends as a continuous operator from $L^2(\mathbb{R}^n)$ to $ L^2(\mathbb{R}^m)$ with 
\begin{align*}
    \|\mathcal{K}\|_{L^2(\mathbb{R}^n) \rightarrow L^2(\mathbb{R}^m)} \leq (AB)^\frac{1}{2}
\end{align*}
\end{lemma}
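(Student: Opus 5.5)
We will prove Schur's Lemma by the classical Cauchy--Schwarz splitting of the kernel, working first on $\mathcal{S}(\mathbb{R}^n)$ and then extending by density. Fix $f\in\mathcal{S}(\mathbb{R}^n)$ and write, for almost every $x\in\mathbb{R}^m$,
\begin{align*}
|\mathcal{K}f(x)|\leq \int_{\mathbb{R}^n}|K(x,y)|^{\frac12}\,|K(x,y)|^{\frac12}|f(y)|\,dy .
\end{align*}
The integral converges absolutely because $K(x,\cdot)\in L^1$ and $f$ is bounded. Applying the Cauchy--Schwarz inequality in $y$ and then the first hypothesis gives
\begin{align*}
|\mathcal{K}f(x)|^2\leq \Big(\int_{\mathbb{R}^n}|K(x,y)|\,dy\Big)\Big(\int_{\mathbb{R}^n}|K(x,y)|\,|f(y)|^2dy\Big)\leq A\int_{\mathbb{R}^n}|K(x,y)|\,|f(y)|^2dy .
\end{align*}

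Next we integrate in $x$ over $\mathbb{R}^m$. The integrand $|K(x,y)||f(y)|^2$ is nonnegative and measurable on $\mathbb{R}^m\times\mathbb{R}^n$, so Tonelli's theorem lets us exchange the order of integration. The second hypothesis then yields
\begin{align*}
\|\mathcal{K}f\|_{L^2(\mathbb{R}^m)}^2\leq A\int_{\mathbb{R}^n}|f(y)|^2\Big(\int_{\mathbb{R}^m}|K(x,y)|\,dx\Big)dy\leq AB\,\|f\|_{L^2(\mathbb{R}^n)}^2 .
\end{align*}
Hence $\|\mathcal{K}f\|_{L^2}\leq (AB)^{\frac12}\|f\|_{L^2}$ for every $f\in\mathcal{S}(\mathbb{R}^n)$.

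Finally, $\mathcal{S}(\mathbb{R}^n)$ is dense in $L^2(\mathbb{R}^n)$ and $\mathcal{K}$ is linear, so this bound lets us extend $\mathcal{K}$ uniquely to a continuous operator $L^2(\mathbb{R}^n)\to L^2(\mathbb{R}^m)$ with the same norm bound $(AB)^{\frac12}$.

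The only delicate point is measurability. The essential suprema in the hypotheses and the use of Tonelli both require $K$ to be jointly measurable, which we take as implicit in the statement that $K$ is a function kernel. Everything else is elementary. In the applications below (Lemmas \ref{lem1Th} and \ref{lem2Th}), we will obtain $A$ and $B$ from the bound \eqref{10µµA1}, $|k(x,\nu)|\lesssim\langle\nu\rangle^{-d-1}$, after the change of variables $\nu=(x-y)/h$.
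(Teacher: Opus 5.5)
Your proof is correct and is exactly the standard argument the paper invokes when it says the lemma ``follows the same argument as in the case $m=n$'': split $|K|=|K|^{1/2}|K|^{1/2}$, apply Cauchy--Schwarz in $y$ and Tonelli in $x$, then extend by density. Your closing remark on the applications plays no role in the lemma and is slightly off: Lemmas \ref{lem1Th} and \ref{lem2Th} use the integrated bound \eqref{kprime1} (their symbols only decay like $\langle\zeta\rangle^{-2}$, so the $\Sigma(\mathbb{R}^{2d})$ kernel bound does not apply directly) together with the trace change of variables $\nu'=(x'-y')/h$, $\nu_d=-y_d/h$, which yields asymmetric constants such as $A\lesssim 1$, $B\lesssim h^{-1}$ in Lemma \ref{lem1Th}; this asymmetry is precisely why the lemma is stated for $m\neq n$ with separate $A$ and $B$.
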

Lemma \ref{shurlem} follows the same argument as in the case $m=n$. 
\begin{proof}[Proof of Lemma \ref{lem1Th}] 
Set $a'(x',\xi)=a(x',0,\xi)$. We write
\begin{align} \label{operatorker}
    (\Op^h(a)u)_{|z=0}&=\Big(\frac{1}{(2\pi)^d}\int_{\mathbb{R}^{2d}} e^{i(x-y)\xi} a(x,h\xi)u(y) \; dy d\xi \Big)_{|z=0} \nonumber
    \\&=\frac{1}{(2\pi)^d}\int_{\mathbb{R}^{2d}} e^{i(x'-y')\xi'-iy_d\zeta} a'(x',h\xi)u(y)\; dy d\xi,
\end{align}
with associated kernel given by
\begin{align} \label{noy1}
    K'(x',y)=\frac{1}{(2\pi)^d} \int_{\mathbb{R}^d}e^{i(x'-y')\xi'-iy_d\zeta} a'(x',h\xi) \;d\xi
    &= \frac{1}{(2\pi h)^d}\int_{\mathbb{R}^d} e^{i\frac{(x'-y')}{h}\xi'-i\frac{y_d}{h}\zeta}a'(x',\xi)\;d\xi \nonumber
    \\&= h^{-d} k_{a'}(x',\frac{x'-y'}{h},\frac{-y_d}{h}),
\end{align}
with $$k_{a'}(x',\nu)=\frac{1}{(2\pi)^d} \int_{\mathbb{R}^d} e^{i \nu.\xi} a'(x',\xi)\;d\xi.$$ 
Let $L=(1-i\nu.\nabla_\xi)/\langle \nu \rangle ^2$. Noting that $L\, \mathrm{exp}(i\nu.\xi)=\mathrm{exp}(i\nu.\xi)$, one has
\begin{align*}
    k_{a'}(x',\nu)=\frac{1}{(2\pi)^d} \int_{\mathbb{R}^d} e^{i \nu.\xi} (^tL)^{d+1} a'(x',\xi) \;d\xi.
\end{align*}
Since $a$ is compactly supported in $\xi'$, smooth in $\xi$ with decay $\langle \zeta\rangle ^{-2}$, we deduce
\begin{align} \label{kprime1}
    |k_{a'}(x',\nu)|\lesssim \int_{\mathbb{R}^d} \frac{1}{\langle \nu \rangle^{d+1}} \frac{1}{\langle \zeta \rangle^2} \frac{1}{\langle \xi'\rangle^N} \;d\xi\lesssim \frac{1}{\langle \nu \rangle^{d+1}}, 
\end{align}
where $N> d-1$.
Using \eqref{noy1}, \eqref{kprime1} and the change of variables $\frac{x'-y'}{h}= \nu'$ and $\frac{-y_d}{h}=\nu_d$, we get
\begin{align} \label{2a167lem1}
    \underset{x'\in \mathbb{R}^{d-1}}{\mathrm{ess\; sup}} \int_{\mathbb{R}^d}|K'(x',y)|\;dy= \underset{x'\in \mathbb{R}^{d-1}}{\mathrm{ess\; sup}} \; h^{-d}\int_{\mathbb{R}^d} | k_{a'}(x',\frac{x'-y'}{h},\frac{-y_d}{h})| \;dy &=\underset{x'\in \mathbb{R}^{d-1}}{\mathrm{ess\; sup}} \int_{\mathbb{R}^d} |k_{a'}(x',\nu)|\;d\nu \nonumber
    \\&
    \lesssim \int_{\mathbb{R}^d} \frac{1}{\langle \nu \rangle^{d+1}} \;d\nu \lesssim 1.
\end{align}
Similarly,
\begin{align}\label{2a168lem1}
    \underset{y\in \mathbb{R}^{d}}{\mathrm{ess\; sup}} \int_{\mathbb{R}^{d-1}} |K'(x',y)|\;dx'& = \underset{y\in \mathbb{R}^{d}}{\mathrm{ess\; sup}} \; h^{-d}
    \int_{\mathbb{R}^{d-1}}  | k_{a'}(x',\frac{x'-y'}{h},\frac{-y_d}{h})|\; dx'\nonumber
    \\&
    =  \underset{y\in \mathbb{R}^{d}}{\mathrm{ess\; sup}} \; h^{-1} \int_{\mathbb{R}^{d-1}} |k_{a'}(y'+h\nu',\nu',\frac{-y_d}{h}) \;d\nu'  \nonumber
    \\&\lesssim h^{-1} \int_{\mathbb{R}^{d-1}} \frac{d\nu'}{\langle (\nu',\frac{y_d}{h})\rangle^{d+1}} 
    \lesssim h^{-1}\int_{\mathbb{R}^{d-1}} \frac{d\nu'}{\langle \nu'\rangle^{d+1}}  \lesssim h^{-1}.
\end{align}
The result then follows immediately from Lemma \ref{shurlem}.
\end{proof}
\begin{proof}[Proof of Lemma \ref{lem2Th}]
According to \eqref{operatorker}, the kernel of the operator $\mathcal{K}$ can be written as
    \begin{align*}
       L(x',y)&= K'(x',y)\Big(\theta(y)-\theta(x',0)\Big)
       \\&= h^{-d} k_{a'}(x',\frac{x'-y'}{h},\frac{-y_d}{h}) \Big( \theta(y)- \theta(x',0)\Big)
       \\&=h^{-d}  k_{a'}(x',\nu)\Big(\theta(x'-h\nu',-h\nu_d)- \theta(x',0)\Big),
    \end{align*}
where $K'(x',y)$ is defined in \eqref{noy1}, $a'(x',\xi)=a(x',0,\xi)$, $\frac{x'-y'}{h}= \nu'$, $\frac{-y_d}{h}=\nu_d$ and $$k_{a'}(x',\nu)=\frac{1}{(2\pi)^d} \int_{\mathbb{R}^d} e^{i \nu.\xi} a'(x',\xi)\;d\xi.$$
Since $\theta \in C^1(\mathbb{R}^d)$, a first-order Taylor expansion yields
\begin{align*}
\theta(x'-h\nu',-h\nu_d)- \theta(x',0)=-h \underset{j}{\sum} \nu_j \Theta_j(x',h\nu),
\end{align*}
with $$\Theta_j(x',h\nu)= \int_0^1\partial_{x_j}\theta\big((x',0)-s h\nu\big)\; ds.$$
Using the smoothness of $a$ in $\xi$ and the identity $-i\nu_j k_{a'}(x',\nu)=k_{(\partial_{\xi_j}a)'}(x',\nu)$, we obtain
\begin{align*}
L(x',y)=-h^{1-d}\;\underset{j}{\sum} \nu_j k_{a'}(x',\nu) \Theta_j(x',h\nu)=-i h^{1-d}\;\underset{j}{\sum} \nu_j k_{(\partial_{\xi_j}a)'}(x',\nu)\Theta_j(x',h\nu).
\end{align*}
Finally, using that $a$ is compactly supported in $\xi'$, smooth in $\xi$ with decay $\langle \zeta\rangle ^{-2}$ and arguing as in \eqref{2a167lem1} and \eqref{2a168lem1}, we deduce
\begin{align*}
     &\underset{x'\in \mathbb{R}^{d-1}}{\mathrm{ess\; sup}} \int_{\mathbb{R}^d} |L(x',y)| \; dy \lesssim h,
     \\& \underset{y \in \mathbb{R}^{d}}{\mathrm{ess\; sup}} \int_{\mathbb{R}^{d-1}} |L(x',y)| \; dx' \lesssim 1.
\end{align*}
The conclusion then follows from Lemma \ref{shurlem}.
\end{proof}
\begin{proof}[Proof of Lemma \ref{lem3Th}]
With $a'(x',\xi)=a(x',0,\xi)$, one writes
\begin{align*}
\mathcal{K}u =(\Op^h(a) (u \; \delta))_{|z=0}&
\frac{1}{(2\pi)^d} \int_{\mathbb{R}^{2d}} e^{i(x'-y')\xi'-iy_d\zeta} a'(x',h\xi',h\zeta)u(y')\;\delta(y_d) \;dy' dy_d d\xi' d\zeta
\\& =\frac{1}{(2\pi h)^{d}} \int_{\mathbb{R}^{2d-1}} e^{i\frac{(x'-y')}{h}\xi'} a'(x',\xi',\zeta)u(y') \;dy' d\xi' d\zeta
\\&=\frac{h^{-1}}{2\pi} \Op^h(t)u(x'), 
\end{align*}
where 
\begin{align*}
t(x',\xi')= \int_{\mathbb{R}} a'(x',\xi',\zeta) \; d\zeta,
\end{align*}
is a tangential symbol compactly supported in the $(x',\xi')$-variables, $C^0(\mathbb{R}^{d-1})$ in the $x'$-variable and $C^\infty(\mathbb{R}^{d-1})$ in $\xi'$. Consequently, 
\begin{align*}\|\mathcal{K}u\|_{L^2(\mathbb{R}^{d-1})} \lesssim h^{-1}\|u_{|z=0}\|_{L^2(\mathbb{R}^{d-1})}.
\end{align*}
This concludes the proof of Lemma \ref{lem3Th}.
\end{proof}
\end{appendix}

\end{document}